\newcommand{\mbf}[1]{\mathbf{ #1}}
\newcommand{\tnf}[1]{\textnormal{#1}}
\newcommand{\tbf}[1]{\textbf{#1}}
\newcommand{\mbs}[1]{\boldsymbol{#1}}
\newcommand{\mcl}[1]{\mathcal{#1}}
\newcommand{\mscr}[1]{\mathscr{#1}}
\newcommand{\R}{\mathbb{R}}
\newcommand{\N}{\mathbb{N}}
\newcommand{\norm}[1]{\left\lVert{#1}\right\rVert}
\newcommand{\ip}[2]{\left\langle #1, #2 \right\rangle}
\newcommand{\bmat}[1]{\begin{bmatrix}#1\end{bmatrix}}
\newcommand{\smallbmat}[1]{\left[\scriptsize\begin{smallmatrix}
		#1\end{smallmatrix} \right]}
\newcommand{\mat}[1]{\begin{matrix}#1\end{matrix}}
\newcommand{\lmat}[1]{\begin{array}{l}#1\end{array}}
\newtheorem{example}[thm]{Example}
\newenvironment{proof}{%
	\ \\[-3.5\baselineskip]\begin{pf}}{%
		\hfill $\blacksquare$\end{pf} \ \\[-3\baselineskip]}
\let\bl\bigl
\let\bbl\Bigl
\let\bbbl\biggl
\let\bbbbl\Biggl
\let\br\bigr
\let\bbr\Bigr
\let\bbbr\biggr
\let\bbbbr\Biggr
\newcommand{\adj}[0]{*}		
\newcommand{\PI}[0]{\Uppi}
\newcommand{\PIset}[0]{\bm{\Pi}}
\newcommand{\PIparam}[0]{\bm{\Gamma}}
\newcommand\Resize[2]{\resizebox{#1}{!}{\mbox{\ensuremath{\displaystyle #2}}}}
\newcommand\scalemath[2]{\scalebox{#1}{\mbox{\ensuremath{\displaystyle #2}}}}
\begin{document}

\begin{frontmatter}
	
	\title{A State-Space Representation of Linear Multivariate PDEs and Stability Analysis using SDP\thanksref{footnoteinfo}
	} 
	
	\thanks[footnoteinfo]{\tbf{Acknowledgement:} This work was supported by the National Science Foundation grants 2337751 and 2429973.}
	
	\author[Tempe]{Declan S. Jagt}\ead{djagt@asu.edu},    
	\author[Tempe]{Matthew M. Peet}\ead{mpeet@asu.edu},               
	
	\address[Tempe]{Arizona State University, SEMTE, P.O. Box 876106, Tempe, AZ 85287-6106,	USA}

	\begin{keyword}   
		Distributed Parameter Systems; Semidefinite Programming; Stability Analysis.               %
	\end{keyword}

	\begin{abstract}
		\ \\[-0.25\baselineskip]
		\noindent Recently, it has been shown that stability analysis and control of coupled Partial Differential Equations (PDEs) in a single spatial variable can be more conveniently performed using the Partial Integral Equation (PIE) representation. This PIE offers an equivalent, state-space representation of the PDE on the Hilbert space $L_{2}$, and is parameterized by an algebra of Partial Integral (PI) operators, allowing e.g. stability to be analysed by solving a linear operator inequality on PI operator variables. In this paper, we show how this PIE framework for univariate PDEs can be extended inductively to multivariate PDEs on a hyper-rectangle. 
		Specifically, assuming the boundary conditions defining the domain of the PDE to be decoupled along distinct spatial directions, we propose a readily verifiable condition for existence of a bijection between the PDE domain and $L_{2}$.
		We derive an explicit expression for the map defining this bijection, and use this map to construct an equivalent PIE representation for a broad class of linear multivariate PDEs. Next, we embed the parameters defining this PIE representation in a class of multivariate PI operators, and prove that this class forms a $*$-algebra---allowing PI operator inequalities for stability analysis, estimation, and control of univariate PDEs to be similarly formulated for multivariate PDEs.
		Finally, we show how such an operator inequality for stability analysis of multivariate PDEs can be solved with semidefinite programming, using a positive matrix parameterization of positive semidefinite multivariate PI operators. This framework for representation and stability analysis of multivariate PDEs is incorporated in the PIETOOLS software, and applied to analyze stability of 2D heat, wave, and plate equations, obtaining accurate bounds on the rate of decay.\\[-0.5\baselineskip]
	\end{abstract}
	
\end{frontmatter}




\section{Introduction}\label{sec:introduction}

Partial Differential Equations (PDEs) are frequently used to model physical phenomena such as fluid flow (e.g. Navier-Stokes), vibrations in flexible structures (e.g. Kirchoff-Love plates), and population growth (e.g. Fisher's equation). Such PDE models provide a simplified representation of the phenomena, which can then be used to analyse stability properties or design a stabilizing controller. 
However, while there exist well-defined state-space and transfer function based algorithms for simulation, stability analysis and control of Ordinary Differential Equations (ODEs), applicable for any suitably well-posed problem, these methods do not readily generalize to PDEs. In particular, development of similar universal tools for PDEs is complicated by the fact that the state of the PDE is infinite-dimensional, and is further required to satisfy a set of boundary conditions.
Moreover, PDEs can involve an arbitrary number of spatial variables, and tools developed for univariate PDEs may not readily extend to a multivariate setting. As such, most work on analysis and control of (multivariate) PDEs is ad hoc, tailoring results to a limited class of PDEs and boundary conditions. An overview of such methods is as follows. \\[-0.75\baselineskip]

\subsection{Methods of Analysis and Control of PDEs}

Semi-group theory is a mathematical framework for analysis of PDEs, with associated notions of solution and well-posedness being defined in terms of equivalence of a strongly continuous semigroup~\cite{curtain1995introduction,bensoussan2007representation}. In this framework, operator inequality conditions may be proposed for analysis and control purposes~\cite{lasiecka2000PDEControl,lasiecka1991controlKirchoff,mahawattege2021fluid}. 
However, because the semigroup framework does not provide explicit methods for finding a solution or solving operator inequalities, PDE problems expressed in this framework are typically discretized through projection of the state onto a finite set of basis functions. Discretization of the dynamics is called ``early-lumping''\footnote{Examples of early-lumping can be found in~\cite{gayme2011amplification,liu2021io_analysis_flow,chernyshenko2014SOS_NS,fuentes2022SOS_NS}.} and discretization of operator inequalities is termed ``late-lumping''\footnote{Examples of late-lumping can be found in~\cite{moghadam2013boundary,riesmeier2022LateLumping}.}. However, in each case, care must be taken in selecting a finite-dimensional basis to ensure compatibility with boundary conditions. Furthermore, inferring properties of the PDE from properties of the discretization requires extensive, ad hoc, analysis.

Another PDE framework is the backstepping methodology~\cite{vazquez2024Backstepping}, often used to design stabilizing boundary feedback controllers. The discretization step in this case lies in numerically solving certain Volterra type integral equations for reconstruction of the backstepping state transformation. The resulting kernel may then be used to define an invertible state transformation and stabilizing controller gain~\cite{krstic2007backstepping}. However, defining the backstepping transformation for PDEs with multiple spatial variables requires an extension of the Volterra framework, and integration of function-valued boundary conditions. Consequently, such multivariate methods typically require use of spatial invariance or symmetry of solutions to reduce the dimensionality of the problem~\cite{vazquez2008BacksteppingNS,xu2008Backstepping2D,vazquez2019boundary}.

Finally, spectral discretization by wave number can be used in combination with transfer-function approaches to perform energy gain analysis. This method has been applied to linearized fluid flow in multiple spatial variables~\cite{jovanovic2005Energy_Amplification_NS,lieu2013L2_gain_Couette_flow,jovanovic2021io_flow_control}, yielding decomposition into streamwise and spanwise components. This yields an operator-valued transfer function which is then discretized.

Methods which avoid discretization are often based on construction of a suitable metric---i.e. a Lyapunov functional. The challenge in this case is parameterization of quadratic Lyapunov functionals and verifying negativity of the derivative along solutions to the PDE (often through the use of Linear Matrix Inequalities (LMIs)). In most cases, the Lyapunov functional is chosen to be the $L_2$ norm of the state, and negativity of the derivative of this function is established using inequalities such as Poincare, Wirtinger, Jensen, etc., together with integration by parts. However, restrictions on the structure of the Lyapunov functional and the need for ad hoc analysis limits the scope of such methods and implies conservatism in the resulting stability conditions. This approach has been applied to design observers for the 2D Navier-Stokes equations~\cite{kang2019ObserverNS,zayats2021ObserverNS,zhuk2023ObserverNS} and controllers for the Kuramoto-Sivashinsky equation~\cite{kang2021ControlKSE}. 
Works which extend these approaches to Lyapunov functionals defined by weighted $L_2$ norms using Sum-of-Squares~\cite{parrilo2000SOS} for parabolic PDEs 
include~\cite{papachristodoulou2006AnalysisPDEs,meyer2015StabilityPDEs} (linear PDEs) and~\cite{valmorbida2015IntegralInequalities2D,ahmadi2016AnalysisPDEs} (nonlinear PDEs). Applications to fluid flow include~\cite{yu2008StabilityFlow2D,ahmadi2019AnalysisFlow}. 
Moment-based dual approaches can be found in~\cite{magron2020MomentsControl,korda2022Moments,henrion2023MomentsPDEs}. 
In all cases, however, the need for ad hoc analysis and the use of restricted classes of Lyapunov functionals limits accuracy of the results~\cite{fantuzzi2022sMomentSharpness}.

While this brief survey on methods for analysis and control of multivariate PDEs is clearly incomplete, in each case we find that the need for ad hoc analysis
limits the accuracy, scope and reliability of the method. To address this problem, we will examine the question of whether there exists some more fundamental state-space representation of PDEs which allows for the design of universal tools for simulation, analysis and control, which would be applicable to any PDE expressed in terms of this representation.

\subsection{The PIE Representation for Stability Analysis and Control of 1D PDEs}

The need for universal simulation, analysis and control methods for PDEs has motivated the development of a state-space framework for representation of these systems. Specifically, for PDEs in a single spatial variable, we have the Partial Integral Equation (PIE) representation~\cite{shivakumar2022GPDE_Arxiv}. This PIE representation defines the fundamental (or PIE) state as the highest-order spatial derivative ($\partial_s^{d}$) of the PDE state. This fundamental state lies in the Hilbert space $L_2^{n}$ and does not admit boundary conditions or Sobolev regularity constraints. 

Because the PIE state is unconstrained, establishing equivalence of PDE and PIE requires invertibility of $\partial_s^{d}:\mcl{D}\to L_{2}^{n}$ on the domain of the PDE, $\mcl{D}$, as specified by the boundary conditions and regularity constraints. For univariate coupled PDEs, necessary and sufficient conditions for invertibility of $\partial_s^{d}:\mcl{D}\to L_{2}^{n}$ have been proposed~\cite{gohberg1990LinearOperators},
and result in an analytic construction of this inverse as defined by a Volterra type integral operator. Specifically, this integral operator is of the form 
\begin{equation*}
	\bl(\mcl{T}\mbf{v}\br)(s)=\int_{a}^{s}\mbs{T}_{1}(s,\theta)\mbf{v}(\theta)\, d\theta +\int_{s}^{b}\mbs{T}_{2}(s,\theta)\mbf{v}(\theta)\, d\theta,
\end{equation*}
where $\mbs{T}_1,\mbs{T}_2$ are polynomials and can be computed analytically using the matrices which parameterize the boundary conditions associated with $\mcl{D}$. Linear integral operators of this form (denoted Partial Integral (PI) operators) are bounded and form a $*$-algebra, where $*$ denotes the $L_2$-adjoint. This algebraic structure is significant in that it implies that the evolution of the fundamental state may be represented as a PIE of the form $\partial_{t}\mcl{T}\mbf{v}=\mcl{A}\mbf{v}$ where $\mcl{A}$ is likewise a PI operator albeit with inclusion of a multiplier operator (inclusion of polynomial multipliers preserves the $*$-algebraic structure). 

Once a PIE representation has been established, there exists an array of algorithmic tools for simulation, analysis, control and estimation, 
most of which have been incorporated in the PIETOOLS software package~\cite{shivakumar2025PIETOOLS} 
(see also literature on PIE methods in 1D).

When PDEs involve multiple spatial variables, establishing an equivalent PIE representation raises new challenges. In this case, the fundamental state is similarly defined as the highest-order spatial derivative of the PDE state, e.g. $\mbf{v}=\partial_{x}^{d}\partial_{y}^{d}\mbf{u}$, and the goal (for a given PDE domain $\mcl{D}$) is to construct an inverse of $\partial_{x}^{d}\partial_{y}^{d}:\mcl{D}\to L_{2}^{n}$. The challenge lies in the boundary conditions which restrict the PDE state $\mbf{u} \in \mcl{D}$. Specifically, if the PDE is defined on an $N$-dimensional hyper-rectangle, the boundary conditions constrain the values of the PDE state on the $2N$ surfaces of the hyper-rectangle, each of which is itself an $(N-1)$-dimensional hyper-rectangle. Furthermore, unlike in the 1D case, the boundary values of the multivariate PDE state are coupled in ways which create new questions of well-posedness. For example, in the simplest case, the surfaces of the hyper-rectangle intersect at the corners and hence boundary conditions must be consistent at these corners (e.g. the boundary conditions must not imply $\mbf{u}(x,0)\neq \mbf{u}(0,y)$ at $(x,y)=(0,0)$). This question of consistency for 2D PDEs was explored in~\cite{jagt2022PIE_2D}, wherein a consistent basis for boundary conditions was proposed and it was shown that projection of the boundary conditions onto this basis was able to resolve the question of consistency. The disadvantage of this approach, however, is the need for projection of the boundary conditions and for inversion of multivariate integral operators to obtain the PIE representation.

In contrast to~\cite{jagt2022PIE_2D}, this paper considers a restricted set of boundary conditions defined as the intersection of lifted 1D domains---e.g. $\mcl{D}:=\mcl{D}_x \cap \mcl{D}_y$ where $\mcl{D}_y:=\{\mbf{u} \mid A_y\mbf{u}(x,0)+B_y\mbf{u}(x,1)=0\}$ and  $\mcl{D}_x:=\{\mbf{u} \mid A_x\mbf{u}(0,y)+B_x\mbf{u}(1,y)=0\}$. 
This approach includes many commonly used boundary conditions, and has the additional advantages that the parameters defining the boundary conditions may be expressed in a more intuitive manner; and it avoids the need for inversion of multivariate operators.
Moreover, using this approach, consistency of the boundary conditions may be readily expressed as a testable constraint on the parameters, e.g. $A_{x},A_{y},B_{x}$ and $B_{y}$, as will be shown in Section~\ref{sec:Tmap}.
Assuming this constraint to be satisfied, the inverse to the spatial differential operator may then be constructed inductively as e.g. $(\partial_{x}\partial_{y})^{-1}=(\partial_{x})^{-1}(\partial_{y})^{-1}$---reducing the need for complicated multivariate notation and greatly simplifying the construction of the PIE representation.

The construction of the PIE representation for a class of linear, $N$D PDEs is presented in Section~\ref{sec:Tmap}. In Section~\ref{sec:PIs}, induction is then used to define a class of multivariate PI operators, which parameterize the PIE representation of multivariate PDEs. 
Using these PI operators, linear PI operator inequalities for e.g. stability analysis and optimal control of univariate PDEs may then be generalized to the multivariate setting, and we present such an operator inequality for stability analysis of multivariate PDEs in Section~\ref{sec:stability}. Finally, we show how this operator inequality can be solved using semidefinite programming, by parameterizing a class of positive semidefinite multivariate PI operators by positive semidefinite matrices. In Section~\ref{sec:Examples}, we use this approach to verify stability of 2D heat, wave, and plate equations. A more detailed overview of the organization of the paper is provided in Section~\ref{sec:outline}.

\section{Notation}

Let $\N$ denote the natural numbers and $\N_{0}:=\N\cup\{0\}$. For $i,j\in\N_{0}$, we denote a set of indices as $\{i:j\}:=\{i,i+1,\hdots,j\}$. 
For $N\in\N$ and multi-indices $\alpha,\beta\in\N_{0}^{N}$, we write $\alpha\leq \beta$ if $\alpha_{i}\leq \beta_{i}$ for all $i\in\{1:n\}$. 
We denote $\vec{0}:=(0,\hdots,0)\in\N^{N}$, and $\vec{2}:=(2,\hdots,2)\in\N^{N}$.

Define $\Omega:=\prod_{i=1}^{N}[a_{i},b_{i}]:=[a_{1},b_{1}]\times\cdots\times[a_{N},b_{N}]$ for $[a_{i},b_{i}]\subset\R$.
Denote by $\R^{n}[s]$ the space of $\R^{n}$-valued polynomials in variables $s$. Let $L_{2}^{n}[\Omega]$ denote the Hilbert space of real-valued, square-integrable functions on $\Omega$, with standard inner product 
$\ip{\mbf{u}}{\mbf{v}}_{L_2}\!:=\int_{\Omega}\mbf{u}(s)^T\mbf{v}(s)ds$. 
For suitably differentiable $\mbf{u}\in L_2^{n}[\Omega]$, define
\begin{equation*}
	\partial^{k}_{z}\mbf{u}:=\frac{\partial^{k}}{\partial z^{k}}\mbf{u},	\qquad
	D^{\alpha}\mbf{u}:=\partial_{s_{1}}^{\alpha_{1}}\cdots\partial_{s_{N}}^{\alpha_{N}}\mbf{u}, 
\end{equation*}
where $k \in \N_{0}$ and $\alpha \in \N^{N}_{0}$.
For sufficiently regular $\mbf{u}\in L_{2}^{n}[\Omega]$, we denote limit values in one argument as $\mbf{u}(\theta_{i}):=\mbf{u}(s)|_{s_{i}=\theta_{i}}:=\mbf{u}(s_{1},\hdots,s_{i-1},\theta_{i},s_{i+1},\hdots,s_{N})$.

Let $\mcl{L}(L_{2}^{n},L_{2}^{m})$ denote the space of bounded linear operators from $L_{2}^{n}$ to $L_{2}^{m}$, with induced operator norm $\norm{.}_{\tnf{op}}$, and let $\mcl{L}(L_{2}^{n}):=\mcl{L}(L_{2}^{n},L_{2}^{n})$.
For $\mbs{K}\in L_2^{m\times n}[\Omega]$, define the multiplier operator $\tnf{M}[\mbs{K}]\in\mcl{L}(L_{2}^{n}[\Omega],L_{2}^{m}[\Omega])$ by $(\text{M}[\mbs{K}]\mbf{u})(s):=\mbs{K}(s)\mbf{u}(s)$.
We write $I_{n}\in\R^{n\times n}$ for the $n\times n$ identity matrix, and $0_{m\times n}\in\R^{m\times n}$ for a matrix of all zeros. For $A\in\R^{m\times n}$, we let $[A]_{i,j}$ denote the element in row $i$ and column $j$ of $A$.

\subsection{Sobolev Space with Bounded Mixed Derivatives}\label{subsec:notation_sobolev}

Traditionally, solutions to PDEs in multiple spatial dimensions are allowed to lie in the Sobolev Hilbert spaces $W_{2}^{d}$---implying $D^{\alpha}\mbf{u}(t)\in L_2$ for all $\alpha\in\N_{0}^{N}$ such that $\norm{\alpha}_1:=\sum_{i=1}^{N}\alpha_{i}\leq d$. The disadvantage of this approach is that solutions must be defined in the weak sense and that the boundary conditions need to be defined using trace operators. 
Recently, however, there has been a growing interest in multivariate PDE solutions which are required to exist in more restrictive spaces, requiring  $D^{\alpha}\mbf{u}(t)\in L_2$ for all $\alpha$ such that $\norm{\alpha}_{\infty}:=\max_{i=1}^{N}\alpha_{i}\leq d$. 
More generally, for $\delta\in\N_{0}^{N}$, we define the Sobolev space with dominating mixed smoothness of order $\delta$ on $\Omega$ as
\begin{equation*}
	\Resize{\linewidth}{S_{2}^{\delta,n}[\Omega]\!:=\!\bl\{\mbf{u}\in L_2^{n}[\Omega]\mid\! D^{\alpha}\mbf{u}\in L_2^{n}[\Omega],~\forall \alpha\in\N_{0}^{N}\!:\alpha\leq\delta\br\}.}
\end{equation*}
Spaces of this form have the advantage that if $\delta_{i}>0$ for all $i$, then $S_{2}^{\delta}$ can be embedded in a suitable space of H\"older continuous functions~\cite{abdulla2023SobolevEmbedding}---implying weak and classical solutions are equivalent and that boundary values are well-defined without the need for trace operators. 
In particular, for every $\alpha\leq\delta$ and $i\in\{1:N\}$, if $\alpha_{i}<\delta_{i}$, then $D^{\alpha}\mbf{u}|_{s_{i}=a_{i}}$ and $D^{\alpha}\mbf{u}|_{s_{i}=b_{i}}$ are well-defined for any $\mbf{u}\in S_{2}^{\delta}[\Omega]$.
In addition, using $S_{2}^{\delta}$ has the advantage that we may decompose
$S_{2}^{\delta}[\Omega]=\bigcap_{i=1}^{N}S_{2}^{\delta_{i}\cdot\tnf{e}_{i}}[\Omega]$,
 where $\tnf{e}_{i}\in\R^{N}$ is the $i$th standard Euclidean basis vector. 
Based on this decomposition, we will also consider PDE domains of the form $\mcl{D}:=\bigcap_{i=1}^{N}\mcl{D}_{i}\subseteq S_{2}^{\delta}$, where each $\mcl{D}_{i}\subseteq S_{2}^{\delta_{i}\cdot\tnf{e}_{i}}[\Omega]$ is a lifted univariate domain. Specifically, for $\Omega:=\prod_{i=1}^{N}[a_{i},b_{i}]$ and a univariate domain, $\mcl{D}\! \subseteq\! S_{2}^{d,n}[a_{i},\!b_{i}]$, we define the $i^{th}$ lifting of that domain as
\begin{align*}
	\hat{\mcl{D}}[i]&:=\bl\{\mbf u \in S_2^{d\cdot\tnf{e}_{i},n}[\Omega]\mid \forall s_j \in [a_{j},b_{j}],\; j \neq i \;,	\\[-0.2em]
	&\hspace*{1.5cm}\mbf u(s_1,...,s_{i-1},\,\bullet\,,s_{i+1},...,s_{N}) \in \mcl D\br\},
\end{align*}
where for $\mbf{u} \in S_2^{d\cdot\tnf{e}_{i},n}[\Omega]$ and $s_j \in [a_{j},b_{j}]$ for $j \neq i$, $\mbf{v}=\mbf{u}(s_1,\hdots,s_{i-1},\,\bullet\,,s_{i+1},\hdots,s_{N})$ means that $\mbf{v}(s_i)=\mbf{u}(s)$ for all $s_i \in [a_{i},b_{i}]$. Then, for $\mcl D_i\subseteq S_{2}^{\delta_{i},n}[a_{i},b_{i}]$, through some abuse of notation, we let $\bigcap_{i=1}^{N}\mcl{D}_{i}:=\bigcap_{i=1}^{N}\hat{\mcl{D}}_{i}[i]$.
\begin{example}\label{ex:BCs_DN_intro}
	Consider the following 2D PDE domain,
	\begin{equation*}
		\mcl{D}:=\bbbl\{\mbf{u}\in S_{2}^{(2,2)}[[0,1]^2]\,\bbr|\ \lmat{\mbf{u}(0,y)=\mbf{u}(1,y)=0\\[-0.4em]\mbf{u}(x,0)=\mbf{u}_{y}(x,1)=0}\bbbr\}.
	\end{equation*}
	Then $\mcl{D}=\hat{\mcl{D}}_{1}[1]\cap\hat{\mcl{D}}_{2}[2]$, where
	\begin{align*}
		&\mcl{D}_{1}:=\{\mbf{u}\in S_{2}^{2}[0,1]\mid \mbf{u}(0)=\mbf{u}(1)=0\},	\\
& \mapsto \hat{\mcl{D}}_{1}[1]:=\{\mbf{u}\in S_{2}^{(2,0)}[[0,1]^2]\mid \mbf{u}(0,y)=\mbf{u}(1,y)=0\},\\
		&\mcl{D}_{2}:=\{\mbf{u}\in S_{2}^{2}[0,1]\mid \mbf{u}(0)=\mbf{u}_{y}(1)=0\},\\
		& \mapsto \hat{\mcl{D}}_{2}[2]:=\{\mbf{u}\in S_{2}^{(0,2)}[[0,1]^2]\mid \mbf{u}(x,0)=\mbf{u}_{y}(x,1)=0\}.
	\end{align*}%
\end{example}
%
%
%
%

\section{Problem Definition and Organization}\label{sec:outline}

Consider the problem of representation and stability analysis of a class of autonomous linear PDEs of the form
\begin{equation}\label{eq:PDE_standard_intro}
	\partial_{t}\mbf{u}(t,s)=\!\sum_{0\leq\alpha\leq\delta}\!\mbs{A}_{\alpha}(s)D^{\alpha}\mbf{u}(t,s),	\quad ~t\geq 0,~s\in\Omega,	\\[-0.4\baselineskip]	
\end{equation}
parameterized by $\mbs{A}_{\alpha}\in \R^{n\times n}[s]$, with $\Omega:=\prod_{i=1}^{N}[a_{i},b_{i}]$.
We suppose that the domain of the PDE may be decomposed as $\mbf{u}(t)\in \mcl{D}:=\bigcap_{i}\mcl{D}_{i}$, where for each $i$,
\begin{align}\label{eq:BCs_standard_intro}
	\mcl{D}_{i}:=\bbbl\{&\mbf{u}\in S_{2}^{\delta_{i}\cdot\tnf{e}_{i},n}[\Omega]~\bbbl|~ \forall j\in\{0:\delta_{i}-1\},\\[-0.6em]
	& \sum_{k=0}^{\delta_{i}-1}\bbl[B^{i}_{j,k}(\partial_{s_{i}}^{k}\mbf{u})|_{s_{i}=a_{i}} 
	+C^{i}_{j,k}(\partial_{s_{i}}^{k}\mbf{u})|_{s_{i}=b_{i}}\bbr]=0\bbbr\},\notag
\end{align}
with parameters $B^{i}_{j,k},C^{i}_{j,k}\in\R^{n\times n}$. 
Note that this representation assumes a hyperrectangular spatial geometry, $\Omega=\prod_{i=1}^{N}[a_{i},b_{i}]$, in order for the PDE domain $\mcl{D}$ to admit a decomposition into univariate domains $\mcl{D}_{i}$ along each interval $[a_{i},b_{i}]$. This decomposition will simplify the construction of multivariate PIE representations by allowing for $N$ inductive incorporations of univariate domains, as shown in Section~\ref{sec:Tmap}. Furthermore, by requiring the parameters $\mbs{A}_{\alpha}$ to be polynomial, the resulting PIE representation will be defined by polynomial parameters. This use of polynomial parameters will allow a stability test to be formulated as a semidefinite program, as shown in Section~\ref{sec:stability}. Although a PIE representation can be constructed for PDEs with non-polynomial parameters---thereby also supporting more complex geometries through the use of e.g. spherical coordinates---this complicates the formulation of an SDP-based stability test, and hence will not be addressed.

Despite the aforementioned limitations, the proposed class of PDEs and boundary conditions is quite general, including e.g., 2nd-order, 2D PDEs of the form\footnote{We may convert to the form in Eqn.~\eqref{eq:PDE_standard_intro} using $\mbs{A}_{(0,0)}=A_{0}$, $[\mbs{A}_{(1,0)},\mbs{A}_{(0,1)}]=A_{1}$ and $[\mbs{A}_{(2,0)},\mbs{A}_{(0,2)}]=A_{2}$.}
\begin{equation*}
	\mbf{u}_t=A_{0}\mbf{u} +A_{1}\scalemath{0.9}{\bmat{\mbf{u}_{x}\\\mbf{u}_{y}}} +A_{2}\scalemath{0.9}{\bmat{\mbf{u}_{xx}\\\mbf{u}_{yy}}},\quad \mbf{u}\in\mcl{D}=\mcl{D}_{1}\cap\mcl{D}_{2}, 
\end{equation*}
with boundary conditions parameterized as
\begin{equation*}
	\Resize{\linewidth}{
	\mcl{D}_{1}\!:=\!\bbbl\{\!\mbf u\;\bbbl|
	B_{x}\!\smallbmat{\mbf{u}(0,y)\\\mbf{u}_{x}(0,y) \\ \mbf{u}(1,y)\\\mbf{u}_{x}(1,y)}\!\!=\!0\bbbr\},~
	\mcl{D}_{2}\!:=\!\bbbl\{\!\mbf u\;\bbbl|
	B_{y}\!\smallbmat{\mbf{u}(x,0)\\\mbf{u}_{y}(x,0) \\ \mbf{u}(x,1)\\\mbf{u}_{y}(x,1)}\!\!=\!0\bbbr\},}
\end{equation*}
This formulation includes heat and wave\footnote{Let $A_{0}=\smallbmat{0&0\\1&0}$ and $A_{2}=\smallbmat{0&1&0&1\\0&0&0&0}$ and use $\mbf{u}(t)=\smallbmat{u(t)\\ u_{t}(t)}$.} equations with Dirichlet, Robin, and mixed boundary conditions, such as the following reaction-diffusion system.
\begin{example}\label{ex:BCs_DN_0}	
	As an example of the proposed class of PDEs, consider the following 2D reaction-diffusion equation,
	\begin{align*}
		\mbf{u}_{t}(t,x,y)&\!=\!\mbf{u}_{xx}(t,x,y)\!+\!\mbf{u}_{yy}(t,x,y)\!+\!r\mbf{u}(t,x,y),	\\
		\mbf{u}(t,0,y)&=\mbf{u}(t,1,y)=0,	\quad
		\mbf{u}(t,x,0)=\mbf{u}_{y}(t,x,1)=0,
	\end{align*}
	where $r\in\R$. 
	We represent this system in the standardized form in~\eqref{eq:PDE_standard_intro} using $\delta=(2,2)$, $\mbs{A}_{(2,0)}=\mbs{A}_{(0,2)}=1$, and $\mbs{A}_{(2,2)}=r$, and with 
	$\mcl{D}=\bigcap_{i=1}^{2}\hat{\mcl{D}}_{i}[i]$ as in Example~\ref{ex:BCs_DN_intro}.
\end{example}

Considering the class of PDEs in~\eqref{eq:PDE_standard_intro}, our first goal is to construct an equivalent PIE representation, exploiting the decomposition of the PDE domain as $\mcl{D}=\bigcap_{i=1}^{N}\mcl{D}_{i}$.

\subsection{PIE Representation and Fundamental State}

Having proposed a general class of linear, $N$D PDEs,
we now seek to represent solutions to such PDEs using evolution equations of the form
\begin{equation}\label{eq:PIE1}
	\partial_{t}\mcl{T}\mbf{v}(t)=\mcl{A}\mbf{v}(t),\qquad \mbf{v}(t)\in L_{2}^{n}[\Omega],~t\geq 0, 
\end{equation}
where $\mcl{T}=\prod_{i=1}^{N} \mcl T_i$ and $\mcl{A}= \sum_{j=1}^{M}\prod_{i=1}^{N} \mcl{A}_{i,j}$, for $\mcl{T}_{i}$ and $\mcl{A}_{i,j}$ univariate integral operators of the form
\begin{align}\label{eq:PI1}
	&(\mcl{R} \mbf{v})(s):=\mbs{R}_{0}(s_i)\mbf{v}(s)\\[-0.1em]
	&\quad+\int_{a_{i}}^{s_i} \mbs{R}_{1}(s_i,\theta_i)\mbf{v}(\theta_i)d\theta_i	
	+\int_{s_i}^{b_{i}}\mbs{R}_{2}(s_i,\theta_i) \mbf{v}(\theta_i)d\theta_i,	\notag
\end{align}
for polynomials $\mbs{R}_{k}$.
Operators of this form belong to the algebra of Partial Integral (PI) operators defined formally in Defn.~\ref{defn:PI_ND}, and evolution equations of the form in~\eqref{eq:PIE1} are referred to as Partial Integral Equations (PIEs). 
Solutions to the PDE and PIE are then related through the maps $\mbf{v}(t)=D^{\delta}\mbf{u}(t)$ and $\mbf{u}(t)=\mcl{T}\mbf{v}(t)$.

\subsection{Equivalence of PDE and PIE -- Overview of Section~\ref{sec:Tmap}}\label{subsec:overview}

While the map from PDE to PIE state, $D^{\delta}: \mcl{D}\to L_2^{n}$, is specified by the order $\delta$ of the PDE in~\eqref{eq:PDE_standard_intro}, construction of the map from PIE to PDE state, $\mcl{T}:=(D^{\delta})^{-1}:L_2^{n} \to \mcl D$, is nontrivial. Indeed, inversion of $D^{\delta}$ on the domain $\mcl{D}$ is the essential step in constructing the PIE representation---establishing a bijection between $L_{2}^{n}$ and $\mcl{D}$. 
We perform this inversion in Section~\ref{sec:Tmap}, exploiting the decomposition of the PDE domain as $\mcl{D}:=\mcl{D}_{1}\cap\cdots\cap\mcl{D}_{N}$ to inductively define $(D^{\delta})^{-1}=(\partial_{s_{N}}^{\delta_{N}})^{-1}\cdots(\partial_{s_{1}}^{\delta_{1}})^{-1}$. In particular, in Subsection~\ref{subsec:Tmap:1D}, we first recall that if each set $\mcl{D}_{i}$ is defined by well-posed boundary conditions of the form in~\eqref{eq:BCs_standard_intro}, then $\partial_{s_{i}}^{\delta_{i}}:\mcl{D}_{i}\to L_{2}^{n}$ is invertible, and the inverse $\mcl{T}_{i}:L_{2}^{n}\to \mcl{D}_{i}$ is of the form in~\eqref{eq:PI1}, with $\mbs{R}_{0}=0$ and where the formulae for $\mbs{R}_1,\mbs{R}_2$ are given in~\cite{gohberg1990LinearOperators}. It is then relatively easy to show through induction that $\mcl{T}:=\mcl{T}_{N}\cdots\mcl{T}_{1}$ is both a left- and right-inverse of $D^{\delta}:=\partial_{s_{1}}^{\delta_{1}}\cdots\partial_{s_{N}}^{\delta_{N}}$---i.e. $D^{\delta}\mcl{T}=I_{n}:L_{2}^{n}\to L_{2}^{n}$ and $\mcl{T}D^{\delta}=I_{n}:\mcl{D}\to\mcl{D}$. The following example illustrates this construction for a 2D domain with mixed boundary conditions.
\begin{example}\label{ex:BCs_DN_1}
	To illustrate the construction of an inverse to $D^{\delta}:\mcl{D}\to L_{2}$, consider $\mcl{D}=\mcl{D}_{1}\cap\mcl{D}_{2}$ as in Example~\ref{ex:BCs_DN_intro}.
	Then, we can define $\mcl{T}_{1}:=(\partial_{x}^{2})^{-1}:L_{2}[0,1]\to\mcl{D}_{1}$ 
	and $\mcl{T}_{2}:=(\partial_{y}^{2})^{-1}:L_{2}[0,1]\to\mcl{D}_{2}$ as\footnote{the definitions of these operators follow from Cor.~\ref{cor:Tmap_1D}.}
	\begin{align*}
		(\mcl{T}_{1}\mbf{v})(x)
		&:=\int_{0}^{x}\theta(x-1)\mbf{v}(\theta)\, d\theta +\int_{x}^{1}x(\theta-1)\mbf{v}(\theta)\, d\theta,	\\
		(\mcl{T}_{2}\mbf{v})(y)
		&:=-\int_{0}^{y}\eta\mbf{v}(\eta)\, d\eta -\int_{y}^{1}y\mbf{v}(\eta)\, d\eta.
	\end{align*}
	It follows that $\mcl{T}D^{(2,2)}=D^{(2,2)}\mcl{T}=I$, where
	\begin{align*}
		&(\mcl{T}\mbf{v})(x,y)
		=\int_{0}^{x}\int_{0}^{y}\theta(1-x)\eta \mbf{v}(\theta,\eta) d\eta d\theta	\\ &+\!\int_{0}^{x}\!\int_{y}^{1}\!\!\theta(1\!-\!x)y \mbf{v}(\theta,\eta) d\eta d\theta
		+\!\int_{x}^{1}\!\int_{0}^{y}\!\!x(1\!-\!\theta)\eta \mbf{v}(\theta,\eta) d\eta d\theta	\\
		&\quad+\int_{x}^{1}\int_{y}^{1}x(1-\theta)y \mbf{v}(\theta,\eta) d\eta d\theta.
	\end{align*}
\end{example}

Having defined $\mcl{T}$ such that $\mcl{T}D^{\delta}:\mcl{D}\to\mcl{D}$, for any $\mbf{u}(t)$ satisfying the PDE in~\eqref{eq:PDE_standard_intro}, $\mbf{v}(t)=D^{\delta} \mbf{u}(t)$ will satisfy the PIE in~\eqref{eq:PIE1} (letting $\mcl{A}=\sum_{\alpha \leq \delta} \tnf{M}[\mbs{A}_{\alpha}]\, D^{\alpha}\, \mcl{T}$), so that stability of the PIE implies stability of the PDE.
However, to show that if $\mbf{v}(t)$ satisfies the PIE, then $\mbf{u}(t)=\mcl{T}\mbf{v}(t)$ satisfies the PDE, we must also show that $\mcl{T}:L_{2}^{n}\to \mcl{D}$---i.e. that $\mcl{T}\mbf{v}$ satisfies the boundary conditions defining $\mcl{D}$ for all $\mbf{v}\in L_{2}^{n}$.

While for scalar-valued states (i.e. $n=1$), the definition of $\mcl{T}$ ensures $\mcl{T}\mbf{v}\in \mcl{D}$ for all $\mbf{v}\in L_{2}$, the extension to vector-valued states requires an additional restriction on the domains $\mcl{D}_{i}$ defining $\mcl{D}$. 
Specifically, in Subsection~\ref{subsec:Tmap:consistency}, we show that even in the 2D case, $\mcl{D}=\mcl{D}_{1}\cap\mcl{D}_{2}$, there exist domains $\mcl D_1,\mcl D_2$ (each well-posed in the 1D sense) and $\mbf{v}\in L_{2}^{n}[[0,1]^2]$ such that $\mcl{T}\mbf{v}\notin\mcl{D}$. This problem manifests as inconsistency of $\mcl D_{i}$ at the corners of the spatial domain. For illustration, in Example~\ref{ex:BCs_DN_0}, replacing $\mbf{u}(0)=0$ by $\mbf{u}(0)=1$ in the definition of $\mcl{D}_{1}$, this would be incompatible with $\mcl{D}_{2}$, as $\mbf{u}\in\hat{\mcl{D}}_{1}[1]\cap\hat{\mcl{D}}_{2}[2]$ would then require both $\mbf{u}(0,0)=0$ and $\mbf{u}(0,0)=1$.
To avoid such incompatibility, we propose a necessary and sufficient condition on the parameters $B^{i}_{j,k}$ and $C^{i}_{j,k}$ which define the $\mcl{D}_i$ for these domains to be consistent (Defn.~\ref{defn:consistent_BCs}).
This consistency condition then guarantees that $\mcl{T}:L_{2}^{n}\to \mcl{D}$---implying that any solution to the PIE defines a solution to the PDE. 
This establishes that stability of the PIE is equivalent to stability of the PDE.

Having established the main result, in Section~\ref{sec:stability}, we then propose a simple Lyapunov condition for stability of the PIE (and hence PDE) in terms of a set of linear PI operator inequalities. These operator inequalities can be tested using semidefinite programming via software such as PIETOOLS and a brief description of this software is provided in Section~\ref{sec:Examples}, with application to several examples.

\section{An Equivalent PIE Representation of Linear Multivariate PDEs}\label{sec:Tmap}

In~\cite{shivakumar2022GPDE_Arxiv}, it was shown how for a univariate domain $\mcl{D}\subseteq  S_{2}^{d}[a,b]$ defined by a set of suitably admissible linear boundary conditions, we can define a left- and right-inverse $\mcl{T}:L_{2}[a,b]\to \mcl{D}$ to $\partial_{s}^{d}:\mcl{D}\to L_{2}[a,b]$.
Using this relation, an equivalent PIE representation can be derived for a broad class of linear, univariate PDEs.

As discussed in Subsection~\ref{subsec:overview}, our approach to representation of multivariate PDEs is based on inductive application of the univariate construction when the domain of the $N$D PDE may be decomposed as $\mcl{D}=\hat{\mcl{D}}_{1}[1]\cap\cdots\cap \hat{\mcl{D}}_{N}[N]$, where each $\mcl{D}_{i}\subseteq S_{2}^{\delta_{i}}[a_{i},b_{i}]$ is defined by boundary conditions only with respect to the $i$th spatial variable (as in~\eqref{eq:BCs_standard_intro}). When these domains are consistent, the map $\mcl T:L_{2}^{n}[\Omega]\to \mcl{D}$ may then be constructed as  $\mcl{T}:=\mcl{T}_{1}\cdots\mcl{T}_{N}$, where $\mcl{T}_{i}:=(\partial_{s_{i}}^{\delta_{i}})^{-1}:L_{2}^{n}[\Omega]\to \mcl{D}_{i}$. 
To illustrate, consider the special case of Dirichlet and Neumann boundary conditions, defining the following 1D domains:
\begin{align}\label{eq:PDE_dom_DN}
	\mcl{D}_{\tnf{D}}&:=\{\mbf{u} \in S_{2}^{2,n}[a_{i},b_{i}]\;\mid\; \mbf u(a_i)=\mbf u(b_i)=0\},		\\
	\mcl{D}_{\tnf{N}1}&:=\{\mbf{u} \in S_{2}^{2,n}[a_{i},b_{i}]\;\mid\; \mbf{u}(a_i)=\mbf{u}_{s}(b_i)=0\},	\notag\\
	\mcl{D}_{\tnf{N}2}&:=\{\mbf{u} \in S_{2}^{2,n}[a_{i},b_{i}]\;\mid\; \mbf{u}_{s}(a_i)=\mbf{u}(b_i)=0\}.		\notag 
\end{align}
Defining the associated $N$D domain $\mcl D=\bigcap_{i=1}^{N}\mcl{D}_i:=\bigcap_{i=1}^{N}\hat{\mcl{D}}_{i}[i]$ as in Subsection~\ref{subsec:notation_sobolev}, we then have the following result (a special case of the main result in Thm.~\ref{thm:PDE2PIE}).

\begin{cor}\label{cor:Tmap_DN}
	Let $\mcl{D} =\bigcap_{i=1}^{N} \mcl{D}_{i}$ where $\mcl{D}_{i}\in\{\mcl{D}_{\tnf{D}},\mcl{D}_{\tnf{N}1},\mcl{D}_{\tnf{N}2}\}$ for each $i\in\{1:N\}$, for $\mcl{D}_{\tnf{D}}$, $\mcl{D}_{\tnf{N}1}$, $\mcl{D}_{\tnf{N}2}$ as in~\eqref{eq:PDE_dom_DN}. Then $\mbf{u}=\mcl T D^{\vec{2}}\mbf{u}$ for all $\mbf{u} \in \mcl{D}$, and $\mbf{v}=D^{\vec{2}}\mcl{T} \mbf{v}$ and $\mcl{T} \mbf{v} \in \mcl{D}$ for all $\mbf{v} \in L_2^{n}$, where
	$(\mcl T\mbf{v})(s):=\int_{\Omega}\mbs{G}(s,\theta) \mbf{v}(\theta)d\theta$ with $\mbs{G}(s,\theta):=\prod_{i=1}^{N}\mbs{G}_{i}(s_{i},\theta_{i})$ for $\mbs{G}_{i}(x,y):=\bbl\{\!\scalemath{0.9}{\lmat{\mbs{h}_{i}(x,y)+(x-y),~ y\leq x,\\[-0.3em]\mbs{h}_{i}(x,y),\hspace*{1.63cm} y>x,}}$
	where
	\begin{equation*}
		\mbs{h}_{i}(x,y):=\begin{cases}
			\frac{-(x-a_{i})(b_{i}-y)}{b_{i}-a_{i}},	&	\mcl{D}_i={\mcl{D}}_{\tnf{D}},	\\
			a_{i}-x,	&	\mcl{D}_i={\mcl{D}}_{\tnf{N}1},	\\
			y-b_{i},	&	\mcl{D}_i={\mcl{D}}_{\tnf{N}2},
		\end{cases}
		\quad x,y\in[a_{i},b_{i}].
	\end{equation*}
	Moreover, $\mbf{u}(t)\in\mcl{D}$ satisfies the $N$D heat equation, $\partial_{t}\mbf{u}(t)=\nabla^2\mbf{u}(t)=\sum_{i=1}^{N}\partial_{s_{i}}^2\mbf{u}(t)$, if and only if $\mbf{v}(t)=D^{\vec{2}}\mbf{u}(t)\in L_{2}^{n}$ satisfies the PIE $\partial_{t} \mcl{T}\mbf{v}(t)=\mcl{A}\mbf{v}(t)$, where $\mcl{A}\mbf{v}(s):=\sum_{j=1}^{N}\prod_{i\neq j}(\int_{[a_{i},b_{i}]}\mbs{G}_{i}(s_{i},\theta_{i}))\mbf{v}(\theta)d\theta$.
\end{cor}

\textbf{Note:} For $\mcl{T}$ as in Cor.~\ref{cor:Tmap_DN}, we have $\mcl{T}=\mcl{T}_{1}\cdots\mcl{T}_{N}$, where $(\mcl{T}_i\mbf{v})(s):=\int_{a_{i}}^{b_{i}}\mbs{G}_{i}(s_i,\theta)\mbf u(s_1,...,s_{i-1},\theta,s_{i+1},...,s_N)d \theta$.

To establish results of this form, in Subsection~\ref{subsec:Tmap:1D}, we recall the construction of $\mcl{T}_i$ and the 1D PIE representation. Consistency of the domains, $\mcl{D}_{i}$, will be addressed in Subsection~\ref{subsec:Tmap:consistency}.
Finally, construction of $\mcl{T}=\mcl{T}_{1}\cdots\mcl{T}_{N}$ and the $N$D PIE representation is presented in Subsection~\ref{subsec:Tmap:PDE2PIE}.

\subsection{The PIE Representation of 1D PDEs}\label{subsec:Tmap:1D}

Consider a linear 1D PDE of the form
\begin{equation*}
	\partial_{t}\mbf{u}(t,s)=(\mscr{H}\mbf{u})(t,s),\quad \mbf{u}(t)\in\mcl{D},~s\in[a,b],
\end{equation*}
where $(\mscr{H}\mbf{u})(t,s):=\sum_{k=0}^{d} \mbs{H}_{k}(s-a)\partial_{s}^{k}\mbf{u}(t,s)$, and 
where $\mcl{D}$ is the domain of the PDE, defined by a suitable set of linear boundary conditions.
The crucial step in constructing the PIE representation for such a PDE is that of inverting the operator $\partial_s^d : \mcl{D} \to L_2^{n}[a,b]$. A class of domains for which this can be achieved has already been proposed in~\cite{shivakumar2022GPDE_Arxiv}, Defn.~9, and we define a similar (slightly less general) class as follows.
%
%
\begin{defn}\label{defn:admissible_BCs_1D}
	We say that $\mcl{D}$ of the form
	\begin{align}\label{eq:PDEdom_1D}
		\mcl{D}:=&\bbbl\{\mbf{u}\in S_{2}^{d,n}[a,b]~\bbbl|~ \forall j\in\{0:d-1\},\\[-0.5em] 
		&\hspace*{1.5cm}\sum_{k=0}^{d-1}\bbl[B_{j,k}(\partial_{s}^{k}\mbf{u})(a) +C_{j,k}(\partial_{s}^{k}\mbf{u})(b)\bbr]=0\bbbr\},	\notag
	\end{align}
	is \tbf{admissible} if the matrix $(H_{a} +H_{b}\mbs{Q}(b-a))$ is invertible, where $H_{a},H_{b}\in\R^{nd\times nd}$ are defined by $[H_a]_{j,k}=B_{j,k}$, $[H_b]_{j,k}=C_{j,k}$, and where
	\begin{equation}\label{eq:Q}
		\mbs{Q}(z):=\scalemath{1}{\bmat{I_{n}&z I_{n}&\cdots&\frac{z^{d-1}}{(d-1)!}I_{n}\\[-0.2em]0_{n}&I_{n}&\ddots&\frac{z^{d-2}}{(d-2)!}I_{n}\\[-0.2em]\vdots&\vdots&\ddots&\vdots\\[-0.4em]0_{n}&0_{n}&\cdots&I_{n}}}.
	\end{equation}
\end{defn}

The matrix $\mbs{Q}(b-a)$ in Defn.~\ref{defn:admissible_BCs_1D} may be used to express the upper-boundary values, $\partial_{s}^{i}\mbf{u}(b)$, in terms of the lower-boundary values, $\partial_{s}^{j}\mbf{u}(a)$, relating the two through $\partial_{s}^{d}\mbf{u}$. Admissibility of the domain then implies that sufficient boundary conditions are specified to uniquely define each of the boundary values in terms of $\partial_{s}^{d}\mbf{u}$---thereby ensuring invertibility of $\partial_{s}^{d}:\mcl{D}\to L_{2}^{n}[a,b]$. The following result, a corollary of Thm.~3.1 in Sec.~XIV.3 of~\cite{gohberg1990LinearOperators}, provides an explicit expression for the resulting inverse $\mcl{T}: L_{2}^{n}\to \mcl{D}$, as well as for the composition of $\mcl{T}$ with the differential operator $\mscr{H}$\footnote{the original theorem from~\cite{gohberg1990LinearOperators} presents an inverse to the more general operator $\tau:=\sum_{i=0}^{d-1}\mbs{a}_{i}(s)\partial_{s}^{i}+\partial_{s}^{d}$ for $\mbs{a}_{i}\in L_{2}$, allowing for the construction of different PIE representations. However, we fix $\mbs{a}_{i}=0$ in order for $\mcl{T}=\tau^{-1}$ to be defined by polynomials.}.

\begin{cor}\label{cor:gohberg_PIE}
For given $a,b\in\R$, $d\in\N_{0}$ and $\mbs{H}_{k}\in \R^{n\times n}[s]$, let $\mcl{D}\subseteq S_{2}^{d,n}[a,b]$ be admissible as per Defn.~\ref{defn:admissible_BCs_1D} with $H_a,H_b$ as defined therein. Let $\mscr{H}:\mcl{D}\to L_{2}^{n}$ be defined by $(\mscr{H}\mbf{u})(s):=\sum_{k=0}^{d} \mbs{H}_{k}(s-a)\partial_{s}^{k}\mbf{u}(s)$. 
Then for any $\mbf{u}\in \mcl{D}$, $\mcl{T}\partial_{s}^{d}\mbf{u}=\mbf{u}$ and $\mscr{H}\mbf{u}=\mcl{A}\partial_{s}^{d}\mbf{u}$, and for any $\mbf{v} \in L_2^{n}[a,b]$, $\mcl{T}\mbf{v} \in\mcl{D}$, $\partial_{s}^{d}\mcl{T}\mbf{v}=\mbf{v}$, and $\mscr{H}\mcl{T}\mbf{v}=\mcl{A}\mbf{v}$, where for $d\neq 0$,
\begin{align*}
	&(\mcl{T}\mbf{v})(s):=\int_{a}^{b}  \mbs{G}_T(s,\theta)\mbf{v}(\theta)\,  d\theta,	\\
	&(\mcl{A}\mbf{v})(s):=\mbs{H}_{d}(s-a)\mbf{v}(s)+\int_{a}^{b}  \mbs{G}_A(s,\theta)\mbf{v}(\theta)\,  d\theta,
\end{align*}
where
\begin{align*}
	\mbs{G}_T(s,\theta)&:=
	\scalemath{0.95}{\begin{cases}
		\mbf e_1(s-a)^T(I_{n}-K)\mbf e_d(b-\theta)&\theta \leq s,\\[-0.1em]
		-\mbf e_1(s-a)^TK\mbf e_d(b-\theta)& s < \theta,
	\end{cases}}	\\
	\mbs{G}_A(s,\theta)&:=
	\scalemath{0.95}{\begin{cases}
		\mbf c_A(s-a)^T(I_{n}-K)\mbf e_d(b-\theta)&\theta \leq s,\\[-0.1em]
		-\mbf c_A(s-a)^TK\mbf e_d(b-\theta)& s < \theta,
	\end{cases}}
\end{align*}
where $K:=(H_a+H_b \mbs{Q} (b-a))^{-1}H_b$ with $\mbs{Q}$ as in~\eqref{eq:Q}, and $\mbf{e}_{1}:=(\hat{\mbf{e}}_{1}\otimes I_{n})^T$, $\mbf{e}_{d}:=(\hat{\mbf{e}}_{d}\otimes I_{n})^T$, where
\begin{align*}
	&\hat{\mbf{e}}_1(z):=\!\bmat{1,~z,~...,~\frac{z^{d-1}}{(d-1)!}},	\quad
	\hat{\mbf{e}}_d(z):=\!\bmat{\frac{z^{d-1}}{(d-1)!},~...,~ z,~1},	\\
	&\mbf{c}_A(z)^T\!:=\!	
	\scalemath{0.95}{\bbl[\mat{\mbs{H}_0(z),~ \mbs{H}_0(z) z \!+\! \mbs{H}_1(z),~ ...,\, \sum_{k=0}^{d-1} \mbs{H}_{k}(z)\frac{z^{d-k-1}}{(d-k-1)!}}\bbr]}. 
\end{align*}
Additionally, if $d=0$, the identities hold with $\mcl{T}=I_{n}$ and $\mcl{A}=\tnf{M}[\mbs{A}_{0}]$.
\end{cor}

Cor.~\ref{cor:gohberg_PIE} shows that, for $\mcl{D}$ admissible, we can define the inverse to the differential operator $\partial_{s}^{d}:\mcl{D}\to L_{2}$ as an integral operator $\mcl{T}:L_{2}\to \mcl{D}$. Moreover, for a PDE of the form $\partial_{t}\mbf{u}(t)=\mscr{H}\mbf{u}(t)$, we can define an equivalent PIE representation as $\partial_{t}\mcl{T}\mbf{v}(t)=\mcl{A}\mbf{v}(t)$, using fundamental state $\mbf{v}(t)=\partial_{s}^{d}\mbf{u}(t)$. 
The following corollary adapts this result to the multivariate space, $\partial_{s_{i}}^{\delta_{i}}:\hat{\mcl{D}}[i]\to L_{2}[\Omega]$.
 
\begin{cor}\label{cor:Tmap_1D}
	For any $i\in\{1:N\}$, let $\mcl{D}_{i}:=\hat{\mcl{D}}[i]\subseteq S_{2}^{\delta_{i}\cdot \tnf{e}_i,n}[\Omega]$ for $\Omega=\prod_{i=1}^{N}[a_{i},b_{i}]$ and $\mcl{D}$ of the form in~\eqref{eq:PDEdom_1D} with $d=\delta_{i}$ and $[a,b]=[a_{i},b_{i}]$. Define associated $\mcl{T},\mcl{A}$, and $\mscr{H}$ as in Cor.~\ref{cor:gohberg_PIE}. If we define the extensions of these operators, $\mcl{T}_{i},\mcl{A}_{i}:L_2^{n}[\Omega]\to L_{2}^{n}[\Omega]$ and $\mscr{H}_{i}:\mcl{D}_{i}\to L_{2}^{n}[\Omega]$, as 
	\begin{align*}
		&(\mcl{T}_i\mbf{v})(s):=\bl(\mcl{T}\mbf{v}(s_1,...,s_{i-1},\bullet,s_{i+1},...,s_{N})\br)(s_{i}),	\\
		&(\mcl{A}_{i}\mbf{v})(s):=\bl(\mcl{A}\mbf {v}(s_1,...,s_{i-1},\bullet,s_{i+1},...,s_{N})\br)(s_i),	\\
		&(\mscr{H}_{i}\mbf{v})(s):=\bl(\mscr{H}\mbf {v}(s_1,...,s_{i-1},\bullet,s_{i+1},...,s_{N})\br)(s_{i}),
	\end{align*}
	then for all $\mbf{u}\in\mcl{D}_{i}$, $\mcl{T}_{i}\partial_{s_{i}}^{\delta_{i}}\mbf{u}=\mbf{u}$ and $\mcl{A}_{i}\partial_{s_{i}}^{\delta_{i}}\mbf{u}=\mscr{H}_{i}\mbf{u}$, and for all $\mbf{v}\!\in\! L_{2}^{n}[\Omega]$, $\mcl{T}_{i}\mbf{v}\!\in\!\mcl{D}_{i}$, $\partial_{s_{i}}^{\delta_{i}}\mcl{T}_{i}\mbf{v}\!=\!\mbf{v}$ and $\mscr{H}_{i}\mcl{T}_{i}\mbf{v}\!=\!\mcl{A}_{i}\mbf{v}$.
\end{cor}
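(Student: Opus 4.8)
The plan is to prove Corollary~\ref{cor:Tmap_1D} by reducing each claimed identity to its one-dimensional counterpart in Corollary~\ref{cor:gohberg_PIE}, applied fiber-by-fiber in the $s_i$ direction. Fix $i\in\{1:N\}$ and, for a point $\hat s:=(s_1,\ldots,s_{i-1},s_{i+1},\ldots,s_N)\in\prod_{j\neq i}[a_j,b_j]$, write $\mbf{w}_{\hat s}(\,\cdot\,):=\mbf{w}(s_1,\ldots,s_{i-1},\,\cdot\,,s_{i+1},\ldots,s_N)$ for the univariate $s_i$-slice of a function $\mbf{w}$ on $\Omega$. In this notation the definitions of the extended operators read $(\mcl{T}_i\mbf{v})_{\hat s}=\mcl{T}\mbf{v}_{\hat s}$, $(\mcl{A}_i\mbf{v})_{\hat s}=\mcl{A}\mbf{v}_{\hat s}$ and $(\mscr{H}_i\mbf{u})_{\hat s}=\mscr{H}\mbf{u}_{\hat s}$, and since $\partial_{s_i}^{\delta_i}$ differentiates only in the $i$th variable, $(\partial_{s_i}^{\delta_i}\mbf{w})_{\hat s}=\partial_s^{\delta_i}(\mbf{w}_{\hat s})$. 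The case $\delta_i=0$ is immediate from the last line of Corollary~\ref{cor:gohberg_PIE}, so assume $\delta_i>0$.

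For $\mbf{u}\in\mcl{D}_i=\hat{\mcl{D}}[i]$, the definition of the lifted domain gives $\mbf{u}_{\hat s}\in\mcl{D}$ for every $\hat s$, so Corollary~\ref{cor:gohberg_PIE} supplies $\mcl{T}(\partial_s^{\delta_i}\mbf{u}_{\hat s})=\mbf{u}_{\hat s}$ and $\mscr{H}\mbf{u}_{\hat s}=\mcl{A}(\partial_s^{\delta_i}\mbf{u}_{\hat s})$; rewriting both sides with the fiber identities above gives $\mcl{T}_i\partial_{s_i}^{\delta_i}\mbf{u}=\mbf{u}$ and $\mscr{H}_i\mbf{u}=\mcl{A}_i\partial_{s_i}^{\delta_i}\mbf{u}$ in $L_2^n[\Omega]$. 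Symmetrically, for $\mbf{v}\in L_2^n[\Omega]$, Tonelli's theorem gives $\mbf{v}_{\hat s}\in L_2^n[a_i,b_i]$ for a.e.\ $\hat s$, and Corollary~\ref{cor:gohberg_PIE} then yields $\mcl{T}\mbf{v}_{\hat s}\in\mcl{D}$, $\partial_s^{\delta_i}(\mcl{T}\mbf{v}_{\hat s})=\mbf{v}_{\hat s}$ and $\mscr{H}(\mcl{T}\mbf{v}_{\hat s})=\mcl{A}\mbf{v}_{\hat s}$ for a.e.\ $\hat s$; translating back, $\partial_{s_i}^{\delta_i}\mcl{T}_i\mbf{v}=\mbf{v}$ and $\mscr{H}_i\mcl{T}_i\mbf{v}=\mcl{A}_i\mbf{v}$ in $L_2^n[\Omega]$.

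It remains to verify $\mcl{T}_i\mbf{v}\in\mcl{D}_i$, i.e.\ $\mcl{T}_i\mbf{v}\in S_2^{\delta_i\cdot\tnf{e}_i,n}[\Omega]$ together with $(\mcl{T}_i\mbf{v})_{\hat s}\in\mcl{D}$; the latter is just $\mcl{T}\mbf{v}_{\hat s}\in\mcl{D}$, already obtained, and once the $S_2^{\delta_i\cdot\tnf{e}_i,n}$ regularity is known the boundary traces in~\eqref{eq:PDEdom_1D} are well-defined $L_2$-functions of $\hat s$ (by the embedding in Section~\ref{subsec:notation_sobolev}) and vanish. For the regularity, note that on each of the triangles $\theta\le s$ and $s<\theta$ the kernel $\mbs{G}_T(s,\theta)$ of Corollary~\ref{cor:gohberg_PIE} is polynomial, hence bounded on $[a_i,b_i]^2$; this makes $\mcl{T}_i\mbf{v}$ jointly measurable and gives a fiber-uniform bound $\norm{\mcl{T}\mbf{v}_{\hat s}}_{L_2[a_i,b_i]}\le c_i\norm{\mbf{v}_{\hat s}}_{L_2[a_i,b_i]}$, so that by Tonelli
\begin{equation*}
	\norm{\mcl{T}_i\mbf{v}}_{L_2[\Omega]}^2=\int_{\prod_{j\neq i}[a_j,b_j]}\norm{\mcl{T}\mbf{v}_{\hat s}}_{L_2[a_i,b_i]}^2\,d\hat s\le c_i^2\,\norm{\mbf{v}}_{L_2[\Omega]}^2<\infty,
\end{equation*}
while $\partial_{s_i}^{\delta_i}\mcl{T}_i\mbf{v}=\mbf{v}\in L_2^n[\Omega]$ from the previous paragraph; hence $\mcl{T}_i\mbf{v}\in S_2^{\delta_i\cdot\tnf{e}_i,n}[\Omega]$. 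The same fiber-uniform estimate, applied now to the piecewise-polynomial kernel $\mbs{G}_A$ and the polynomial multiplier $\tnf{M}[\mbs{H}_{\delta_i}]$, shows that $\mcl{A}_i$ and $\mscr{H}_i$ also map into $L_2^n[\Omega]$.

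The argument is conceptually routine; the only step needing care is the measure-theoretic bookkeeping — verifying that the a.e.-in-$\hat s$ identities of Corollary~\ref{cor:gohberg_PIE} assemble into genuine $L_2[\Omega]$ identities, and that $\mcl{T}_i\mbf{v}$ inherits joint measurability and the dominating-mixed-smoothness regularity — which is exactly what the fiber-uniform boundedness of the $1$D operators provides.
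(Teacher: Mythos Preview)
Your proof is correct and follows essentially the same strategy as the paper's own argument: reduce each identity to its one-dimensional counterpart in Cor.~\ref{cor:gohberg_PIE} by restricting to $s_i$-fibers, using the definition of the lifted domain $\hat{\mcl D}[i]$ for the $\mbf u\in\mcl D_i$ direction and Fubini/Tonelli for the a.e.-fiber statement in the $\mbf v\in L_2^n[\Omega]$ direction. The paper's proof is terser---it simply asserts that fiberwise membership $\mcl T\mbf v_{\hat s}\in\mcl D$ yields $\mcl T_i\mbf v\in\mcl D_i$ without spelling out the $S_2^{\delta_i\cdot\tnf e_i,n}$ regularity or the fiber-uniform boundedness you supply---so your additional measure-theoretic bookkeeping is more than the paper provides, not less.
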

\begin{proof}
For any $\mbf{u}\in\mcl{D}_{i}$ and $\mbf{v}\in L_{2}^{n}[\Omega]$, we have $\mbf{u}(s_1,...,s_{i-1},\bullet,s_{i+1},...,s_N) \in \mcl{D}$ and\\ $\mbf{v}(s_1,...,s_{i-1},\bullet,s_{i+1},...,s_{N})\in L_{2}^{n}[a_{i},b_{i}]$ for almost every $s_{j}\in[a_{j},b_{j}]$ for $j\neq i$. By Cor.~\ref{cor:gohberg_PIE}, it follows that
\begin{align*}
	&(\mcl{T}\partial_{s_{i}}^{\delta_{i}}\mbf{u}(s_1,...,s_{i-1},\bullet,s_{i+1},...,s_{N}))(s_{i})
	=\mbf{u}(s),	\\
	&(\mcl{A}\partial_{s_{i}}^{\delta_{i}}\mbf{u}(s_1,...,s_{i-1},\bullet,s_{i+1},...,s_{N}))(s_{i})
	=\mcl{H}\mbf{u}(s),		\\
	&\partial_{s_{i}}^{\delta_{i}}(\mcl{T}_{i}\mbf{v}(s_1,...,s_{i-1},\bullet,s_{i+1},...,s_{N}))(s_{i})
	=\mbf{v}(s),	\\
	&(\mcl{H}\mcl{T}\mbf{v}(s_1,...,s_{i-1},\bullet,s_{i+1},...,s_{N}))(s_{i})
	=\mcl{A}\mbf{v}(s), 
\end{align*}
and $(\mcl{T}\mbf{v})(s_1,...,s_{i-1},\bullet,s_{i+1},...,s_{N})\in\mcl{D}$. By definition of $\mcl{T}_{i}$, $\mcl{A}_{i}$, and $\mcl{H}_{i}$, then, $\mcl{T}_{i}\partial_{s_{i}}^{\delta_{i}}\mbf{u}=\mbf{u}$ and $\mcl{A}_{i}\partial_{s_{i}}^{\delta_{i}}\mbf{u}=\mscr{H}_{i}\mbf{u}$, as well as $\partial_{s_{i}}^{\delta_{i}}\mcl{T}_{i}\mbf{v}=\mbf{v}$, $\mscr{H}_{i}\mcl{T}_{i}\mbf{v}=\mcl{A}_{i}\mbf{v}$, and $\mcl{T}_{i}\mbf{v}\in\mcl{D}_{i}$. 
\end{proof}

By Cor.~\ref{cor:Tmap_1D}, for any admissible $\mcl{D}_{i}$, we can define an inverse to $\partial_{s_{i}}:\mcl{D}_{i}\to L_{2}^{n}$ as a lifted, univariate integral operator $\mcl{T}_{i}:L_{2}^{n}\to\mcl{D}_{i}$.
Using this result, the following corollary shows that $\mcl{T}:=\mcl{T}_{N}\cdots\mcl{T}_{1}$ defines an inverse of $D^\delta$ when the domain of $D^\delta$ is extended to the range of $\mcl T$. 

\begin{cor}\label{cor:LRinverse}
	For $\mcl{D}_{i}\subseteq S_{2}^{\delta_{i}\cdot\tnf{e}_{i},n}$ and associated $\mcl{T}_{i}:L_{2}^{n}\to\mcl{D}_{i}$ as in Cor.~\ref{cor:Tmap_1D}, let $\mcl{T}:=\mcl{T}_{N}\cdots\mcl{T}_{1}$ and $\mcl{D}:=\bigcap_{i}\mcl{D}_{i}$. Then $\mcl T D^\delta \mbf{u}=\mbf{u}$ and $D^{\delta}\mcl{T}\mbf{v}=\mbf{v}$ for any $\mbf{u}\in \mcl D$ and $\mbf{v}\in L_2^{n}$.\\[-2.0\baselineskip]
\end{cor}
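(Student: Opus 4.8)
The plan is to prove both composition identities by stripping off one univariate factor at a time, applying the one-dimensional inversion result Cor.~\ref{cor:Tmap_1D} at each step; the two directions differ only in difficulty. First note that since $\mcl{D}_{i}\subseteq S_{2}^{\delta_{i}\cdot\tnf{e}_{i},n}[\Omega]$ for each $i$, the decomposition $S_{2}^{\delta,n}=\bigcap_{i}S_{2}^{\delta_{i}\cdot\tnf{e}_{i},n}$ of Subsec.~\ref{subsec:notation_sobolev} gives $\mcl{D}=\bigcap_{i}\mcl{D}_{i}\subseteq S_{2}^{\delta,n}[\Omega]$, so every $\mbf{u}\in\mcl{D}$ has $D^{\beta}\mbf{u}\in L_{2}^{n}[\Omega]$ for all $\beta\leq\delta$, with well-defined traces $(D^{\beta}\mbf{u})|_{s_{i}=a_{i}},(D^{\beta}\mbf{u})|_{s_{i}=b_{i}}$ whenever $\beta_{i}<\delta_{i}$.

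For the right-inverse $D^{\delta}\mcl{T}\mbf{v}=\mbf{v}$ with $\mbf{v}\in L_{2}^{n}[\Omega]$, I would set $\mbf{v}_{0}:=\mbf{v}$ and $\mbf{v}_{m}:=\mcl{T}_{m}\mbf{v}_{m-1}$ for $m\in\{1:N\}$, so $\mcl{T}\mbf{v}=\mcl{T}_{N}\cdots\mcl{T}_{1}\mbf{v}=\mbf{v}_{N}$ and, by Cor.~\ref{cor:Tmap_1D}, each $\mbf{v}_{m}\in S_{2}^{\delta_{m}\cdot\tnf{e}_{m},n}[\Omega]\subseteq L_{2}^{n}[\Omega]$ with $\partial_{s_{m}}^{\delta_{m}}\mbf{v}_{m}=\mbf{v}_{m-1}$. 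Writing $D^{\delta}=\partial_{s_{1}}^{\delta_{1}}\cdots\partial_{s_{N}}^{\delta_{N}}$, the factor $\partial_{s_{N}}^{\delta_{N}}$ is adjacent to $\mcl{T}_{N}$ in $D^{\delta}\mcl{T}$, so $D^{\delta}\mcl{T}\mbf{v}=\partial_{s_{1}}^{\delta_{1}}\cdots\partial_{s_{N-1}}^{\delta_{N-1}}\bl(\partial_{s_{N}}^{\delta_{N}}\mcl{T}_{N}\mbf{v}_{N-1}\br)=\partial_{s_{1}}^{\delta_{1}}\cdots\partial_{s_{N-1}}^{\delta_{N-1}}\mbf{v}_{N-1}$, and iterating this reduction down to $\partial_{s_{1}}^{\delta_{1}}\mcl{T}_{1}\mbf{v}=\mbf{v}$ closes this case; no commutation of operators is required.

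The left-inverse $\mcl{T}D^{\delta}\mbf{u}=\mbf{u}$ with $\mbf{u}\in\mcl{D}$ uses the mirror-image peeling, now with $\mcl{T}_{1}$ adjacent to $\partial_{s_{1}}^{\delta_{1}}$, but collapsing $\mcl{T}_{1}\partial_{s_{1}}^{\delta_{1}}\mbf{w}$ to $\mbf{w}$ via Cor.~\ref{cor:Tmap_1D} now requires $\mbf{w}:=\partial_{s_{2}}^{\delta_{2}}\cdots\partial_{s_{N}}^{\delta_{N}}\mbf{u}\in\mcl{D}_{1}$, and similarly at each later step. The crux of the argument, and what I expect to be the main obstacle, is therefore the lemma: \emph{for $\mbf{u}\in\mcl{D}$, $i\in\{1:N\}$ and $\gamma\leq\delta$ with $\gamma_{i}=0$, one has $D^{\gamma}\mbf{u}\in\mcl{D}_{i}$.} I would prove it in two parts. (a) Regularity: since $\gamma_{i}=0$, $\gamma+k\tnf{e}_{i}\leq\delta$ for every $k\leq\delta_{i}$, so $\partial_{s_{i}}^{k}D^{\gamma}\mbf{u}=D^{\gamma+k\tnf{e}_{i}}\mbf{u}\in L_{2}^{n}[\Omega]$, hence $D^{\gamma}\mbf{u}\in S_{2}^{\delta_{i}\cdot\tnf{e}_{i},n}[\Omega]$ and the traces $(\partial_{s_{i}}^{k}D^{\gamma}\mbf{u})|_{s_{i}=a_{i}},(\partial_{s_{i}}^{k}D^{\gamma}\mbf{u})|_{s_{i}=b_{i}}$ are well-defined for $k<\delta_{i}$. (b) Boundary conditions: applying the transverse operator $D^{\gamma}$ (involving only the variables $s_{j}$, $j\neq i$) to each defining identity of $\mcl{D}_{i}$ in~\eqref{eq:BCs_standard_intro}, and using that $D^{\gamma}$ commutes with $\partial_{s_{i}}^{k}$ and with the restriction maps $|_{s_{i}=a_{i}},|_{s_{i}=b_{i}}$ (legitimate by (a)) while the matrices $B^{i}_{j,k},C^{i}_{j,k}$ are constant, shows that $D^{\gamma}\mbf{u}$ satisfies the same identities; with (a), this gives $D^{\gamma}\mbf{u}\in\mcl{D}_{i}$.

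Granting the lemma, I would conclude as follows. Set $\mbf{u}_{N}:=\mbf{u}$ and $\mbf{u}_{m-1}:=\partial_{s_{m}}^{\delta_{m}}\mbf{u}_{m}$ for $m=N,N-1,\ldots,1$, so that $\mbf{u}_{m}=D^{\gamma}\mbf{u}$ with $\gamma=\sum_{j=m+1}^{N}\delta_{j}\tnf{e}_{j}$ — whose $m$-th component is zero and which satisfies $\gamma\leq\delta$ — and $\mbf{u}_{0}=D^{\delta}\mbf{u}$. By the lemma (and $\mbf{u}\in\mcl{D}\subseteq\mcl{D}_{N}$ for the case $m=N$), $\mbf{u}_{m}\in\mcl{D}_{m}$ for all $m\in\{1:N\}$, so Cor.~\ref{cor:Tmap_1D} gives $\mcl{T}_{m}\partial_{s_{m}}^{\delta_{m}}\mbf{u}_{m}=\mbf{u}_{m}$; therefore $\mcl{T}D^{\delta}\mbf{u}=\mcl{T}_{N}\cdots\mcl{T}_{1}\mbf{u}_{0}=\mcl{T}_{N}\cdots\mcl{T}_{2}\mbf{u}_{1}=\cdots=\mcl{T}_{N}\mbf{u}_{N-1}=\mbf{u}_{N}=\mbf{u}$. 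The only delicate points are those in part (a) of the lemma — that the mixed-smoothness space $S_{2}^{\delta}$ is large enough for all the traces and derivative commutations used in (b) to be valid — which is precisely the advantage of working with $S_{2}^{\delta}$ rather than the classical Sobolev space $W_{2}^{d}$.
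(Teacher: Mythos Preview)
Your proposal is correct and follows essentially the same approach as the paper: the paper's Cor.~\ref{cor:LRinverse_appx} peels off one univariate factor at a time in each direction exactly as you do, and your key lemma (that $D^{\gamma}\mbf{u}\in\mcl{D}_{i}$ when $\gamma_{i}=0$) is a mild generalization of the paper's Lem.~\ref{lem:PDEdom_diff_appx}, proved by the same commutation-of-transverse-derivatives-through-boundary-conditions argument you outline. The paper's version states the lemma only for ``tail'' derivatives $\partial_{s_{j}}^{\alpha_{j}}\cdots\partial_{s_{N}}^{\alpha_{N}}$ and concludes membership in the full intersection $\mcl{D}_{1}\cap\cdots\cap\mcl{D}_{j-1}$, but for the application here the two formulations are equivalent.
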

\begin{pf}
	The result follows from the fact that 
	$\mcl{T}_{i}$ defines both a left- and right-inverse to $\partial_{s_{i}}^{\delta_{i}}:\mcl{D}_{i}\to L_{2}^{n}$. 
	 A formal proof is given in Cor.~\ref{cor:LRinverse_appx} in Appx.~\ref{appx:proofs}.\hfill $\blacksquare$
\end{pf}

By Cor.~\ref{cor:LRinverse}, $\mcl{T}$ is the inverse of $D^{\delta}:\mcl{D}\to L_{2}^{n}$, if and only if the image of $\mcl{T}$ is $\mcl{D}$.
In the following subsection, we prove a necessary and sufficient condition for $\tnf{Im}(\mcl{T})=\mcl{D}$ to hold as \textit{consistency} of the domains $\mcl{D}_{i}$.

\subsection{Consistency of PDE Domains}\label{subsec:Tmap:consistency}

In this subsection, we see that if $\mcl{D}_{i}$ are admissible in the sense of Defn.~\ref{defn:admissible_BCs_1D}, and $\mcl{T}_i$ are as defined in Cor.~\ref{cor:Tmap_1D}, then we may define a necessary and sufficient condition for invertibility of $D^\delta := \prod_i \partial_{s_i}^{\delta_i}$ on $\mcl D = \bigcap_i \mcl{D}_i$ and show that such an inverse may be constructed as $\mcl T:=\prod_i \mcl{T}_i$.
Since we have already shown in Cor.~\ref{cor:LRinverse} that this $\mcl T$ is a left- and right-inverse of $D^\delta$ on the image of $\mcl{T}$, the only challenge is now to show that $\tnf{Im}(\mcl{T})=\mcl{D}$ or, equivalently, that $D^{\delta}:\mcl{D} \rightarrow L_2$ is surjective. To illustrate this challenge, the following example presents a 2D case where $D^{\delta}:\mcl{D}=\mcl{D}_{1}\cap\mcl{D}_{2} \rightarrow L_2$ is not surjective, as a result of ``inconsistency'' of the domains $\mcl{D}_i$.

\begin{example}\label{ex:inconsistent_BCs}
To illustrate the problem of inconsistent PDE domains, consider the domain $\mcl{D}=\mcl{D}_{1}\cap \mcl{D}_{2}$, where
\begin{align*}
	\mcl{D}_{1}&:=\bbbl\{\bbbl[\scalemath{0.9}{\mat{\mbf{u}_{1}\\[-0.2em]\mbf{u}_{2}}}\bbbr]\in S_{2}^{(1,0),2}[[0,1]^2]\:\bbl|\: \scalemath{0.9}{\begin{array}{l}\mbf{u}_{1}(0,y)=0\\[-0.2em]\mbf{u}_{2}(0,y)=\mbf{u}_{1}(1,y)\end{array}}\bbbr\},		\\[-0.2em]
	\mcl{D}_{2}&:=\bbbl\{\bbbl[\scalemath{0.9}{\mat{\mbf{u}_{1}\\[-0.2em]\mbf{u}_{2}}}\bbbr]\in S_{2}^{(0,1),2}[[0,1]^2]\:\bbl|\: \scalemath{0.9}{\begin{array}{l}\mbf{u}_{1}(x,0)=\mbf{u}_{2}(x,1)\\[-0.2em]\mbf{u}_{2}(x,0)=0\end{array}}\bbbr\}.
\end{align*}
Both of these domains are admissible per Defn.~\ref{defn:admissible_BCs_1D}, and by Cor.~\ref{cor:Tmap_1D}, we have $\mbf{u}=\mcl{T}_{1}\partial_{x}\mbf{u}=\mcl{T}_{2}\partial_{y}\mbf{u}$ for all $\mbf{u}\in \mcl{D}$, where
\begin{align*}
	\bl(\mcl{T}_{1}\mbf{v}\br)(x,y)&:=\int_{0}^{x}\bbbl[\scalemath{0.9}{\mat{1&0\\[-0.2em] 1&1}}\bbbr]\mbf{v}(\theta,y)d\theta +\int_{x}^{1}\bbbl[\scalemath{0.9}{\mat{0&0\\[-0.2em] 1&0}}\bbbr]\mbf{v}(\theta,y)d\theta,\\
	\bl(\mcl{T}_{2}\mbf{v}\br)(x,y)&:=\int_{0}^{y}\bbbl[\scalemath{0.9}{\mat{1&1\\[-0.2em] 0&1}}\bbbr]\mbf{v}(x,\eta)d\eta +\int_{y}^{1}\bbbl[\scalemath{0.9}{\mat{0&1\\[-0.2em] 0&0}}\bbbr]\mbf{v}(x,\eta)d\eta.
\end{align*}
Defining $\mcl{T}:=\mcl{T}_{2}\mcl{T}_{1}$, by Cor.~\ref{cor:LRinverse}, we then have $\partial_{x}\partial_{y}\mcl{T}\mbf v=\mbf{v}$ and $\mcl{T}\partial_{x}\partial_{y}\mbf{u}=\mbf{u}$ for all $\mbf{v}\in L_{2}^{2}[[0,1]^2]$ and $\mbf{u}\in\mcl{D}$. 
However, if we define $\mbf{v}_1=\mbf{v}_{2}=1$, then $\mbf{v} \in L_2^{2}[[0,1]^2]$, and $\mbf{u}=\mcl{T}\mbf{v}$ implies $\mbf{u}_1=1+x+xy$ and $\mbf u_2=y+xy$. Clearly, $\partial_{x}\partial_{y} \mcl{T} \mbf{v}=\mbf{v}$ and $\mcl{T}\partial_{x}\partial_{y}\mbf{u}=\mbf{u}$, but $\mbf{u}=\mcl{T}\mbf{v}\notin\mcl{D}$ since $\mbf{u}_{1}(0,y)\neq 0$. The problem, here, is that the domains $\mcl{D}_{1}$ and $\mcl{D}_{2}$ are not consistent. In particular, any $\mbf{u}\in\mcl{D}_{1}$ must satisfy $\mbf{u}_{1}(0,0)=0$, whereas any $\mbf{u}\in\mcl{D}_{2}$ must satisfy $\mbf{u}_{1}(0,0)=\mbf{u}_{2}(0,1)$. Although it is possible for both of these constraints to be satisfied simultaneously, this does impose another constraint, $\mbf{u}_{2}(0,1)=0$, which is not implied by either $\mbf{u}\in\mcl{D}_{1}$ or $\mbf{u}\in\mcl{D}_{2}$ individually. Similarly, any $\mbf{u}\in\mcl{D}$ must also satisfy $\mbf{u}_{2}(1,1)=0$, as well as $\mbf{u}_{1}(1,0)=\mbf{u}_{1}(1,1)=0$---all constraints that need not be satisfied for $\mbf{u}\in \mcl{D}_{1}$ or $\mbf{u}\in \mcl{D}_{2}$. 
As a result of these auxiliary constraints, any $\mbf{u}\in \mcl{D}$ must satisfy $\int_{0}^{1}\int_{0}^{1}\partial_{x}\partial_{y}\mbf{u}(x,y)dy dx =\mbf{u}(1,1)-\mbf{u}(0,1)-\mbf{u}(1,0)+\mbf{u}(0,0)= 0$, meaning that $\partial_{x}\partial_{y}:\mcl{D}\to L_{2}^{2}[[0,1]^2]$ is not surjective onto $L_{2}$, and thus not right-invertible.
\end{example}

This example illustrates a 2D case where the domains $\mcl{D}_i$ impose different constraints at the corners of the spatial domain. To avoid such inconsistencies, 
we propose the following constraint on the domains $\mcl D_i$, in terms of the corresponding parameters $B^{i}_{j,k},C^{i}_{j,k}$.

\begin{defn}[Consistent Domains]\label{defn:consistent_BCs}
	For each $i\in\{1:N\}$, let $\{[a_{i},b_{i}],\delta_{i},B^{i}_{j,k},C^{i}_{j,k}\}$ define $\mcl{D}_{i}:=\hat{\mcl{D}}[i]$, where $\mcl{D}\subseteq S_{2}^{\delta_{i},n}[a_{i},b_{i}]$ is of the form in~\eqref{eq:PDEdom_1D} and admissible as per Defn.~\ref{defn:admissible_BCs_1D}. Define $[H^{i}_a]_{j,k}:=B^{i}_{j,k}$, $[H^{i}_b]_{j,k}:=C^{i}_{j,k}$ and associated $K^{i}:=(H_{a}^{i}+H_{b}^{i}\mbs{Q}(b_{i}-a_{i}))^{-1}H_{b}^{i}\in\R^{n \delta_{i}\times n \delta_{i}}$ for $\mbs{Q}$ as in Defn.~\ref{defn:admissible_BCs_1D}, and decompose $K^{i}$ as
	\begin{align}\label{eq:M_decomp_2D} 
		~~K^{i}=\scalemath{0.95}{\bmat{K^{i}_{1,1}&\cdots&K^{i}_{1,\delta_{i}}\\[-0.2em]\vdots&\ddots&\vdots\\[-0.2em]K^{i}_{\delta_{i},1}&\cdots&K^{i}_{\delta_{i},\delta_{i}}}},\quad
		\mat{\text{where}~
		K^{i}_{k,\ell}\in\R^{n\times n},\\\qquad\;\forall k,\ell\in\{1:\delta_{i}\}.}
		\\[-1.3\baselineskip]\notag
	\end{align}
	We say the $\mcl{D}_{i}$ are \tbf{consistent} if $K^{i}_{k,p}K^{j}_{\ell,q}=K^{j}_{\ell,q}K^{i}_{k,p}$ for all $i,j\in\{1:N\}$, $k,p\in\{1:\delta_{i}\}$ and $\ell,q\in\{1:\delta_{j}\}$.\\[0.2em]
	\textbf{Note:} \textit{Commutability of the full matrices $K^{i}$ and $K^{j}$ is not necessary or sufficient for consistency, unless $\delta_{i}=\delta_{j}=1$.}
\end{defn}	

We now apply this consistency definition to Example~\ref{ex:inconsistent_BCs}.

\begin{example}\label{ex:inconsistent_2}
To illustrate the consistency condition from Defn.~\ref{defn:consistent_BCs}, consider again the 2D domain $\mcl{D}=\mcl{D}_{1}\cap\mcl{D}_{2}$ from Example~\ref{ex:inconsistent_BCs}. In this case, $\delta_{1}=\delta_{2}=1$, $n=2$, and $\mcl{D}_{1}$ is defined by $B^{1}=I_{2}$ and $C^{1}=\smallbmat{0&0\\1&0}$, and $\mcl{D}_{2}$ is defined by $B^{2}=I_{2}$ and $C^{2}=\smallbmat{0&1\\0&0}$. The corresponding matrices $K^{i}$ in Defn.~\ref{defn:consistent_BCs} are given by $K^{1}=\smallbmat{0&0\\1&0}$ and $K^{2}=\smallbmat{0&1\\0&0}$, which do not commute. Thus, $\mcl{D}_{1}$ and $\mcl{D}_{2}$ are not consistent.
\end{example}

Given this definition of consistency, the following lemma shows that $D^{\delta}:\bigcap_{i=1}^{N}\mcl{D}_{i}\to L_{2}^{n}$ is right-invertible only if the domains $\mcl{D}_i$ are consistent in the sense of Defn.~\ref{defn:consistent_BCs}. For simplicity, we consider only 2 spatial variables, extending this result to $N$D in Lem.~\ref{lem:RI}. 

\begin{lem}\label{lem:compatibility_Du(a,c)_2D}
	Let $\mcl{D}_{1}$, $\mcl{D}_{2}$ (of the form in Eqn.~\eqref{eq:PDEdom_1D} with $d=\delta_1$, $d=\delta_2$, respectively) be admissible as per Defn.~\ref{defn:admissible_BCs_1D}. 
	If there exists $\mcl{T}:L_{2}^{n}\to \mcl{D}_{1}\cap\mcl{D}_{2}$ such that $\partial_{x}^{\delta_{1}}\partial_{y}^{\delta_{2}}\mcl{T}\mbf{v}=\mbf{v}$ for all $\mbf{v}\in L_{2}^{n}$, then $\mcl{D}_{1}$ and $\mcl{D}_{2}$ are consistent.
\end{lem}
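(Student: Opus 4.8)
The plan is to deduce consistency from the mere existence of $\mcl{T}$ by first showing that $\mcl{T}$ is forced to equal \emph{both} of the ordered compositions $\mcl{T}_1\mcl{T}_2$ and $\mcl{T}_2\mcl{T}_1$ of the univariate inverse operators, so that these two operators coincide, and then reading off the commutation of their kernel matrices --- which is precisely the condition in Defn.~\ref{defn:consistent_BCs}.

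First I would fix $\mbf{v}\in L_2^n$ and set $\mbf{u}:=\mcl{T}\mbf{v}$, so that by hypothesis $\mbf{u}\in\mcl{D}_1\cap\mcl{D}_2$ and $\partial_x^{\delta_1}\partial_y^{\delta_2}\mbf{u}=\mbf{v}$. Since $\mbf{u}\in\mcl{D}_1$, Cor.~\ref{cor:Tmap_1D} gives $\mbf{u}=\mcl{T}_1\partial_x^{\delta_1}\mbf{u}$; and since $\partial_x^{\delta_1}\mbf{u}\in\mcl{D}_2$ --- which I would justify from $\mbf{u}\in S_2^{(\delta_1,\delta_2),n}$ (so $\partial_x^{\delta_1}\mbf{u}\in S_2^{\delta_2\cdot\tnf{e}_2,n}$) together with the fact that the linear boundary relations defining $\mcl{D}_2$ involve only the $y$-traces and are therefore preserved under $\partial_x^{\delta_1}$ --- Cor.~\ref{cor:Tmap_1D} also gives $\partial_x^{\delta_1}\mbf{u}=\mcl{T}_2\partial_y^{\delta_2}\partial_x^{\delta_1}\mbf{u}=\mcl{T}_2\mbf{v}$, whence $\mbf{u}=\mcl{T}_1\mcl{T}_2\mbf{v}$. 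Running the same argument with the roles of $x$ and $y$ exchanged (using $\mbf{u}\in\mcl{D}_2$ and $\partial_y^{\delta_2}\mbf{u}\in\mcl{D}_1$) gives $\mbf{u}=\mcl{T}_2\mcl{T}_1\mbf{v}$. As $\mbf{v}$ was arbitrary, $\mcl{T}_1\mcl{T}_2=\mcl{T}_2\mcl{T}_1$ on $L_2^n$. (Here I use freely that $\mcl{T}_1$ commutes with $\partial_y^{\delta_2}$ and $\mcl{T}_2$ with $\partial_x^{\delta_1}$, since each $\mcl{T}_i$ integrates only in $s_i$.)

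Second, I would translate this operator identity into a statement about the parameter matrices. By Cor.~\ref{cor:gohberg_PIE}, $\mcl{T}_i$ is a PI operator whose polynomial semi-separable kernel $\mbs{G}^i$ acts only in $s_i$ and whose coefficients, on the pieces $\{\theta_i\le s_i\}$ and $\{\theta_i> s_i\}$, are the $n\times n$ blocks $\delta_{kp}I_n-K^i_{k,p}$, respectively $-K^i_{k,p}$, of $I_n-K^i$, respectively $K^i$, with $K^i$ decomposed as in~\eqref{eq:M_decomp_2D}. Hence $\mcl{T}_1\mcl{T}_2$ and $\mcl{T}_2\mcl{T}_1$ have kernels $\mbs{G}^1(x,\theta)\mbs{G}^2(y,\eta)$ and $\mbs{G}^2(y,\eta)\mbs{G}^1(x,\theta)$; equality of these two (Hilbert--Schmidt) operators forces equality of the kernels a.e.\ on $[a_1,b_1]^2\times[a_2,b_2]^2$. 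Restricting to $\{\theta\le x,\ \eta\le y\}$, both kernels are polynomials in $(x-a_1,\,b_1-\theta,\,y-a_2,\,b_2-\eta)$ whose monomials $\tfrac{(x-a_1)^{k-1}}{(k-1)!}\tfrac{(b_1-\theta)^{\delta_1-p}}{(\delta_1-p)!}\tfrac{(y-a_2)^{\ell-1}}{(\ell-1)!}\tfrac{(b_2-\eta)^{\delta_2-q}}{(\delta_2-q)!}$ are linearly independent, so matching coefficients yields $(\delta_{kp}I_n-K^1_{k,p})(\delta_{\ell q}I_n-K^2_{\ell,q})=(\delta_{\ell q}I_n-K^2_{\ell,q})(\delta_{kp}I_n-K^1_{k,p})$, i.e.\ $K^1_{k,p}K^2_{\ell,q}=K^2_{\ell,q}K^1_{k,p}$, for all $k,p\in\{1:\delta_1\}$ and $\ell,q\in\{1:\delta_2\}$ --- which is exactly consistency of $\mcl{D}_1,\mcl{D}_2$ in the sense of Defn.~\ref{defn:consistent_BCs}.

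I expect the main obstacle to be the two trace/regularity justifications in the first step --- that $\partial_x^{\delta_1}\mbf{u}\in\mcl{D}_2$ and $\partial_y^{\delta_2}\mbf{u}\in\mcl{D}_1$ --- which rely on the dominating mixed smoothness $\mbf{u}\in S_2^{(\delta_1,\delta_2),n}$ (and its embedding into a space where the relevant lower-order traces are classical, cf.\ Sec.~\ref{subsec:notation_sobolev}) in order to differentiate the boundary identities defining $\mcl{D}_2$ (resp.\ $\mcl{D}_1$) in the orthogonal variable. A secondary, purely algebraic point is the coefficient-matching in the second step; alternatively, one can bypass the kernel computation by evaluating the mixed corner derivative $\partial_x^k\partial_y^\ell\mbf{u}(a_1,a_2)$ in the two possible orders of iterated boundary restriction (first along $\mcl{D}_1$ then $\mcl{D}_2$, and conversely), equating, and then using a biorthogonal choice $\mbf{v}(x,y)=\phi(x)\psi(y)w$ with $w\in\R^n$ arbitrary to isolate each pair of blocks --- the same commutation relation drops out.
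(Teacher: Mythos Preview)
Your proof is correct, but it takes a different route from the paper's own argument --- and, amusingly, the ``alternative'' you sketch at the end is essentially the route the paper chooses.

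The paper does \emph{not} first identify the abstract right-inverse $\mcl{T}$ with $\mcl{T}_1\mcl{T}_2$ and $\mcl{T}_2\mcl{T}_1$. Instead it evaluates the corner values $(\partial_x^{k-1}\partial_y^{\ell-1}\mbf{u})(a_1,a_2)$ in two ways: first by using $\mbf{u}\in\mcl{D}_2$ (to express the $y$-boundary data via $K^2$) followed by $\partial_y^{\delta_2}\mbf{u}\in\mcl{D}_1$ (to express the remaining $x$-boundary data via $K^1$), and then in the reverse order. Equating the two integral representations in terms of $\mbf{v}$ and matching polynomial coefficients yields $K^1_{k,p}K^2_{\ell,q}=K^2_{\ell,q}K^1_{k,p}$. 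The trace/regularity step you flag as the main obstacle is exactly the content of the paper's Lem.~\ref{lem:PDEdom_diff_appx}.

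Your approach is cleaner in one respect: it factors the implication through the operator identity $\mcl{T}_1\mcl{T}_2=\mcl{T}_2\mcl{T}_1$, which is precisely the characterization of consistency proved independently in Lem.~\ref{lem:M_commute_T}. In effect you prove ``right-invertible $\Rightarrow$ $\mcl{T}_1,\mcl{T}_2$ commute $\Rightarrow$ consistent'', and as a byproduct you establish that \emph{any} right-inverse must coincide with $\mcl{T}_1\mcl{T}_2$ --- a uniqueness statement the paper does not isolate. The paper's corner-value argument, by contrast, is more self-contained (it does not rely on the kernel equivalence of Lem.~\ref{lem:M_commute_T}) and makes the geometric interpretation --- inconsistency manifests at the corners of the hyper-rectangle --- more explicit.
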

\begin{proof}
	We provide an outline of the proof here, referring to Lem.~\ref{lem:compatibility_Du(a,c)_2D_appx} in Appx.~\ref{appx:proofs} for a full proof. 
	We will assume $\delta_{1}=\delta_{2}=d$ and $\Omega=[0,1]^2$.\\	
	\indent Suppose there exists an operator $\mcl{T}:L_{2}^{n}[\Omega]\to \mcl{D}_{1}\cap\mcl{D}_{2}$ such that $\partial_{x}^{d}\partial_{y}^{d}\mcl{T}\mbf{v}=\mbf{v}$ for all $\mbf{v}\in L_{2}^{n}[\Omega]$. For any $\mbf{v}\in L_{2}^{n}[\Omega]$, let $\mbf{u}=\mcl{T}\mbf{v}$. Then, $\mbf{v}=\partial_{x}^{d}\partial_{y}^{d}\mbf{u}$ and $\mbf{u}\in \mcl{D}_{1}\cap \mcl{D}_{2}$.
	Using this fact, we can show that for each $k,\ell\in\{1:d\}$, we can express the corner value $(\partial_{x}^{k-1}\partial_{y}^{\ell-1}\mbf{u})(0,0)$ in terms of $\mbf{v}$ in two distinct manners, as 
	\begin{align*}
		&\int_{0}^{1}\!\int_{0}^{1}\!\bl(K^{2}_{(\ell,:)}\mbf{e}_{d}(1-y)\br)\bl(K^{1}_{(k,:)} \mbf{e}_{d}(1-x)\br)\mbf{v}(x,y)\,dy\,dx	\\[-0.1em]
		&=\bl(\partial_{x}^{k-1}\partial_{y}^{\ell-1}\mbf{u}\br)(0,0)	\\[-0.1em]
		&=\!\int_{0}^{1}\!\int_{0}^{1}\!\bl(K^{1}_{(k,:)} \mbf{e}_{d}(1-x)\br)\bl(K^{2}_{(\ell,:)}\mbf{e}_{d}(1-y)\br)\mbf{v}(x,y)\,dy\,dx,
	\end{align*}
	where $\mbf{e}_{d}$ is as in Cor.~\ref{cor:gohberg_PIE} and $K^{i}_{(k,:)}:=\bl[K^{i}_{k,i},\cdots,K^{i}_{k,\delta_{i}}\br]$. 
	Using the fact that $\mbf{e}_{d}(z)$ is polynomial, it follows that we must have $K^{1}_{k,p}K^{2}_{\ell,q}=K^{2}_{\ell,q}K^{1}_{k,p}$ for all $k,p,\ell,q$.
\end{proof}

While Lem.~\ref{lem:compatibility_Du(a,c)_2D} only shows that consistency of the $\mcl{D}_{i}$ is necessary for $\partial_{s_{i}}^{\delta_{i}}\partial_{s_{j}}^{\delta_{j}}:\mcl{D}_{i}\cap\mcl{D}_{j}\to L_{2}^{n}$ to be right-invertible, this condition is also sufficient. 
To establish this result, we first show that consistency of the $\mcl{D}_i$ implies commutability of the $\mcl{T}_i$---i.e. $\mcl{T}_i\mcl{T}_j=\mcl{T}_j\mcl{T}_i$.

\begin{lem}\label{lem:M_commute_T}
	For each $i\in\{1:N\}$, let $\mcl{D}_{i}$ and associated $\mcl{T}_{i}:L_{2}^{n}\to\mcl{D}_{i}$ be as in Cor.~\ref{cor:Tmap_1D}. 
	Then, for any $i,j\in\{1:N\}$,   $\mcl{T}_{i}\mcl{T}_{j}\mbf{v}=\mcl{T}_{j}\mcl{T}_{i}\mbf{v}$ for all $\mbf{v}\in L_2$ if and only if $\mcl{D}_{i}$ and $\mcl{D}_{j}$ are consistent in the sense of Defn.~\ref{defn:consistent_BCs}.
\end{lem}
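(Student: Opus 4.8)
The plan is to reduce the operator identity $\mcl{T}_{i}\mcl{T}_{j}=\mcl{T}_{j}\mcl{T}_{i}$ on $L_{2}$ to a commutativity relation among the $n\times n$ blocks of the matrices $K^{i}$ and $K^{j}$, which is precisely the consistency condition of Defn.~\ref{defn:consistent_BCs}. We may assume $i\neq j$, since for $i=j$ both sides of the claimed equivalence hold trivially. By Cor.~\ref{cor:Tmap_1D}, $\mcl{T}_{i}$ acts only in the variable $s_{i}$, namely
\[
	(\mcl{T}_{i}\mbf{v})(s)=\int_{a_{i}}^{b_{i}}\mbs{G}^{i}(s_{i},\theta_{i})\,\mbf{v}(s_{1},\hdots,s_{i-1},\theta_{i},s_{i+1},\hdots,s_{N})\,d\theta_{i},
\]
where $\mbs{G}^{i}$ is the kernel $\mbs{G}_T$ of Cor.~\ref{cor:gohberg_PIE} instantiated with $d=\delta_{i}$ and $[a,b]=[a_{i},b_{i}]$; that is, $\mbs{G}^{i}(s_{i},\theta_{i})$ equals $\mbf{e}_{1}(s_{i}-a_{i})^{T}(I_{n}-K^{i})\mbf{e}_{d}(b_{i}-\theta_{i})$ on $\{\theta_{i}\leq s_{i}\}$ and $-\mbf{e}_{1}(s_{i}-a_{i})^{T}K^{i}\mbf{e}_{d}(b_{i}-\theta_{i})$ on $\{\theta_{i}>s_{i}\}$. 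Since $\mbs{G}^{i}$ is bounded and $\mbf{v}\in L_{2}$ on the bounded domain $\Omega$, Fubini's theorem shows that $\mcl{T}_{i}\mcl{T}_{j}$ and $\mcl{T}_{j}\mcl{T}_{i}$ are integral operators over $(\theta_{i},\theta_{j})$ whose kernels are $\mbs{G}^{i}(s_{i},\theta_{i})\mbs{G}^{j}(s_{j},\theta_{j})$ and $\mbs{G}^{j}(s_{j},\theta_{j})\mbs{G}^{i}(s_{i},\theta_{i})$, respectively.

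Next I would invoke the standard fact that two integral operators on $L_{2}$ over a finite-measure set with bounded kernels coincide if and only if their kernels agree almost everywhere (test against indicators of small boxes in $(\theta_{i},\theta_{j})$). Hence $\mcl{T}_{i}\mcl{T}_{j}=\mcl{T}_{j}\mcl{T}_{i}$ if and only if $\mbs{G}^{i}(s_{i},\theta_{i})$ and $\mbs{G}^{j}(s_{j},\theta_{j})$ commute as matrices for almost every $(s_{i},\theta_{i},s_{j},\theta_{j})$. On each of the four regions obtained by combining $\{\theta_{i}\leq s_{i}\}$ or $\{\theta_{i}>s_{i}\}$ with $\{\theta_{j}\leq s_{j}\}$ or $\{\theta_{j}>s_{j}\}$ -- each of which has nonempty interior -- both kernels are genuine polynomials in the four scalar variables, so, since a polynomial vanishing on a set of positive measure vanishes identically, the commutator $\mbs{G}^{i}\mbs{G}^{j}-\mbs{G}^{j}\mbs{G}^{i}$ vanishes a.e. if and only if it vanishes identically on each region.

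To finish, expand the kernels: on any of the four regions one has $\mbs{G}^{i}(s_{i},\theta_{i})=\sum_{k,\ell=1}^{\delta_{i}}c_{k,\ell}\,(s_{i}-a_{i})^{k-1}(b_{i}-\theta_{i})^{\delta_{i}-\ell}\,M^{i}_{k,\ell}$ with positive scalars $c_{k,\ell}$, where each coefficient $M^{i}_{k,\ell}$ is either the $(k,\ell)$ block of $I_{n}-K^{i}$ (on $\{\theta_{i}\leq s_{i}\}$) or the matrix $-K^{i}_{k,\ell}$ (on $\{\theta_{i}>s_{i}\}$), and similarly for $j$. The products $\mbs{G}^{i}\mbs{G}^{j}$ and $\mbs{G}^{j}\mbs{G}^{i}$ are then polynomials in $(s_{i},\theta_{i},s_{j},\theta_{j})$ with matrix coefficients $M^{i}_{k,\ell}M^{j}_{p,q}$ and $M^{j}_{p,q}M^{i}_{k,\ell}$; because the corresponding monomials lie in the disjoint variable groups $\{s_{i},\theta_{i}\}$ and $\{s_{j},\theta_{j}\}$ and are therefore linearly independent, the two polynomials agree if and only if $M^{i}_{k,\ell}M^{j}_{p,q}=M^{j}_{p,q}M^{i}_{k,\ell}$ for all indices and all four region choices. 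Since $I_{n}$ is central, each such identity is equivalent to $K^{i}_{k,\ell}K^{j}_{p,q}=K^{j}_{p,q}K^{i}_{k,\ell}$; ranging over $k,\ell\in\{1:\delta_{i}\}$ and $p,q\in\{1:\delta_{j}\}$ this is exactly consistency of $\mcl{D}_{i}$ and $\mcl{D}_{j}$ in the sense of Defn.~\ref{defn:consistent_BCs}. (For the ``only if'' direction the single region $\{\theta_{i}>s_{i}\}\cap\{\theta_{j}>s_{j}\}$ already suffices; the case $\delta_{i}=0$, where $\mcl{T}_{i}=I_{n}$, is immediate.)

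The only genuine obstacle is the bookkeeping in this last step: one must carry the piecewise structure of $\mbs{G}^{i}$ and $\mbs{G}^{j}$ through all four regions and verify that the monomial coefficients of the composed kernels are exactly products of the (shifted) blocks of $K^{i}$ with those of $K^{j}$, so that commutativity of the kernels is equivalent to the block-commutation relation of Defn.~\ref{defn:consistent_BCs} -- neither weaker nor stronger. Once the expansion is set up, the remainder combines only linear independence of monomials in disjoint variables with the centrality of $I_{n}$, and both directions of the equivalence are obtained at once.
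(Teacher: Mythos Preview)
Your argument is correct and follows the same path as the paper's proof (given in full as Lem.~\ref{lem:M_commute_T_appx} in the appendix): reduce commutativity of the operators to almost-everywhere commutativity of their kernels, then to commutativity of the matrix coefficients in a monomial expansion, which are exactly the blocks $K^{i}_{k,\ell}$. The one organizational difference is that the paper first decomposes $\mcl{T}_{i}=\mcl{Q}_{i}+\mcl{R}_{i}$, where $\mcl{R}_{i}$ is the Volterra term with scalar kernel $\frac{(s_{i}-\theta_{i})^{\delta_{i}-1}}{(\delta_{i}-1)!}I_{n}$; since $\mcl{R}_{i}$ and $\mcl{R}_{j}$ commute with each other and with $\mcl{Q}_{i},\mcl{Q}_{j}$ by inspection, the question collapses to $\mcl{Q}_{i}\mcl{Q}_{j}=\mcl{Q}_{j}\mcl{Q}_{i}$, whose kernel $\mbs{T}_{i}(s_{i},\theta_{i})=-\mbf{e}_{1}(s_{i}-a_{i})^{T}K^{i}\mbf{e}_{\delta_{i}}(b_{i}-\theta_{i})$ is a single polynomial on the whole square. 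This lets the paper avoid the four-region bookkeeping entirely and work with one global polynomial identity (their Lem.~\ref{lem:M_commute_Tparam_appx}). You instead carry the piecewise structure of $\mbs{G}^{i}$ through all four regions and dispose of the scalar contribution at the very end via centrality of $I_{n}$; this is equivalent, just with the ``scalar part commutes'' observation deferred from the operator level to the coefficient level. Your remark that the region $\{\theta_{i}>s_{i}\}\cap\{\theta_{j}>s_{j}\}$ alone suffices for the ``only if'' direction is exactly the content of the paper's reduction to $\mcl{Q}_{i},\mcl{Q}_{j}$.
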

\begin{pf}
A formal proof is given in Lem.~\ref{lem:M_commute_T_appx} in Appx.~\ref{appx:proofs}. This proof uses the definition
	\begin{align*}
		&(\mcl{T}_{i}\mbf{v})(s):=\int_{a_{i}}^{s_{i}}\scalemath{0.9}{\frac{(s_{i}-\theta_{i})^{\delta_{i}-1}}{(\delta_{i}-1)!}}\mbf{v}(s)|_{s_{i}=\theta_{i}}\,d\theta_{i}	\\[-0.4em]
		&\qquad-\!\int_{a_{i}}^{b_{i}}\bl[\mbf{e}_{1}(s_{i}-a_{i})^T K^{i}\mbf{e}_{\delta_{i}}(b_{i}-\theta_{i})\,\mbf{v}(s)|_{s_{i}=\theta_{i}}\br]d\theta_{i},
	\end{align*}
	for $\mbf{v}\in L_{2}[\Omega]$, and 
	where $\mbf{e}_{1},\mbf{e}_{\delta_{i}}$ are as in Cor.~\ref{cor:gohberg_PIE} (letting $d=\delta_{i}$). It follows that $\mcl{T}_{i}$ and $\mcl{T}_{j}$ commute if and only if $\mbf{e}_{1}(s_{i}-a_{i})^T K^{i}\mbf{e}_{\delta_{i}}(b_{i}-\theta_{i})$ and $\mbf{e}_{1}(s_{j}-a_{j})^T K^{j}\mbf{e}_{\delta_{j}}(b_{j}-\theta_{j})$ commute for all $s_{i},\theta_{i},s_{j},\theta_{j}$. 
	Since $\mbf{e}_{1}^T K^{i}\mbf{e}_{\delta_{i}}$ is a matrix-valued polynomial with coefficients $K^{i}_{p,\ell}$, 
	it follows that $\mcl{T}_{i}$ and $\mcl{T}_{j}$ commute if and only if $K^{i}_{k,p}$ and $K^{j}_{\ell,q}$ commute for all $k,p,\ell,q$, and thus $\mcl{D}_{i}$ and $\mcl{D}_{j}$ are consistent. \hfill$\blacksquare$ 
\end{pf}
\ \\[-3.0\baselineskip]

Lem.~\ref{lem:M_commute_T} shows that consistency of the $\mcl{D}_{i}$ is necessary and sufficient for the operators $\mcl{T}_{i}:L_{2}^{n}\to \mcl{D}_{i}$ and $\mcl{T}_{j}:L_{2}^{n}\to\mcl{D}_{j}$ for $i,j\in\{1:N\}$ to commute. Using this result, we finally prove that $D^{\delta}:\mcl{D}\to L_{2}^{n}$ is right-invertible if and only if the domains $\mcl{D}_{i}$ are consistent.

\begin{lem}\label{lem:RI}
	For each $i\in\{1:N\}$, let $\mcl{D}_{i}\subseteq S_{2}^{\delta_{i}\cdot\tnf{e}_{i},n}$ as in Cor.~\ref{cor:Tmap_1D} be admissible as per Defn.~\ref{defn:admissible_BCs_1D}, and let $\mcl{D}:=\bigcap_{i=1}^{N}\mcl{D}_{i}$.
	Then $D^\delta: \mcl{D}\rightarrow L_2^{n}$ is right-invertible if and only if the $\mcl{D}_i$ are consistent as per Defn.~\ref{defn:consistent_BCs}. Furthermore, the right-inverse is then given by $\mcl{T}:=\prod_{i=1}^{N}\mcl{T}_{i}$ for $\mcl{T}_{i}$ as in Cor.~\ref{cor:Tmap_1D}.
\end{lem}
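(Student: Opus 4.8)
\textbf{Proof proposal for Lemma~\ref{lem:RI}.}
The plan is to prove the two implications separately, leaning heavily on the 2D results already established. For the \emph{necessity} direction, suppose $D^{\delta}:\mcl{D}\to L_{2}^{n}$ is right-invertible, so there exists $\mcl{S}:L_{2}^{n}\to\mcl{D}$ with $D^{\delta}\mcl{S}\mbf{v}=\mbf{v}$. I would show this forces pairwise consistency of the $\mcl{D}_{i}$. Fix any $i\neq j$. The idea is to reduce to the 2D situation of Lem.~\ref{lem:compatibility_Du(a,c)_2D} by freezing the remaining variables. Concretely, given $\mbf{v}\in L_{2}^{n}[\Omega]$ one has $\mbf{u}=\mcl{S}\mbf{v}\in\mcl{D}\subseteq\mcl{D}_{i}\cap\mcl{D}_{j}$, and since the boundary conditions defining $\mcl{D}_{i}$ and $\mcl{D}_{j}$ only involve the $i$th and $j$th variables respectively, for (almost) every fixed value of $s_{m}$, $m\notin\{i,j\}$, the slice $\mbf{u}(\cdots,\bullet_{i},\cdots,\bullet_{j},\cdots)$ lies in the corresponding 2D intersection domain, and $\partial_{s_i}^{\delta_i}\partial_{s_j}^{\delta_j}$ applied to it recovers the corresponding slice of $\mbf{v}$. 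Thus the 2D restriction of $\mcl{S}$ is a right-inverse of $\partial_{s_i}^{\delta_i}\partial_{s_j}^{\delta_j}$ on the 2D intersection domain, and Lem.~\ref{lem:compatibility_Du(a,c)_2D} yields $K^{i}_{k,p}K^{j}_{\ell,q}=K^{j}_{\ell,q}K^{i}_{k,p}$ for all relevant indices. Since $i,j$ were arbitrary, the $\mcl{D}_{i}$ are consistent per Defn.~\ref{defn:consistent_BCs}.

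For the \emph{sufficiency} direction, assume the $\mcl{D}_{i}$ are consistent and set $\mcl{T}:=\prod_{i=1}^{N}\mcl{T}_{i}$. By Cor.~\ref{cor:LRinverse} we already know $D^{\delta}\mcl{T}\mbf{v}=\mbf{v}$ for all $\mbf{v}\in L_{2}^{n}$ (and $\mcl{T}D^{\delta}\mbf{u}=\mbf{u}$ on $\mcl{D}$), so the only thing left is to verify $\tnf{Im}(\mcl{T})\subseteq\mcl{D}=\bigcap_{i}\mcl{D}_{i}$; this makes $\mcl{T}$ a genuine right-inverse mapping into $\mcl{D}$. By Lem.~\ref{lem:M_commute_T}, consistency implies that the $\mcl{T}_{i}$ pairwise commute, so for each fixed $i$ the product may be reordered as $\mcl{T}=\mcl{T}_{i}\bl(\prod_{j\neq i}\mcl{T}_{j}\br)$. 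Since $\mcl{T}_{i}:L_{2}^{n}\to\mcl{D}_{i}$ by Cor.~\ref{cor:Tmap_1D}, it follows that $\mcl{T}\mbf{v}\in\mcl{D}_{i}$ for every $\mbf{v}\in L_{2}^{n}$. As this holds for every $i$, we get $\mcl{T}\mbf{v}\in\bigcap_{i}\mcl{D}_{i}=\mcl{D}$, hence $\mcl{T}:L_{2}^{n}\to\mcl{D}$ is a right-inverse of $D^{\delta}:\mcl{D}\to L_{2}^{n}$.

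I expect the main obstacle to be the \emph{necessity} argument, specifically making rigorous the slicing/reduction to the 2D case for general $N$. One must be careful that the freezing of the variables $s_{m}$, $m\notin\{i,j\}$, is valid for almost every such tuple, that the resulting 2D slices genuinely lie in the 2D intersection domains $\hat{\mcl{D}}_i[i]\cap\hat{\mcl{D}}_j[j]$ restricted to the plane (this uses that the defining boundary conditions of $\mcl{D}_{i}$ involve only $s_{i}$, so they survive the restriction), and that $D^{\delta}\mcl{S}=I$ on $L_2^n[\Omega]$ implies the corresponding 2D identity slicewise (using that $D^{\delta}=\partial_{s_i}^{\delta_i}\partial_{s_j}^{\delta_j}D^{\delta'}$ with $\delta'$ supported off $\{i,j\}$, and that acting by the extra derivatives cannot destroy the needed right-inverse property once one integrates out the extra variables appropriately). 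One clean way to finesse this is to apply the full machinery of Lem.~\ref{lem:compatibility_Du(a,c)_2D_appx} in the appendix directly in $N$ dimensions, tracking only the $i,j$ indices and treating all other variables as parameters; the corner-consistency computation then goes through essentially verbatim, yielding the pairwise commutation relations. The sufficiency direction, by contrast, is short given Cor.~\ref{cor:LRinverse} and Lem.~\ref{lem:M_commute_T}.
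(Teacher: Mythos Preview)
Your sufficiency argument is correct and essentially identical to the paper's: commute the $\mcl{T}_{i}$ via Lem.~\ref{lem:M_commute_T}, reorder, and invoke Cor.~\ref{cor:LRinverse}.

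For necessity, your primary route (slicing) has a genuine gap. Fixing $s_{m}$ for $m\notin\{i,j\}$ and looking at slices of $\mbf{u}=\mcl{S}\mbf{v}$ does not produce a well-defined two-variable operator: the slice $\mbf{u}(\cdots,\bullet_{i},\cdots,\bullet_{j},\cdots)$ depends on \emph{all} of $\mbf{v}$, not just the corresponding slice, so there is no map $L_{2}^{n}[[a_i,b_i]\times[a_j,b_j]]\to\mcl{D}_{2\mathrm D}$ to which Lem.~\ref{lem:compatibility_Du(a,c)_2D} applies. Moreover, your claim that ``$\partial_{s_i}^{\delta_i}\partial_{s_j}^{\delta_j}$ applied to it recovers the corresponding slice of $\mbf{v}$'' is false: it recovers the slice of $\partial_{s_i}^{\delta_i}\partial_{s_j}^{\delta_j}\mbf{u}$, and the remaining derivatives $\prod_{k\notin\{i,j\}}\partial_{s_k}^{\delta_k}$ are still missing. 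Your own closing caveat flags exactly this, but ``integrating out the extra variables appropriately'' is not a fix---there is no averaging that turns a slice into a right-inverse.

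The paper avoids slicing altogether. It stays on the full $N$-dimensional space and \emph{differentiates out} the extra variables: set
\[
\tilde{\mcl{T}}_{i,j}:=\Bigl(\prod_{k\notin\{i,j\}}\partial_{s_{k}}^{\delta_{k}}\Bigr)\mcl{S}.
\]
Since $\mcl{S}:L_{2}^{n}\to\mcl{D}$, Lem.~\ref{lem:PDEdom_diff_appx} gives $\tilde{\mcl{T}}_{i,j}\mbf{v}\in\mcl{D}_{i}\cap\mcl{D}_{j}$, and by commuting the partials one has $\partial_{s_{i}}^{\delta_{i}}\partial_{s_{j}}^{\delta_{j}}\tilde{\mcl{T}}_{i,j}\mbf{v}=D^{\delta}\mcl{S}\mbf{v}=\mbf{v}$. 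Thus $\tilde{\mcl{T}}_{i,j}$ is an honest right-inverse of $\partial_{s_{i}}^{\delta_{i}}\partial_{s_{j}}^{\delta_{j}}:\mcl{D}_{i}\cap\mcl{D}_{j}\to L_{2}^{n}$ (on the lifted domains, with the other variables carried as parameters), and Lem.~\ref{lem:compatibility_Du(a,c)_2D} applies directly. Your fallback suggestion of ``treating all other variables as parameters'' is in the right spirit, but the concrete construction via differentiation is the step you are missing.
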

\begin{proof}
	For sufficiency, suppose the domains $\mcl{D}_i$ are consistent. Then, by Lem.~\ref{lem:M_commute_T}, the operators $\mcl{T}_{i}:L_{2}^{n}\to\mcl{D}_{i}$ defined in Cor.~\ref{cor:Tmap_1D} commute. Defining $\mcl{T}:=\prod_{i=1}^{N}\mcl{T}_{i}$,
	then, $\mcl{T}\mbf{v}=\mcl{T}_{i}(\prod_{j\neq i}\mcl{T}_{j})\mbf{v}\in\mcl{D}_{i}$ for all $i$, and therefore $\mcl{T}:L_{2}^{n}\to \mcl{D}$. Since, by Cor.~\ref{cor:LRinverse}, $D^{\delta}\mcl{T}\mbf{v}=\mbf{v}$ for all $\mbf{v}\in L_{2}^{n}$, it follows that $\mcl{T}$ defines a right-inverse to $D^{\delta}:\mcl{D}\to L_{2}^{n}$.

	For necessity, suppose that 
	there exists $\mcl{T}:L_{2}^{n}\to \mcl{D}$ such that $D^{\delta}\mcl{T}=I_{n}$. To prove that the domains $\mcl{D}_{i}$ are consistent, we first show that for every $i\neq j$, the operator $\partial_{s_{i}}^{\delta_{i}}\partial_{s_{j}}^{\delta_{j}}:\mcl{D}_{i}\cap\mcl{D}_{j}\to L_{2}^{n}$ is right-invertible. To this end, note that the operators $\partial_{s_{i}}^{\delta_{i}}$ commute on $\mcl{D}\subseteq S_{2}^{\delta,n}$, so that $D^{\delta}\mbf{u}=\partial_{s_{i}}^{\delta_{i}}\partial_{s_{j}}^{\delta_{j}}(\prod_{k\notin\{i,j\}}\partial_{s_{k}}^{\delta_{k}})\mbf{u}$ for $\mbf{u}\in\mcl{D}$. Here, by Lem.~\ref{lem:PDEdom_diff_appx} in Appx.~\ref{appx:proofs}, we have for every $i\neq j$ that $(\prod_{k\notin\{i,j\}}\partial_{s_{k}}^{\delta_{k}})\mbf{u}\in \mcl{D}_{i}\cap\mcl{D}_{j}$ for all $\mbf{u}\in \mcl{D}$. Defining then $\tilde{\mcl{T}}_{i,j}:=(\prod_{k\notin\{i,j\}}\partial_{s_{k}}^{\delta_{k}})\mcl{T}$, the fact that $\mcl{T}:L_{2}^{n}\to\mcl{D}$ implies $\tilde{\mcl{T}}_{i,j}:L_{2}^{n}\to \mcl{D}_{i}\cap\mcl{D}_{j}$. Furthermore, $\tilde{\mcl{T}}_{i,j}$ also satisfies $\partial_{s_{i}}^{\delta_{i}}\partial_{s_{j}}^{\delta_{j}}\tilde{\mcl{T}}_{i,j}\mbf{v}=\partial_{s_{i}}^{\delta_{i}}\partial_{s_{j}}^{\delta_{j}}(\prod_{k\notin\{i,j\}}\partial_{s_{k}}^{\delta_{k}})\mcl{T}\mbf{v}=D^{\delta}\mcl{T}\mbf{v}=\mbf{v}$ for all $\mbf{v}\in L_{2}^{n}$, and thus defines a right-inverse to $\partial_{s_{i}}^{\delta_{i}}\partial_{s_{j}}^{\delta_{j}}:\mcl{D}_{i}\cap\mcl{D}_{j}\to L_{2}^{n}$. Since this holds for all $i\neq j$, it follows by Lem.~\ref{lem:compatibility_Du(a,c)_2D} that the domains $\mcl{D}_{i}$ are consistent.
\end{proof}

Lem.~\ref{lem:RI} shows that if the $\mcl{D}_{i}$ are admissible and we define $\mcl{D}:=\bigcap_{i}\mcl{D}_{i}$, then consistency of $\mcl{D}_i$ is necessary and sufficient for $D^{\delta}: \mcl{D}\to L_{2}^{n}$ to be right-invertible, with right-inverse $\mcl{T}:=\prod_{i=1}^{N}\mcl{T}_{i}$. Furthermore, recall that Cor.~\ref{cor:LRinverse} establishes that $\mcl{T}$ is a left inverse of $D^\delta$---implying that $D^{\delta}:\mcl{D} \rightarrow L_2$ is invertible if and only if the domains $\mcl{D}_{i}$ are consistent. This implies a bijection between the domain $\mcl D$ and the Hilbert space $L_2$. In the following subsection, we will apply this bijection to the state of a class of linear multivariate PDEs to construct an equivalent evolution equation on the Hilbert space $L_2$, where this evolution equation is not constrained by boundary conditions.

\subsection{A PIE Representation of Multivariate PDEs}\label{subsec:Tmap:PDE2PIE}

Having established necessary and sufficient conditions for $D^{\delta}:\bigcap_{i}\mcl{D}_{i}\to L_{2}^{n}$ to be invertible, and having obtained an explicit representation of this inverse,
we now apply this result to construct the PIE representation of a class of linear, coupled, multivariate PDEs. Specifically, we parameterize a class of PDEs by $\delta, \mbs{A}_{\alpha}, \mcl D_i$ as
\begin{align}\label{eq:PDE_standard}
	\	\notag \\[-1.3\baselineskip]
	\partial_{t}\mbf{u}(t,s)=\!\!\sum_{\vec{0}\leq\alpha\leq\delta}\!\mbs{A}_{\alpha}(s)D^{\alpha}\mbf{u}(t,s),\quad \mbf{u}(t)\in \bigcap_{i=1}^{N}\mcl{D}_{i},
	\\[-1.3\baselineskip] \notag
\end{align}
where $s\in\Omega:=\prod_{i=1}^{N}[a_{i},b_{i}]$, $\mbs{A}_{\alpha} \in \R[s]$, and where the $\mcl D_i$ are lifted, univariate, and admissible domains as per Defn.~\ref{defn:admissible_BCs_1D}, and are consistent as per Defn.~\ref{defn:consistent_BCs}. 
Given a PDE of the form in~\eqref{eq:PDE_standard}, we define a classical solution as follows.

\begin{defn}[Classical Solution to PDE]
	For $\mbs{A}_{\alpha} \in \R^{n\times n}[s]$ and admissible and consistent $\mcl D_i$, we say that $\mbf{u}(t)\in \mcl D:=\bigcap_{i=1}^{N}{\mcl{D}}_{i}$ solves the PDE defined by $\{\mbs{A}_{\alpha},\mcl{D}_{i}\}$ with initial condition $\mbf{u}_{0} \in \mcl D$ if $\mbf{u}(t)$ is Frech\'et differentiable, $\mbf{u}(0)=\mbf{u}_{0}$, and $\mbf{u}(t)$ satisfies~\eqref{eq:PDE_standard} for all $t\ge 0$.
\end{defn}

Having specified a class of multivariate PDEs, we now map solutions of such PDEs to solutions of an associated evolution equation on the \textit{fundamental state} $\mbf v(t):=D^\delta \mbf u(t)$. To define this evolution, however, we need a mapping from $\mbf v(t)$ to $\mbf u(t)$. This is achieved in the following theorem, by applying Cor.~\ref{cor:gohberg_PIE} to each of the univariate differential operators in $D^{\alpha}=\prod \partial_{s_{i}}^{\alpha_i}$, and combining the results using the embedding of univariate operators in multivariate space. 

\begin{thm}[Multivariate Extension of Cor.~\ref{cor:gohberg_PIE}]\label{thm:Tmap}
	\ \\
	For $n,N\in \N$, $\Omega=\prod_{i=1}^{N} [a_i,b_i]$, and $\delta \in \N_{0}^N$, let $\mcl{D}:=\bigcap_{i=1}^{N}\mcl{D}_{i} \subseteq S_{2}^{\delta,n}[\Omega]$ with $\mcl{D}_{i}\subseteq S_2^{\delta_i\cdot\tnf{e}_{i},n}[\Omega]$ admissible and consistent and define $\mscr{H}:=\sum_{\vec{0}\leq\alpha\leq\delta}\tnf{M}[\mbs{A}_{\alpha}]D^{\alpha}$. 
	
Let $\mcl{T}_i, \mcl{A}_{i,j}$ be defined as in Cor.~\ref{cor:gohberg_PIE} and extended to the multivariate space as in Cor.~\ref{cor:Tmap_1D} (letting $\mcl{T}\mapsto\mcl{T}_{i}$ and $\mcl{A}\mapsto\mcl{A}_{i,j}$), for $d=\delta_i$, $\mcl D=\mcl D_i$, and $\{\mbs H_k\}$ where $\mbs H_k:=\bbl\{\scalemath{0.9}{\mat{I_{n},\hspace*{0.25cm} k=j,\\[-0.4em] 0,\quad k\neq j,}}$ for each $k\in\{0:d\}$.
	Define $\mcl{T}:=\prod_{i=1}^{N}\mcl{T}_{i}$ and $\mcl{A}:=\sum_{\vec{0}\leq\alpha\leq\delta}\bl(\tnf{M}[\mbs{A}_{\alpha}]\prod_{i=1}^{N}\mcl{A}_{i,\alpha_{i}}\br)$. Then:
	\begin{enumerate}
		\item For all $\mbf{u}\in \mcl{D}$, we have $D^{\delta}\mbf{u}\in L_{2}^{n}[\Omega]$, $\mbf{u}=\mcl{T}D^{\delta}\mbf{u}$, and $\mscr{H}\mbf{u} =\mcl{A}D^{\delta}\mbf{u}$.
		\item 
		For all $\mbf{v}\in L_2^{n}[\Omega]$, we have $\mcl{T}\mbf{v}\in \mcl{D}$, $\mbf{v}=D^{\delta}\mcl{T}\mbf{v}$, and $\mcl{A}\mbf{v}=\mscr{H}\mcl{T}\mbf{v}$.
	\end{enumerate}
\end{thm}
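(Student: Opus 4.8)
The plan is to separate the six assertions into those concerning only $\mcl{T}$ and $D^{\delta}$, which are immediate from results already established in this section, and the two identities tying $\mscr{H}$ to $\mcl{A}$, which carry the new content. Since the $\mcl{D}_{i}$ are consistent, Lem.~\ref{lem:M_commute_T} gives that the $\mcl{T}_{i}$ pairwise commute, so $\mcl{T}:=\prod_{i=1}^{N}\mcl{T}_{i}$ is independent of the order of composition and agrees with the operator in Cor.~\ref{cor:LRinverse} and Lem.~\ref{lem:RI}. Hence for $\mbf{u}\in\mcl{D}\subseteq S_{2}^{\delta,n}[\Omega]$ we have $D^{\delta}\mbf{u}\in L_{2}^{n}[\Omega]$ directly from the definition of $S_{2}^{\delta,n}$, and $\mbf{u}=\mcl{T}D^{\delta}\mbf{u}$ by Cor.~\ref{cor:LRinverse}; and for $\mbf{v}\in L_{2}^{n}[\Omega]$ we have $\mcl{T}\mbf{v}\in\mcl{D}$ by the sufficiency part of Lem.~\ref{lem:RI} and $\mbf{v}=D^{\delta}\mcl{T}\mbf{v}$ by Cor.~\ref{cor:LRinverse}. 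This leaves only $\mscr{H}\mbf{u}=\mcl{A}D^{\delta}\mbf{u}$ (item 1) and $\mcl{A}\mbf{v}=\mscr{H}\mcl{T}\mbf{v}$ (item 2).

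I would deduce both of these from the single identity
\begin{equation}\label{eq:Dalpha_T}
	D^{\alpha}\mcl{T}\mbf{v}=\bl({\textstyle\prod_{i=1}^{N}}\mcl{A}_{i,\alpha_{i}}\br)\mbf{v},\qquad\mbf{v}\in L_{2}^{n}[\Omega],\quad\vec{0}\leq\alpha\leq\delta.
\end{equation}
Granting~\eqref{eq:Dalpha_T}, multiplying by $\tnf{M}[\mbs{A}_{\alpha}]$ and summing over $\vec{0}\leq\alpha\leq\delta$ gives $\mscr{H}\mcl{T}\mbf{v}=\mcl{A}\mbf{v}$ (item 2); substituting $\mbf{v}=D^{\delta}\mbf{u}$ and using $\mbf{u}=\mcl{T}D^{\delta}\mbf{u}$ turns the same computation into $\mscr{H}\mbf{u}=\mcl{A}D^{\delta}\mbf{u}$ (item 1). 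To prove~\eqref{eq:Dalpha_T}, note first that $\mcl{A}_{i,\alpha_{i}}$ is precisely the operator furnished by Cor.~\ref{cor:gohberg_PIE}, lifted to $\Omega$ via Cor.~\ref{cor:Tmap_1D}, for the one-dimensional operator $\mscr{H}=\partial_{s_{i}}^{\alpha_{i}}$ (i.e. $\mbs{H}_{k}=I_{n}$ for $k=\alpha_{i}$ and $0$ otherwise); hence $\partial_{s_{i}}^{\alpha_{i}}\mcl{T}_{i}\mbf{w}=\mcl{A}_{i,\alpha_{i}}\mbf{w}$ for every $\mbf{w}\in L_{2}^{n}[\Omega]$. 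I would then argue by induction on $N$: writing $\mcl{T}=\bl(\prod_{i=1}^{N-1}\mcl{T}_{i}\br)\mcl{T}_{N}$, the derivative $\partial_{s_{N}}^{\alpha_{N}}$ passes through $\prod_{i=1}^{N-1}\mcl{T}_{i}$ (which acts only in $s_{1},\dots,s_{N-1}$, so this is differentiation under the integral sign against polynomial kernels) and then meets $\mcl{T}_{N}$, giving $\partial_{s_{N}}^{\alpha_{N}}\mcl{T}_{N}\mbf{v}=\mcl{A}_{N,\alpha_{N}}\mbf{v}\in L_{2}^{n}[\Omega]$; applying the induction hypothesis to $\prod_{i=1}^{N-1}\mcl{T}_{i}$ acting on $\mcl{A}_{N,\alpha_{N}}\mbf{v}$ then produces $D^{\alpha}\mcl{T}\mbf{v}=\bl(\prod_{i=1}^{N-1}\mcl{A}_{i,\alpha_{i}}\br)\mcl{A}_{N,\alpha_{N}}\mbf{v}$, which is~\eqref{eq:Dalpha_T}. (Equivalently, one rearranges $\partial_{s_{1}}^{\alpha_{1}}\cdots\partial_{s_{N}}^{\alpha_{N}}\mcl{T}_{1}\cdots\mcl{T}_{N}$ into $(\partial_{s_{1}}^{\alpha_{1}}\mcl{T}_{1})\cdots(\partial_{s_{N}}^{\alpha_{N}}\mcl{T}_{N})$ and invokes the 1D identity on each group; the factors $\mcl{A}_{i,\alpha_{i}}$ pairwise commute, being integral or identity operators in distinct variables, so the products in~\eqref{eq:Dalpha_T} and in the definition of $\mcl{A}$ are unambiguous. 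The instance $\alpha=\delta$ is consistent with $D^{\delta}\mcl{T}\mbf{v}=\mbf{v}$, since having $\mbs{H}_{\delta_{i}}=I_{n}$ as the only nonzero coefficient forces $\mcl{A}_{i,\delta_{i}}=I_{n}$.)

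I expect the only genuine care to be needed in the commutation and regularity bookkeeping behind~\eqref{eq:Dalpha_T}: justifying that $\partial_{s_{i}}^{\alpha_{i}}$ may be drawn through the lifted operators $\mcl{T}_{j}$ and $\mcl{A}_{j,\alpha_{j}}$ for $j\neq i$, and that each intermediate function is regular enough in the variable differentiated next for that derivative to be classical. This rests on the smoothness of the polynomial semi-separable kernels, the compactness of $\Omega$ (so the iterated integrals converge absolutely and $L_{2}^{n}[\Omega]\subseteq L_{1}^{n}[\Omega]$), and on facts already in hand: $\mcl{T}_{N}\mbf{v}\in\mcl{D}_{N}\subseteq S_{2}^{\delta_{N}\cdot\tnf{e}_{N},n}[\Omega]$ with $\alpha_{N}\leq\delta_{N}$ makes $\partial_{s_{N}}^{\alpha_{N}}\mcl{T}_{N}\mbf{v}$ well-defined, while $\mcl{T}\mbf{v}\in S_{2}^{\delta,n}[\Omega]$ (Lem.~\ref{lem:RI}) is precisely the regularity needed to form $D^{\alpha}\mcl{T}\mbf{v}$ for $\alpha\leq\delta$ --- which is also why the restriction $\alpha\leq\delta$ is essential. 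Setting up the induction so that the $(N-1)$-variable hypothesis applies to $\prod_{i=1}^{N-1}\mcl{T}_{i}$ acting on functions of $N$ variables (treating $s_{N}$ as a passive parameter) is the final bit of bookkeeping, but routine.
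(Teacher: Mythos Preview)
Your proposal is correct and follows essentially the same route as the paper: the $\mcl{T}/D^{\delta}$ assertions are handled by Cor.~\ref{cor:LRinverse} and Lem.~\ref{lem:RI}, while the $\mscr{H}/\mcl{A}$ identities come from rearranging $\partial_{s_{1}}^{\alpha_{1}}\cdots\partial_{s_{N}}^{\alpha_{N}}\mcl{T}_{1}\cdots\mcl{T}_{N}$ into $(\partial_{s_{1}}^{\alpha_{1}}\mcl{T}_{1})\cdots(\partial_{s_{N}}^{\alpha_{N}}\mcl{T}_{N})$ via commutation of $\partial_{s_{i}}$ with $\mcl{T}_{j}$, $\mcl{A}_{j,\alpha_{j}}$ for $i\neq j$ (the paper isolates this as Lem.~\ref{lem:Top_orth_Dop} in the appendix) together with the 1D identity $\partial_{s_{i}}^{\alpha_{i}}\mcl{T}_{i}=\mcl{A}_{i,\alpha_{i}}$ from Cor.~\ref{cor:Tmap_1D}. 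The only organizational difference is that the paper proves item~1 directly from the left-inverse identity $\mcl{A}_{i,\alpha_{i}}\partial_{s_{i}}^{\delta_{i}}\mbf{u}=\partial_{s_{i}}^{\alpha_{i}}\mbf{u}$ rather than deducing it from item~2 via $\mbf{u}=\mcl{T}D^{\delta}\mbf{u}$; both routes are equally valid.
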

\begin{proof}
	For the first statement, fix arbitrary $\mbf{u}\in\mcl{D}\subseteq S_{2}^{\delta,n}$. Then $D^{\delta}\mbf{u}\in L_{2}^{n}$ since $\mbf{u}\in S_{2}^{\delta,n}$, and $\mbf{u}=\mcl{T}D^{\delta}\mbf{u}$ by Cor.~\ref{cor:LRinverse}. Finally, by Cor.~\ref{cor:Tmap_1D} (and by definition of the $\mbs{H}_{k}$), we have $\mcl{A}_{i,j}\partial_{s_{i}}^{\delta_i}\mbf{u}=\partial_{s_{i}}^{j}\mbf{u}$ for all $j$ and $i$, and therefore
	\begin{align*}
		& \\[-1.5\baselineskip]
		&\mcl{A}D^{\delta}\mbf{u}
		=\!\sum_{\alpha\leq\delta}\tnf{M}[\mbs{A}_{\alpha}]\prod_{i=1}^{N}\bl(\mcl{A}_{i,\alpha_{i}}\br)\prod_{\ell=1}^{N}\bl(\partial_{s_{\ell}}^{\delta_{\ell}}\br)\mbf{u}	\\[-0.5em]
		&=\!\sum_{\alpha\leq\delta }\tnf{M}[\mbs{A}_{\alpha}]\prod_{i=1}^{N}\bl(\mcl{A}_{i,\alpha_{i}}\partial_{s_{i}}^{\delta_{i}}\br)\mbf{u}		
		\!=\!\sum_{\alpha\leq\delta }\tnf{M}[\mbs{A}_{\alpha}]\prod_{i=1}^{N}(\partial_{s_{i}}^{\alpha_{i}})\mbf{u}
		=\mscr{H}\mbf{u}, \\[-1.5\baselineskip]
	\end{align*}
	where we remark that $\mcl{A}_{i,\alpha_{i}}$ and $\partial_{s_{\ell}}^{\delta_{\ell}}$ commute for $i\neq \ell$ by Lem.~\ref{lem:Top_orth_Dop} in Appx.~\ref{appx:proofs}.
	
	For the second statement, fix arbitrary $\mbf{v}\in L_{2}^{n}$. Then, by Cor.~\ref{cor:LRinverse} and Lem.~\ref{lem:RI}, we have $D^{\delta}\mcl{T}\mbf{v}=\mbf{v}$ and $\mcl{T}\mbf{v}\in\mcl{D}$. Finally, by Cor.~\ref{cor:gohberg_PIE}, we have $\mcl{A}_{i,j}\mbf{v}=\partial_{s_{i}}^{j}\mcl{T}_{i}\mbf{v}$, and therefore
	\begin{align*}
		& \\[-1.5\baselineskip]
		\mcl{A}\mbf{v}
		&=\sum_{\alpha\leq\delta}\tnf{M}[\mbs{A}_{\alpha}]\prod_{i=1}^{N}\bl(\mcl{A}_{i,\alpha_{i}}\br)\mbf{v}	
		=\sum_{\alpha\leq\delta}\tnf{M}[\mbs{A}_{\alpha}]\prod_{i=1}^{N}\bl(\partial_{s_{i}}^{\alpha_{i}}\mcl{T}_{i}\br)\mbf{v}	\\[-0.5em]
		&\quad~=\sum_{\alpha\leq\delta}\tnf{M}[\mbs{A}_{\alpha}]\prod_{i=\ell}^{N}\bl(\partial_{s_{\ell}}^{\alpha_{\ell}}\br)\prod_{i=1}^{N}\bl(\mcl{T}_{i}\br)\mbf{v}
		=\mscr{H}\mcl{T}\mbf{v},
	\end{align*}
	where we remark that also $\mcl{T}_{i}$ and $\partial_{s_{\ell}}^{\alpha_{\ell}}$ commute for $i\neq \ell$ by Lem.~\ref{lem:Top_orth_Dop} in Appx.~\ref{appx:proofs}.
\end{proof}

We now apply the results of Thm.~\ref{thm:Tmap} to the PDE in~\eqref{eq:PDE_standard}, by defining an associated fundamental state as follows.
\begin{defn}[Fundamental State]
For a linear PDE as in~\eqref{eq:PDE_standard}, defined by $\{\delta,\mbs{A}_{\alpha},\mcl{D}_{i}\}$, we define the fundamental state associated with PDE state $\mbf{u}(t)$ as $\mbf{v}(t):=D^{\delta}\mbf{u}(t)$.	
\end{defn}

Taking the highest-order mixed derivative of the PDE state, the fundamental state captures a minimal amount of information necessary to represent solutions to the PDE at any time, discarding boundary information of the solution that is already encoded in the domains $\mcl{D}_{i}$. If the domains are admissible and consistent, Thm.~\ref{thm:Tmap} then proves that the PDE state can be recovered from the fundamental state as $\mbf{u}(t)=\mcl{T} \mbf{v}(t)$, showing that no information is lost. Using the mapping $\sum_{\vec{0}\leq\alpha\leq\delta}\tnf{M}[\mbs{A}_{\alpha}]D^{\alpha}\mbf{u}(t) =\mcl{A}\mbf{v}(t)$ from Thm.~\ref{thm:Tmap}, 
it follows that $\mbf{v}(t)$ satisfies the following evolution equation,
\begin{equation}\label{eq:PIE_standard}
	\partial_{t}\mcl{T}\mbf{v}(t)=\mcl{A}\mbf{v}(t),\qquad \mbf{v}(t)\in L_{2}^{n}[\Omega].
\end{equation}
We refer to a system of this form as a Partial Integral Equation (PIE).
We define a solution to the PIE as follows.

\begin{defn}[Classical Solution to PIE]
	For a given initial state $\mbf{v}_{0}\in L_{2}^{n}[\Omega]$, we say that $\mbf{v}(t)\in L_{2}^{n}$ solves the PIE defined by $\{\mcl{T},\mcl{A}\}$ if $\mbf{v}(t)$ is Frech\'et differentiable, $\mbf{v}(0)=\mbf{v}_{0}$, and $\mbf{v}(t)$ satisfies~\eqref{eq:PIE_standard} for all $t\geq 0$.
\end{defn}

For a given PDE as in~\eqref{eq:PDE_standard}, defining $\{\mcl{T},\mcl{A}\}$ as in Thm.~\ref{thm:Tmap}, the following theorem shows that any solution to the PDE can be mapped to a solution to the PIE~\eqref{eq:PIE_standard}, and vice versa, 
so that the PIE in fact defines an equivalent representation of the PDE.

\begin{thm}\label{thm:PDE2PIE}
	For $N\in\N$ and $\delta\in\N_{0}^{N}$, let $\mcl{D}_{i}\subseteq S_{2}^{\delta_{i}\cdot\tnf{e}_{i},n}$ be admissible and consistent, and let $\mcl{D}:=\bigcap_{i=1}^{N}\mcl{D}_{i}$. Let $\mbs{A}_{\alpha}\in\R^{n\times n}[s]$, for each $\vec{0}\leq\alpha\leq\delta$. Define associated operators $\{\mcl{T},\mcl{A}\}$ as in Thm.~\ref{thm:Tmap}. Then the following hold:
	\begin{enumerate}
		\item 
		If $\mbf{u}$ solves the PDE defined by $\{\mbs{A}_{\alpha},\mcl{D}_{i}\}$ with initial value $\mbf{u}_{0}\in \mcl{D}$, then $\mbf{v}:=D^{\delta}\mbf{u}$ solves the PIE defined by $\{\mcl{T},\mcl{A}\}$ with initial value $\mbf{v}_{0}=D^{\delta}\mbf{u}_{0}$, and $\mbf{u}=\mcl{T}\mbf{v}$. 
		
		\item 
		If $\mbf{v}$ solves the PIE defined by $\{\mcl{T},\mcl{A}\}$ with initial value $\mbf{v}_{0}\in L_{2}^{n}$, then $\mbf{u}=\mcl{T}\mbf{v}$ solves the PDE defined by $\{\mbs{A}_{\alpha},\mcl{D}_{i}\}$ with initial value $\mbf{u}_{0}=\mcl{T}\mbf{v}_{0}$, and $\mbf{v}=D^{\delta}\mbf{u}$. 
	\end{enumerate}
\end{thm}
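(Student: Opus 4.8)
The plan is to obtain both implications directly from Theorem~\ref{thm:Tmap}, which already supplies the bijection $\mbf{u}\leftrightarrow\mbf{v}$ between $\mcl{D}$ and $L_2^n[\Omega]$ together with the intertwining identities $\mbf{u}=\mcl{T}D^{\delta}\mbf{u}$, $\mbf{v}=D^{\delta}\mcl{T}\mbf{v}$, $\mscr{H}\mbf{u}=\mcl{A}D^{\delta}\mbf{u}$, and $\mcl{A}\mbf{v}=\mscr{H}\mcl{T}\mbf{v}$. Since $\mscr{H}=\sum_{\vec{0}\leq\alpha\leq\delta}\tnf{M}[\mbs{A}_{\alpha}]D^{\alpha}$, the PDE~\eqref{eq:PDE_standard} is precisely $\partial_{t}\mbf{u}(t)=\mscr{H}\mbf{u}(t)$ and the PIE~\eqref{eq:PIE_standard} is precisely $\partial_{t}\mcl{T}\mbf{v}(t)=\mcl{A}\mbf{v}(t)$, so once the pointwise-in-$t$ algebraic identities are in place the only items left to verify are: preservation of the initial condition, the state constraint $\mbf{u}(t)\in\mcl{D}$, and Fréchet differentiability of the transformed curve. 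Before either direction, I would record that $\mcl{T}$ is a bounded operator on $L_2^n[\Omega]$ and, more precisely, bounded \emph{into} $S_2^{\delta,n}[\Omega]$, since each $D^{\alpha}\mcl{T}$ with $\alpha\leq\delta$ is again a (bounded) PI operator by the algebra of Section~\ref{sec:PIs}; and that, conversely, $D^{\delta}:S_2^{\delta,n}[\Omega]\to L_2^n[\Omega]$ is bounded by definition of the $S_2^{\delta}$-norm.

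For part~1, let $\mbf{u}$ solve the PDE and set $\mbf{v}(t):=D^{\delta}\mbf{u}(t)$. Since $\mbf{u}(t)\in\mcl{D}\subseteq S_2^{\delta,n}[\Omega]$ for every $t$, Theorem~\ref{thm:Tmap}(1) gives $\mbf{v}(t)\in L_2^n[\Omega]$, $\mbf{u}(t)=\mcl{T}\mbf{v}(t)$, and $\mscr{H}\mbf{u}(t)=\mcl{A}\mbf{v}(t)$. Hence $\partial_{t}\mcl{T}\mbf{v}(t)=\partial_{t}\mbf{u}(t)=\mscr{H}\mbf{u}(t)=\mcl{A}\mbf{v}(t)$, so $\mbf{v}$ satisfies~\eqref{eq:PIE_standard}; moreover $\mbf{v}(0)=D^{\delta}\mbf{u}_{0}=\mbf{v}_{0}$ and $\mbf{u}=\mcl{T}\mbf{v}$. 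It remains to verify that $\mbf{v}$ is Fréchet differentiable as an $L_2^n[\Omega]$-valued curve: I would argue that, because any solution of~\eqref{eq:PDE_standard} evolves within $\mcl{D}$, which is a closed subspace of the Hilbert space $S_2^{\delta,n}[\Omega]$ (the boundary-trace functionals defining it being continuous on $S_2^{\delta}$ by the embedding recalled in Section~\ref{subsec:notation_sobolev}), the difference quotients $h^{-1}(\mbf{u}(t+h)-\mbf{u}(t))$ converge in the $S_2^{\delta}$-norm; applying the bounded operator $D^{\delta}:S_2^{\delta,n}[\Omega]\to L_2^n[\Omega]$ then yields convergence of $h^{-1}(\mbf{v}(t+h)-\mbf{v}(t))$ in $L_2^n[\Omega]$, so $\partial_{t}\mbf{v}$ exists and equals $D^{\delta}\partial_{t}\mbf{u}$.

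For part~2, let $\mbf{v}$ solve the PIE and set $\mbf{u}(t):=\mcl{T}\mbf{v}(t)$. By Theorem~\ref{thm:Tmap}(2), $\mbf{u}(t)\in\mcl{D}$ for every $t$ and $D^{\delta}\mbf{u}(t)=D^{\delta}\mcl{T}\mbf{v}(t)=\mbf{v}(t)$, so $\mbf{v}=D^{\delta}\mbf{u}$; also $\mbf{u}(0)=\mcl{T}\mbf{v}_{0}=\mbf{u}_{0}$. Since $\mcl{T}$ is bounded from $L_2^n[\Omega]$ into $S_2^{\delta,n}[\Omega]$, Fréchet differentiability of $\mbf{v}$ in $L_2^n[\Omega]$ transfers to Fréchet differentiability of $\mbf{u}=\mcl{T}\mbf{v}$ in $S_2^{\delta,n}[\Omega]$, with $\partial_{t}\mbf{u}=\mcl{T}\partial_{t}\mbf{v}=\partial_{t}(\mcl{T}\mbf{v})$; in particular $\mbf{u}$ is a differentiable curve in $\mcl{D}$. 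Using Theorem~\ref{thm:Tmap}(2) once more, $\partial_{t}\mbf{u}(t)=\partial_{t}\mcl{T}\mbf{v}(t)=\mcl{A}\mbf{v}(t)=\mscr{H}\mcl{T}\mbf{v}(t)=\mscr{H}\mbf{u}(t)$, so $\mbf{u}$ satisfies~\eqref{eq:PDE_standard} and hence solves the PDE with initial value $\mbf{u}_{0}$.

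Most of Theorem~\ref{thm:PDE2PIE} is therefore bookkeeping layered on Theorem~\ref{thm:Tmap}; the one genuinely delicate point — and the place I expect to spend the most care — is the transfer of Fréchet differentiability through the \emph{unbounded} map $D^{\delta}$ in part~1 (and, dually, through $\mcl{T}$ regarded as a map into $S_2^{\delta}$ in part~2). The key is to fix the right topologies: a solution of~\eqref{eq:PDE_standard} is necessarily a curve in the Banach space $\mcl{D}$ with the $S_2^{\delta}$-norm, not merely in $L_2^n$, which makes $D^{\delta}$ bounded on that domain; and $\mcl{T}$ maps $L_2^n$ boundedly into $S_2^{\delta}$ precisely because every partial derivative $D^{\alpha}\mcl{T}$, $\alpha\leq\delta$, stays in the PI algebra. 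Establishing (or citing) those two boundedness facts cleanly is the real work behind the theorem.
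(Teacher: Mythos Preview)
Your approach is essentially the same as the paper's: both parts follow directly from the intertwining identities of Theorem~\ref{thm:Tmap}, and the paper's proof is exactly the chain $\partial_{t}\mcl{T}\mbf{v}(t)-\mcl{A}\mbf{v}(t)=\partial_{t}\mbf{u}(t)-\mscr{H}\mbf{u}(t)$ you describe. The only notable difference is that the paper's proof does not address the Fr\'echet differentiability transfer at all---it simply uses $\mcl{T}\mbf{v}(t)=\mbf{u}(t)$ to write $\partial_{t}\mcl{T}\mbf{v}=\partial_{t}\mbf{u}$ and leaves the verification that $\mbf{v}$ (respectively $\mbf{u}$) is itself Fr\'echet differentiable implicit---so your discussion of the $S_{2}^{\delta}$-topology and boundedness of $D^{\delta}$ and $\mcl{T}$ is more careful than what the paper actually records.
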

\begin{proof}
For the first statement, fix arbitrary $\mbf{u}_{0}\in \mcl{D}$, and let $\mbf{u}$ be an associated solution to the PDE in~\eqref{eq:PDE_standard}. Then $\mbf{u}(t)\in \mcl{D}\subseteq S_{2}^{\delta,n}$ for $t\geq 0$. Let $\mbf{v}(t):=D^{\delta}\mbf{u}(t)\in L_{2}^{n}$ for all $t\geq 0$ and $\mbf{v}_{0}:=D^{\delta}\mbf{u}_{0}\in L_{2}^{n}$. Clearly, then, $\mbf{u}(0)=\mbf{u}_{0}$ implies $\mbf{v}(0)=\mbf{v}_{0}$. In addition, by Thm.~\ref{thm:Tmap}, and by definition of the operators $\{\mcl{T},\mcl{A}\}$, we have $\mbf{u}(t)=\mcl{T}D^{\delta}\mbf{u}(t)
=\mcl{T}\mbf{v}(t)$ and $\sum_{\vec{0}\leq\alpha\leq\delta}\mbs{A}_{\alpha}(s)D^{\alpha}\mbf{u}(t,s)=\mcl{A}\mbf{v}(t,s)$.
It follows that $\partial_{t}\mcl{T}\mbf{v}(t,s)-\mcl{A}\mbf{v}(t,s)=\partial_{t}\mbf{u}(t,s)-\sum_{\vec{0}\leq \alpha\leq\delta}\mbs{A}_{\alpha}(s)D^{\alpha}\mbf{u}(t,s)$.
Since $\mbf{u}(t)$ satisfies~\eqref{eq:PDE_standard}, it follows that $\mbf{v}(t)$ satisfies~\eqref{eq:PIE_standard}, and thus the first statement holds.

For the second statement, fix arbitrary $\mbf{v}_{0}\in L_{2}^{n}[\Omega]$, and let $\mbf{v}$ be a corresponding solution to the PIE defined by $\{\mcl{T},\mcl{A}\}$. Let $\mbf{u}(t):=\mcl{T}\mbf{v}(t)$. Then, by Thm.~\ref{thm:Tmap}, $\mbf{u}(t)\in \mcl{D}$ for all $t\geq 0$, and $\sum_{\vec{0}\leq\alpha\leq\delta}\mbs{A}_{\alpha}(s)D^{\alpha}\mbf{u}(t,s)=\mcl{A}\mbf{v}(t,s)$. By the proof of the first statement, we find that $\partial_{t}\mcl{T}\mbf{v}(t,s)-\mcl{A}\mbf{v}(t,s)=\partial_{t}\mbf{u}(t,s)-\sum_{\vec{0}\leq \alpha\leq\delta}\mbs{A}_{\alpha}(s)D^{\alpha}\mbf{u}(t,s)$.
Since $\mbf{v}(t)$ satisfies~\eqref{eq:PIE_standard}, it follows that $\mbf{u}(t)$ satisfies~\eqref{eq:PDE_standard}, and thus the second statement holds as well. 
\end{proof}

By Thm.~\ref{thm:PDE2PIE}, for any PDE of the form in~\eqref{eq:PDE_standard} with admissible and consistent $\mcl{D}_{i}$, we can define an equivalent representation as a PIE of the form in~\eqref{eq:PIE_standard}, with any classical solution $\mbf{u}$ to the PDE admitting a classical solution $D^{\delta}\mbf{u}$ to the PIE, and any classical solution $\mbf{v}$ to the PIE admitting a classical solution $\mcl{T}\mbf{v}$ to the PDE.
While Theorem~\ref{thm:PDE2PIE} does not guarantee existence or uniqueness of such classical solutions, it does imply that well-posedness of either representation can be inferred from well-posedness of the other. Here, the admissibility and consistency conditions already ensure well-posedness of the boundary value problem defined by domain $\mcl{D}$, in the sense that for any $\mbf{v}\in L_{2}$ there exists a unique $\mbf{u}=\mcl{T}\mbf{v}\in\mcl{D}$ satisfying $D^{\delta}\mbf{u}=\mbf{v}$.
This precludes a potential cause of ill-posedness of the PDE, simplifying well-posedness analysis in the PIE representation. In particular, for 1D PDEs, well-posedness can be tested in the PIE representation by solving an optimization program similar to the stability test in Section~\ref{sec:stability}, as has recently been shown in~\cite{jagt2026WellPosedPIE_arXiv}.
	
Although Thm.~\ref{thm:PDE2PIE} requires the domains $\mcl{D}_{i}$ to be admissible and consistent, this imposes only mild constraints on the boundary conditions.
In practice, most common boundary conditions, including Dirichlet, Robin, and mixed boundary conditions, satisfy the admissibility constraints from Defn.~\ref{defn:admissible_BCs_1D}. 
In addition, the consistency conditions from Defn.~\ref{defn:consistent_BCs} are trivially satisfied in the scalar-valued case, $n=1$, and will be satisfied for $n>1$ if the matrices $B^{i}_{j,k},C^{i}_{j,k}$ defining the $\mcl{D}_{i}$ in Defn.~\ref{defn:admissible_BCs_1D} are all diagonal---i.e. if there is no coupling between state variables in the boundary conditions. 
For example, imposing mixed Dirichlet and Neumann boundary conditions on all state variables, 
letting $\mcl{D}_{i}$ be of the form of $\mcl{D}_{\tnf{D}}$, $\mcl{D}_{\tnf{N}1}$ or $\mcl{D}_{\tnf{N}2}$ in Eqn.~\eqref{eq:PDE_dom_DN}, 
the resulting $\mcl{D}_{i}$ will be consistent. 
The following corollary---a reformulation of Cor.~\ref{cor:Tmap_DN} from the start of this section---explicitly constructs the operators defining the PIE representation of the $N$D heat equation with such Dirichlet and Neumann boundary conditions.



\begin{cor}\label{cor:Tmap_DN_2}
	Let $\mcl{D} =\bigcap_{i=1}^{N} \mcl{D}_{i}$, where $\mcl{D}_{i}\in\{\mcl{D}_{\tnf{D}},\mcl{D}_{\tnf{N}1},\mcl{D}_{\tnf{N}2}\}$ for each $i\in\{1:N\}$, for $\mcl{D}_{\tnf{D}}$, $\mcl{D}_{\tnf{N}1}$, $\mcl{D}_{\tnf{N}2}$ as in~\eqref{eq:PDE_dom_DN}. Then $\mbf{u}=\mcl T D^{\vec{2}}\mbf{u}$, $\mbf{v}=D^{\vec{2}}\mcl{T} \mbf{v}$ and $\mcl{T} \mbf{v} \in \mcl{D}$ for all $\mbf{u} \in \mcl{D}$ and $\mbf{v} \in L_2^{n}$, where
	$(\mcl T\mbf{v})(s):=\int_{\Omega}\mbs{G}(s,\theta) \mbf{v}(\theta)d\theta$ with $\mbs{G}(s,\theta):=\prod_{i=1}^{N}\mbs{G}_{i}(s_{i},\theta_{i})$ for $\mbs{G}_{i}(x,y):=\bbl\{\!\scalemath{0.9}{\lmat{\mbs{h}_{i}(x,y)+(x-y),~ y\leq x,\\[-0.3em]\mbs{h}_{i}(x,y),\hspace*{1.63cm} y>x,}}$
	where\\[-1.25\baselineskip]
	\begin{equation*}
		\mbs{h}_{i}(x,y):=\begin{cases}
			\frac{-(x-a_{i})(b_{i}-y)}{b_{i}-a_{i}},	&	\mcl{D}_i={\mcl{D}}_{\tnf{D}},	\\
			a_{i}-x,	&	\mcl{D}_i={\mcl{D}}_{\tnf{N}1},	\\
			y-b_{i},	&	\mcl{D}_i={\mcl{D}}_{\tnf{N}2}.
		\end{cases}
	\end{equation*}
	Moreover, $\mbf{u}(t)\in\mcl{D}$ satisfies the $N$D heat equation, $\partial_{t}\mbf{u}(t)=\nabla^2\mbf{u}(t)=\sum_{i=1}^{N}\partial_{s_{i}}^2\mbf{u}(t)$, if and only if $\mbf{v}(t)=D^{\vec{2}}\mbf{u}(t)\in L_{2}^{n}$ satisfies the PIE $\partial_{t} \mcl{T}\mbf{v}(t)=\mcl{A}\mbf{v}(t)$, where $\mcl{A}\mbf{v}(s):=\sum_{j=1}^{N}\prod_{i\neq j}(\int_{[a_{i},b_{i}]}\mbs{G}_{i}(s_{i},\theta_{i}))\mbf{v}(\theta)d\theta$.
\end{cor}
\begin{proof}
	First, we define the parameters of the $\mcl D_i$ used in Eqn.~\eqref{eq:PDEdom_1D} as $B^{i}_{0,0}=C^{i}_{1,0}=I_{n}$ if $\mcl{D}_{i}=\mcl{D}_{\tnf{D}}$; $B^{i}_{0,0}=C^{i}_{1,1}=I_{n}$ if $\mcl{D}_{i}=\mcl{D}_{\tnf{N}1}$; and $B^{i}_{0,1}=C^{i}_{1,0}=I_{n}$ if $\mcl{D}_{i}=\mcl{D}_{\tnf{N}2}$; with $B^{i}_{j,k}=C^{i}_{j,k}=0_{n}$ for all other $j,k\in\{0,1\}$. Each $\mcl{D}_{i}$ is admissible as per Defn.~\ref{defn:admissible_BCs_1D}, and we can compute $K^{i}=(H^{i}_{a}+H^{i}_{b}\mbs{Q}(b_{i}-a_{i}))^{-1}H^{i}_{b}$ to find
	$K^{i}=\frac{1}{b_{i}-a_{i}}\smallbmat{0_{n}&0_{n}\\I_{n}&0_{n}}$ for $\mcl{D}_{i}=\mcl{D}_{\tnf{D}}$; $K^{i}=\smallbmat{0_{n}&0_{n}\\I_{n}&0_{n}}$ for $\mcl{D}_{i}=\mcl{D}_{\tnf{N1}}$;  and $K^{i}=\smallbmat{I_{n}&0_{n}\\0_{n}&0_{n}}$ for $\mcl{D}_{i}=\mcl{D}_{\tnf{N2}}$. 
	Decomposing the $K^{i}$ as in Defn.~\ref{defn:consistent_BCs}, the blocks $K^{i}_{k,p}\in\R^{n\times n}$ and $K^{j}_{\ell,q}\in\R^{n\times n}$ then trivially commute for all $i,j\in\{1:N\}$ and $k,\ell,p,q\in\{1,2\}$, and thus the $\mcl D_i$ are also consistent as per Defn.~\ref{defn:consistent_BCs}. 
	Now, defining $(\mcl{T}_{i}\mbf{v})(s_{i}):=\int_{a_{i}}^{b_{i}}\mbs{G}_{i}(s_{i},\theta_{i})\mbf{v}(\theta_{i})d\theta_{i}$, for each $i\in\{1:N\}$, we find by Cor.~\ref{cor:gohberg_PIE} and Cor.~\ref{cor:Tmap_1D} that $\mbf{u}=\mcl{T}_{i}\partial_{s_{i}}^{2}\mbf{u}$, $\mcl{T}_{i}\mbf{v}\in\hat{\mcl{D}}_{i}[i]$, and $\mbf{v}=\partial_{s_{i}}^{2}\mcl{T}_{i}\mbf{v}$ for all $\mbf{u}\in\hat{\mcl{D}}_{i}[i]$ and $\mbf{v}\in L_{2}^{n}[\Omega]$. By definition of the operators $\mcl{T}$ and $\mcl{A}$, it follows by Thm.~\ref{thm:Tmap} that $\mbf{u}=\mcl{T}D^{\vec{2}}\mbf{u}$, $\mcl{T}\mbf{v}\in\mcl{D}$, and $D^{\vec{2}}\mcl{T}\mbf{v}=\mbf{v}$ for all $\mbf{u}\in\mcl{D}$ and $\mbf{v}\in L_{2}^{n}[\Omega]$, and that $\mbf{u}(t)$ satisfies the $N$D heat equation if and only if $\mbf{v}(t):=D^{\vec{2}}\mbf{u}(t)$ satisfies $\partial_{t} \mcl{T}\mbf{v}(t)=\mcl{A}\mbf{v}(t)$.
\end{proof}

Cor.~\ref{cor:Tmap_DN_2} shows how an $N$D heat equation with mixed boundary conditions can be expressed as a PIE, providing an explicit expression for the operators $\{\mcl{T},\mcl{A}\}$ defining this PIE representation.
The following example illustrates how these operators are defined for a 2D reaction-diffusion equation, with mixed boundary conditions.
\begin{example}\label{ex:BCs_DN_2}
	To illustrate the construction of the PIE representation for a multivariate PDE, consider the following 2D reaction-diffusion equation from Example~\ref{ex:BCs_DN_0},
	\begin{align}\label{eq:example_heat_eq}
		\mbf{u}_{t}(t,x,y)&\!=\!\mbf{u}_{xx}(t,x,y)\!+\!\mbf{u}_{yy}(t,x,y)\!+\!r\mbf{u}(t,x,y),	\\
		\mbf{u}(t,0,y)&=\mbf{u}(t,1,y)=0,\quad \mbf{u}(t,x,0)=\mbf{u}_{y}(t,x,1)=0,\notag
	\end{align}
	where $\mbf{u}(t)\in S_{2}^{(2,2)}[[0,1]^2]$. Then, by Cor.~\ref{cor:Tmap_1D}, for any solution $\mbf{u}$ to the PDE, $\mbf{u}=\mcl{T}_{1}\mbf{u}_{xx}=\mcl{T}_{2}\mbf{u}_{yy}$, where
	\begin{align*}
		(\mcl{T}_{1}\mbf{v})(x,y)		&:=\!-\!\int_{0}^{x}\!\!\!\theta(1-x)\mbf{v}(\theta,y)d\theta -\int_{x}^{1}\!\!\!x(1-\theta)\mbf{v}(\theta,y)d\theta,	\\
		(\mcl{T}_{2}\mbf{v})(x,y)
		&:=-\int_{0}^{y}\eta\mbf{v}(x,\eta)d\eta -\int_{y}^{1}y\mbf{v}(x,\eta)d\eta.
	\end{align*}
	By Cor.~\ref{cor:Tmap_DN_2}, it follows that also $\mbf{u}_{yy}=\mcl{T}_{1}\mbf{u}_{xxyy}$, $\mbf{u}_{xx}=\mcl{T}_{2}\mbf{u}_{xxyy}$, and $\mbf{u}=\mcl{T}\mbf{u}_{xxyy}$, where
	\begin{equation*}
		(\mcl{T}\mbf{v})(x,y):=(\mcl{T}_{1}(\mcl{T}_{2}\mbf{v}))(x,y)=(\mcl{T}_{2}(\mcl{T}_{1}\mbf{v}))(x,y).
	\end{equation*}
	Conversely, for any $\mbf{v}\in L_{2}[[0,1]^2]$, we have $\mbf{v}=\partial_{x}^2\mcl{T}_{1}\mbf{v}$, $\mbf{v}=\partial_{y}^2\mcl{T}_{2}\mbf{v}$, and $\mbf{v}=\partial_{x}^{2}\partial_{y}^{2}\mcl{T}\mbf{v}$. 
	By Thm.~\ref{thm:PDE2PIE}, then, $\mbf{u}$ solves the PDE~\eqref{eq:example_heat_eq} if and only if $\mbf{v}=\mbf{u}_{xxyy}$ solves the PIE
	\begin{equation*}
		\partial_{t}\mcl{T}\mbf{v}(t)=\mcl{A}\mbf{v}(t)
		:=\mcl{T}_{2}\mbf{v}(t) +\mcl{T}_{1}\mbf{v}(t) +r\mcl{T}\mbf{v}(t).
	\end{equation*}
\end{example}

Example~\ref{ex:BCs_DN_2} illustrates how, using Thm.~\ref{thm:PDE2PIE}, a linear multivariate PDE with admissible and consistent domains can be equivalently represented as a PIE. Using this equivalence, stability properties of the PDE can be inferred from properties of the associated PIE. Verification of such stability properties using the PIE representation is significantly easier, since the fundamental state $\mbf{v}(t)$ of the PIE lies in the Hilbert space $L_{2}^{n}$, obviating the need for ad hoc manipulations to account for boundary conditions. 
In addition, as will be shown in the following section, the operators $\mcl{T},\mcl{A}$ defining the PIE representation belong to a $*$-algebra of Partial Integral (PI) operators, 
being closed under composition and adjoint operations.
Using these properties, stability of the PDE can be tested in the PIE representation by solving a convex optimization program, as will be shown in Section~\ref{sec:stability}.

\section{A $*$-Algebra of Multivariate PI Operators}\label{sec:PIs}

Having shown how a broad class of multivaraite, linear PDEs can be equivalently represented as PIEs, we now show that this PIE representation retains many elements of the classical, matrix-parameterized, state-space representation of linear ordinary differential equations.
In particular, we show that the parameters $\mcl{T},\mcl{A}$ defining the PIE representation may be embedded within a parameterized $*$-algebra of integral operators---akin to how the parameters defining linear ordinary differential equations belong to the $*$-algebra of matrices.

Let us begin by recalling that, in constructing the operators defining the PIE representation of a given multivariate PDE, an inductive approach was used, defining each multivariate operator as the composition (product) and sum of univariate operators: $\mcl{T}:=\prod_{i=1}^{N}\mcl{T}_{i}$ and $\mcl{A}:=\sum_{\vec{0}\leq\alpha\leq\delta}\tnf{M}[\mbs{A}_{\alpha}]\,\prod_{i=1}^{N}\mcl{A}_{i,\alpha_{i}}$.
Here, each of the univariate operators $\mcl{T}_{i}$ and $\mcl{A}_{i,\alpha_{i}}$ is of the form $(\mcl{R}\mbf{v})(s)=\mbs{R}_{0}(s)\mbf{v}(s)+\int_{a}^{s}\mbs{R}_{1}(s,\theta)\mbf{v}(\theta)d\theta +\int_{s}^{b}\mbs{R}_{2}(s,\theta)\mbf{v}(\theta)d\theta$, for polynomial $\mbs{R}_{0},\mbs{R}_{1},\mbs{R}_{2}$. Such univariate operators have been referred to as 3-PI operators in, e.g.~\cite{shivakumar2022GPDE_Arxiv} (see also Defn.~\ref{defn:PI_1D}), and have been used to parameterize a class of 1D PIEs. Moreover it has been shown that these univariate 3-PI operators form a $*$-algebra---being closed under summation, composition, and adjoint operations.

In this section, we show that in the multivariate setting, the operators $\mcl{T}$ and $\mcl{A}$ defining the PIE representation of a PDE similarly belong to a $*$-algebra of PI operators. In particular, in Subsection~\ref{subsec:PIs:PI_ops}, based on the inductive definition of $\mcl{T}$ and $\mcl{A}$, we define a class of $N$D PI operators as linear combinations of compositions of univariate 3-PI operators along each spatial direction. In Subsection~\ref{subsec:PIs:algebra}, we then prove that this class of operators is closed under compositions and adjoints, providing explicit formulae for performing these operations.

\subsection{A Parameterization of Multivariate 3-PI Operators}\label{subsec:PIs:PI_ops}

In this subsection, we define a parameterized class of multivariate PI operators on $L_{2}[\Omega]$ which includes the operators $\mcl T, \mcl{A}$, as constructed in Thm.~\ref{thm:PDE2PIE}. 
To define such multivariate PI operators, we first recall the following definition of univariate 3-PI operators from, e.g.~\cite{shivakumar2022GPDE_Arxiv}. 
\begin{defn}[Univariate 3-PI operators, $\PIset_{1}{[a,b]}$]\label{defn:PI_1D}
	For $[a,b]\subset\R$, we say that $\mcl{P}\in \mcl L(L_2^{n}[a,b],L_2^{m}[a,b])$ is a \textbf{univariate 3-PI operator} (denoted  $\mcl{P}\in\PIset_{1}^{m\times n}[a,b]$) if there exist $\mbs{P}_{0}\in \R^{m\times n}[s]$ and $\mbs{P}_{1},\mbs{P}_{2}\in \R^{m\times n}[s,\theta]$ such that
	\begin{equation*}\Resize{\linewidth}{
			(\mcl{P}\mbf{v})(s)
			\!=\!\mbs{P}_{0}(s)\mbf{v}(s)\! +\!\int_{a}^{s}\!\!\!\mbs{P}_{1}(s,\theta)\mbf{v}(\theta)d\theta \!+\!\int_{s}^{b}\!\!\!\mbs{P}_{2}(s,\theta)\mbf{v}(\theta)d\theta.}
	\end{equation*}
	We say that $\mcl{P}$ is defined by parameters $\{\mbs{P}_{0},\mbs{P}_{1},\mbs{P}_{2}\}\in \PIparam_{1}^{m\times n}[a,b]:= \R^{m\times n}[s]\times\R^{m\times n}[s,\theta]\times \R^{m\times n}[s,\theta]$.
\end{defn}

The class of univariate 3-PI operators includes both multiplier operators with polynomial multipliers and integral operators with polynomial semi-separable kernels on $L_{2}[a,b]$, where we say that a kernel, $\mbs{K}$, is \textit{polynomial semi-separable} if it may be expressed as
\begin{equation*}
	\mbs{K}(s,\theta):=
	\begin{cases}
		\mbs{P}_{1}(s,\theta),	&	s\geq\theta,	\\
		\mbs{P}_{2}(s,\theta),	&	s<\theta,
	\end{cases}
	\quad \text{for}~ \mbs{P}_{1},\mbs{P}_{2}\in \R^{m\times n}[s,\theta].
\end{equation*}
The restriction to polynomial parameters will allow us in Section~\ref{sec:stability} to verify properties of 3-PI operators using algorithms based on the monomial coefficients of these polynomial parameters. Note that, by inspection, the operators $\mcl{T}$ and $\mcl{A}$ defining the univariate PIE representation in Cor.~\ref{cor:gohberg_PIE} are univariate 3-PI operators.

We now define the class of $N$D 3-PI operators, by taking sums of compositions of univariate 3-PI operators.  
\begin{defn}[$N$D 3-PI operators, {$\PIset_{N}[\Omega]$}]\label{defn:PI_ND}
	Given $N\in\N$, $\Omega=\prod_{i=1}^{N}[a_{i},b_{i}]$, we say that $\mcl{P} \in \mcl L(L_2[\Omega])$ is a \textbf{$\boldsymbol{N}$D 3-PI operator} (denoted  $\mcl{P}\in\PIset_{N}[\Omega]$) if there exist $M\!\in\N$ and $\mcl P_{j,i}\!\in \PIset_{1}[a_i,\!b_i]$ such that $\mcl{P}=\!\sum_{j=1}^{M}\! \prod_{i=1}^{N}\!\mcl P_{j,i}$,
	where each univariate $\mcl P_{j,i}$ operates on variable $s_i$---i.e. 
	\begin{equation*}
		(\mcl P_{j,i}\mbf v)(s):=\bl(\mcl{P}_{j,i}\mbf v(s_1,\cdots,s_{i-1},\bullet,s_{i+1},\cdots s_N)\br)(s_i).
	\end{equation*}
	For matrix-valued operators, $\mcl{P} \in \mcl L(L_2^{n}[\Omega],L_2^{m}[\Omega])$, we say that $\mcl{P}$ is an $N$D 3-PI operator (denoted  $\mcl{P}\in\PIset_{N}^{m \times n}[\Omega]$) if $[\mcl{P}]_{k,\ell}\in \PIset_{N}[\Omega]$ for all $k\in\{1:m\}$ and $\ell\in\{1:n\}$.
\end{defn}

Defn.~\ref{defn:PI_ND} offers a natural parameterization of multivariate 3-PI operators, by taking compositions of univariate 3-PI operators along different directions---i.e. $\mcl{P}=\prod_{i=1}^{N}\mcl{P}_{i}$. Furthermore, by allowing summation of such operators, the class $\PIset_{N}$ also includes, e.g., any operator of the form
$(\mcl{P}\mbf{v})(s):=\mbs M(s)\mbf{v}(s)+\int_{\Omega}\mbs{K}(s,\theta)\mbf{v}(\theta)d\theta$,
for $\mbs{M}$ polynomial and $\mbs{K}$ polynomial semi-separable, 
where the definition of polynomial semi-separable kernels may be extended to multivariate domains as follows.
\begin{defn}\label{defn:semisep}
	For $N\in\N$ and $\Omega:=\prod_{i=1}^{N}[a_{i},b_{i}]$, we say that $\mbs{K}\in L_{2}^{m\times n}[\Omega]$ is \tbf{polynomial semi-separable} if for each $\alpha\in\{-1,1\}^{N}$ there exists polynomial $\mbs{K}_{\alpha}\in\R^{m\times n}[s,\theta]$ such that $\mbs{K}(s,\theta)=\mbs{K}_{\alpha}(s,\theta)$ for all $s,\theta\in\Omega$ for which $\alpha_{i}\cdot (s_{i}-\theta_{i})\leq 0$ for every $i\in\{1:N\}$.
\end{defn}

We may now observe that, given a multivariate PDE as in~\eqref{eq:PDE_standard}, the operators $\mcl{T}$ and $\mcl{A}$ defining the associated PIE representation in Thm.~\ref{thm:Tmap} are $N$D 3-PI operators. 

\begin{lem}\label{lem:PI_op_T}
	For $N\in\N$ and $\delta\in\N_{0}^{N}$, let $\mcl{T}$ and $\mcl{A}$ be as defined in Thm.~\ref{thm:Tmap}. Then $\mcl{T},\mcl{A}\in\PIset_{N}^{n\times n}[\Omega]$.
\end{lem}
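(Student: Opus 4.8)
The plan is to verify directly from the definitions that each building block of $\mcl T$ and $\mcl A$ is a univariate 3-PI operator acting on a single spatial variable, and then observe that $\mcl T$ and $\mcl A$ are obtained from these blocks only by taking compositions and sums --- precisely the operations under which $\PIset_{N}^{n\times n}[\Omega]$ is closed by construction (Defn.~\ref{defn:PI_ND}).

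First I would recall that, by Thm.~\ref{thm:Tmap}, $\mcl T=\prod_{i=1}^{N}\mcl T_{i}$, where each $\mcl T_{i}$ is the multivariate lifting (as in Cor.~\ref{cor:Tmap_1D}) of the univariate operator $\mcl T$ of Cor.~\ref{cor:gohberg_PIE} applied in variable $s_{i}$. Inspecting the explicit formula in Cor.~\ref{cor:gohberg_PIE}, $\mcl T$ has the form $(\mcl T\mbf v)(s_i)=\int_{a_i}^{b_i}\mbs G_T(s_i,\theta_i)\mbf v(\theta_i)\,d\theta_i$ with $\mbs G_T$ polynomial semi-separable (it equals $\mbf e_1(s-a)^T(I-K)\mbf e_d(b-\theta)$ for $\theta\le s$ and $-\mbf e_1(s-a)^TK\mbf e_d(b-\theta)$ for $s<\theta$, and for $d=0$ reduces to $I_n$). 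Hence $\mcl T$, viewed in variable $s_i$, is a univariate 3-PI operator: take $\mbs R_0=0$ (or $I_n$ if $\delta_i=0$), $\mbs R_1=\mbf e_1(s-a_i)^T(I-K^i)\mbf e_{\delta_i}(b_i-\theta)$, $\mbs R_2=-\mbf e_1(s-a_i)^TK^i\mbf e_{\delta_i}(b_i-\theta)$, all polynomial in $(s,\theta)$. Thus $\mcl T_i\in\PIset_1^{n\times n}[a_i,b_i]$ acting on $s_i$, and therefore $\mcl T=\prod_{i=1}^{N}\mcl T_i\in\PIset_N^{n\times n}[\Omega]$ directly by Defn.~\ref{defn:PI_ND} (with $M=1$).

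For $\mcl A$, by Thm.~\ref{thm:Tmap} we have $\mcl A=\sum_{\vec 0\le\alpha\le\delta}\tnf{M}[\mbs A_\alpha]\prod_{i=1}^{N}\mcl A_{i,\alpha_i}$. Each $\mcl A_{i,\alpha_i}$ is the lifting to variable $s_i$ of the univariate operator $\mcl A$ from Cor.~\ref{cor:gohberg_PIE} with the particular choice of $\{\mbs H_k\}$ specified in Thm.~\ref{thm:Tmap}; from the formula there, that $\mcl A$ has the form $(\mcl A\mbf v)(s)=\mbs H_d(s-a)\mbf v(s)+\int_a^b\mbs G_A(s,\theta)\mbf v(\theta)\,d\theta$ with $\mbs H_d$ polynomial (in fact $0$ or $I_n$ here) and $\mbs G_A$ polynomial semi-separable, so $\mcl A_{i,\alpha_i}\in\PIset_1^{n\times n}[a_i,b_i]$ acting on $s_i$. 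The polynomial multiplier $\tnf{M}[\mbs A_\alpha]$ with $\mbs A_\alpha\in\R^{n\times n}[s]$ can be absorbed into one of the univariate factors --- e.g.\ writing $\mbs A_\alpha(s)=\sum_{\beta}\mbs C_\beta s_1^{\beta_1}\cdots s_N^{\beta_N}$ as a finite sum of products of univariate polynomial multipliers (each a univariate 3-PI operator with $\mbs R_0$ a monomial times a constant matrix and $\mbs R_1=\mbs R_2=0$), and distributing the resulting sum over $j$. Hence each term $\tnf{M}[\mbs A_\alpha]\prod_{i=1}^{N}\mcl A_{i,\alpha_i}$ is itself a finite sum of products $\prod_{i=1}^N\mcl A'_{j,i}$ with $\mcl A'_{j,i}\in\PIset_1^{n\times n}[a_i,b_i]$; summing over $\alpha\le\delta$ then gives $\mcl A$ as a finite sum $\sum_{j=1}^{M}\prod_{i=1}^{N}\mcl A'_{j,i}$, i.e.\ $\mcl A\in\PIset_N^{n\times n}[\Omega]$ by Defn.~\ref{defn:PI_ND}. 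Strictly, the matrix-valued statement follows by reading off each scalar entry $[\mcl T]_{k,\ell}$, $[\mcl A]_{k,\ell}$ and checking it lies in $\PIset_N[\Omega]$, which is immediate since the polynomial parameters above have matrix-polynomial entries.

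The only mild subtlety --- and the step I expect to require the most care --- is handling the polynomial multiplier $\tnf{M}[\mbs A_\alpha]$: one must confirm that a multivariate polynomial multiplier indeed decomposes into a sum of compositions of univariate polynomial multipliers so that it fits the inductive template of Defn.~\ref{defn:PI_ND}, and that composing such a multiplier with $\prod_i\mcl A_{i,\alpha_i}$ does not leave the class (it does not, since a univariate polynomial multiplier in $s_i$ composed with a univariate 3-PI operator in $s_i$ is again a univariate 3-PI operator in $s_i$ --- this is part of the 1D $*$-algebra recalled from~\cite{shivakumar2024GPDE}). Everything else is bookkeeping with the explicit formulas of Cor.~\ref{cor:gohberg_PIE}, confirming polynomiality of the kernels $\mbs G_T,\mbs G_A$ and the multiplier $\mbs H_d$, together with a direct appeal to the closure of $\PIset_N[\Omega]$ under sums and products that is built into Defn.~\ref{defn:PI_ND}.
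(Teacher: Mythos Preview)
Your proposal is correct and follows essentially the same approach as the paper: identify each $\mcl{T}_i$ and $\mcl{A}_{i,\alpha_i}$ as a univariate 3-PI operator from the explicit formulas in Cor.~\ref{cor:gohberg_PIE}, then use closure of $\PIset_N$ under sums and products, with the matrix-valued case handled entrywise. The only cosmetic difference is that the paper dispatches the multiplier $\tnf{M}[\mbs{A}_\alpha]$ by observing $\tnf{M}[\mbs{A}_\alpha]\in\PIset_N$ and invoking closure under composition (Prop.~\ref{prop:PI_composition_ND}), whereas you expand $\mbs{A}_\alpha$ into monomials by hand; both amount to the same thing.
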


The proof follows directly using the inductive definition of multivariate 3-PI operators 
(see Lem.~\ref{lem:PI_op_T_appx} in Appx.~\ref{appx:PIs} for a formal proof).
The following example illustrates the parameterization of multivariate 3-PI operators by examining the $\mcl{T}$ and $\mcl{A}$ as obtained from the PIE representation of the 2D PDE from Example~\ref{ex:BCs_DN_2}.
\begin{example}\label{ex:BCs_DN_3}
	Consider the reaction-diffusion equation
	\begin{align*}
		\mbf{u}_{t}(t,x,y)&=\mbf{u}_{xx}(t,x,y)+\mbf{u}_{yy}(t,x,y)+\mbf{u}(t,x,y),	\\
		\mbf{u}(t,0,y)&=\mbf{u}(t,1,y)=0, \quad \mbf{u}(t,x,0)=\mbf{u}_{y}(t,x,1)=0, 
	\end{align*}
	where $\mbf{u}(t)\in S_{2}^{\vec{2}}[[0,1]^2]$.
	Defining $\mcl{T}_{1},\mcl{T}_{2}$ as in Example~\ref{ex:BCs_DN_2}, we can equivalently represent the PDE as a PIE, $\partial_{t}\mcl{T}\mbf{v}(t)=\mcl{A}\mbf{v}(t)$, where $\mcl{T}=\mcl{T}_{1}\mcl{T}_{2}$ and $\mcl{A}=\!\mcl{T}_{2}+\mcl{T}_{1}+\mcl{T}$. Here, for each $i\in\{1,2\}$, the operator $\mcl{T}_{i}\in\PIset_{1}[0,1]$ is defined by parameters $\mbf{T}_{i}=\{0,\mbs{T}_{i}^{1},\mbs{T}_{i}^{2}\}\in\PIparam_{1}[0,1]$, where
	\begin{align*}
		\mbs{T}_{1}^{1}(x,\theta)&:=-\theta(1-x),	&
		\mbs{T}_{1}^{2}(x,\theta)&:=-x(1-\theta),		\\
		\mbs{T}_{2}^{1}(y,\eta)&:=-\eta,				&
		\mbs{T}_{2}^{2}(y,\eta)&:=-y.
	\end{align*}
	It follows that $\mcl{T},\mcl{A}\in\PIset_{2}[[0,1]^2]$. 
\end{example}

Having now defined a suitable class of $N$D 3-PI operators, in the following subsection we show that this class is a $*$-algebra, providing explicit expressions for composition and adjoint in terms of the parameters defining the operators.

\subsection{$N$D 3-PI Operators Form a $*$-Algebra}\label{subsec:PIs:algebra}

A $*$-algebra is a vector space equipped with a multiplication and involution operation which satisfy certain properties. It is well-known that both the space of multiplier operators with bounded multipliers and that of integral operators with square-integrable kernels are $*$-algebras, using composition ($\circ$) for the multiplication operation, and the adjoint ($*$) with respect to the $L_{2}$ inner product as involution operation. Since 3-PI operators are defined by (bounded) multiplier and integral operators, we can similarly define a multiplication and involution operation on $\PIset_{N}^{n\times n}[\Omega]$ as the composition and adjoint, respectively, so that for $\mcl{Q},\mcl{R}\in\PIset_{N}^{n\times n}[\Omega]$ we define $(\mcl{Q}\circ\mcl{R})\mbf{v}:=\mcl{Q}(\mcl{R}\mbf{v})$ and $\mcl{P}=\mcl{R}^*$ if $\ip{\mcl{P}\mbf{u}}{\mbf{v}}_{L_{2}}=\ip{\mbf{u}}{\mcl{R}\mbf{v}}_{L_{2}}$
for all $\mbf{u},\mbf{v}\in L_{2}^{n}$. In order to prove that $\PIset_{N}^{n\times n}[\Omega]$ is a $*$-algebra, then, we need only show that $\PIset_{N}^{n\times n}$ is a vector space which is closed under composition and adjoint operations. To reduce notational complexity, we will prove this fact only for the scalar-valued case, $n=1$, noting that this result readily generalizes to arbitrary $n\in\N$ using the standard matrix product and transpose operations.

To start, we note that (by definition) $\PIset_{N}$ is indeed a vector space, as shown in the following lemma.

\begin{lem}\label{lem:PI_sum_ND}
	For $N\in\N$ and $\Omega:=\prod_{i=1}^{N}[a_{i},b_{i}]$, if $\mcl{Q},\mcl{R}\in\PIset_{N}[\Omega]$, then $\lambda\mcl{Q}+\mu\mcl{R}\in\PIset_{N}[\Omega]$ for all $\lambda,\mu\in\R$.
\end{lem}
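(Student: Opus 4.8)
The plan is to reduce the statement to the univariate case using the inductive definition of $\PIset_{N}[\Omega]$. Since $\mcl{Q},\mcl{R}\in\PIset_{N}[\Omega]$, by Defn.~\ref{defn:PI_ND} we may write $\mcl{Q}=\sum_{j=1}^{M_Q}\prod_{i=1}^{N}\mcl{Q}_{j,i}$ and $\mcl{R}=\sum_{j=1}^{M_R}\prod_{i=1}^{N}\mcl{R}_{j,i}$ with each $\mcl{Q}_{j,i},\mcl{R}_{j,i}\in\PIset_{1}[a_{i},b_{i}]$ acting on variable $s_i$. The key observation is that scalar multiplication distributes into any single factor of a product composition: for any $\lambda\in\R$, $\lambda\prod_{i=1}^{N}\mcl{Q}_{j,i}=\prod_{i=1}^{N}\tilde{\mcl{Q}}_{j,i}$ where $\tilde{\mcl{Q}}_{j,1}:=\lambda\mcl{Q}_{j,1}$ and $\tilde{\mcl{Q}}_{j,i}:=\mcl{Q}_{j,i}$ for $i\geq 2$, provided that $\lambda\mcl{Q}_{j,1}\in\PIset_{1}[a_1,b_1]$.

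First I would establish this univariate closure fact: the class $\PIset_{1}^{m\times n}[a,b]$ is closed under scalar multiplication (indeed under linear combination). This is immediate from Defn.~\ref{defn:PI_1D}, since if $\mcl{R}_1=\PI_1[\{\mbs{R}_0,\mbs{R}_1,\mbs{R}_2\}]$ then $\lambda\mcl{R}_1=\PI_1[\{\lambda\mbs{R}_0,\lambda\mbs{R}_1,\lambda\mbs{R}_2\}]$, and polynomial parameters are preserved under scaling. (If this fact is not stated earlier, I would include a one-line verification.)

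Next I would assemble the pieces: $\lambda\mcl{Q}+\mu\mcl{R}$ is a sum of $M_Q+M_R$ terms, each of which is a product composition of $N$ univariate 3-PI operators (after absorbing the scalar $\lambda$ or $\mu$ into the first factor of each product, using the univariate closure fact). Hence $\lambda\mcl{Q}+\mu\mcl{R}$ has exactly the form required by Defn.~\ref{defn:PI_ND} with $M=M_Q+M_R$, so $\lambda\mcl{Q}+\mu\mcl{R}\in\PIset_{N}[\Omega]$. I do not anticipate a serious obstacle here; the only point requiring minor care is the bookkeeping that the operators $\mcl{Q}_{j,i}$ act on distinct variables and that absorbing a scalar into one factor does not disturb this structure, which is clear since scalar multiplication commutes with everything. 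One could also note, if desired, that the $\PIset_{N}[\Omega]$ obviously contains the zero operator (take $M=1$ and all factors zero, or $M$ arbitrary with zero parameters), completing the verification that it is a vector subspace of $\mcl{L}(L_2[\Omega])$.
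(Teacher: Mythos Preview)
Your proposal is correct and matches the paper's proof essentially line for line: the paper also absorbs the scalar into the first univariate factor by setting $\lambda\mbf{Q}_{j,1}:=\{\lambda\mbs{Q}_{j,1}^{0},\lambda\mbs{Q}_{j,1}^{1},\lambda\mbs{Q}_{j,1}^{2}\}$, verifies $\lambda\PI_{1}[\mbf{Q}_{j,1}]=\PI_{1}[\lambda\mbf{Q}_{j,1}]$, and then observes that $\lambda\mcl{Q}+\mu\mcl{R}$ is a finite sum of $\PI_{N}$ terms.
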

\begin{proof}
	The result follows from the definition of $\PIset_{N}$, and linearity of integral and multiplier operators. A full proof is given in Lem.~\ref{lem:PI_sum_ND_appx} in Appx.~\ref{appx:PIs}.
\end{proof}

Having established that the set $\PIset_{N}$ is a vector space, it remains to prove that it is closed under composition and adjoint operations. However, these properties readily follow from the inductive definition of multivariate 3-PI operators in terms of univariate 3-PI operators---the latter of which have already been proven to form a $*$-algebra. 
Specifically, recall the following result from~\cite{shivakumar2022GPDE_Arxiv}, proving that the composition of two univariate 3-PI operators is again a univariate 3-PI operator.
\begin{lem}\label{lem:PI_composition_1D}
	For $[a,b]\subseteq\R$,
	if $\mcl{Q},\mcl{R}\in\PIset_{1}[a,b]$, then $\mcl{P}:=\mcl{Q}\circ\mcl{R}\in\PIset_{1}[a,b]$. In particular, if $\mcl{Q}$ and $\mcl{R}$ are defined by parameters $\{\mbs{Q}_{0},\mbs{Q}_{1},\mbs{Q}_{2}\}\in\PIparam_{1}[a,b]$ and $\{\mbs{R}_{0},\mbs{R}_{1},\mbs{R}_{2}\}\in\PIparam_{1}[a,b]$, respectively, then $\mcl{P}$ is defined by parameters $\{\mbs{P}_{0},\mbs{P}_{1},\mbs{P}_{2}\}\in\PIparam_{1}[a,b]$, where $\mbs{P}_{0}(s):=\mbs{Q}_{0}(s)\mbs{R}_{0}(s)$ and
	\begin{align*}
		&~\mbs{P}_{1}(s,\theta):=\mbs{Q}_{0}(s)\mbs{R}_{1}(s,\theta) +\mbs{Q}_{1}(s,\theta)\mbs{R}_{0}(\theta) \\[-0.2em] &\qquad +{\textstyle\int_{a}^{\theta}}\mbs{Q}_{1}(s,\eta)\mbs{R}_{2}(\eta,\theta)d\eta +{\textstyle\int_{\theta}^{s}}\mbs{Q}_{1}(s,\eta)\mbs{R}_{1}(\eta,\theta)d\eta	\\
		&\qquad\qquad+{\textstyle\int_{s}^{b}}\mbs{Q}_{2}(s,\eta)\mbs{R}_{1}(\eta,\theta)d\eta, \\
		&~\mbs{P}_{2}(s,\theta):=\mbs{Q}_{0}(s)\mbs{R}_{2}(s,\theta) +\mbs{Q}_{2}(s,\theta)\mbs{R}_{0}(\theta) \\[-0.2em] &\qquad +{\textstyle\int_{a}^{s}}\mbs{Q}_{1}(s,\eta)\mbs{R}_{2}(\eta,\theta)d\eta +{\textstyle\int_{s}^{\theta}}\mbs{Q}_{2}(s,\eta)\mbs{R}_{2}(\eta,\theta)d\eta	\\
		&\qquad\qquad+{\textstyle\int_{\theta}^{b}}\mbs{Q}_{2}(s,\eta)\mbs{R}_{1}(\eta,\theta)d\eta.\\[-1.8\baselineskip]
	\end{align*}
\end{lem}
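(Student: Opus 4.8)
The plan is to compute $(\PI_{1}[\mbf{Q}]\comp\PI_{1}[\mbf{R}])\mbf{v}$ directly and reorganize the result into 3-PI form. Writing $\mbf{w}:=\PI_{1}[\mbf{R}]\mbf{v}$, so that
\[
\mbf{w}(\eta)=\mbs{R}_{0}(\eta)\mbf{v}(\eta)+\int_{a}^{\eta}\mbs{R}_{1}(\eta,\theta)\mbf{v}(\theta)d\theta+\int_{\eta}^{b}\mbs{R}_{2}(\eta,\theta)\mbf{v}(\theta)d\theta,
\]
I would substitute this into $(\PI_{1}[\mbf{Q}]\mbf{w})(s)=\mbs{Q}_{0}(s)\mbf{w}(s)+\int_{a}^{s}\mbs{Q}_{1}(s,\eta)\mbf{w}(\eta)d\eta+\int_{s}^{b}\mbs{Q}_{2}(s,\eta)\mbf{w}(\eta)d\eta$ and expand, obtaining nine terms. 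One of these, $\mbs{Q}_{0}(s)\mbs{R}_{0}(s)\mbf{v}(s)$, is exactly the pointwise part $\mbs{P}_{0}(s)\mbf{v}(s)$. Four further terms are single integrals: after renaming the integration variable to $\theta$, the terms $\mbs{Q}_{0}(s)\int_{a}^{s}\mbs{R}_{1}$ and $\int_{a}^{s}\mbs{Q}_{1}(s,\theta)\mbs{R}_{0}(\theta)$ are supported on $\theta\le s$ and contribute $\mbs{Q}_{0}\mbs{R}_{1}$ and $\mbs{Q}_{1}\mbs{R}_{0}$ to $\mbs{P}_{1}$, while $\mbs{Q}_{0}(s)\int_{s}^{b}\mbs{R}_{2}$ and $\int_{s}^{b}\mbs{Q}_{2}(s,\theta)\mbs{R}_{0}(\theta)$ are supported on $\theta>s$ and contribute $\mbs{Q}_{0}\mbs{R}_{2}$ and $\mbs{Q}_{2}\mbs{R}_{0}$ to $\mbs{P}_{2}$.

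The core of the argument is handling the four remaining iterated double integrals, namely $\mbs{Q}_{1}$ paired with $\mbs{R}_{1}$, $\mbs{Q}_{1}$ with $\mbs{R}_{2}$, $\mbs{Q}_{2}$ with $\mbs{R}_{1}$, and $\mbs{Q}_{2}$ with $\mbs{R}_{2}$. For each I would interchange the order of the $\eta$- and $\theta$-integrations (justified by Fubini, since each integrand is a product of polynomials --- hence bounded on the compact region --- with the $L_{1}$ function $\mbf{v}$), write the region of integration explicitly as a subset of $\{(\eta,\theta)\in[a,b]^{2}\}$, and split it along the line $\theta=s$ to sort the contributions into the $\theta\le s$ pile (which assembles into $\mbs{P}_{1}$) and the $\theta>s$ pile (which assembles into $\mbs{P}_{2}$). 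The $\mbs{Q}_{1}$-against-$\mbs{R}_{1}$ integral has region $\{a\le\theta\le\eta\le s\}$, lying entirely in $\{\theta\le s\}$, and contributes $\int_{\theta}^{s}\mbs{Q}_{1}(s,\eta)\mbs{R}_{1}(\eta,\theta)d\eta$ to $\mbs{P}_{1}$; symmetrically, the $\mbs{Q}_{2}$-against-$\mbs{R}_{2}$ integral lies entirely in $\{\theta>s\}$ and contributes $\int_{s}^{\theta}\mbs{Q}_{2}(s,\eta)\mbs{R}_{2}(\eta,\theta)d\eta$ to $\mbs{P}_{2}$. The remaining two genuinely split: $\int_{a}^{s}\mbs{Q}_{1}(s,\eta)\int_{\eta}^{b}\mbs{R}_{2}(\eta,\theta)\mbf{v}(\theta)d\theta\,d\eta$ has region $\{a\le\eta\le s,\ \eta\le\theta\le b\}=\{a\le\eta\le\theta\le s\}\cup\{a\le\eta\le s<\theta\le b\}$, yielding $\int_{a}^{\theta}\mbs{Q}_{1}(s,\eta)\mbs{R}_{2}(\eta,\theta)d\eta$ in $\mbs{P}_{1}$ and $\int_{a}^{s}\mbs{Q}_{1}(s,\eta)\mbs{R}_{2}(\eta,\theta)d\eta$ in $\mbs{P}_{2}$; likewise $\int_{s}^{b}\mbs{Q}_{2}(s,\eta)\int_{a}^{\eta}\mbs{R}_{1}(\eta,\theta)\mbf{v}(\theta)d\theta\,d\eta$ splits into contributions $\int_{s}^{b}\mbs{Q}_{2}(s,\eta)\mbs{R}_{1}(\eta,\theta)d\eta$ in $\mbs{P}_{1}$ and $\int_{\theta}^{b}\mbs{Q}_{2}(s,\eta)\mbs{R}_{1}(\eta,\theta)d\eta$ in $\mbs{P}_{2}$. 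Collecting all contributions reproduces exactly the stated $\mbs{P}_{0},\mbs{P}_{1},\mbs{P}_{2}$.

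It then remains only to verify that $\mbf{Q}\comp\mbf{R}=\{\mbs{P}_{0},\mbs{P}_{1},\mbs{P}_{2}\}$ genuinely lies in $\PIparam_{1}[a,b]$, i.e. that the three parameters are polynomial: $\mbs{P}_{0}$ is a product of polynomials, and each inner integral appearing in $\mbs{P}_{1},\mbs{P}_{2}$ is the integral in $\eta$ of a polynomial whose coefficients are polynomial in $(s,\theta)$, evaluated between affine limits in $s$ or $\theta$, hence again polynomial in $(s,\theta)$. Since the above identity holds for every $\mbf{v}\in L_{2}$, this yields $\PI_{1}[\mbf{Q}]\comp\PI_{1}[\mbf{R}]=\PI_{1}[\mbf{Q}\comp\mbf{R}]\in\PIset_{1}[a,b]$. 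The matrix-valued statement follows from the identical computation, retaining all matrix products in the order written.

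The main obstacle is purely organizational bookkeeping: across the four iterated integrals there are several regions to split at $\theta=s$, and it is easy to drop a piece or reverse an orientation. I would mitigate this by first rewriting each of the nine expanded terms together with its explicit region of integration in the $(\eta,\theta)$-plane, and only then performing the interchanges and case splits.
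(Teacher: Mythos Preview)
Your proposal is correct and takes essentially the same approach as the paper, which simply states that the result ``follows by explicitly taking the composition $\PI_{1}[\mbf{Q}]\circ\PI_{1}[\mbf{R}]$, and performing standard algebraic manipulations'' and defers the details to an external reference. Your expansion into nine terms, the Fubini-based interchange of the $(\eta,\theta)$-integrations, and the splitting of the mixed $\mbs{Q}_{1}\mbs{R}_{2}$ and $\mbs{Q}_{2}\mbs{R}_{1}$ regions along $\theta=s$ are exactly the ``standard algebraic manipulations'' the paper has in mind, and your bookkeeping is accurate.
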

\begin{proof}
	We refer to Lem. 36 in~\cite{shivakumar2022GPDE_Arxiv} for a proof.
\end{proof}

Lem.~\ref{lem:PI_composition_1D} shows that the composition of univariate 3-PI operators again defines a univariate 3-PI operator, providing an explicit expression for the parameters defining this composition. Using this expression, and the inductive definition of multivariate PI operators, we then obtain the following result for the composition of $N$D 3-PI operators.
\begin{prop}[Composition of $N$D PI Operators]\label{prop:PI_composition_ND}
	For any $N\in\N$ and $\Omega=\prod_{i=1}^{N}[a_{i},b_{i}]$, if $\mcl{Q},\mcl{R}\in\PIset_{N}[\Omega]$, then $\mcl{P}:=\mcl{Q}\circ\mcl{R}\in\PIset_{N}[\Omega]$. In particular, if $\mcl{Q}=\sum_{j=1}^{M}\prod_{i=1}^{N}\mcl{Q}_{j,i}$ and $\mcl{R}=\sum_{k=1}^{K}\prod_{i=1}^{N}\mcl{R}_{k,i}$ where $\mcl{Q}_{j,i},\mcl{R}_{k,i}\in\PIset_{1}[a_{i},b_{i}]$ for $i\in\{1:N\}$, $j\in\{1:M\}$ and $k\in\{1:K\}$, then $\mcl{P}=\sum_{j=1}^{M}\sum_{k=1}^{K}\prod_{i=1}^{N}\mcl{Q}_{j,i}\circ\mcl{R}_{k,i}$, where the parameters defining $\mcl{Q}_{j,i}\circ\mcl{R}_{k,i}\in\PIset_{1}[a_{i},b_{i}]$ are given in Lem.~\ref{lem:PI_composition_1D}.
\end{prop}
\begin{proof}
	We prove the result for $M=K=1$, noting that the extension to arbitrary $M,K\in\N$ then follows by Lem.~\ref{lem:PI_sum_ND}.
	Let $\mcl{Q}=\prod_{i=1}^{N}\mcl{Q}_{i}$ and $\mcl{R}=\prod_{i=1}^{N}\mcl{R}_{i}$ for $\mcl{Q}_{i},\mcl{R}_{i}\in\PIset_{1}[a_{i},b_{i}]$ for $i\in\{1:N\}$. Then, for each $i,\ell\in\{1:N\}$, $\mcl{Q}_{i}$ operates only on variable $s_{i}$, and $\mcl{R}_{\ell}$ only on variable $s_{\ell}$. Since the operators are defined by scalar-valued parameters, this implies $\mcl{Q}_{i}\circ\mcl{R}_{\ell}=\mcl{R}_{\ell}\circ\mcl{Q}_{i}$ whenever $i\neq \ell$ (see Lem.~\ref{lem:PI_prod_commute} in Appx.~\ref{appx:PIs} for a formal proof). Using this fact, it follows that
	\begin{equation*}
		\prod_{i=1}^{N}\mcl{Q}_{i}\circ\prod_{\ell=1}^{N}\mcl{R}_{\ell}=\prod_{i=1}^{N}\mcl{Q}_{i}\circ\mcl{R}_{i},
	\end{equation*}
	(see Cor.~\ref{cor:PI_prod_commute} in Appx.~\ref{appx:PIs} for a formal proof). Letting $\mcl{P}_{i}:=\mcl{Q}_{i}\circ\mcl{R}_{i}$ for each $i\in\{1:N\}$, we find $\mcl{Q}\circ\mcl{R}=\prod_{i=1}^{N}\mcl{P}_{i}$. Since, by Lem.~\ref{lem:PI_composition_1D}, $\mcl{P}_{i}\in \PIset_{1}[a_{i},b_{i}]$, we conclude that $\mcl{P}\in\PIset_{N}[\Omega]$
\end{proof}

Lem.~\ref{lem:PI_adjoint_1D} shows that, for any $\mcl{Q},\mcl{R}\in\PIset_{N}$, we can define $\mcl{P}\in\PIset_{N}$ such that $\mcl{P}=\mcl{Q}\circ\mcl{R}$---proving that $\PIset_{N}$ is closed under compositions.
It remains only to prove, then, that $\PIset_{N}$ is also closed under the adjoint on $L_{2}$. This is relatively trivial, based on the following result for the adjoint of univariate 3-PI operators from e.g.~\cite{shivakumar2022GPDE_Arxiv}. 
\begin{lem}\label{lem:PI_adjoint_1D}
	For $[a,b]\subseteq\R$, if $\mcl{R}\in\PIset_{1}[a,b]$, then $\mcl{R}^*\in\PIset_{1}[a,b]$. In particular, if $\mcl{R}$ is defined by parameters $\{\mbs{R}_{0},\mbs{R}_{1},\mbs{R}_{2}\}\in\PIparam_{1}[a,b]$, then $\mcl{R}^*$ is defined by parameters $\{\mbs{P}_{0},\mbs{P}_{1},\mbs{P}_{2}\}\in\PIparam_{1}[a,b]$, where
	$\mbs{P}_{0}(s):=\mbs{R}_{0}(s)$, $\mbs{P}_{1}(s,\theta):=\mbs{R}_{2}(\theta,s)$ and $\mbs{P}_{2}(s,\theta):=\mbs{R}_{1}(\theta,s)$.
\end{lem}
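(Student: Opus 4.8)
The plan is to verify the claimed formula directly from the defining property of the $L_2$-adjoint, namely that $\mcl{R}^{\adj}$ is the unique operator with $\ip{\mcl{R}^{\adj}\mbf{u}}{\mbf{v}}_{L_2}=\ip{\mbf{u}}{\mcl{R}\mbf{v}}_{L_2}$ for all $\mbf{u},\mbf{v}\in L_2^{n}[a,b]$. Since $\PI_{1}[\mbf{R}]$ is a bounded operator on $L_2^{n}[a,b]$ --- being the sum of a bounded multiplier operator with polynomial multiplier and two integral operators with polynomial (hence bounded) kernels --- its adjoint exists and is unique, so it suffices to exhibit a single $3$-PI operator satisfying the adjoint identity. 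As elsewhere in this subsection, I will carry out the computation for the scalar case $n=1$; the matrix case follows identically upon replacing each scalar transpose by the matrix transpose, which accounts for the transposes implicit in the stated formulae.

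First I would expand $\ip{\mbf{u}}{\mcl{R}\mbf{v}}_{L_2}=\int_{a}^{b}\mbf{u}(s)\bl(\mbs{R}_{0}(s)\mbf{v}(s)+\int_{a}^{s}\mbs{R}_{1}(s,\theta)\mbf{v}(\theta)d\theta+\int_{s}^{b}\mbs{R}_{2}(s,\theta)\mbf{v}(\theta)d\theta\br)ds$ and treat the three summands separately. The multiplier term is immediate: $\int_{a}^{b}\mbf{u}(s)\mbs{R}_{0}(s)\mbf{v}(s)\,ds$ shows that the multiplier part of $\mcl{R}^{\adj}$ is again $\tnf{M}[\mbs{R}_{0}]$, giving $\mbs{P}_{0}=\mbs{R}_{0}$. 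For the first integral term, $\int_{a}^{b}\int_{a}^{s}\mbf{u}(s)\mbs{R}_{1}(s,\theta)\mbf{v}(\theta)\,d\theta\,ds$, the domain of integration is the triangle $\{(s,\theta):a\le\theta\le s\le b\}$; applying Fubini to exchange the order of integration and then relabeling the outer variable as $s$ and inner as $\theta$ rewrites this as $\int_{a}^{b}\bl(\int_{s}^{b}\mbs{R}_{1}(\theta,s)\mbf{u}(\theta)\,d\theta\br)\mbf{v}(s)\,ds$, which is exactly the action of $\PI_{1}$ with kernel $\mbs{P}_{2}(s,\theta):=\mbs{R}_{1}(\theta,s)$ in the ``$\int_{s}^{b}$'' slot. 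The second integral term is handled the same way: over the triangle $\{(s,\theta):a\le s\le\theta\le b\}$, Fubini and relabeling turn $\int_{a}^{b}\int_{s}^{b}\mbf{u}(s)\mbs{R}_{2}(s,\theta)\mbf{v}(\theta)\,d\theta\,ds$ into the $\PI_{1}$ action with kernel $\mbs{P}_{1}(s,\theta):=\mbs{R}_{2}(\theta,s)$ in the ``$\int_{a}^{s}$'' slot.

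Collecting the three contributions gives an operator of $3$-PI form with parameters $\mbf{R}^{\adj}=\{\mbs{R}_{0},\,\mbs{R}_{2}(\theta,s),\,\mbs{R}_{1}(\theta,s)\}$; since swapping the two arguments of a polynomial yields a polynomial, $\mbf{R}^{\adj}\in\PIparam_{1}[a,b]$ and hence $\PI_{1}[\mbf{R}^{\adj}]\in\PIset_{1}[a,b]$. By uniqueness of the adjoint this operator equals $\mcl{R}^{\adj}$, which is the claim. There is no real obstacle here --- the only point requiring care is the bookkeeping in the two applications of Fubini: the exchange of integration order sends the ``lower'' kernel $\mbs{R}_{1}$ (paired with $\int_{a}^{s}$) to the ``upper'' slot $\mbs{P}_{2}$ (paired with $\int_{s}^{b}$) and conversely sends $\mbs{R}_{2}$ to $\mbs{P}_{1}$, which is precisely the swap appearing in the statement.
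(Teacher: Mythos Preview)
Your proof is correct. The paper itself does not prove this lemma but simply cites an external reference (Lem.~37 in \cite{shivakumar2022GPDE_Arxiv}); your direct Fubini-based computation is exactly the standard argument one would expect that citation to contain, and your bookkeeping on which kernel lands in which slot is right.
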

\begin{proof}
	We refer to Lem.~37 in~\cite{shivakumar2022GPDE_Arxiv} for a proof.
\end{proof}


Using the expression for the adjoint of a 1D 3-PI operator from Lem.~\ref{lem:PI_adjoint_1D}, the following lemma shows how we may similarly compute the adjoint of an $N$D 3-PI operator, proving that this adjoint is an $N$D 3-PI operator as well.
\begin{lem}\label{lem:PI_adjoint_ND}
	For $N\in\N$, $\Omega:=\prod_{i=1}^{N}[a_{i},b_{i}]$, if $\mcl{R}\in\PIset_{N}[\Omega]$, then $\mcl{R}^*\in\PIset_{N}[\Omega]$. In particular, if $\mcl{R}=\sum_{j=1}^{M}\prod_{i=1}^{N}\mcl{R}_{j,i}$ for some $\mcl{R}_{j,i}\in\PIset_{1}[a_{i},b_{i}]$, then $\mcl{R}^*=\sum_{j=1}^{M}\prod_{i=1}^{N}\mcl{R}_{j,i}^*$, where the parameters defining $\mcl{R}_{j,i}^*$ are given in Lem.~\ref{lem:PI_adjoint_1D}.
\end{lem}
\begin{proof}
	The result follows immediately from the definition of multivariate 3-PI operators, and the commutative properties of univariate 3-PI operators. 
	A full proof is given in Lem.~\ref{lem:PI_adjoint_ND_appx} in Appx.~\ref{appx:PIs}.
\end{proof}



By Lem.~\ref{lem:PI_adjoint_ND}, the adjoint of any multivariate 3-PI operator is again a multivariate 3-PI operator. Using this result, as well as Prop.~\ref{prop:PI_composition_ND} and Lem.~\ref{lem:PI_sum_ND}, it then follows that $\PIset_{N}[\Omega]$ in fact defines a $*$-algebra.

\begin{prop}\label{prop:*algebra}
	For any $N\in\N$ and $\Omega:=\prod_{i=1}^{N}[a_{i},b_{i}]$, $\PIset_{N}[\Omega]$ with multiplication defined by composition and involution defined by the adjoint on $L_{2}$, is a $*$-algebra.
\end{prop}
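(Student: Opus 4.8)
The plan is to verify the three axioms of a $*$-algebra for $\PIset_{N}[\Omega]$ by assembling the results already established in this subsection. Recall that a $*$-algebra over $\R$ is an associative $\R$-algebra $\mcl{B}$ equipped with an involution $*:\mcl{B}\to\mcl{B}$ satisfying $(\lambda\mcl{Q}+\mu\mcl{R})^*=\lambda\mcl{Q}^*+\mu\mcl{R}^*$, $(\mcl{Q}\circ\mcl{R})^*=\mcl{R}^*\circ\mcl{Q}^*$, and $(\mcl{Q}^*)^*=\mcl{Q}$. So the steps are: (i) $\PIset_{N}[\Omega]$ is an $\R$-vector space closed under composition, hence an associative $\R$-algebra; (ii) the adjoint maps $\PIset_{N}[\Omega]$ into itself and is conjugate-linear (here $\R$-linear) and anti-multiplicative; (iii) the adjoint is an involution, i.e. $(\mcl{R}^*)^*=\mcl{R}$.

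For step (i): $\PIset_{N}[\Omega]\subseteq\mcl{L}(L_2[\Omega])$ is a subset of a vector space, and Lem.~\ref{lem:PI_sum_ND} shows it is closed under linear combinations, so it is an $\R$-subspace. Prop.~\ref{prop:PI_composition_ND} shows it is closed under composition, and composition of bounded operators is automatically associative and bilinear over $\R$ (distributivity and scalar compatibility are inherited from $\mcl{L}(L_2[\Omega])$), so $\PIset_{N}[\Omega]$ is an associative $\R$-algebra. For step (ii): Lem.~\ref{lem:PI_adjoint_ND} shows $\mcl{R}\in\PIset_{N}[\Omega]$ implies $\mcl{R}^*\in\PIset_{N}[\Omega]$, so the adjoint restricts to a well-defined map on $\PIset_{N}[\Omega]$; the identities $(\lambda\mcl{Q}+\mu\mcl{R})^*=\lambda\mcl{Q}^*+\mu\mcl{R}^*$ and $(\mcl{Q}\circ\mcl{R})^*=\mcl{R}^*\circ\mcl{Q}^*$ hold for all bounded operators on a Hilbert space and so hold in particular on $\PIset_{N}[\Omega]$. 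For step (iii): $(\mcl{R}^*)^*=\mcl{R}$ likewise holds for every bounded operator on $L_2[\Omega]$, and since both $\mcl{R}$ and $\mcl{R}^*$ lie in $\PIset_{N}[\Omega]$ by Lem.~\ref{lem:PI_adjoint_ND}, the involution property is inherited.

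Honestly, there is no real obstacle here: the entire content of the proposition is packaged into Lem.~\ref{lem:PI_sum_ND}, Prop.~\ref{prop:PI_composition_ND}, and Lem.~\ref{lem:PI_adjoint_ND}, and the remaining algebraic identities ($*$-linearity, anti-multiplicativity, idempotence of the adjoint) are standard facts about $\mcl{L}(L_2)$ that $\PIset_{N}[\Omega]$ inherits as a subset. The only thing worth being careful about is making explicit that $\PIset_{N}[\Omega]$ is closed under the adjoint (so that $*$ really is an internal operation and not just a map into $\mcl{L}(L_2)$) — but that is exactly Lem.~\ref{lem:PI_adjoint_ND}. So the proof is essentially a one-line citation of these three results plus the observation that the ambient $C^*$-algebra $\mcl{L}(L_2[\Omega])$ supplies the remaining axioms; I would write it as such, perhaps spelling out the three bullet identities for completeness. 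The matrix-valued case $\PIset_{N}^{n\times n}[\Omega]$ follows by the usual componentwise argument with transpose in place of adjoint, as the authors note, so I would mention that in a closing sentence.

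\begin{proof}
	By Lem.~\ref{lem:PI_sum_ND}, $\PIset_{N}[\Omega]$ is closed under linear combinations, and since it is a subset of the vector space $\mcl{L}(L_2[\Omega])$, it is an $\R$-subspace. By Prop.~\ref{prop:PI_composition_ND}, $\PIset_{N}[\Omega]$ is closed under composition, and composition in $\mcl{L}(L_2[\Omega])$ is associative and bilinear over $\R$; hence $\PIset_{N}[\Omega]$, with multiplication given by composition, is an associative $\R$-algebra. By Lem.~\ref{lem:PI_adjoint_ND}, $\mcl{R}\in\PIset_{N}[\Omega]$ implies $\mcl{R}^*\in\PIset_{N}[\Omega]$, so the $L_2$-adjoint restricts to a well-defined map on $\PIset_{N}[\Omega]$. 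It remains to note that, for all $\mcl{Q},\mcl{R}\in\PIset_{N}[\Omega]$ and $\lambda,\mu\in\R$, the identities
	\begin{align*}
		(\lambda\mcl{Q}+\mu\mcl{R})^* &=\lambda\mcl{Q}^*+\mu\mcl{R}^*, &
		(\mcl{Q}\circ\mcl{R})^* &=\mcl{R}^*\circ\mcl{Q}^*, &
		(\mcl{R}^*)^* &=\mcl{R},
	\end{align*}
	hold, as these are standard properties of the adjoint of bounded operators on the Hilbert space $L_2[\Omega]$, inherited by $\PIset_{N}[\Omega]$ as a subset closed under $*$. Therefore $\PIset_{N}[\Omega]$ is a $*$-algebra. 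The matrix-valued case $\PIset_{N}^{n\times n}[\Omega]$ follows by applying this argument componentwise, with the matrix transpose in place of the scalar adjoint.
\end{proof}
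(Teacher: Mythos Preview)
Your proposal is correct and follows essentially the same approach as the paper: both arguments cite Lem.~\ref{lem:PI_sum_ND}, Prop.~\ref{prop:PI_composition_ND}, and Lem.~\ref{lem:PI_adjoint_ND} to establish closure under linear combination, composition, and adjoint, and then inherit the remaining $*$-algebra axioms from the ambient algebra $\mcl{L}(L_2[\Omega])$. Your version is slightly more explicit in listing the adjoint identities, but the substance is identical.
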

\begin{proof}
	To prove this result, we use the fact that $\mcl{L}(L_{2}[\Omega])$ is a $*$-algebra, with multiplication defined by composition, and involution defined by the adjoint on $L_{2}$. Since $\PIset_{N}[\Omega]\subseteq \mcl{L}(L_{2}[\Omega])$, the composition and adjoint on $L_{2}$ then also define suitable multiplication and involution operations, respectively, on $\PIset_{N}[\Omega]$. Since $\PIset_{N}[\Omega]$ is a vector space (by Lem.~\ref{lem:PI_sum_ND}) which is closed under composition and adjoint operations (by Prop.~\ref{prop:PI_composition_ND} and Lem.~\ref{lem:PI_adjoint_ND}, respectively), it follows that $\PIset_{N}[\Omega]$ is a $*$-algebra.
\end{proof}

Prop.~\ref{prop:*algebra} proves that the set $\PIset_{N}[\Omega]$ of $N$D 3-PI operators is a $*$-algebra, being closed under linear combinations, composition, and adjoint. This result also generalizes directly to $\PIset_{N}^{n\times n}$, using the standard formulae for matrix multiplication and transposition. In the following section, we will use the $*$-algebraic properties of $N$D 3-PI operators to show how stability of linear $N$D PDEs can be tested in the PIE representation using convex optimization.

\section{Stability Analysis in the PIE Representation using Semidefinite Programming}\label{sec:stability}

Having shown that any sufficiently well-posed linear $N$D PDE as in~\eqref{eq:PDE_standard} admits an equivalent PIE representation of the form $\partial_{t}\mcl{T}\mbf{v}= \mcl{A}\mbf{v}$ where $\mbf v \in L_2$, and having shown that the parameters $\mcl T,\mcl{A}$ lie in the $*$-algebra of bounded linear multivariate 3-PI operators, we now exploit this representation to obtain stability conditions which may be tested using convex optimization. Specifically, in Subsection~\ref{subsec:stability:LPI}, we will define a suitable notion of exponential stability and derive a sufficient condition for this stability as existence of a PI operator $\mcl P$ such that
\begin{equation*} 
	\mcl{P}^*\mcl{T}=\mcl{T}^*\mcl{P}\succeq \epsilon^2 \mcl{T}^*\mcl{T},\qquad
	\mcl{P}^*\mcl{A}+\mcl{A}^*\mcl{P}\preceq -2k\mcl{P}^*\mcl{T}, 
\end{equation*}
for some $\epsilon,k \ge 0$. Inequalities of this form, expressed in terms of PI operators and operator inequalities, are referred to as Linear PI Inequalities (LPIs). In Subsection~\ref{subsec:stability:SDP}, we then show how this LPI can be solved with semidefinite programming, by parameterizing positive semidefinite multivariate PI operators by positive semidefinite matrices.

\subsection{An LPI for Exponential Stability of PDEs}\label{subsec:stability:LPI}

To construct a PIE-based stability test for linear multivariate PDEs, we first define a notion of exponential stability. Specifically, we use the notion of \textit{exponential PIE to PDE stability}, which is slightly weaker than the classical notion of exponential $L_2$ stability. 
This weaker definition is used because the classical notion of exponential stability, when applied to PDEs in first-order form (i.e. $\partial_{t}\mbf{u}=\mscr{H} \mbf{u}$), does not account for certain notions of energy storage (e.g. strain) and is not satisfied for many commonly used PDEs such as wave equations (see e.g. Appx.~C.2). 


\begin{defn}\label{defn:exponential_stability}
	A PDE defined by parameters $\{\mbs{A}_{\alpha},\mcl{D}_{i}\}$ as in~\eqref{eq:PDE_standard}, admitting a unique solution that depends continuously on the initial state, is said to be \textbf{exponentially PIE to PDE stable} with rate $k\geq 0$ and gain $M\geq 1$ if for every initial state $\mbf{u}_{0}\in \mcl{D}\subseteq S_{2}^{\delta,n}$ and all $t\geq 0$, the associated solution $\mbf{u}(t)$ satisfies $\norm{\mbf{u}(t)}_{L_{2}}\leq Me^{-kt}\|D^{\delta}\mbf{u}_{0}\|_{L_{2}}$ .
\end{defn}

In contrast to classical notions of exponential stability, Defn.~\ref{defn:exponential_stability} defines exponential stability using a Sobolev norm on the initial PDE state, corresponding to the $L_{2}$ norm of the fundamental state, $\|D^{\delta}\mbf{u}_{0}\|_{L_{2}}$ (rather than $\|\mbf{u}_{0}\|_{L_{2}}$). Since the $L_{2}$ norm of the PDE state is bounded by that of the fundamental state, the classical notion of stability then implies PIE to PDE stability, but the converse is not true.


Having defined a suitable notion of exponential stability, suppose now we want to verify this stability property for a linear PDE as in~\eqref{eq:PDE_standard}, with an associated PIE representation of the form $\partial_{t}\mcl{T}\mbf{v}(t)=\mcl{A}\mbf{v}(t)$. Our stabiltiy test is based on existence of a Lyapunov functional $V:L_{2}^{n}\to \R$ such that $\frac{d}{dt}V(\mbf{v}(t))\leq -2kV(\mbf{v}(t))$ for any $\mbf v(t)$ which satisfies the PIE---implying $V(\mbf{v}(t))\leq e^{-2kt}V(\mbf{v}(0))$. The distinction between $L_2$ and PIE to PDE stability, then, lies only in the nature of the \textit{upper} bound on $V$---i.e. $V(\mbf{v})\leq C\norm{\mbf{v}}_{L_{2}}$ as opposed to $V(\mbf{v})\leq C\norm{\mcl T\mbf{v}}_{L_{2}}$. 
The following theorem shows how existence of such a functional $V$ can be tested by solving an LPI.

\begin{thm}\label{thm:LPI_stability}
	For $\{\mbs{A}_{\alpha},\mcl{D}_{i}\}$ defining a PDE as in~\eqref{eq:PDE_standard}, let associated operators $\mcl{T},\mcl{A}\in \PIset_{N}^{n\times n}$ be as in Thm.~\ref{thm:PDE2PIE}. For $\epsilon>0$ and $k\geq 0$, if there exists $\mcl{P}\in\PIset_{N}^{n\times n}$ such that
	\begin{equation}\label{eq:LPI_stability}
		\mcl{P}^*\mcl{T}=\mcl{T}^*\mcl{P}\succeq \epsilon^2 \mcl{T}^*\mcl{T},\qquad
		\mcl{P}^*\mcl{A}+\mcl{A}^*\mcl{P}\preceq -2k\mcl{P}^*\mcl{T},
	\end{equation}
	then the PDE defined by $\{\mbs{A}_{\alpha},\mcl{D}_{i}\}$ is exponentially PIE to PDE stable with rate $k$ and gain $M:=\sqrt{\|\mcl{P}^*\mcl{T}\|_{\tnf{op}}}/\epsilon$.
\end{thm}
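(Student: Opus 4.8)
The plan is to use $V(\mbf v) := \ip{\mcl P \mbf v}{\mcl T \mbf v}_{L_2}$ as a Lyapunov functional and show that the two inequalities in~\eqref{eq:LPI_stability} give, respectively, the required lower/upper bounds on $V$ and the required decay of $V$ along PIE solutions. First I would record that $\mcl P, \mcl T, \mcl A \in \PIset_N^{n\times n}$ are bounded operators on $L_2^n$ (so all inner products below are well-defined), and that the first identity $\mcl P^*\mcl T = \mcl T^*\mcl P$ makes $\mcl P^*\mcl T$ self-adjoint; hence $V(\mbf v) = \ip{\mbf v}{\mcl P^*\mcl T \mbf v}_{L_2}$ is real-valued. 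The lower bound $\mcl P^*\mcl T \succeq \epsilon^2 \mcl T^*\mcl T$ gives $V(\mbf v) \geq \epsilon^2 \norm{\mcl T \mbf v}_{L_2}^2$, and boundedness of $\mcl P^*\mcl T$ gives $V(\mbf v) \leq \norm{\mcl P^*\mcl T}_{\tnf{op}} \norm{\mbf v}_{L_2}^2$.

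Next I would differentiate $V$ along a PIE solution $\mbf v(t)$. Since $\mbf v$ is Fréchet differentiable and $\partial_t \mcl T \mbf v = \mcl A \mbf v$, and since $\mcl P^*\mcl T$ is self-adjoint,
\[
	\tfrac{d}{dt} V(\mbf v(t)) = \ip{\mcl P \partial_t(\mcl T \mbf v)}{\mbf v} + \ip{\mcl P \mcl T \mbf v}{\partial_t \mbf v}
	= \ip{\mbf v}{(\mcl P^*\mcl A + \mcl A^*\mcl P)\mbf v}_{L_2},
\]
where I would carefully justify moving $\partial_t$ through the bounded operators $\mcl T, \mcl P$ (using continuity/linearity of bounded operators and the product rule for the bilinear form). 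Then the second inequality in~\eqref{eq:LPI_stability} yields $\tfrac{d}{dt}V(\mbf v(t)) \leq -2k\, \ip{\mbf v}{\mcl P^*\mcl T \mbf v}_{L_2} = -2k V(\mbf v(t))$. Grönwall's inequality (or direct integration of $\tfrac{d}{dt}(e^{2kt} V) \leq 0$) then gives $V(\mbf v(t)) \leq e^{-2kt} V(\mbf v(0))$ for all $t \geq 0$.

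Finally I would translate this back to the PDE. Given an initial PDE state $\mbf u_0 \in \mcl D$, set $\mbf v_0 := D^\delta \mbf u_0 \in L_2^n$; by Thm.~\ref{thm:PDE2PIE} the PDE solution $\mbf u(t)$ satisfies $\mbf u(t) = \mcl T \mbf v(t)$ where $\mbf v$ solves the PIE with $\mbf v(0) = \mbf v_0$. Chaining the bounds,
\[
	\epsilon^2 \norm{\mbf u(t)}_{L_2}^2 = \epsilon^2 \norm{\mcl T \mbf v(t)}_{L_2}^2 \leq V(\mbf v(t)) \leq e^{-2kt} V(\mbf v(0)) \leq e^{-2kt} \norm{\mcl P^*\mcl T}_{\tnf{op}} \norm{\mbf v_0}_{L_2}^2,
\]
so $\norm{\mbf u(t)}_{L_2} \leq M e^{-kt} \norm{D^\delta \mbf u_0}_{L_2}$ with $M = \sqrt{\norm{\mcl P^*\mcl T}_{\tnf{op}}}/\epsilon$, which is the claimed bound; note $M \geq 1$ follows from $\mcl P^*\mcl T \succeq \epsilon^2 \mcl T^*\mcl T$ evaluated appropriately. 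I expect the main obstacle to be the rigorous justification of the differentiation step: one must confirm that $t \mapsto V(\mbf v(t))$ is differentiable with the stated derivative, which requires that $\partial_t(\mcl T\mbf v)(t) = \mcl T(\partial_t \mbf v)(t)$ in the Fréchet sense (valid since $\mcl T$ is bounded linear) and that the PIE solution notion supplies enough regularity; everything else is routine manipulation of bounded self-adjoint operators and Grönwall.
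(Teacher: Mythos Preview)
Your proposal is correct and follows essentially the same approach as the paper: define the Lyapunov functional $V(\mbf v)=\ip{\mbf v}{\mcl P^*\mcl T\mbf v}_{L_2}$, use the first inequality in~\eqref{eq:LPI_stability} for the two-sided bound $\epsilon^2\norm{\mcl T\mbf v}^2\le V(\mbf v)\le \|\mcl P^*\mcl T\|_{\tnf{op}}\norm{\mbf v}^2$, differentiate along the PIE to obtain $\dot V\le -2kV$ via the second inequality, apply Gr\"onwall, and translate back through $\mbf u(t)=\mcl T\mbf v(t)$ and $\mbf v(0)=D^\delta\mbf u_0$ from Thm.~\ref{thm:PDE2PIE}. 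One small slip: your displayed intermediate expression $\ip{\mcl P\,\partial_t(\mcl T\mbf v)}{\mbf v}+\ip{\mcl P\mcl T\mbf v}{\partial_t\mbf v}$ is the derivative of $\ip{\mcl P\mcl T\mbf v}{\mbf v}$ rather than of $\ip{\mcl P\mbf v}{\mcl T\mbf v}$; the correct route (as in the paper) uses $\mcl P^*$ in place of $\mcl P$ there, but your stated conclusion $\ip{\mbf v}{(\mcl P^*\mcl A+\mcl A^*\mcl P)\mbf v}$ is right.
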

\begin{proof}
	Let $\mcl{P}$ be such that~\eqref{eq:LPI_stability} is satisfied, and consider the functional $V: L_{2}^{n}[\Omega]\to\R$ defined by $V(\mbf{v}):=\ip{\mbf{v}}{\mcl{P}^*\mcl{T}\mbf{v}}_{L_{2}}$. Since $\mcl{P}^*\mcl{T}\succeq\epsilon^2 \mcl{T}^*\mcl{T}$, this functional satisfies
	\begin{equation*}
		\epsilon^2\norm{\mcl{T}\mbf{v}}_{L_{2}}^2
		\leq V(\mbf{v})
		\leq \|\mcl{P}^*\mcl{T}\|_{\tnf{op}} \norm{\mbf{v}}_{L_{2}}^2.
	\end{equation*}
	Now, let $\mbf{u}_{0}\in\bigcap_{i=1}^{N}\mcl{D}_{i}\subseteq S_{2}^{\delta,n}[\Omega]$ be an arbitrary initial state, and let $\mbf{u}$ be the associated solution to the PDE defined by $\{\mbs{A}_{\alpha},\mcl{D}_{i}\}$. Let $\mbf{v}=D^{\delta}\mbf{u}$. Then, by  Thm.~\ref{thm:PDE2PIE}, $\mbf{u}(t)=\mcl{T}\mbf{v}(t)$ and $\partial_{t}\mcl{T} \mbf{v}(t)=\mcl{A}\mbf v(t)$ for all $t\ge 0$, with $\mbf v(0)=D^{\delta}\mbf{u}_{0}$. Taking the temporal derivative of $V(\mbf{v}(t))$ and exploiting the algebraic properties of $\PIset_N$, we find
	\begin{align*}
		\frac{d}{dt}V(\mbf{v}(t))
		&=\ip{\mbf{v}(t)}{\partial_{t}\mcl{P}^*\mcl{T}\mbf{v}(t)}_{L_{2}} +\ip{\partial_{t}\mcl{P}^*\mcl{T}\mbf{v}(t)}{\mbf{v}(t)}_{L_{2}}	\\
		&=\ip{\mbf{v}(t)}{\mcl{P}^*\mcl{A}\mbf{v}(t)}_{L_{2}} +\ip{\mcl{P}^*\mcl{A}\mbf{v}(t)}{\mbf{v}(t)}_{L_{2}}	\\
		&=\ip{\mbf{v}(t)}{[\mcl{P}^*\mcl{A}+\mcl{A}^*\mcl{P}]\mbf{v}(t)}_{L_{2}}	\\
		&\leq -2k\ip{\mbf{v}(t)}{\mcl{P}^*\mcl T\mbf{v}(t)}_{L_{2}}
		=-2kV(\mbf{v}(t)).
	\end{align*}
	By Gr\"onwall-Bellman inequality, it follows that $V(\mbf{v}(t))\leq e^{-2k t}V(\mbf{v}(0))$, and therefore, for all $t\geq 0$,
	\begin{align*}
		&\norm{\mbf{u}(t)}_{L_{2}}
		=\norm{\mcl{T}\mbf{v}(t)}_{L_{2}}
		\leq \frac{1}{\epsilon}\sqrt{V(\mbf{v}(t))}		
		\leq \frac{1}{\epsilon} e^{-k t}\sqrt{V(\mbf{v}(0))}	\\
		&\hspace*{0.85cm}\leq \frac{\sqrt{\|\mcl{P}^*\mcl{T}\|_{\tnf{op}}}}{\epsilon}e^{-k t}\norm{\mbf{v}(0)}_{L_{2}}
		=Me^{-kt}\norm{D^{\delta}\mbf{u}_{0}}_{L_{2}}.
	\end{align*}
	Since this holds for all $\mbf{u}_{0}$, 
	we conclude that the PDE is exponentially PIE to PDE stable.
\end{proof}

Thm.~\ref{thm:LPI_stability} proves that exponential PIE to PDE stability of any PDE of the form in~\eqref{eq:PDE_standard} can be tested using the associated PIE representation, by testing for existence of a solution $\mcl{P}$ to the LPI in~\eqref{eq:LPI_stability}. 
In the following subsection, we will show how this LPI can be solved numerically using semidefinite programming.

\subsection{Solving LPIs using Semidefinite Programming}\label{subsec:stability:SDP}

Having provided a test for exponential PIE to PDE stability as feasibility of an LPI, we now briefly show how sufficient conditions for feasibility of this LPI may be tested using semidefinite programming (SDP). This approach is based on the fact that for any positive semidefinite matrix, $R\succeq 0$, and any $\mcl{Z} \in \PIset_{N}[\Omega]$, we have $\mcl{R}:=\mcl Z^* \tnf{M}[R] \mcl Z \succeq 0$ (where $\tnf{M}[R]$ is just standard matrix multiplication extended to $L_2^n$). Furthermore, using the composition maps defined previously, there is a linear map from the elements of $R$ to the coefficients of the polynomials defining $\mcl{R}$. Thus, for given $\mcl{Z} \in \PIset_{N}[\Omega]$, if we can find an operator $\mcl{P}$ and matrices $Q,R\succeq 0$ satisfying
\begin{align*}
	\mcl{P}^*\mcl{T}=\mcl{T}^*\mcl{P},\qquad
	\mcl{T}^*\mcl{P} -\epsilon^2 \mcl{T}^*\mcl{T}&=\mcl Z^* \tnf{M}[R] \mcl Z,	\\
	\mcl{P}^*\mcl{A}+\mcl{A}^*\mcl{P}+2k\mcl{P}^*\mcl{T}&=-\mcl Z^* \tnf{M}[Q] \mcl Z,
\end{align*}
then we may conclude exponential PIE to PDE stability. Such equalities may then be enforced using SDP. All that remains, therefore, is to choose a suitable $\mcl Z$ so as to render the equality constraints feasible. Specifically, $\mcl{Z}$ is selected by defining a vector of basis operators for $\PIset_{N}$. This is done inductively by first defining a vector of basis operators for $\PIset_{1}[a_{i},b_{i}]$ for each $i$, and then constructing the multivariate Kronecker product of these bases. From~\cite{shivakumar2022GPDE_Arxiv}, we recall that such a basis for $\PIset_1[a,b]$ is constructed using the degree-$d$ monomial basis vector, $\mbf{z}_d$, yielding an associated basis of polynomial 3-PI parameters, $\mbf{Z}_{d}\in\PIparam_{1}^{\mu(d)\times 1}[a,b]$ for $\mu(d):=d^2+4d+3$, as
\begin{equation}\label{eq:Zparams_1D}
	\mbf{Z}_{d}:=
	\bbbbl\{\bbbbl[\scalemath{0.9}{\mat{\mbf{z}_{d}(s)\\[-0.5em]0\\[-0.5em]0}}\bbbbr],\bbbbl[\scalemath{0.9}{\mat{0\\[-0.5em]\mbf{z}_{d}(s,\theta)\\[-0.5em]0}}\bbbbr],\bbbbl[\scalemath{0.9}{\mat{0\\[-0.5em]0\\[-0.5em] \mbf{z}_{d}(s,\theta)}}\bbbbr]\bbbbr\}.
\end{equation}
Defining $\mcl{Z}_{d}\in\PIset_{1}^{\mu(d)\times 1}[a,b]$ by such parameters $\mbf{Z}_{d}$, any $\mcl{P}\in\PIset_{1}[a,b]$ with polynomial parameters of degree at most $d$ can then be represented as $\mcl{P}=c^T\,\mcl{Z}_{d}$ for a unique vector of coefficients $c\in\R^{\mu(d)}$. Given such bases $\mcl{Z}_{d,i}\in\PIset_{1}^{\mu(d)\times 1}[a_{i},b_{i}]$ of univariate operators for $i\in\{1:N\}$---defined by parameters $\mbf{Z}_{d,i}\in\PIparam_{1}^{\mu(d)\times 1}[a_{i},b_{i}]$ of the form in~\eqref{eq:Zparams_1D}---a basis for the space of multivariate 3-PI operators is then constructed as
\begin{equation}\label{eq:Zop}
	\mcl{Z}_{d}:=\mcl{Z}_{d,1}\otimes\cdots\otimes\mcl{Z}_{d,N}\in\PIset_{N}^{\mu(d)^N\times 1}[\Omega],
\end{equation}
so that any $\mcl{P}\in\PIset_{N}[\Omega]$ (defined by parameters of degree at most $d$ in each variable $s_{i}$) can be represented as $\mcl{P}=c^T\,\mcl{Z}_{d}$. Note that while $\mcl{Z}_d$ here defines a basis for the \textit{linear} representation of multivariate 3-PI operators, $\mcl{Z}_{d}^* \tnf{M}[R] \mcl{Z}_{d}$ is a \textit{quadratic} form. However, by the composition rules of multivariate 3-PI operators, $\mcl{Z}_{d}^*\tnf{M}[R]\mcl{Z}_{d}$ will be defined by polynomial parameters of degree at most $2d+1$ in each variable. 
Then, based on the monomial ordering of $\mbf{z}_d$, we can define an associated matrix $A$, such that for any 
matrix $R$,
we have $\mcl{Z}_{d}^*\tnf{M}[R]\mcl{Z}_{d}=\tnf{vec}(R)^TA\mcl{Z}_{2d+1}$, where $\tnf{vec}(R)$ is the vector of columns of $R$. In this way, $\mcl{Z}_d$ also defines a basis for the quadratic representation of $\PIset_{N}[\Omega]$. 
The following corollary uses this basis to tighten the stability LPI in Thm.~\ref{thm:LPI_stability} to an optimization problem that can be solved using SDP.

\begin{cor}\label{cor:LPI_stability_SDP}
	For given $\{\mbs{A}_{\alpha},\mcl{D}_{i}\}$ defining a PDE as in~\eqref{eq:PDE_standard}, let $\mcl{T},\mcl{A}\in \PIset^{n\times n}$ be as defined in Thm.~\ref{thm:PDE2PIE}. Define $\mcl{Z}_{d}$ as in~\eqref{eq:Zop}, with $\mu(d):=d^2+4d+3$. For any $d,d'\in\N_{0}$, $\epsilon>0$ and $k\geq 0$, if there exist $P\in\R^{n\times n\mu(d)^{N}}$ and $R,Q\succeq 0$ such that $\mcl{P}=\tnf{M}[P](I_{n}\otimes\mcl{Z}_{d})$ satisfies
	\begin{align}\label{eq:SDP_stability}
		&\mcl{P}^*\mcl{T}=\mcl{T}^*\mcl{P}=\epsilon ^2 \mcl T^* \mcl T+(I_{n}\!\otimes\!\mcl{Z}_{d'})^* \tnf{M}[R](I_{n}\!\otimes\!\mcl{Z}_{d'}),	\\
		&\mcl{P}^*\mcl{A}+\mcl{A}^*\mcl{P}+2k\mcl{P}^*\mcl{T}=-(I_{n}\!\otimes\!\mcl{Z}_{d'})^* \tnf{M}[Q](I_{n}\!\otimes\!\mcl{Z}_{d'}),	\notag
	\end{align}
 then the PDE defined by $\{\mbs{A}_{\alpha},\mcl{D}_{i}\}$ is exponentially PIE to PDE stable with rate $k$ and gain $M:=\sqrt{\|\mcl{P}^*\mcl{T}\|_{\tnf{op}}}/\epsilon$.
\end{cor}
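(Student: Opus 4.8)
The plan is to show that any feasible point of the SDP in~\eqref{eq:SDP_stability} immediately yields a feasible point of the LPI in~\eqref{eq:LPI_stability}, so that the conclusion is just an application of Thm.~\ref{thm:LPI_stability}. First I would check that the operator $\mcl{P}=\tnf{M}[P](I_{n}\otimes\mcl{Z}_{d})$ is a well-defined element of $\PIset_{N}^{n\times n}[\Omega]$: since $\mcl{Z}_{d}\in\PIset_{N}^{\mu(d)^{N}\times 1}[\Omega]$ by~\eqref{eq:Zop}, and since the matrix-valued extension of $\PIset_{N}[\Omega]$ is closed under multiplication by constant matrices (a special case of the composition rule, using that $\tnf{M}[P]$ lies in the algebra), it follows that $\mcl{P}\in\PIset_{N}^{n\times n}[\Omega]$. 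By Prop.~\ref{prop:*algebra} and its matrix-valued generalization, the compositions $\mcl{P}^{*}\mcl{T}$, $\mcl{T}^{*}\mcl{P}$, $\mcl{P}^{*}\mcl{A}$, $\mcl{A}^{*}\mcl{P}$, $\mcl{T}^{*}\mcl{T}$, and $(I_{n}\otimes\mcl{Z}_{d'})^{*}\tnf{M}[R](I_{n}\otimes\mcl{Z}_{d'})$ (likewise with $Q$) are again elements of $\PIset_{N}^{n\times n}[\Omega]$, so the identities in~\eqref{eq:SDP_stability} are genuine equalities between PI operators.

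Next I would use positivity of $R$ and $Q$ to convert the two SDP equalities into the operator inequalities of~\eqref{eq:LPI_stability}. For any $\mbf{w}\in L_{2}^{n}[\Omega]$, writing $\mbf{z}:=(I_{n}\otimes\mcl{Z}_{d'})\mbf{w}$, we have
\[
\ip{\mbf{w}}{(I_{n}\otimes\mcl{Z}_{d'})^{*}\tnf{M}[R](I_{n}\otimes\mcl{Z}_{d'})\mbf{w}}_{L_{2}}=\ip{\mbf{z}}{\tnf{M}[R]\mbf{z}}_{L_{2}}=\int_{\Omega}\mbf{z}(s)^{T}R\,\mbf{z}(s)\,ds\geq 0,
\]
since $R\succeq 0$, and identically for $Q$. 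Hence the first identity in~\eqref{eq:SDP_stability} gives $\mcl{P}^{*}\mcl{T}=\mcl{T}^{*}\mcl{P}$ together with $\mcl{T}^{*}\mcl{P}-\epsilon^{2}\mcl{T}^{*}\mcl{T}\succeq 0$, i.e.\ $\mcl{P}^{*}\mcl{T}=\mcl{T}^{*}\mcl{P}\succeq\epsilon^{2}\mcl{T}^{*}\mcl{T}$; and the second identity gives $\mcl{P}^{*}\mcl{A}+\mcl{A}^{*}\mcl{P}+2k\mcl{P}^{*}\mcl{T}\preceq 0$, i.e.\ $\mcl{P}^{*}\mcl{A}+\mcl{A}^{*}\mcl{P}\preceq-2k\mcl{P}^{*}\mcl{T}$. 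Thus $\mcl{P}$ satisfies~\eqref{eq:LPI_stability} with the given $\epsilon>0$ and $k\geq 0$, and Thm.~\ref{thm:LPI_stability} delivers exponential PIE to PDE stability with rate $k$ and gain $M=\sqrt{\|\mcl{P}^{*}\mcl{T}\|_{\tnf{op}}}/\epsilon$, which is exactly the claimed conclusion.

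In terms of difficulty, the logical implication above is essentially immediate; the real substance of the corollary is the assertion (already argued in the paragraph preceding the statement) that the equality constraints in~\eqref{eq:SDP_stability} reduce to linear constraints on the monomial coefficients of the PI parameters together with $\tnf{vec}(R)$ and $\tnf{vec}(Q)$ — via the composition and adjoint formulae of Prop.~\ref{prop:PI_composition_ND} and Lem.~\ref{lem:PI_adjoint_ND} and the quadratic-form map relating $\mcl{Z}_{d}^{*}\tnf{M}[R]\mcl{Z}_{d}$ to $\mcl{Z}_{2d+1}$ — so that feasibility is decidable by SDP. The only genuinely fiddly point in a complete write-up is the degree bookkeeping: confirming that $\mcl{P}^{*}\mcl{T}$, $\mcl{P}^{*}\mcl{A}$, etc.\ have PI parameters whose degrees are captured by the bases $\mcl{Z}_{d}$, $\mcl{Z}_{d'}$, $\mcl{Z}_{2d+1}$, so that the linear reformulation is exact rather than merely sufficient. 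This is a matter of matching polynomial degrees across the composition rules, not of any new idea.
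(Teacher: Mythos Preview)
Your proposal is correct and follows essentially the same route as the paper: use $R\succeq 0$ and $Q\succeq 0$ to turn the equalities~\eqref{eq:SDP_stability} into the operator inequalities~\eqref{eq:LPI_stability}, then invoke Thm.~\ref{thm:LPI_stability}. The paper's proof is shorter (it omits the explicit verification that $\mcl{P}\in\PIset_{N}^{n\times n}$ and the quadratic-form computation showing $(I_{n}\otimes\mcl{Z}_{d'})^{*}\tnf{M}[R](I_{n}\otimes\mcl{Z}_{d'})\succeq 0$), but the logic is identical.
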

\begin{proof}
	Suppose there exist $P,Q,R$ satisfying the conditions of the corollary, with $\mcl{P}=\tnf{M}[P](I_{n}\otimes\mcl{Z}_{d})$. Since $R\succeq 0$, we then have $\mcl{P}^*\mcl{T}-\epsilon^2\mcl{T}^*\mcl{T}\succeq 0$, and therefore $\mcl{P}^*\mcl{T}\succeq\epsilon^2\mcl{T}^*\mcl{T}$. Furthermore, since $Q\succeq 0$, we have $\mcl{P}^*\mcl{A}+\mcl{A}^*\mcl{P}+2k\mcl{P}^*\mcl{T}\preceq 0$, and thus the LPI~\eqref{eq:LPI_stability} is feasible. By Thm.~\ref{thm:LPI_stability}, it follows that the PDE defined by $\{\mbs{A}_{\alpha},\mcl{D}_{i}\}$ is exponentially PIE to PDE stable, with rate $k$ and gain $M:=\sqrt{\|\mcl{P}^*\mcl{T}\|_{\tnf{op}}}/\epsilon$.
\end{proof}

Cor.~\ref{cor:LPI_stability_SDP} allows stability of a broad class of $N$D PDEs to be tested using SDP. We note, however, that the computational complexity associated with solving the resulting SDP scales with the size of the basis $\mcl Z_d$. Specifically, while the length of the univariate parameter basis $\mbf Z_d$ in~\eqref{eq:Zparams_1D} is proportional to $d^2$, the use of the Kronecker product implies the length of $\mcl Z_d$ scales as $d^{2N}$---implying the number of decision variables in $P,Q,R$ scale as $d^{4N}$.
For higher spatial dimension, then, solving the resulting SDP will require either very low degree or the use of specialized large-scale SDP solvers~\cite{zheng2020SparseSOS,andersen2021cvxopt,wang2025LowRankSDP}. As in the univariate case, however, computational complexity can be partially mitigated by reducing the number of monomials. 
Specifically, the degree $d$ in Cor.~\ref{cor:LPI_stability_SDP} should be chosen to balance accuracy and complexity---the required degree will depend strongly on the type of PDE being analyzed, the required accuracy in decay rate $k$, and the underlying difficulty of the stability problem. 
In addition, kernels may be eliminated from $\mcl{P}$ and multipliers do not appear in the expression for $\mcl{T}^*\mcl{P}$. 
Finally, it is often sufficient to use polynomial rather than polynomial semi-separable kernels (i.e. $\mbf{Z}_{i}^{d}=\{0,\mbs{R}_{i,1},\mbs{R}_{i,1}\}$).


\section{Software and Numerical Examples}\label{sec:Examples}
Having developed a framework for representation and computational stability analysis of a broad class of linear multivariate PDEs, we now demonstrate the application of this framework: providing efficient software construction of the PIE representation and testing of LPIs; presenting the PIE representation for 2D heat, wave, and plate equations; verifying accuracy of the stability test; and providing tight bounds on the associated rates of exponential decay.

\subsection{The PIETOOLS Software Package}\label{subsec:Examples:PIETOOLS}

While Thm.~\ref{thm:PDE2PIE} provides analytic expressions for the conversion of a PDE to a PIE, and while such expressions often provide insight into the structure of the associated PDE and PIE, evaluating such expressions may require substantial effort on the part of the user. Therefore, to facilitate the rapid prototyping and conversion of PDE models to a PIE representation, and to construct the associated stability test, such functions have been automated in the Matlab software package PIETOOLS~\cite{shivakumar2025PIETOOLS}. A brief introduction and illustrative example of the most useful features of this package are provided here.

Specifically, PIETOOLS is a MATLAB software package whose essential function is to allow for the declaration and manipulation of PI operators and PI operator variables in a syntax similar to matrices---adapting approaches from the widely used SOSTOOLS~\cite{papachristodoulou2021SOSTOOLS} and YALMIP~\cite{lofberg2004yalmip} interfaces.
In addition, a central feature of PIETOOLS is the command-line interface for declaration of 1D and 2D ODE-PDE systems. Given such a PDE model, the software allows for: conversion to PIE; numerical simulation; stability analysis; optimal controller synthesis, etc. 

The command-line interface begins by declaration of PDE state variables using \texttt{pde\_var}, which creates objects which can be added (\texttt{+}), differentiated (\texttt{diff}) and evaluated at spatial positions (\texttt{subs}) in order to define the evolution equations and boundary conditions which constitute a PDE model. Using \texttt{convert}, such a PDE model may then be converted to a PIE, returning the associated PI operators $\{\mcl{T},\mcl{A}\}$.
For example, to compute the PIE representation of the following modified 2D heat equation,
\begin{align}\label{eq:heat_eq_PIETOOLS}
	&\!\!\mbf{u}_{t}(t,x,y)=(1\!+\!x)\mbf{u}_{xx}(t,x,y)\!+\!\mbf{u}_{yy}(t,x,y),~~	x,y\in[0,1],	\notag\\
	&\!\!\mbf{u}(t,0,y)=\!\mbf{u}(t,1,y)=\!\mbf{u}(t,x,0)=\!\mbf{u}_{y}(t,x,1)\!=\!0,	
\end{align}
we can use the following code

\begin{verbatim}
>> pvar t x y;    dom = [0,1; 0,1];
>> u = pde_var('state',[x;y],dom);
>> PDE = [diff(u,t)==(1+x)*diff(u,x,2)+diff(u,y,2);
              subs(u,x,0)==0;   subs(u,x,1)==0;
              subs(u,y,0)==0;   subs(diff(u,y),y,1)==0];
>> PIE = convert(PDE,'pie');
>> T = PIE.T;    A = PIE.A;
\end{verbatim}

Here, the \texttt{convert} function checks the declared \texttt{PDE} for the highest order of the derivative of the PDE state taken along each spatial direction---in this case finding a 2nd-order derivative along both the $x$- and $y$-direction ($\mbf{u}_{xx}$ and $\mbf{u}_{yy}$)---and selects the fundamental state as the corresponding highest-order mixed derivative---in this case $\mbf{v}=\mbf{u}_{xxyy}$.
After verifying consistency, the operators $\{\mcl{T},\mcl{A}\}$ defining the resulting PIE are then returned as \texttt{opvar2d} objects, \texttt{T,A}, for which operations such as addition (\texttt{+}), multiplication (\texttt{*}), and transpose (\texttt{'}) have been overloaded to allow for easy computation of the sum, composition, and adjoint of PI operators. Moreover, given these operators, an LPI such as the program in Cor.~\ref{cor:LPI_stability_SDP} can be declared as an \texttt{lpiprogram} structure, using a programming structure similar to SOSTOOLS~\cite{papachristodoulou2021SOSTOOLS}. This program structure can be modified to add indefinite and positive semidefinite PI operator decision variables using \texttt{lpivar} and \texttt{poslpivar}, and declare equality and inequality constraints using \texttt{lpi\_eq} and \texttt{lpi\_ineq}, after which an SDP solver such as SeDumi~\cite{sturm1999SeDuMi} or Mosek~\cite{mosek} can be used to solve the program by calling \texttt{lpisolve}. For example, to solve the program in Cor.~\ref{cor:LPI_stability_SDP} with $d=4$, $\epsilon=10^{-1}$, and $k=1$ for the PDE in~\eqref{eq:heat_eq_PIETOOLS}, we can call


{
\begin{verbatim}
>> d = 4;  ep = 1e-1;  k = 1;
>> prog = lpiprogram([x;y],dom);
>> [prog,P] = lpivar(prog,T.dim,d);
>> prog = lpi_eq(prog,P'*T-T'*P);
>> prog = lpi_ineq(prog,T'*P-ep^2*T'*T);
>> prog = lpi_ineq(prog,-(P'*A+A'*P+2*k*P'*T));
>> prog = lpisolve(prog);
\end{verbatim}}

\begin{figure}[t]
	\centering
	\includegraphics[width=1\linewidth]{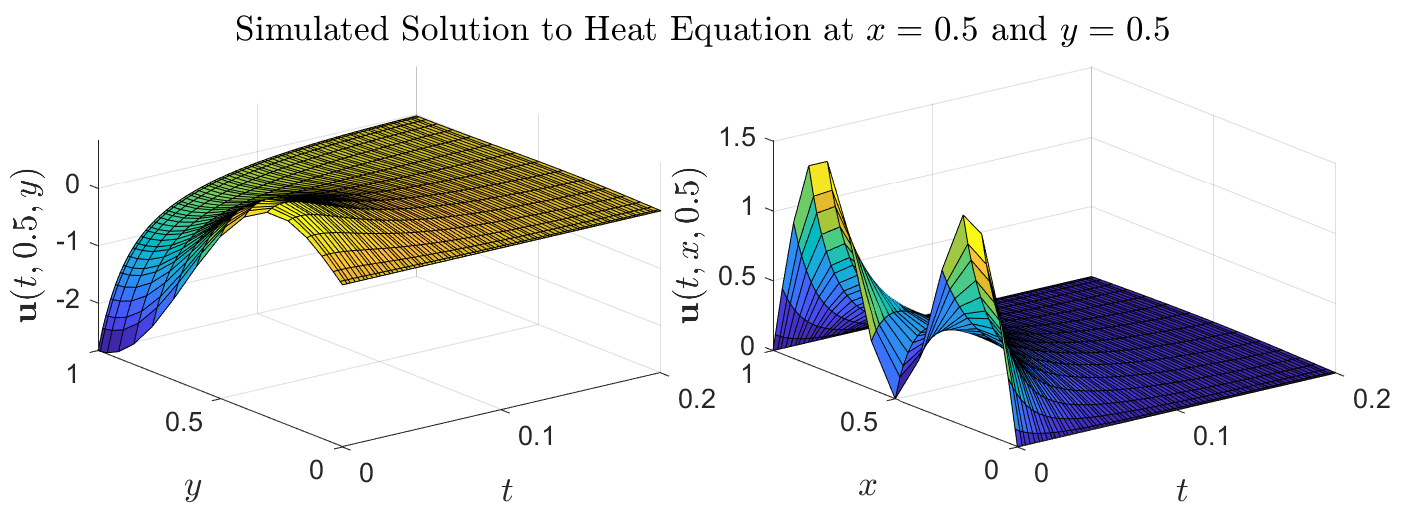}
	\vspace*{-0.4cm}
	\caption{
		Simulated solution $\mbf{u}(t,x,y)$ to the reaction-diffusion equation in~\eqref{eq:heat_eq} at $x=0.5$ and $y=0.5$, using $r=12$ and with $\mbf{u}(0,x,y)=\sqrt{2}\sin(\pi x)\sin(\frac{3\pi}{2}y)+\sqrt{2}\sin(3\pi x)\sin(\frac{\pi}{2}y)$.} 
\label{fig:Heat_Eq_Sim}	
\vspace*{0.1cm}
\end{figure}

A more optimized version of this LPI is also included as a script \texttt{PIETOOLS\_stability}.
We refer to the user manual~\cite{shivakumar2025PIETOOLS} for more details on how to declare PDEs, PIEs, and LPIs using PIETOOLS, as well as
for other applications of the software such as controller design. Note, however, that PIETOOLS does not currently support PDE models of spatial dimension greater than two. Therefore, the application of this software in the following subsection will be limited to analysis of heat, wave and plate equations in 2D.

\subsection{Numerical Examples}\label{subsec:Examples:examples}

In this subsection, we use the PIETOOLS software suite to verify exponential PIE to PDE stability of 2D heat, wave, and plate equations and to compute a maximal lower bound on the associated exponential rate of decay. For the wave and plate equations, second-order temporal derivatives are eliminated through introduction of auxiliary states to obtain a first-order PDE representation as in Eqn.~\eqref{eq:PDE_standard}.
For each example, stability was verified by using the scripts in Subsection~\ref{subsec:Examples:PIETOOLS} to solve the optimization problem in Cor.~\ref{cor:LPI_stability_SDP} with $\epsilon=10^{-1}$, using Mosek~\cite{mosek} to solve the underlying SDP. Maximal lower bounds on the decay rate were calculated via bisection on the parameter $k$ in Cor.~\ref{cor:LPI_stability_SDP}.
Each PDE was simulated using the PIESIM routines included in PIETOOLS, expanding solutions using a basis of $17\times 17$ Chebyshev polynomials in space, and using the Crank-Nicolson scheme for time integration. 
All numerical tests were performed on a computer with Intel(R) Core(TM) i7-5960x CPU @3.00 GHz, with 128 GB RAM.
Analytic proofs of stability for the heat and wave equation are provided in Appx.~\ref{appx:Examples}.

\subsubsection{Reaction-Diffusion Equation}\label{subsec:Examples:examples:heat}

First, consider again the reaction-diffusion equation from Example~\ref{ex:BCs_DN_2},
\begin{align}\label{eq:heat_eq}
	\mbf{u}_{t}(t,x,y)&\!=\!\mbf{u}_{xx}(t,x,y)\!+\!\mbf{u}_{yy}(t,x,y)\!+\!r\mbf{u}(t,x,y),	\\
	\mbf{u}(t,0,y)&\!=\!\mbf{u}(t,1,y)\!=\!0,\quad \mbf{u}(t,x,0)\!=\!\mbf{u}_{y}(t,x,1)\!=\!0,	\notag
\end{align}
where $\mbf{u}(t)\in S_{2}^{(2,2)}[[0,1]^2]$. A PIE representation of this PDE is presented in Example~\ref{ex:BCs_DN_2}.
This PDE can be shown to be exponentially PIE to PDE stable if and only if $r\leq 1\frac{1}{4}\pi^2\approx 12.337$, with rate $k=1\frac{1}{4}\pi^2-r$---see Appx.~\ref{appx:Examples:heat}.
Using PIETOOLS to verify the stability LPI via Cor.~\ref{cor:LPI_stability_SDP} with $d=1$ and $k=0$, and performing bisection on the value of $r$, stability of the PDE can be verified for any $r\leq 1\frac{1}{4}\pi^2-10^{-3}$. Fixing $r$ and performing bisection on the value of $k$, exponential PIE to PDE stability can be verified with maximal rates $\hat{k}$ as in Table~\ref{tab:decay_HeatEq}, for several values of $r$, and for several values of the maximal monomial degree $d$. The mean computation time $\bar{t}$ needed to parse the stability LPI and solve the resulting semidefinite program for each value of $d$ is also displayed in the table. A simulated solution to the reaction-diffusion equation for $r=12$ is displayed in Fig.~\ref{fig:Heat_Eq_Sim}, at $x=0.5$ and at $y=0.5$, for initial state $\mbf{u}(0,x,y)=\sqrt{2}\sin(\pi x)\sin(\frac{3\pi}{2}x)+\sqrt{2}\sin(3\pi x)\sin(\frac{\pi}{2}x)$.

\begin{figure}[t]
	\centering
	\includegraphics[width=1\linewidth]{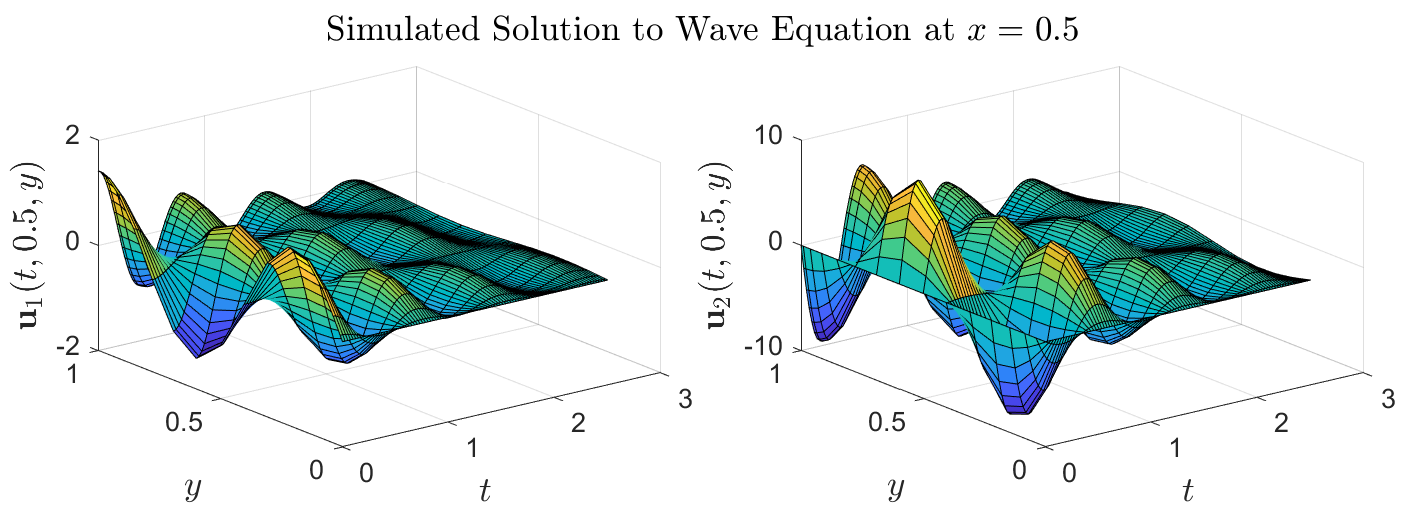}
	\vspace*{-0.4cm}
	\caption{Simulated solution to the damped wave equation
		in~\eqref{eq:wave_eq_u} at $x=0.5$, using $\kappa=1$ and starting with initial
		values $\mbf{u}_{1}(0,x,y):=\sin(\frac{\pi}{2}x)\sin(\frac{5\pi}{2}y)$ and $\mbf{u}_{2}(0,x,y)=0$.}
\label{fig:Wave_Eq_Sim}	
\vspace*{0.1cm}
\end{figure}

\begin{table}[h]
	\setlength{\tabcolsep}{3.4pt}
	\begin{tabular}{c|c||ccccc|c}
		& $d\,\backslash\, r$ & 0 & 4 & 8 & 12 & 12.3 & $\bar{t}$ (s)	\\[-0.3em]\hline
		\multirow{4}{*}{$\hat{k}$}&
		~$0$ & 12.336 & 8.3362 & 4.3340 & 0.33578 & 0.03613 & 43.44	\\[-0.4em]
		&~$1$ & 12.336 & 8.3363 & 4.3367 & 0.33666 & 0.03685	& 55.74 \\[-0.4em]
		&~$2$ & 12.337 &8.3368 & 4.3369 & 0.33700 & 0.03700	& 149.1 \\[-0.3em]\hline
		$k$ & & 12.337 & 8.3370 & 4.3370 & 0.33700 & 0.03700 & 
	\end{tabular}
	\caption{%
		Largest exponential decay rate $\hat{k}$ for which stability of the		reaction-diffusion equation in~\eqref{eq:heat_eq} was verified for several		values of the parameter $r>0$, using Cor.~\ref{cor:LPI_stability_SDP} with $d\in\{0:2\}$. The average time $\bar{t}$ required to numerically parse and solve the optimization program for each value of $d$ is also displayed, as
		well as the analytic rate of decay $k$ for each value of $r$.%
	}\label{tab:decay_HeatEq}
\end{table}

\subsubsection{Wave Equation}\label{subsec:Examples:examples:wave}

Next, consider the 2D wave equation with stabilizing feedback, parameterized by $\kappa\in\R$,
\begin{align}\label{eq:wave_eq}
	&\mbs{\phi}_{tt}(t)=\mbs{\phi}_{xx}(t)+\mbs{\phi}_{yy}(t)-2\kappa\mbs{\phi}_{t}(t)-\kappa^2\mbs{\phi}(t),	\\
	&\mbs{\phi}(t,0,y)=\mbs{\phi}_{x}(t,1,y)=0,\quad \mbs{\phi}(t,x,0)=\mbs{\phi}_{y}(t,x,1)=0,	\notag 
\end{align}
where $\mbs{\phi}(t)\in S_{2}^{(2,2)}[[0,1]^2]$. Using the augmented state $\mbf{u}(t)=(\mbf{u}_{1}(t),\mbf{u}_{2}(t))=(\mbs{\phi}(t),\mbs{\phi}_{t}(t))\in S_{2}^{(2,2),2}$, we represent this PDE in the first-order form of Eqn.~\eqref{eq:PDE_standard_intro} as
\begin{align}\label{eq:wave_eq_u}
	&\partial_{t}\mbf{u}(t)=\bbbl[\scalemath{0.9}{\mat{0&1\\[-0.2em]\partial_{x}^2+\partial_{y}^2-\kappa^2&-2\kappa}}\bbbr]\mbf{u}(t),		\\
	&\mbf{u}(t,0,y)\!=\!\partial_{x}\mbf{u}(t,1,y)\!=\!0,\enspace
	\mbf{u}(t,x,0)\!=\!\partial_{y}\mbf{u}(t,x,1)\!=\!0,		\notag		
\end{align}
for $(x,y)\in[0,1]^2$.
For $\kappa\geq 0$, this PDE can be shown to be exponentially PIE to PDE stable in the sense of Defn.~\ref{defn:exponential_stability} with rate of decay $\kappa$---see Appx.~\ref{appx:Examples:wave}.

Although PIETOOLS can be used to find a PIE representation of this PDE directly, we may provide some additional insight into the structure of the PIE by noting that $\mbf u_1$ and $\mbf u_2$ satisfy the same regularity and boundary constraints. Thus, using Thm.~\ref{thm:Tmap}, we can define operators $\mcl{T}_{1},\mcl{T}_{2}$ and $\mcl{T}=\mcl{T}_{1}\mcl{T}_{2}$ such that $\mbf{u}_{i}(t)=\mcl{T}\partial_{x}^2\partial_{y}^2\mbf{u}_{i}(t)$, $\partial_{x}^2\mbf{u}_{i}(t)=\mcl{T}_{1}\partial_{x}^2\partial_{y}^2\mbf{u}_{i}(t)$, and $\partial_{y}^2\mbf{u}_{i}(t)=\mcl{T}_{2}\partial_{x}^2\partial_{y}^2\mbf{u}_{i}(t)$, for each $i\in\{1,2\}$.
It follows that $\mbf{u}(t)$ satisfies the PDE~\eqref{eq:wave_eq_u} if and only if $\mbf{v}(t)=\mbf{u}_{xxyy}(t)$ satisfies the PIE
\begin{equation*}
\partial_{t}\bbbl[\scalemath{0.9}{\mat{\mcl{T}&0\\[-0.2em]0&\mcl{T}}}\bbbr]\mbf{v}(t)=\bbbl[\scalemath{0.9}{\mat{0&\mcl{T}\\[-0.2em]\mcl{T}_{1}+\mcl{T}_{2}-\kappa^2\mcl{T}&-2\kappa\mcl{T}}}\bbbr]\mbf{v}(t).
\end{equation*}
Simulating this PIE for $\kappa=1$ and with $\mbf{u}_{1}(0,x,y)=\sin(\frac{\pi}{2}x)\sin(\frac{5\pi}{2}y)$ and $\mbf{u}_{2}(0,x,y)=0$, the obtained solution at $x=0.5$ is plotted in Fig.~\ref{fig:Wave_Eq_Sim}, showing that both $\mbf{u}_{1}(t)=\mbs{\phi}(t)$ and $\mbf{u}_{2}(t)=\mbs{\phi}_{t}(t)$ converge to zero. Using Cor.~\ref{cor:LPI_stability_SDP} with $d=1$, exponential stability can also be verified with PIETOOLS. Maximal lower bounds on the decay rate, $\hat{k}$, are provided in Tab.~\ref{tab:decay_WaveEq}, in each case tightly lower-bounding the true rate $k=\kappa$.
The mean computation time to parse the stability LPI and solve the resulting SDP for each value of $\kappa$ was 720 seconds, and the mean CPU time for solving the SDP was 477 seconds.

\begin{table}[h]
	\setlength{\tabcolsep}{2.5pt}
	\begin{tabular}{c|ccccccc}
		$\kappa=k$ & 1 & 2 & 3 & 4 & 5 & 6 & 7	\\[-0.4em]\hline
		$\hat{k}$ & 0.9999 & 1.9998 & 2.9874 & 3.9973 & 4.9980 & 5.9906 & 6.9645  
	\end{tabular}
	\caption{Largest lower bound on the decay rate, $\hat{k}$, for which exponential PIE to PDE stability of the wave equation in~\eqref{eq:wave_eq_u} was verified with Cor.~\ref{cor:LPI_stability_SDP}, using $d=1$, for several values of the parameter $\kappa>0$ (which is also the true decay rate, $k$).}\label{tab:decay_WaveEq}
\end{table}


\subsubsection{Kirchhoff Plate Equation}\label{subsec:Examples:examples:plate}

As a final example, consider the following Kirchhoff plate equation with structural damping, used to model the vertical deflection $\mbf{w}$ of a clamped plate,
\begin{align}\label{eq:Plate_Eq}
	\mbf{w}_{tt}(t)&=-\mbf{w}_{xxxx}(t)-2\mbf{w}_{xxyy}(t)-\mbf{w}_{yyyy}(t)\\
	&\qquad+\alpha_{0}(\mbf{w}_{txx}(t)+\mbf{w}_{tyy}(t)),	\notag	\\
	\mbf{w}(t,0,y)&=\mbf{w}_{x}(t,0,y)=\mbf{w}(t,1,y)=\mbf{w}_{x}(t,1,y)=0,	\notag\\
	\mbf{w}(t,x,0)&=\mbf{w}_{y}(t,x,0)=\mbf{w}(t,x,1)=\mbf{w}_{y}(t,x,1)=0.	\notag
\end{align}
Introducing $\mbf{u}(t)=(\mbf{w}(t),\mbf{w}_{t}(t))\in S_{2}^{(4,4),2}[[0,1]^2]$, we represent this PDE in the first-order form as 
\begin{align}\label{eq:Plate_Eq_u}
	&\partial_{t}\mbf{u}(t)=\bbbl[\scalemath{0.875}{\mat{0&1\\[-0.2em]\!-D^{(4,0)}\!-\!2D^{(2,2)}\!-\!D^{(0,4)}&\,\alpha_{0}\bl(D^{(2,0)}\!+\!D^{(0,2)}\br)}}\bbbr]\mbf{u}(t),	\notag\\
	&\mbf{u}(t,0,y)\!=\!\mbf{u}_{x}(t,0,y)\!=\!\mbf{u}(t,1,y)\!=\!\mbf{u}_{x}(t,1,y)\!=\!0,	\notag\\
	&\mbf{u}(t,x,0)\!=\!\mbf{u}_{y}(t,x,0)\!=\!\mbf{u}(t,x,1)\!=\!\mbf{u}_{y}(t,x,1)\!=\!0.	
\end{align}
As for the wave equation, we use Thm.~\ref{thm:Tmap} to define PI operators $\mcl{T}, \mcl R_i, \mcl Q_i$ such that $\mbf{v}(t)=D^{(4,4)}\mbf{u}(t)$ satisfies
\begin{equation*}
	\partial_{t}\bbbr[\scalemath{0.9}{\mat{\mcl{T}&0\\[-0.2em]0&\mcl{T}}}\bbbr]\mbf{v}(t)=\bbbl[\scalemath{0.9}{\mat{0&\mcl{T}\\[-0.2em]-\mcl{Q}_{1}-\mcl{Q}_{2}-2\mcl{Q}_{3}&\alpha_{0}\bl(\mcl{R}_{1}+\mcl{R}_{2}\br)}}\bbbr]\mbf{v}(t).
\end{equation*}
Applying Cor.~\ref{cor:LPI_stability_SDP} to this PIE, using $d=0$, the PDE~\eqref{eq:Plate_Eq_u} with $\alpha_{0}=0.2$ was found to be exponentially PIE to PDE stable with rate at least $k=3.6328$. 
Here, although stability properties of plate equations such as that in~\eqref{eq:Plate_Eq} have been studied in the literature, including in e.g.~\cite{liu1997PlateEq,fridman2010PlateEq,hajjej2024KirchoffPlate}, these results commonly prove exponential decay of a different energy functional $E(\mbf{u}(t))$, rather than of the norm $\norm{\mbf{u}(t)}_{L_{2}}$.
Therefore, to verify that the PDE is indeed exponentially stable with rate at least $k=3.6328$, solutions to the PDE were also simulated using the PIESIM software. 
The norm $\|\mbf{u}(t)\|_{L_{2}}$ of the simulated solutions is displayed in Fig.~\ref{fig:Plate_Eq_Sim}, for three initial states $\mbf{u}^{i}:=\hat{\mbf{u}}^{i}/\|\hat{\mbf{u}}^{i}\|_{L_{2}}$, where $\hat{\mbf{u}}^{i}=(\hat{\mbf{u}}^{i}_{1},\hat{\mbf{u}}^{i}_{2})$ for
\begin{align}\label{eq:Plate_Eq_InitialStates}
	\hat{\mbf{u}}^{1}_{1}(x,y)&:=(\cos(2\pi x)-1)(\cos(2\pi y)-1),\enspace
	\hat{\mbf{u}}^{1}_{2}(x,y):=0,	\notag\\
	\hat{\mbf{u}}^{2}_{1}(x,y)&:=\hat{\mbf{u}}^{1}_{1}(x,2y),\quad
	\hat{\mbf{u}}^{2}_{2}(x,y):=2\hat{\mbf{u}}^{1}_{1}(2x,y),	\\
	\hat{\mbf{u}}^{3}_{1}(x,y)&:=x^2(1-x)^2y^2(1-y^2),\enspace
	\hat{\mbf{u}}^{3}_{2}(x,y):=2\hat{\mbf{u}}^{3}_{1}(x,y).	\notag
\end{align}
These numerical simulations verify that the norm of the solution for each initial condition decays exponentially with a rate greater than the computed lower bound.


\begin{figure}[t]
	\centering
	\includegraphics[width=1\linewidth]{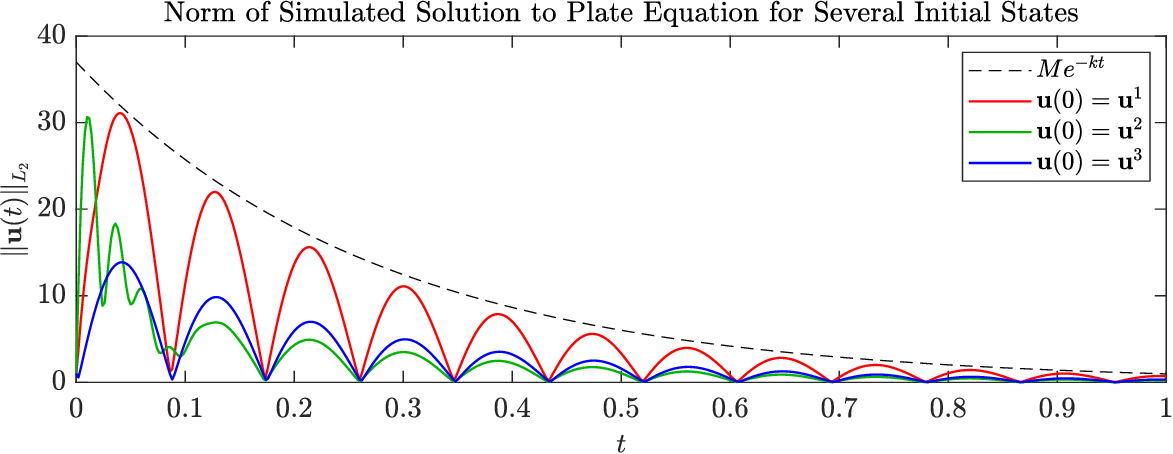}
	\vspace*{-0.4cm}
	\caption{Evolution of the norm $\|\mbf{u}(t)\|_{L_{2}}$ of simulated solutions to\\[-0.1em]
		the Kirchhoff plate equation in~\eqref{eq:Plate_Eq_u} for $\alpha_{0}=0.2$ and for initial	states $\mbf{u}^{i}:=\hat{\mbf{u}}^{i}/\|\hat{\mbf{u}}^{i}\|_{L_{2}}$, with $\hat{\mbf{u}}^{i}$ as in~\eqref{eq:Plate_Eq_InitialStates} for $i\in\{1,2,3\}$. An exponential bound $Me^{-kt}$ for $M=37$ is also plotted, with		 $k=3.6328$ corresponding to the largest rate for which exponential PIE to PDE stability was verified using PIETOOLS.}
	\label{fig:Plate_Eq_Sim}	
\end{figure}

\section{Conclusion}

We have shown how a broad class of coupled, linear PDEs on a hyper-rectangular spatial domain can be equivalently represented as Partial Integral Equations (PIEs). This representation was constructed inductively, based on a representation of the multivariate domain (including boundary conditions and Sobolev regularity constraints) as the intersection of lifted 1D domains. For such a domain decomposition, we proposed an algebraic consistency condition which was shown to be necessary and sufficient for invertibility of the multivariate spatial differential operator, $D^{\delta}$, on the domain of the PDE---thus providing a bijection between the PDE domain and the Hilbert space $L_2$. Furthermore, an expression for this inverse was constructed inductively as the composition of univariate integral operators with polynomial semi-separable kernels. This inverse operator was then shown to be embedded in a *-algebra of multivariate Partial Integral (PI) operators, allowing for the construction of an equivalent PIE representation of the evolution of the PDE. Given this representation, a notion of exponential PIE to PDE stability was defined and a sufficient condition for such stability was formulated as feasibility of a set of linear PI operator inequalities. Feasibility of such operator inequalities was then verified by constructing a basis for the PI operators and using positive matrices to enforce positivity. Software implementation was proposed which automates construction of the PIE representation and verification of the associated stability conditions. Finally, accuracy of the resulting stability analysis was verified by applying the stability test to 2D heat, wave, and plate equations.

\bibliography{bibfile}

\begin{thebibliography}{10}

\bibitem{abdulla2023SobolevEmbedding}
Ugur~G. Abdulla.
\newblock Generalized {N}ewton--{L}eibniz formula and the embedding of the
  {S}obolev functions with dominating mixed smoothness into {H}\"older spaces.
\newblock {\em AIMS Mathematics}, 8(9):20700--20717, 2023.

\bibitem{ahmadi2019AnalysisFlow}
Mohamadreza Ahmadi, Giorgio Valmorbida, Dennice Gayme, and Antonis
  Papachristodoulou.
\newblock A framework for input--output analysis of wall-bounded shear flows.
\newblock {\em Journal of Fluid Mechanics}, 873:742--785, 2019.

\bibitem{ahmadi2016AnalysisPDEs}
Mohamadreza Ahmadi, Giorgio Valmorbida, and Antonis Papachristodoulou.
\newblock Dissipation inequalities for the analysis of a class of {PDE}s.
\newblock {\em Automatica}, 66:163--171, 2016.

\bibitem{andersen2021cvxopt}
Martin Andersen, Joachim Dahl, and Lieven Vandenberghe.
\newblock {\em {CVXOPT} documentation}, 2021.

\bibitem{mosek}
MOSEK ApS.
\newblock {\em The {MOSEK} optimization toolbox for {MATLAB} manual. Version
  10.0.34}, 2023.

\bibitem{bensoussan2007representation}
Alain Bensoussan, Giuseppe Da~Prato, Michel~C Delfour, and Sanjoy~K Mitter.
\newblock {\em Representation and control of infinite dimensional systems},
  volume~2.
\newblock Springer, 2007.

\bibitem{chernyshenko2014SOS_NS}
Sergei Chernyshenko, Paul Goulart, Deqing Huang, and Antonis Papachristodoulou.
\newblock Polynomial sum of squares in fluid dynamics: a review with a look
  ahead.
\newblock {\em Philosophical Transactions of the Royal Society A: Mathematical,
  Physical and Engineering Sciences}, 372(2020), 2014.

\bibitem{curtain1995introduction}
Ruth~F Curtain and Hans Zwart.
\newblock {\em An Introduction to Infinite-Dimensional Linear Systems Theory}.
\newblock Springer-Verlag, Berlin, Heidelberg, 1995.

\bibitem{evans2022PDEs}
Lawrence~C Evans.
\newblock {\em Partial differential equations}, volume~19.
\newblock American Mathematical Society, 2022.

\bibitem{fantuzzi2022sMomentSharpness}
Giovanni Fantuzzi and Ian Tobasco.
\newblock Sharpness and non-sharpness of occupation measure bounds for integral
  variational problems.
\newblock {\em arXiv eprint:2207.13570}, 2022.

\bibitem{fridman2010PlateEq}
Emilia Fridman, Serge Nicaise, and Julie Valein.
\newblock Stabilization of second order evolution equations with unbounded
  feedback with time-dependent delay.
\newblock {\em SIAM Journal on Control and Optimization}, 48(8):5028--5052,
  2010.

\bibitem{fuentes2022SOS_NS}
Federico Fuentes, David Goluskin, and Sergei Chernyshenko.
\newblock Global stability of fluid flows despite transient growth of energy.
\newblock {\em Physical Review Letters}, 128(20), 2022.

\bibitem{gayme2011amplification}
Dennice Gayme, Beverley McKeon, Bassam Bamieh, Antonis Papachristodoulou, and
  John Doyle.
\newblock Amplification and nonlinear mechanisms in plane {Couette} flow.
\newblock {\em Physics of Fluids}, 23(6), 2011.

\bibitem{gohberg1990LinearOperators}
Israel Gohberg, Seymour Goldberg, and Marinus~A Kaashoek.
\newblock {\em Classes of Linear Operators}, volume~1.
\newblock Springer Basel AG, 1990.

\bibitem{hajjej2024KirchoffPlate}
Zayd Hajjej and Hongwei Zhang.
\newblock Exponential stability of a {K}irchhoff plate equation with structural
  damping and internal time delay.
\newblock {\em Symmetry}, 16(11), 2024.

\bibitem{henrion2023MomentsPDEs}
Didier Henrion, Maria Infusino, Salma Kuhlmann, and Victor Vinnikov.
\newblock Infinite-dimensional moment-{SOS} hierarchy for nonlinear partial
  differential equations.
\newblock {\em arXiv eprint:2305.18768}, 2023.

\bibitem{jagt2022PIE_2D}
Declan~S Jagt and Matthew~M Peet.
\newblock A {PIE} representation of coupled linear {2D} {PDE}s and stability
  analysis using {LPIs}.
\newblock In {\em IEEE American Control Conference}, pages 1659--1666, 2022.

\bibitem{jagt2026WellPosedPIE_arXiv}
Declan~S Jagt and Matthew~M Peet.
\newblock Verifying well-posedness of linear {PDE}s using convex optimization.
\newblock {\em arXiv eprint:2604.00573}, 2026.

\bibitem{jovanovic2021io_flow_control}
Mihailo~R Jovanovi{\'c}.
\newblock From bypass transition to flow control and data-driven turbulence
  modeling: an input--output viewpoint.
\newblock {\em Annual Review of Fluid Mechanics}, 53:311--345, 2021.

\bibitem{jovanovic2005Energy_Amplification_NS}
Mihailo~R Jovanovi{\'c} and Bassam Bamieh.
\newblock Componentwise energy amplification in channel flows.
\newblock {\em Journal of Fluid Mechanics}, 534:145--183, 2005.

\bibitem{kang2021ControlKSE}
Wen Kang and Emilia Fridman.
\newblock Sampled-data control of {2-D Kuramoto-Sivashinsky} equation.
\newblock {\em IEEE Transactions on Automatic Control}, 67(3):1314--1326, 2021.

\bibitem{kang2019ObserverNS}
Wen Kang, Emilia Fridman, and Sergiy Zhuk.
\newblock Sampled-data observer for {2D Navier-Stokes} equation.
\newblock In {\em IEEE Conference on Decision and Control}, pages 1201--1206,
  2019.

\bibitem{korda2022Moments}
Milan Korda, Didier Henrion, and Jean~Bernard Lasserre.
\newblock Moments and convex optimization for analysis and control of nonlinear
  {PDE}s.
\newblock In {\em Handbook of Numerical Analysis}, volume~23, pages 339--366.
  2022.

\bibitem{krstic2007backstepping}
Miroslav Krstic and Andrey Smyshlyaev.
\newblock Backstepping boundary control -- a tutorial.
\newblock In {\em IEEE American Control Conference}, pages 870--875, 2007.

\bibitem{lasiecka1991controlKirchoff}
Irena Lasiecka and Roberto Triggiani.
\newblock Exact controllability and uniform stabilization of {Kirchoff} plates
  with boundary control only on {$\Delta$w| $\Sigma$} and homogeneous boundary
  displacement.
\newblock {\em Journal of Differential Equations}, 93(1):62--101, 1991.

\bibitem{lasiecka2000PDEControl}
Irena Lasiecka and Roberto Triggiani.
\newblock {\em Control theory for partial differential equations: Continuous
  and approximation theories}, volume~1.
\newblock 2000.

\bibitem{lieu2013L2_gain_Couette_flow}
Binh Lieu, Mihailo~R Jovanovi{\'c}, and Satish Kumar.
\newblock Worst-case amplification of disturbances in inertialess {Couette}
  flow of viscoelastic fluids.
\newblock {\em Journal of Fluid Mechanics}, 723:232--263, 2013.

\bibitem{liu2021io_analysis_flow}
Chang Liu and Dennice Gayme.
\newblock Structured input--output analysis of transitional wall-bounded flows.
\newblock {\em Journal of Fluid Mechanics}, 927, 2021.

\bibitem{liu1997PlateEq}
Zhuangyi Liu and Songmu Zheng.
\newblock Exponential stability of the {Kirchhoff} plate with thermal or
  viscoelastic damping.
\newblock {\em Quarterly of Applied Mathematics}, 55(3):551--564, 1997.

\bibitem{lofberg2004yalmip}
Johan Lofberg.
\newblock {YALMIP}: A toolbox for modeling and optimization in {MATLAB}.
\newblock In {\em IEEE International Conference on Robotics and Automation},
  pages 284--289, 2004.

\bibitem{magron2020MomentsControl}
Victor Magron and Christophe Prieur.
\newblock Optimal control of linear {PDE}s using occupation measures and {SDP}
  relaxations.
\newblock {\em IMA Journal of Mathematical Control and Information},
  37(1):159--174, 2020.

\bibitem{mahawattege2021fluid}
Rasika Mahawattege and Roberto Triggiani.
\newblock Fluid--structure interaction with {Kelvin}--{Voigt} damping:
  Analyticity, spectral analysis, exponential decay.
\newblock {\em Applied Mathematics \& Optimization}, 84(Suppl 2):1821--1863,
  2021.

\bibitem{meyer2015StabilityPDEs}
Evgeny Meyer and Matthew Peet.
\newblock Stability analysis of parabolic linear {PDE}s with two spatial
  dimensions using {Lyapunov} method and {SOS}.
\newblock In {\em IEEE Conference on Decision and Control}, pages 1884--1890,
  2015.

\bibitem{moghadam2013boundary}
Amir~Alizadeh Moghadam, Ilyasse Aksikas, Stevan Dubljevic, and J~Fraser Forbes.
\newblock Boundary optimal {(LQ)} control of coupled hyperbolic {PDEs} and
  {ODEs}.
\newblock {\em Automatica}, 49(2):526--533, 2013.

\bibitem{papachristodoulou2021SOSTOOLS}
Antonis Papachristodoulou, James Anderson, Giorgio Valmorbida, Stephen Prajna,
  Pete Seiler, Pablo Parrilo, Matthew Peet, and Declan Jagt.
\newblock {SOSTOOLS} version 4.00 sum of squares optimization toolbox for
  {MATLAB}, 2021.

\bibitem{papachristodoulou2006AnalysisPDEs}
Antonis Papachristodoulou and Matthew Peet.
\newblock On the analysis of systems described by classes of partial
  differential equations.
\newblock In {\em IEEE Conference on Decision and Control (CDC)}, pages
  747--752, 2006.

\bibitem{parrilo2000SOS}
Pablo~A Parrilo.
\newblock {\em Structured semidefinite programs and semialgebraic geometry
  methods in robustness and optimization}.
\newblock California Institute of Technology, 2000.

\bibitem{riesmeier2022LateLumping}
Marcus Riesmeier and Frank Woittennek.
\newblock Late lumping of transformation-based feedback laws for boundary
  control systems.
\newblock {\em arXiv eprint:2211.01238}, 2022.

\bibitem{shivakumar2022GPDE_Arxiv}
Sachin Shivakumar, Amritam Das, Siep Weiland, and Matthew Peet.
\newblock Extension of the partial integral equation representation to {GPDE}
  input-output systems.
\newblock {\em arxiv eprint:2205.03735}, 2022.

\bibitem{shivakumar2025PIETOOLS}
Sachin Shivakumar, Declan Jagt, Danilo Braghini, Amritam Das, Yulia Peet, and
  Matthew Peet.
\newblock {PIETOOLS} 2025: User manual, 2026.

\bibitem{sturm1999SeDuMi}
Jos~F Sturm.
\newblock Using {SeDuMi} 1.02, a {MATLAB} toolbox for optimization over
  symmetric cones.
\newblock {\em Optimization Methods and Software}, 11(1-4):625--653, 1999.

\bibitem{valmorbida2015IntegralInequalities2D}
Giorgio Valmorbida, Mohamadreza Ahmadi, and Antonis Papachristodoulou.
\newblock Convex solutions to integral inequalities in two-dimensional domains.
\newblock In {\em IEEE Conference on Decision and Control}, pages 7268--7273,
  2015.

\bibitem{vazquez2024Backstepping}
Rafael V{\'a}zquez, Jean Auriol, Federico Bribiesca-Argomedo, and Miroslav
  Krstic.
\newblock Backstepping for {Partial Differential Equations}.
\newblock {\em arXiv eprint:2410.15146}, 2024.

\bibitem{vazquez2019boundary}
Rafael V{\'a}zquez and Miroslav Krstic.
\newblock Boundary control and estimation of reaction--diffusion equations on
  the sphere under revolution symmetry conditions.
\newblock {\em International Journal of Control}, 92(1):2--11, 2019.

\bibitem{vazquez2008BacksteppingNS}
Rafael V{\'a}zquez, Emmanuel Tr{\'e}lat, and Jean-Michel Coron.
\newblock Control for fast and stable laminar-to-high-{R}eynolds-numbers
  transfer in a {2D Navier-Stokes} channel flow.
\newblock {\em Discrete and Continuous Dynamical Systems-Series B},
  10(4):925--956, 2008.

\bibitem{wang2025LowRankSDP}
Jie Wang and Liangbing Hu.
\newblock Solving low-rank semidefinite programs via manifold optimization.
\newblock {\em Journal of Scientific Computing}, 104(1):33, 2025.

\bibitem{xu2008Backstepping2D}
Chao Xu, Eugenio Schuster, Rafael V{\'a}zquez, and Miroslav Krstic.
\newblock Stabilization of linearized {2D} magnetohydrodynamic channel flow by
  backstepping boundary control.
\newblock {\em Systems \& control letters}, 57(10):805--812, 2008.

\bibitem{yu2008StabilityFlow2D}
Hwayeong Yu, Kenji Kashima, and Jun~Ichi Imura.
\newblock Stability analysis of 2-dimensional fluid flow based on sum of
  squares relaxation.
\newblock In {\em SICE Annual Conference}, pages 3321--3326, 2008.

\bibitem{zayats2021ObserverNS}
Mykhaylo Zayats, Emilia Fridman, and Sergiy Zhuk.
\newblock Global observer design for {Navier-Stokes} equations in {2D}.
\newblock In {\em IEEE Conference on Decision and Control}, pages 1862--1867,
  2021.

\bibitem{zheng2020SparseSOS}
Yang Zheng, Giovanni Fantuzzi, Antonis Papachristodoulou, Paul Goulart, and
  Andrew Wynn.
\newblock Chordal decomposition in operator-splitting methods for sparse
  semidefinite programs.
\newblock {\em Mathematical Programming}, 180(1):489--532, 2020.

\bibitem{zhuk2023ObserverNS}
Sergiy Zhuk, Mykhaylo Zayats, and Emilia Fridman.
\newblock Detectability and global observer design for {2D Navier-Stokes}
  equations with uncertain inputs.
\newblock {\em Automatica}, 153, 2023.

\end{thebibliography}
\bibliographystyle{plain}

\clearpage

\onecolumn

\appendix

\section{A Bijection Between the PDE Domain and Fundamental State Space}\label{appx:proofs}

In Section~\ref{sec:Tmap}, an explicit expression was derived for the inverse of the multivariate differential operator $D^{\delta}:\mcl{D}\to L_{2}[\Omega]$, on the PDE domain $\mcl{D}\subseteq S_{2}^{\delta}[\Omega]$. Specifically, it was shown that if we can decompose $\mcl{D}=\bigcap_{i=1}^{N}\mcl{D}_{i}$, for lifted univariate domains $\mcl{D}_{i}$, then the inverse to $D^{\delta}:\mcl{D}\to L_{2}[\Omega]$ can be expressed as the product $\mcl{T}=\prod_{i=1}^{N}\mcl{T}_{i}:L_{2}[\Omega]\to\mcl{D}$ of the inverse $\mcl{T}_{i}$ to each $\partial_{s_{i}}^{\delta_{i}}:\mcl{D}_{i}\to L_{2}[\Omega]$. The proof of this result, however, relies on the commutability of the operators $\mcl{T}_{i},\partial_{s_{j}}^{\delta_{j}}$, and of the operators $\mcl{T}_{i},\mcl{T}_{j}$, for distinct $i,j$. In this appendix, we show that these commutative properties are indeed satisfied under the conditions presented in Section~\ref{sec:Tmap}---in particular when the domains $\mcl{D}_{i}$ are consistent---providing some of the proofs and details omitted from that section.


To start, consider a univariate domain $\mcl{D}\subseteq S_{2}^{d,n}[a,b]$ as in~\eqref{eq:PDEdom_1D}, lifted to the multivariate setting as $\mcl{D}\mapsto\mcl{D}_{i}=\hat{\mcl{D}}[i]\subseteq S_{2}^{\delta_{i}\cdot\tnf{e}_{i},n}[\Omega]$ using $d\mapsto\delta_{i}$ and $[a,b]\mapsto[a_{i},b_{i}]$. Recall from Cor.~\ref{cor:gohberg_PIE} and Cor.~\ref{cor:Tmap_1D} that if $\mcl{D}$ is admissible as per Defn.~\ref{defn:admissible_BCs_1D}, we can define the inverse to $\partial_{s_{i}}^{\delta_{i}}:\mcl{D}_{i}\to L_{2}^{n}[\Omega]$ as an integral operator $\mcl{T}_{i}:L_{2}^{n}[\Omega]\to \mcl{D}_{i}$ with polynomial semi-separable kernel. As such, for any distinct $i,j\in\{1:N\}$, we can define a right-inverse to $\partial_{s_{i}}^{\delta_{i}}\partial_{s_{j}}^{\delta_{j}}:\mcl{D}_{i}\cap \mcl{D}_{j}\to L_{2}^{n}[\Omega]$ as $\mcl{T}_{j}\mcl{T}_{i}$, satisfying $\partial_{s_{i}}^{\delta_{i}}\partial_{s_{j}}^{\delta_{j}}\,\mcl{T}_{j}\mcl{T}_{i}\mbf{v}=\partial_{s_{i}}^{\delta_{i}}\, \mcl{T}_{i}\mbf{v}=\mbf{v}$ for all $\mbf{v}\in L_{2}^{n}[\Omega]$. In order to show that this operator also satisfies $\mcl{T}_{j}\mcl{T}_{i}\,\partial_{s_{i}}^{\delta_{i}}\partial_{s_{j}}^{\delta_{j}}\mbf{u}=\mbf{u}$ for $\mbf{u}\in\mcl{D}_{i}\cap\mcl{D}_{j}$, however, we first need to prove that $\partial_{s_{j}}^{\delta_{j}}\mbf{u}\in \mcl{D}_{i}$, for which we have the following lemma.

\begin{lem}\label{lem:PDEdom_diff_appx} 
	Let $N\in\N$, $\delta\in\N_{0}^{N}$, and $\Omega=\prod_{i=1}^{N}[a_{i},b_{i}]$.
	For given $B^{i}_{j,k},C^{i}_{j,k}$, let associated domains $\mcl{D}_{i}$ be of the form in~\eqref{eq:PDEdom_1D} (letting $B_{j,k}=B^{i}_{j,k}$, $C_{j,k}=C^{i}_{j,k}$, $[a,b]=[a_{i},b_{i}]$ and $d=\delta_{i}$), and lifted to the multivariate setting as defined in Subsection~\ref{subsec:notation_sobolev} (so that $\mcl{D}_{i}=\hat{\mcl{D}}[i]\subseteq S_{2}^{\delta_{i}\cdot\tnf{e}_{i},n}[\Omega]$) for each $i\in\{1:N\}$. 
	If $\mbf{u}\in\mcl{D}:=\mcl{D}_{1}\cap\cdots\cap\mcl{D}_{N}$, then $\partial_{s_{j}}^{\alpha_{j}}\cdots\partial_{s_{N}}^{\alpha_{N}}\mbf{u}\in\mcl{D}_{1}\cap\cdots\cap\mcl{D}_{j-1}$ for all $j\in\{1:N\}$ and $\vec{0}\leq\alpha\leq\delta$.
\end{lem}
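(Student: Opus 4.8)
The statement is, morally, the observation that the operations defining $\mcl{D}_{i}$ for $i<j$ involve only the variable $s_{i}$, whereas $\partial_{s_{j}}^{\alpha_{j}}\cdots\partial_{s_{N}}^{\alpha_{N}}$ involves only the variables $s_{j},\dots,s_{N}$, and the two index sets $\{1:j-1\}$ and $\{j:N\}$ are disjoint, so the two operations commute. I would fix $\mbf{u}\in\mcl{D}$, $j\in\{1:N\}$ and $\vec{0}\le\alpha\le\delta$, set $\mbf{w}:=\partial_{s_{j}}^{\alpha_{j}}\cdots\partial_{s_{N}}^{\alpha_{N}}\mbf{u}$, and prove $\mbf{w}\in\mcl{D}_{i}$ for each fixed $i\in\{1:j-1\}$; intersecting over $i$ then gives the claim (the case $j=1$ being vacuous). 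Membership $\mbf{w}\in\mcl{D}_{i}=\hat{\mcl{D}}[i]$ requires two things: (a) the regularity $\partial_{s_{i}}^{\delta_{i}}\mbf{w}\in L_{2}^{n}[\Omega]$, and (b) the identities $\sum_{k=0}^{\delta_{i}-1}\bl[B^{i}_{j',k}(\partial_{s_{i}}^{k}\mbf{w})|_{s_{i}=a_{i}}+C^{i}_{j',k}(\partial_{s_{i}}^{k}\mbf{w})|_{s_{i}=b_{i}}\br]=0$ for $j'\in\{0:\delta_{i}-1\}$, viewed as equalities of functions of the remaining variables. If $\delta_{i}=0$ then $\mcl{D}_{i}=L_{2}^{n}[\Omega]$ and there is nothing to prove; if $\delta_{m}=0$ for some $m\in\{j:N\}$ then $\alpha_{m}\le\delta_{m}=0$, so no derivative is taken in $s_{m}$; hence I may assume all relevant orders are positive.

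For (a): since $i<j$, the variable $s_{i}$ is not among $s_{j},\dots,s_{N}$, so $\partial_{s_{i}}^{\delta_{i}}\mbf{w}=D^{\beta}\mbf{u}$ with $\beta\in\N_{0}^{N}$ given by $\beta_{i}=\delta_{i}$, $\beta_{m}=\alpha_{m}$ for $m\in\{j:N\}$, and $\beta_{m}=0$ otherwise; as $\alpha\le\delta$ we have $\beta\le\delta$, so $D^{\beta}\mbf{u}\in L_{2}^{n}[\Omega]$ by definition of $S_{2}^{\delta,n}[\Omega]$. The same computation with $\beta_{i}=k<\delta_{i}$ shows each $\partial_{s_{i}}^{k}\mbf{w}$ ($k\le\delta_{i}-1$) equals $D^{\gamma}\mbf{u}$ with $\gamma\le\delta$ and $\gamma_{i}<\delta_{i}$, so the traces in (b) are well-defined by the discussion of $S_{2}^{\delta}[\Omega]$ in Subsec.~\ref{subsec:notation_sobolev}. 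For (b): the key point is the commutation identity
\begin{equation*}
	(\partial_{s_{i}}^{k}\mbf{w})|_{s_{i}=c}=\partial_{s_{j}}^{\alpha_{j}}\cdots\partial_{s_{N}}^{\alpha_{N}}\bl[(\partial_{s_{i}}^{k}\mbf{u})|_{s_{i}=c}\br],\qquad c\in\{a_{i},b_{i}\},\quad k\le\delta_{i}-1,
\end{equation*}
which interchanges the derivatives in $s_{j},\dots,s_{N}$ with $\partial_{s_{i}}^{k}$ and with the evaluation at $s_{i}=c$, all of which act on distinct variables. Granting it, summing against $B^{i}_{j',k},C^{i}_{j',k}$ and using linearity shows that the boundary form evaluated at $\mbf{w}$ equals $\partial_{s_{j}}^{\alpha_{j}}\cdots\partial_{s_{N}}^{\alpha_{N}}$ applied to the boundary form evaluated at $\mbf{u}$, which vanishes identically because $\mbf{u}\in\mcl{D}_{i}$. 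Together with (a) this gives $\mbf{w}\in\mcl{D}_{i}$, and since $i\in\{1:j-1\}$ was arbitrary, $\mbf{w}\in\bigcap_{i=1}^{j-1}\mcl{D}_{i}$.

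The only step requiring genuine care is the commutation identity in (b): interchanging iterated spatial derivatives in one group of variables with a derivative and a boundary evaluation in a disjoint variable. I expect this to be the main obstacle, though a routine one: it can be justified either by verifying it for $\mbf{u}\in C^{\infty}(\overline{\Omega})$ via Clairaut's theorem and the fundamental theorem of calculus and then passing to the limit using density of smooth functions in $S_{2}^{\delta}[\Omega]$ together with continuity of the relevant derivative and trace maps, or by working throughout with the representative of $\mbf{u}$ that is absolutely continuous in each variable separately (which functions in these dominating-mixed-smoothness spaces admit), for which interchange of iterated one-dimensional operations in distinct variables is immediate from Fubini's theorem. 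All remaining steps are index bookkeeping and direct appeals to the definition of $S_{2}^{\delta,n}[\Omega]$.
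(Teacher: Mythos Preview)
Your approach is correct and uses the same core ideas as the paper's proof: both verify that (a) the required Sobolev regularity in $s_{i}$ is preserved because $\partial_{s_{i}}^{k}\mbf{w}=D^{\gamma}\mbf{u}$ with $\gamma\le\delta$, and (b) the boundary conditions in $\mcl{D}_{i}$ are preserved because $\partial_{s_{j}}^{\alpha_{j}}\cdots\partial_{s_{N}}^{\alpha_{N}}$ commutes with $\partial_{s_{i}}^{k}$ and with evaluation at $s_{i}=a_{i},b_{i}$ for $i<j$. The only organizational difference is that the paper packages this as backward induction on $j$ (peeling off one $\partial_{s_{j}}^{\alpha_{j}}$ at a time), whereas you handle all the derivatives in one shot; your version is also more explicit in isolating and justifying the trace--derivative commutation, which the paper simply asserts.
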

\begin{proof}
	%
	Fix arbitrary $\vec{0}\leq\alpha\leq\delta$.
	We prove the result by induction on $j$, ranging from $N$ to $1$. For the base case, $j=N$, fix arbitrary $\mbf{u}\in\mcl{D}$. Then $\mbf{u}\in S_{2}^{\delta_{i}\cdot\tnf{e}_{i},n}[\Omega]$ for all $i\in\{1:N\}$, and therefore $\partial_{s_{N}}^{\alpha_{N}}\mbf{u}\in S_{2}^{\delta_{i}\cdot\tnf{e}_{i},n}[\Omega]$ for all $i\in\{1:N-1\}$. In addition, by definition of the domains $\mcl{D}_{i}$, we have
	\begin{equation*}
		\sum_{k=0}^{\delta_{i}-1}\bbl[B^{i}_{\ell,k} (\partial_{s_{i}}^{k}\mbf{u})(s)|_{s_{i}=a_{i}} +C^{i}_{\ell,k}(\partial_{s_{i}}^{k}\mbf{u})(s)|_{s_{i}=b_{i}}\bbr]=0,\qquad \forall \ell\in\{0:\delta_{i}-1\},
	\end{equation*} 
	for all $i\in\{1:N-1\}$. It follows that also
	\begin{align*}
		&\sum_{k=0}^{\delta_{i}-1}\bbl[B^{i}_{\ell,k} (\partial_{s_{i}}^{k}\partial_{s_{N}}^{\alpha_{N}}\mbf{u})(s)|_{s_{i}=a_{i}} +C^{i}_{\ell,k}(\partial_{s_{i}}^{k}\partial_{s_{N}}^{\alpha_{N}}\mbf{u})(s)|_{s_{i}=b_{i}}\bbr]	\\
		&\hspace*{2.0cm}=\partial_{s_{N}}^{\alpha_{N}}\sum_{k=0}^{\delta_{i}-1}\bbl[B^{i}_{\ell,k} (\partial_{s_{i}}^{k}\mbf{u})(s)|_{s_{i}=a_{i}} +C^{i}_{\ell,k}(\partial_{s_{i}}^{k}\mbf{u})(s)|_{s_{i}=b_{i}}\bbr]
		=0,
	\end{align*}
	for every $\ell\in\{0:\delta_{i}-1\}$, and therefore $\partial_{s_{N}}^{\alpha_{N}}\mbf{u}\in \mcl{D}_{i}$, for all $i\in\{1:N-1\}$. Thus, $\partial_{s_{N}}^{\alpha_{N}}\mbf{u}\in \mcl{D}_{1}\cap\cdots\cap \mcl{D}_{N-1}$.
	
	Now, suppose for induction the lemma statement holds for all $j\in\{J:N\}$, for some $J\in\{1:N\}$, and let $j=J-1$. Then, for any $\mbf{u}\in\mcl{D}$, we have $\partial_{s_{j+1}}^{\alpha_{j+1}}\cdots\partial_{s_{N}}^{\alpha_{N}}\mbf{u}\in \mcl{D}_{1}\cap\cdots\cap \mcl{D}_{j}\subseteq S_{2}^{\delta_{1},n}[\Omega]\cap\cdots\cap S_{2}^{\delta_{j}\cdot\tnf{e}_{j},n}[\Omega]$, by the induction hypothesis. It follows that $\partial_{s_{j}}^{\alpha_{j}}\cdots\partial_{s_{N}}^{\alpha_{N}}\mbf{u}\in S_{2}^{\delta_{i}\cdot\tnf{e}_{i},n}[\Omega]$ for all $i\in\{1:j-1\}$. In addition, by definition of the domains $\mcl{D}_{i}$, we have
	\begin{equation*}
		\sum_{k=0}^{\delta_{i}-1}\bbl[B^{i}_{\ell,k} \bl(\partial_{s_{i}}^{k}(\partial_{s_{j+1}}^{\alpha_{j+1}}\cdots\partial_{s_{N}}^{\alpha_{N}}\mbf{u})\br)(s)|_{s_{i}=a_{i}} +C^{i}_{\ell,k}\bl(\partial_{s_{i}}^{k}(\partial_{s_{j+1}}^{\alpha_{j+1}}\cdots\partial_{s_{N}}^{\alpha_{N}}\mbf{u})\br)(s)|_{s_{i}=b_{i}}\bbr]=0,\qquad \forall \ell\in\{0:\delta_{i}-1\},
	\end{equation*}
	for all $i\in\{1:j\}$. It follows that also
	\begin{align*}
		&\sum_{k=0}^{\delta_{i}-1}\bbl[B^{i}_{\ell,k} \bl(\partial_{s_{i}}^{k}(\partial_{s_{j}}^{\alpha_{j}}\cdots\partial_{s_{N}}^{\alpha_{N}}\mbf{u})\br)(s)|_{s_{i}=a_{i}} +C^{i}_{\ell,k}\bl(\partial_{s_{i}}^{k}(\partial_{s_{j}}^{\alpha_{j}}\cdots\partial_{s_{N}}^{\alpha_{N}}\mbf{u})\br)(s)|_{s_{i}=b_{i}}\bbr]	\\
		&\qquad=\partial_{s_{j}}^{\alpha_{j}}\sum_{k=0}^{\delta_{i}-1}\bbl[B^{i}_{\ell,k} \bl(\partial_{s_{i}}^{k}(\partial_{s_{j+1}}^{\alpha_{j+1}}\cdots\partial_{s_{N}}^{\alpha_{N}}\mbf{u})\br)(s)|_{s_{i}=a_{i}} +C^{i}_{\ell,k}\bl(\partial_{s_{i}}^{k}(\partial_{s_{j+1}}^{\alpha_{j+1}}\cdots\partial_{s_{N}}^{\alpha_{N}}\mbf{u})\br)(s)|_{s_{i}=b_{i}}\bbr]
		=0,
	\end{align*}
	for all $\ell\in\{0:\delta_{j}\}$, and therefore $\partial_{s_{j}}^{\alpha_{j}}\cdots\partial_{s_{N}}^{\alpha_{N}}\mbf{u}\in\mcl{D}_{i}$, for all $i\in\{1:j-1\}$. We find that $\partial_{s_{j}}^{\alpha_{j}}\cdots\partial_{s_{N}}^{\alpha_{N}}\mbf{u}\in\mcl{D}_{1}\cap\cdots\cap\mcl{D}_{j-1}$, whence the result holds by induction.
	%
\end{proof}

Lem.~\ref{lem:PDEdom_diff_appx} proves that, for any $\mbf{u}\in \mcl{D}_{1}\cap\cdots\cap\mcl{D}_{N}$, we have $\partial_{s_{j}}^{\delta_{j}}\cdots\partial_{s_{N}}^{\delta_{N}}\mbf{u}\in \mcl{D}_{1}\cap\cdots\cap\mcl{D}_{j-1}$ for all $j\in\{1:N\}$. As such, if both $\mcl{D}_{i}$ and $\mcl{D}_{j}$ are admissible as per Defn.~\ref{defn:admissible_BCs_1D}, then for any $\mbf{u}\in \mcl{D}_{i}\cap \mcl{D}_{j}$ we have by Cor.~\ref{cor:gohberg_PIE} and Cor.~\ref{cor:Tmap_1D} that $\partial_{s_{j}}^{\delta_{j}}\mbf{u}=\mcl{T}_{i}\partial_{s_{i}}^{\delta_{i}}(\partial_{s_{j}}^{\delta_{j}}\mbf{u})$, and thus $\mcl{T}_{j}\mcl{T}_{i}\,\partial_{s_{i}}^{\delta_{i}}\partial_{s_{j}}^{\delta_{j}}\mbf{u}=\mbf{u}$. More generally, the following corollary proves that $\mcl{T}:=\mcl{T}_{N}\cdots\mcl{T}_{1}$ defines a left-inverse to $D^{\delta}:\mcl{D}\to L_{2}^{n}$, as well as a right-inverse to the extension of $D^{\delta}$ to the range of $\mcl{T}$.

\begin{cor}\label{cor:LRinverse_appx}
	Let $\mcl{T}:=\mcl{T}_{N}\cdots\mcl{T}_{1}$ and $\mcl{D}:=\mcl{D}_{1}\cap\cdots\cap\mcl{D}_{N}$, for $\mcl{T}_{i}$ as in Cor.~\ref{cor:Tmap_1D}. Then the following statements hold:
	\begin{enumerate}
		\item $\mcl T D^\delta \mbf{u}=\mbf{u}$ for any $\mbf{u}\in \mcl D$.
		\item $D^{\delta}\mcl{T} \mbf{v}=\mbf{v}$ for any $\mbf{v}\in L_2^{n}$.
	\end{enumerate}
\end{cor}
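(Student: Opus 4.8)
The plan is to prove both statements by induction on the number of spatial variables, reducing each inductive step to the univariate identities of Cor.~\ref{cor:gohberg_PIE}/Cor.~\ref{cor:Tmap_1D} together with the fact, already established in Lem.~\ref{lem:PDEdom_diff_appx} (and in the commutation lemmas referenced in the excerpt), that the lifted operators $\mcl{T}_i$ commute with $\partial_{s_j}^{\delta_j}$ for $i\neq j$ and that differentiating a function of $\mcl{D}$ in one variable leaves it in the domains associated with the other variables. Throughout, I would write $D^{\delta}=\partial_{s_1}^{\delta_1}\cdots\partial_{s_N}^{\delta_N}$ and note that these univariate differential operators all commute with one another on the smooth functions in question.

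For the first statement, fix $\mbf{u}\in\mcl{D}=\mcl{D}_1\cap\cdots\cap\mcl{D}_N$. I would peel off one operator at a time, starting from the inside. Since $\mbf{u}\in\mcl{D}_1$ and, by Lem.~\ref{lem:PDEdom_diff_appx}, $\partial_{s_2}^{\delta_2}\cdots\partial_{s_N}^{\delta_N}\mbf{u}\in\mcl{D}_1$ as well, Cor.~\ref{cor:Tmap_1D} gives $\mcl{T}_1\partial_{s_1}^{\delta_1}\bl(\partial_{s_2}^{\delta_2}\cdots\partial_{s_N}^{\delta_N}\mbf{u}\br)=\partial_{s_2}^{\delta_2}\cdots\partial_{s_N}^{\delta_N}\mbf{u}$. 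Using commutativity of the univariate differential operators, $\mcl{T}_1 D^{\delta}\mbf{u}=\mcl{T}_1\partial_{s_1}^{\delta_1}\partial_{s_2}^{\delta_2}\cdots\partial_{s_N}^{\delta_N}\mbf{u}=\partial_{s_2}^{\delta_2}\cdots\partial_{s_N}^{\delta_N}\mbf{u}$. Now apply $\mcl{T}_2$: since $\partial_{s_3}^{\delta_3}\cdots\partial_{s_N}^{\delta_N}\mbf{u}\in\mcl{D}_2$ by Lem.~\ref{lem:PDEdom_diff_appx}, Cor.~\ref{cor:Tmap_1D} gives $\mcl{T}_2\partial_{s_2}^{\delta_2}\bl(\partial_{s_3}^{\delta_3}\cdots\partial_{s_N}^{\delta_N}\mbf{u}\br)=\partial_{s_3}^{\delta_3}\cdots\partial_{s_N}^{\delta_N}\mbf{u}$. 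Iterating, after applying $\mcl{T}_k$ for $k=1,\dots,N$ we obtain $\mcl{T}_N\cdots\mcl{T}_1 D^{\delta}\mbf{u}=\mbf{u}$, i.e. $\mcl{T}D^{\delta}\mbf{u}=\mbf{u}$. (To make the bookkeeping precise I would state the induction hypothesis as: for each $k\in\{0:N\}$, $\mcl{T}_k\cdots\mcl{T}_1 D^{\delta}\mbf{u}=\partial_{s_{k+1}}^{\delta_{k+1}}\cdots\partial_{s_N}^{\delta_N}\mbf{u}$, with the base case $k=0$ trivial, and the inductive step as above, invoking Lem.~\ref{lem:PDEdom_diff_appx} to justify that the relevant intermediate function lies in $\mcl{D}_{k+1}$ so that Cor.~\ref{cor:Tmap_1D} applies.)

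For the second statement, fix $\mbf{v}\in L_2^n[\Omega]$. Here the argument runs in the opposite order, from the outside in, and uses only the right-inverse part of Cor.~\ref{cor:Tmap_1D} (which holds for \emph{any} $L_2$ function, with no domain hypothesis) together with the fact that $\mcl{T}_i$ commutes with $\partial_{s_j}^{\delta_j}$ for $i\neq j$. First, $D^{\delta}\mcl{T}\mbf{v}=\partial_{s_1}^{\delta_1}\cdots\partial_{s_N}^{\delta_N}\,\mcl{T}_N\cdots\mcl{T}_1\mbf{v}$. Since $\partial_{s_1}^{\delta_1}$ commutes with $\mcl{T}_N,\dots,\mcl{T}_2$, we may move it past those factors to get $\partial_{s_2}^{\delta_2}\cdots\partial_{s_N}^{\delta_N}\,\mcl{T}_N\cdots\mcl{T}_2\,\bl(\partial_{s_1}^{\delta_1}\mcl{T}_1\mbf{v}\br)=\partial_{s_2}^{\delta_2}\cdots\partial_{s_N}^{\delta_N}\,\mcl{T}_N\cdots\mcl{T}_2\mbf{v}$, using $\partial_{s_1}^{\delta_1}\mcl{T}_1\mbf{v}=\mbf{v}$ from Cor.~\ref{cor:Tmap_1D}. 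Repeating this for $\partial_{s_2}^{\delta_2},\dots,\partial_{s_N}^{\delta_N}$ in turn collapses the whole expression to $\mbf{v}$. Again I would phrase this as an induction, with hypothesis: for each $k\in\{0:N\}$, $\partial_{s_{k+1}}^{\delta_{k+1}}\cdots\partial_{s_N}^{\delta_N}\,\mcl{T}_N\cdots\mcl{T}_1\mbf{v}=\mcl{T}_k\cdots\mcl{T}_1\mbf{v}$ (with $k=N$ being the empty-product case $\mcl{T}\mbf{v}$ and $k=0$ the desired $\mbf{v}$).

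The two genuinely non-routine ingredients — commutativity of $\mcl{T}_i$ with $\partial_{s_j}^{\delta_j}$ for $i\neq j$, and the closure property $\partial_{s_j}^{\delta_j}\mbf{u}\in\mcl{D}_i$ for $\mbf{u}\in\mcl{D}$ — are exactly the content of Lem.~\ref{lem:PDEdom_diff_appx} and the commutation lemma (Lem.~\ref{lem:Top_orth_Dop}) cited in the excerpt, so once those are in hand the proof is a clean double induction. The only mild subtlety, and the step I would be most careful about, is the first statement: there one must verify at each stage that the intermediate function to which $\mcl{T}_{k+1}$ is applied actually lies in $\mcl{D}_{k+1}$ before invoking the \emph{left}-inverse identity of Cor.~\ref{cor:Tmap_1D}; this is precisely where Lem.~\ref{lem:PDEdom_diff_appx} is essential, whereas the second statement needs no such domain check because it relies only on the right-inverse identity, valid on all of $L_2$. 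Note also that neither statement requires consistency of the $\mcl{D}_i$ — that hypothesis enters only later, when one wants $\mcl{T}:L_2^n\to\mcl{D}$ (i.e. $\tnf{Im}(\mcl{T})\subseteq\mcl{D}$), which is not asserted in Cor.~\ref{cor:LRinverse_appx}.
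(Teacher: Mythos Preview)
Your proof is correct and, for the first statement, essentially identical to the paper's: both peel off $\mcl{T}_k\partial_{s_k}^{\delta_k}$ one pair at a time, using Lem.~\ref{lem:PDEdom_diff_appx} to guarantee that $\partial_{s_{k+1}}^{\delta_{k+1}}\cdots\partial_{s_N}^{\delta_N}\mbf{u}\in\mcl{D}_k$ so that the left-inverse identity of Cor.~\ref{cor:Tmap_1D} applies.

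For the second statement your route differs slightly from the paper's and is more involved than necessary. You move $\partial_{s_1}^{\delta_1}$ inward past $\mcl{T}_N,\dots,\mcl{T}_2$ to reach $\mcl{T}_1$, which requires the commutation lemma (Lem.~\ref{lem:Top_orth_Dop}) and an implicit check that each intermediate $\mcl{T}_{j-1}\cdots\mcl{T}_1\mbf{v}$ retains $\delta_1$ weak derivatives in $s_1$. The paper instead exploits the adjacency already present in the ordering: in $\partial_{s_1}^{\delta_1}\cdots\partial_{s_N}^{\delta_N}\,\mcl{T}_N\cdots\mcl{T}_1\mbf{v}$ the pair $\partial_{s_N}^{\delta_N}\mcl{T}_N$ sits together, so one collapses $(\partial_{s_N}^{\delta_N}\mcl{T}_N)(\mcl{T}_{N-1}\cdots\mcl{T}_1\mbf{v})=\mcl{T}_{N-1}\cdots\mcl{T}_1\mbf{v}$ directly from the right-inverse identity on $L_2^n$, then repeats with $\partial_{s_{N-1}}^{\delta_{N-1}}\mcl{T}_{N-1}$, and so on. This needs neither Lem.~\ref{lem:Top_orth_Dop} nor any regularity tracking --- only that each $\mcl{T}_{j-1}\cdots\mcl{T}_1\mbf{v}\in L_2^n$, which is immediate. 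Your observation that consistency of the $\mcl{D}_i$ is not used here is exactly right and matches the paper.
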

\begin{proof}
	For the first statement, we note that, by Lem.~\ref{lem:PDEdom_diff_appx}, we have $\partial_{s_{j+1}}^{\delta_{j+1}}\cdots\partial_{s_{N}}^{\delta_{N}}\mbf{u}\in \mcl{D}_{1}\cap\cdots\cap\mcl{D}_{j}\subseteq \mcl{D}_{j}$ for all $\mbf{u}\in\mcl{D}$ and $j\in\{1:N\}$. Since, by Cor.~\ref{cor:Tmap_1D}, $\mcl{T}_{j}\partial_{s_{j}}^{\delta_{j}}\mbf{u}=\mbf{u}$ for all $\mbf{u}\in\mcl{D}_{j}$, it follows that for every $\mbf{u}\in\mcl{D}$,
	\begin{align*}
		\mcl{T}D^{\delta}\mbf{u}
		&=\mcl{T}_{N}\cdots\mcl{T}_{2}(\mcl{T}_{1}\partial_{s_{1}}^{\delta_{1}})(\partial_{s_{2}}^{\delta_{2}}\cdots\partial_{s_{N}}^{\delta_{N}}\mbf{u})	\\
		&=\mcl{T}_{N}\cdots\mcl{T}_{3}(\mcl{T}_{2}\partial_{s_{2}}^{\delta_{2}})(\partial_{s_{3}}^{\delta_{3}}\cdots \partial_{s_{N}}^{\delta_{N}}\mbf{u})	
		=~\cdots~
		=\mcl{T}_{N}\partial_{s_{N}}^{\delta_{N}}\mbf{u}
		=\mbf{u}.
	\end{align*}
	Now, for the second statement, note that $\mcl{T}_{j-1}\cdots\mcl{T}_{1}\mbf{v}\in L_{2}^{n}$ for all $\mbf{v}\in L_{2}^{n}$ and $j\in\{1:N\}$. Since, by Cor.~\ref{cor:Tmap_1D}, $\partial_{s_{j}}^{\delta_{j}}\mcl{T}_{j}\mbf{v}=\mbf{v}$ for all $\mbf{v}\in L_{2}^{n}$, it follows that
	\begin{align*}
		D^{\delta}\mcl{T}\mbf{v}
		&=\partial_{s_{1}}^{\delta_{1}}\cdots\partial_{s_{N-1}}^{\delta_{N-1}}(\partial_{s_{N}}^{\delta_{N}}\mcl{T}_{N})(\mcl{T}_{N-1}\cdots\mcl{T}_{1}\mbf{v})	\\
		&=\partial_{s_{1}}^{\delta_{1}}\cdots\partial_{s_{N-2}}^{\delta_{N-2}}(\partial_{s_{N-1}}^{\delta_{N-1}}\mcl{T}_{N-1})(\mcl{T}_{N-2}\cdots\mcl{T}_{1}\mbf{v})
		=~\cdots~	
		=\partial_{s_{1}}^{\delta_{1}}\mcl{T}_{1}\mbf{v}
		=\mbf{v}.
	\end{align*}
\end{proof}

Cor.~\ref{cor:LRinverse_appx} shows that $\mcl{T}:=\mcl{T}_{N}\cdots\mcl{T}_{1}$ defines both a left- and right-inverse to $D^{\delta}:\mcl{D}\to L_{2}^{n}$ on the image of $D^{\delta}$. In order for $\mcl{T}$ to define an inverse to $D^{\delta}:\mcl{D}\to L_{2}^{n}$, then, it remains to prove that the image of $D^{\delta}$ is indeed $L_{2}^{n}$, or equivalently, that $\mcl{T}\mbf{v}\in D^{\delta}$ for all $\mbf{v}\in L_{2}^{n}$. Unfortunately, admissibility of the domains $\mcl{D}_{i}$ is insufficient to guarantee that this is indeed the case. Indeed, as shown in Subsection~\ref{subsec:Tmap:consistency} for $\Omega=[0,1]^2$ and $\delta=(d,d)$, a unique inverse to $\partial_{x}^{d}\partial_{y}^{d}:\mcl{D}_{1}\cap \mcl{D}_{2}\to L_{2}^{n}[[0,1]^2]$ may not exist if the boundary conditions defining $\mcl{D}_{1}$ and $\mcl{D}_{2}$ impose conflicting constraints at any of the corners of the domain, i.e. $(x,y)\in\{(0,0),(1,0),(0,1),(1,1)\}$. A necessary condition on the parameters $\{B^{1}_{\ell,k},C^{1}_{\ell,k}\}$ and $\{B^{2}_{\ell,k},C^{2}_{\ell,k}\}$ to avoid such conflicting conditions was presented in Lem.~\ref{lem:compatibility_Du(a,c)_2D}, which we restate and proof here.

\begin{lem}\label{lem:compatibility_Du(a,c)_2D_appx}
	Suppose $B^{i}_{j,k},C^{i}_{j,k}$ as in Eqn.~\eqref{eq:PDEdom_1D} define admissible domains $\mcl{D}\subseteq S_{2}^{d,n}[0,1]$ as per Defn.~\ref{defn:admissible_BCs_1D} (using $B_{j,k}=B^{i}_{j,k}$, $C_{j,k}=C^{i}_{j,k}$, $[a,b]=[0,1]$), lifted to the multivariate setting as $\mcl{D}\mapsto\mcl{D}_{i}$, and let $[H^{i}_a]_{j,k}=B^{i}_{j,k}$, $[H^{i}_b]_{j,k}=C^{i}_{j,k}$. Define associated $K^{i}:=\bl(H_{a}^{i}+H_{b}^{i}\mbs{Q}(b_{i}-a_{i})\br)^{-1}H_{b}^{i}\in\R^{n d\times n d}$ for $\mbs{Q}$ as in Defn.~\ref{defn:admissible_BCs_1D}, and decompose the matrix $K^{i}$ into blocks as
	\begin{equation}\label{eq:M_decomp_2D_appx} 
		K^{i}=\bmat{K^{i}_{1,1}&\hdots&K^{i}_{1,d}\\\vdots&\ddots&\vdots\\K^{i}_{d,1}&\hdots&K^{i}_{d,d}},\qquad
		\text{where}\qquad
		K^{i}_{k,\ell}\in\R^{n\times n},~\forall k,\ell\in\{1:d\}.
	\end{equation} 
	If there exists an operator $\mcl{T}:L_{2}^{n}[[0,1]^2]\to \mcl{D}:=\mcl{D}_{i}\cap\mcl{D}_{j}$ such that $\partial_{x}^{d}\partial_{y}^{d}\mcl{T}\mbf{v}=\mbf{v}$ for all $\mbf{v}\in L_{2}^{n}[[0,1]^2]$, then
	\begin{equation}\label{eq:M_commute_necessity_2D_appx}
		K^{1}_{k,p}K^{2}_{\ell,q}=K^{2}_{\ell,q}K^{1}_{k,p},\qquad \forall k,p,\ell,q\in\{1:d\}.
	\end{equation}
	%
\end{lem}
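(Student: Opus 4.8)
The plan is to force the commutation identity~\eqref{eq:M_commute_necessity_2D_appx} by evaluating each low-order mixed derivative of $\mbf{u}=\mcl{T}\mbf{v}$ at the corner $(x,y)=(0,0)$ in two different ways and equating the results. To lighten notation I write $\mcl{D}_1,\mcl{D}_2$ for the two lifted domains, $x,y$ for the associated spatial variables, $\delta_1=\delta_2=d$, and $[a_i,b_i]=[0,1]$, so $\mcl{D}=\mcl{D}_1\cap\mcl{D}_2\subseteq S_2^{(d,d),n}[[0,1]^2]$. Fix an arbitrary $\mbf{v}\in L_2^n[[0,1]^2]$ and set $\mbf{u}:=\mcl{T}\mbf{v}$; then $\mbf{u}\in\mcl{D}$ and $\mbf{v}=\partial_x^d\partial_y^d\mbf{u}$, and by Lem.~\ref{lem:PDEdom_diff_appx} (and the symmetry of its statement in the two indices) we also have $\partial_y^d\mbf{u}\in\mcl{D}_1$ and $\partial_x^d\mbf{u}\in\mcl{D}_2$.

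For the first evaluation, I would use that $\mbf{u}\in\mcl{D}_2$, so by Cor.~\ref{cor:gohberg_PIE}, extended to the lifted operator $\mcl{T}_2$ as in Cor.~\ref{cor:Tmap_1D}, $\mbf{u}=\mcl{T}_2\,\partial_y^d\mbf{u}$; that is, $\mbf{u}(x,\cdot)$ is the integral of $(\partial_y^d\mbf{u})(x,\cdot)$ against the semi-separable kernel built from $K^2$ and the vectors $\mbf{e}_1,\mbf{e}_d$ of Cor.~\ref{cor:gohberg_PIE}. Differentiating $\ell-1$ times in $y$ and setting $y=0$, the ``local'' term $\int_0^y\frac{(y-\eta)^{d-1}}{(d-1)!}(\,\cdot\,)\,d\eta$ and all of its $y$-derivatives of order $<d$ vanish at $y=0$, while $\partial_y^{\ell-1}\mbf{e}_1(y)|_{y=0}$ selects the $\ell$-th block, so for $\ell\in\{1:d\}$,
\[
(\partial_y^{\ell-1}\mbf{u})(x,0)=-\int_0^1 (K^2_{(\ell,:)}\mbf{e}_d(1-\eta))\,(\partial_y^d\mbf{u})(x,\eta)\,d\eta,
\]
where $K^i_{(k,:)}:=[\,K^i_{k,1}\ \cdots\ K^i_{k,d}\,]$ denotes the $k$-th block row of $K^i$. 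Applying the same computation in $x$ to $\partial_y^d\mbf{u}\in\mcl{D}_1$ and using $\partial_x^d\partial_y^d\mbf{u}=\mbf{v}$ then gives, for $k,\ell\in\{1:d\}$,
\[
(\partial_x^{k-1}\partial_y^{\ell-1}\mbf{u})(0,0)=\int_0^1\!\!\int_0^1 (K^2_{(\ell,:)}\mbf{e}_d(1-y))(K^1_{(k,:)}\mbf{e}_d(1-x))\,\mbf{v}(x,y)\,dx\,dy.
\]
Carrying the same argument out in the opposite order -- first using $\mbf{u}\in\mcl{D}_1$ to express $(\partial_x^{k-1}\mbf{u})(0,y)$, then using $\partial_x^d\mbf{u}\in\mcl{D}_2$ -- produces the same corner value but with the two polynomial factors in the reverse order.

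To finish, I would invoke the Sobolev embedding recalled in Subsec.~\ref{subsec:notation_sobolev}: since $\mbf{u}\in S_2^{(d,d),n}$ and $k,\ell\le d$, the mixed derivative $\partial_x^{k-1}\partial_y^{\ell-1}\mbf{u}$ is continuous, so its value at $(0,0)$ is unambiguous and the two expressions above must coincide for every $\mbf{v}\in L_2^n[[0,1]^2]$. Hence the matrix-valued polynomial $(K^2_{(\ell,:)}\mbf{e}_d(1-y))(K^1_{(k,:)}\mbf{e}_d(1-x))-(K^1_{(k,:)}\mbf{e}_d(1-x))(K^2_{(\ell,:)}\mbf{e}_d(1-y))$ integrates against every $\mbf{v}$ to zero, so it vanishes a.e.\ and therefore identically. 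Since $K^i_{(k,:)}\mbf{e}_d(z)=\sum_{p=1}^{d}K^i_{k,p}\frac{z^{d-p}}{(d-p)!}$, expanding the product and matching the coefficients of the linearly independent monomials $x^{d-p}y^{d-q}$ yields $K^1_{k,p}K^2_{\ell,q}=K^2_{\ell,q}K^1_{k,p}$ for all $k,p,\ell,q\in\{1:d\}$, which is~\eqref{eq:M_commute_necessity_2D_appx}.

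The genuinely delicate point is not the two kernel computations (which are routine once one tracks how $\partial_y^{\ell-1}$ acts on $\mbf{e}_1$ and on the local term), but rather the step that makes consistency \emph{necessary} in the first place: a true right-inverse $\mcl{T}$ must map into $S_2^{(d,d),n}$, which is precisely what guarantees that $\partial_x$ and $\partial_y$ commute on $\mbf{u}=\mcl{T}\mbf{v}$ and forces the two corner evaluations to agree. I also expect some care is needed to justify differentiating under the integral sign and evaluating at the boundary for $\mbf{u}\in S_2^{(d,d),n}$ -- again supplied by the embedding of $S_2^{(d,d),n}$ into a space of H\"older-continuous functions, which legitimizes all the pointwise manipulations.
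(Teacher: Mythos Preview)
Your proposal is correct and follows essentially the same route as the paper's own proof: fix $\mbf{u}=\mcl{T}\mbf{v}\in\mcl{D}$, use $\mbf{u}\in\mcl{D}_2$ together with $\partial_y^d\mbf{u}\in\mcl{D}_1$ (resp.\ $\mbf{u}\in\mcl{D}_1$ together with $\partial_x^d\mbf{u}\in\mcl{D}_2$) to derive two integral expressions for $(\partial_x^{k-1}\partial_y^{\ell-1}\mbf{u})(0,0)$, invoke continuity of the mixed derivative to equate them for every $\mbf{v}$, and match monomial coefficients. The only cosmetic difference is that the paper obtains the formula for $(\partial_y^{\ell-1}\mbf{u})(x,0)$ by writing out the boundary conditions $0=H_a^2(\partial_y^{(0:d-1)}\mbf{u})(x,0)+H_b^2(\partial_y^{(0:d-1)}\mbf{u})(x,1)$ and inverting, whereas you reach the same formula by differentiating the Green's-function representation $\mbf{u}=\mcl{T}_2\partial_y^d\mbf{u}$; the two computations are equivalent.
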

\begin{proof}
	Suppose there exists an operator $\mcl{T}:L_{2}^{n}[[0,1]^2]\to \mcl{D}$ such that $\partial_{x}^{d}\partial_{y}^{d}\mcl{T}\mbf{v}=\mbf{v}$ for all $\mbf{v}\in L_{2}^{n}[[0,1]^2]$. For any $\mbf{v}\in L_{2}^{n}[\Omega]$, let $\mbf{u}=\mcl{T}\mbf{v}$. Then, $\mbf{v}=\partial_{x}^{d}\partial_{y}^{d}\mbf{u}$ and $\mbf{u}\in \mcl{D}$. By Lem.~\ref{lem:PDEdom_diff_appx}, it follows that also $\partial_{y}^{d}\mbf{u}\in \mcl{D}_{1}$ and $\partial_{x}^{d}\mbf{u}\in \mcl{D}_{2}$.
	
	Now, fix arbitrary $k,\ell\in\{1:d\}$. To prove that~\eqref{eq:M_commute_necessity_2D_appx} holds, we will use the fact that $\mbf{u}\in \mcl{D}_{2}$ and $\partial_{y}^{d}\mbf{u}\in \mcl{D}_{1}$, and the fact that $\mbf{u}\in \mcl{D}_{1}$ and $\partial_{x}^{d}\mbf{u}\in \mcl{D}_{2}$, to derive two distinct expressions for the same corner value, $(\partial_{x}^{k-1}\partial_{y}^{\ell-1}\mbf{u})(0,0)$, in terms of $\mbf{v}=\partial_{x}^{d}\partial_{y}^{d}\mbf{u}$. First, given that $\mbf{u}\in \mcl{D}_{2}$, we remark that by the proof of Cor.~\ref{cor:gohberg_PIE}, $\mbf{u}$ must satisfy
	\begin{align*}
		0&=H_{a}^{2}\bl(\partial_{y}^{(0:d-1)}\mbf{u}\br)(x,0) +H^{2}_{b}\bl(\partial_{y}^{(0:d-1)}\mbf{u}\br)(x,1)		\\
		&=\bl(H^{2}_{a}+H^{2}_{b}\mbs{Q}(1)\br)\bl(\partial_{y}^{(0:d-1)}\mbf{u}\br)(x,0) +\int_{0}^{1}H^{2}_{b}\mbf{e}_{d}(1-y)\partial_{y}^{d}\mbf{u}(x,y)\,dy.
	\end{align*}
	where $\mbs{Q}$ is as in Defn.~\ref{defn:admissible_BCs_1D}, and where
	\begin{equation*}
		\mbf{e}_d(z):=\bmat{\frac{z^{d-1}}{(d-1)!}\\\vdots\\ \half z^2\\ z \\1}\otimes I_n,\hspace*{1.0cm}
		\partial_{y}^{(0:d-1)}\mbf{u}:=\bmat{\mbf{u}\\\partial_{y}\mbf{u}\\\vdots\\\partial_{y}^{d-1}\mbf{u}}.
	\end{equation*}
	Since, by assumption, the domains $\mcl{D}_{1}$ and $\mcl{D}_{2}$ are admissible, it follows by Defn.~\ref{defn:admissible_BCs_1D} that the matrix $(H^{2}_{a}+H^{2}_{j,k}\mbs{Q}(1))\in\R^{nd\times nd}$ is invertible, and thus $\mbf{u}$ must satisfy
	\begin{equation*}
		\bl(\partial_{y}^{(0:d-1)}\mbf{u}\br)(x,0) =-\int_{0}^{1}\bl(H^{2}_{a}+H^{2}_{b}\mbs{Q}(1)\br)^{-1} H^{2}_{b}\mbf{e}_{d}(1-y)\partial_{y}^{d}\mbf{u}(x,y)\,dy
		=-\int_{0}^{1}K^{2}\mbf{e}_{d}(1-y)\,\partial_{y}^{d}\mbf{u}(x,y)\,dy,
	\end{equation*}
	Taking the derivative $\partial_{x}^{k-1}$ of both sides of this identity, and remarking that $\partial_{y}^{\ell-1}\mbf{u}=(\tnf{e}_{\ell}\otimes I_{n})^T \partial_{y}^{(0:d-1)}\mbf{u}$ for $\tnf{e}_{\ell}\in\R^{d}$ the $\ell$th standard Euclidean basis vector, it follows that the derivative $\partial_{x}^{k-1}\partial_{y}^{\ell-1}\mbf{u}$ must satisfy
	\begin{align*}
		\bl(\partial_{x}^{k-1}\partial_{y}^{\ell-1}\mbf{u}\br)(x,0)
		=(\tnf{e}_{\ell}\otimes I_{n})^T\partial_{x}^{k-1}\bl(\partial_{y}^{(0:d-1)}\mbf{u}\br)(x,0)
		=-\int_{0}^{1}K^{2}_{(\ell,:)} \mbf{e}_{d}(1-y)\,\partial_{x}^{k-1}\partial_{y}^{d}\mbf{u}(x,y)\,dy.
	\end{align*}
	where now $K^{2}_{(\ell,:)}:=(\tnf{e}_{\ell}\otimes I_{n})^T K^{2}$.
	Here, since $\partial_{y}^{d}\mbf{u}\in\mcl{D}_{1}$, by Cor.~\ref{cor:gohberg_PIE} and Cor.~\ref{cor:Tmap_1D} we can express $\partial_{x}^{k-1}(\partial_{y}^{d}\mbf{u})=\mcl{R}_{1}^{k-1}\partial_{x}^{d}(\partial_{y}^{d}\mbf{u})=\mcl{R}_{1}^{k-1}\mbf{v}$, where the operator $\mcl{R}_{1}^{k}$ is defined by
	\begin{equation*}
		(\mcl{R}_{1}^{k-1}\mbf{v})(x,y):=\int_{0}^{1} \mbs{G}_{k-1}(x,\theta)\mbf{v}(\theta,y)  d\theta, \qquad \mbs{G}_{k}(x,\theta):=
		\begin{cases}
			\mbf{c}_{k-1}(x)^T(I_{nd}-K^{1})\mbf{e}_d(1-\theta)&\theta \leq x,\\
			-\mbf{c}_{k-1}(x)^TK^{1}\mbf{e}_d(1-\theta)& x < \theta,
		\end{cases}
	\end{equation*}
	where 
	\begin{equation*}
		\mbf{e}_1(z):=\bmat{1 \\ z\\\half z^2\\ \vdots \\\frac{z^{d-1}}{(d-1)!}}\otimes I_n,
		\qquad 
		\mbf{c}_{k-1}(z):=\partial_{z}^{k-1}\mbf{e}_{1}(z)
		=\bmat{0_{k-1}\\1\\z\\\vdots\\\frac{z^{d-k}}{(d-k)!}}\otimes I_{n}.
	\end{equation*}
	It follows that
	\begin{equation*}
		\bl(\partial_{x}^{k-1}\partial_{y}^{\ell-1}\mbf{u}\br)(x,0)
		=-\int_{0}^{1}K^{2}_{(\ell,:)}\mbf{e}_{d}(1-y)\bl(\mcl{R}_{1}^{k-1}\mbf{v}\br)(x,y)\, dy.
	\end{equation*}
	Evaluating this expression at $x=0$, we note that, by definition of $\mcl{R}_{1}^{k-1}$, we have
	\begin{equation*}
		\bl(\mcl{R}_{1}^{k-1}\mbf{v}\br)(0,y)
		=-\int_{0}^{1} \mbf{c}_{k-1}(0-0)^TK^{1}\mbf{e}_{d}(1-x)\,\mbf{v}(x,y)\,dx	
		=-\int_{0}^{1} K_{(k,:)}^{1} \mbf{e}_{d}(1-x)\,\mbf{v}(x,y)\,dx.
	\end{equation*}
	Using this expression it follows that
	\begin{align*}
		\bl(\partial_{x}^{k-1}\partial_{y}^{\ell-1}\mbf{u}\br)(0,0)
		&=-\int_{0}^{1}K^{2}_{(\ell,:)}\mbf{e}_{d}(1-y)\bl(\mcl{R}_{1}^{k-1}\mbf{v}\br)(0,y)\, dy	\\
		&=\int_{0}^{1}K^{2}_{(\ell,:)}\mbf{e}_{d}(1-y)\bbbl(\int_{0}^{1}K^{1}_{(k,:)} \mbf{e}_{d}(1-x)\mbf{v}(x,y)\,dx\bbbr) dy	\\
		&=\int_{0}^{1}\int_{0}^{1}\bl(K^{2}_{(\ell,:)}\mbf{e}_{d}(1-y)\br)\bl(K^{1}_{(k,:)} \mbf{e}_{d}(1-x)\br)\mbf{v}(x,y)\,dy\,dx.
	\end{align*}
	Here, to derive this expression for $\partial_{x}^{k-1}\partial_{y}^{\ell-1}\mbf{u}(0,0)$, we only used the fact that $\mbf{u}\in\mcl{D}_{2}$ and $\partial_{y}^{d}\mbf{u}\in \mcl{D}_{1}$. Conversely, then, using the fact that $\mbf{u}\in \mcl{D}_{1}$ and $\partial_{x}^{d}\mbf{u}\in \mcl{D}_{2}$, we can apply similar reasoning to find that also
	\begin{equation*}
		\bl(\partial_{x}^{k-1}\partial_{y}^{\ell-1}\mbf{u}\br)(0,0)
		=\int_{0}^{1}\int_{0}^{1}\bl(K^{1}_{(k,:)} \mbf{e}_{d}(1-x)\br)\bl(K^{2}_{(\ell,:)}\mbf{e}_{d}(1-y)\br)\mbf{v}(x,y)\,dy\,dx.
	\end{equation*}
	Since $\mbf{u}\in \mcl{D}\subseteq S_{2}^{(d,d),n}[\Omega]$ and $k,\ell\leq d$, the derivative $\partial_{x}^{k-1}\partial_{y}^{\ell-1}\mbf{u}$ is continuous, and therefore we must have
	\begin{align*}
		&\int_{0}^{1}\int_{0}^{1}\bl(K^{2}_{(\ell,:)}\mbf{e}_{d}(1-y)\br)\bl(K^{1}_{(k,:)} \mbf{e}_{d}(1-x)\br)\mbf{v}(x,y)\,dy\,dx	\\
		&\hspace*{2.0cm}=\bl(\partial_{x}^{k-1}\partial_{y}^{\ell-1}\mbf{u}\br)(0,0)
		=\int_{0}^{1}\int_{0}^{1}\bl(K^{1}_{(k,:)} \mbf{e}_{d}(1-x)\br)\bl(K^{2}_{(\ell,:)}\mbf{e}_{d}(1-y)\br)\mbf{v}(x,y)\,dy\,dx.
	\end{align*}
	Since this holds for arbitrary $\mbf{v}\in L_{2}^{n}[\Omega]$, it follows that in fact
	\begin{equation}\label{eq:EQME_commute_2D_appx}
		\bl(K^{2}_{(\ell,:)}\mbf{e}_{d}(y)\br)\bl(K^{1}_{(k,:)}\mbf{e}_{d}(x)\br)
		-\bl(K^{1}_{(k,:)}\mbf{e}_{d}(x)\br)\bl(K^{2}_{(\ell,:)}\mbf{e}_{d}(y)\br)
		=0\qquad \tnf{for a.e.}~ x,y\in[0,1].			\tag{$\star$}
	\end{equation}
	Finally, to see that this implies~\eqref{eq:M_commute_necessity_2D_appx}, we decompose $K^{i}$ and $K^{j}$ as in~\eqref{eq:M_decomp_2D_appx}, and invoke the definition of $\mbf{e}_{d}$ to find
	\begin{equation*}
		K^{1}_{(k,:)}\mbf{e}_{d}(z)
		=\bmat{K^{1}_{k,1}&K^{1}_{k,2}&\hdots&K^{1}_{k,d}}\bmat{\frac{z^{d-1}}{(d-1)!}I_{n}\\\frac{z^{d-2}}{(d-2)!}I_{n}\\\vdots\\I_{n}}
		=\sum_{p=1}^{d}\frac{1}{(d-p)!}\,K^{1}_{k,p}\, z^{d-p}.
	\end{equation*}
	Similarly, we can express $K^{2}_{(\ell,:)}\mbf{e}_{d}(z)=\sum_{q=1}^{d}\frac{1}{(d-q)!}K^{2}_{\ell,q} z^{d-q}$, for all $z\in\R$. It follows that
	\begin{align*}
		&\bl(K^{2}_{(\ell,:)}\mbf{e}_{d}(y)\br)\bl(K^{1}_{(k,:)}\mbf{e}_{d}(x)\br)
		-\bl(K^{1}_{(k,:)}\mbf{e}_{d}(x)\br)\bl(K_{(\ell,:)}^{2}\mbf{e}_{d}(y)\br)	\\
		&\hspace*{4.0cm} =\sum_{p,q=1}^{d}\frac{1}{(d-p)!(d-q)!}\bbl[K^{1}_{k,p}K^{2}_{\ell,q}-K^{2}_{\ell,q}K^{1}_{k,p}\bbr] x^{d-p} y^{d-q},
	\end{align*}
	for all $x,y\in[0,1]$.
	By ~\eqref{eq:EQME_commute_2D_appx}, it follows that $K^{i}_{k,p}K^{j}_{\ell,q}=K^{j}_{\ell,q}K^{i}_{k,p}$ for all $p,q\in\{1:d\}$.
\end{proof}

Lem.~\ref{lem:compatibility_Du(a,c)_2D_appx} proves that the differential operator $\partial_{x}^{d}\partial_{y}^{d}:\mcl{D}_{1}\cap \mcl{D}_{2}\to L_{2}^{n}[[0,1]^2]$ admits a right-inverse only if a suitable commutative condition is satisfied, ensuring that the boundary conditions defining $\mcl{D}_{1}$ and $\mcl{D}_{2}$ can be simultaneously satisfied at the corner $(x,y)=(0,0)$. We refer to this commutative property as consistency of the domains $\mcl{D}_{1}$ and $\mcl{D}_{2}$---see also Defn.~\ref{defn:consistent_BCs}. In order to prove that consistency of the PDE domains is also sufficient for $\partial_{x}^{d}\partial_{y}^{d}:\mcl{D}_{1}\cap \mcl{D}_{2}\to L_{2}^{n}[[0,1]^2]$ to be right-invertible, we remark that the operators $\mcl{T}=\mcl{T}_{2}\mcl{T}_{1}$ and $\tilde{\mcl{T}}=\mcl{T}_{1}\mcl{T}_{2}$ both already satisfy $\partial_{x}^{d}\partial_{y}^{d}\mcl{T}=\partial_{x}^{d}\partial_{y}^{d}\tilde{\mcl{T}}=I_{n}$, though $\mcl{T}\mbf{v}\notin \mcl{D}_{1}$ and $\tilde{\mcl{T}}\mbf{v}\notin \mcl{D}_{2}$ for general $\mbf{v}\in L_{2}^{n}[[0,1]^2]$. As such, a right-inverse to $\partial_{x}^{d}\partial_{y}^{d}:\mcl{D}_{1}\cap \mcl{D}_{2}\to L_{2}^{n}[[0,1]^2]$ exists only if the operators $\mcl{T}_{1}$ and $\mcl{T}_{2}$ commute. The following lemmas, generalizing Lem.~\ref{lem:M_commute_T} in Sec.~\ref{subsec:Tmap:consistency}, prove that this condition is equivalent to consistency of the domains $\mcl{D}_{1}$ and $\mcl{D}_{2}$.

\begin{lem}\label{lem:M_commute_Tparam_appx}
	For $i,j\in\{1:N\}$, let $\mbs{T}_{i}(x,\theta):=-\mbf{e}_{1}(x-a_{i})^T K^{i}\mbf{e}_{\delta_{i}}(b_{i}-\theta)$ for some $K^{i}\in\R^{n\delta_{i}\times n\delta_{i}}$, and where $\mbf{e}_{1},\mbf{e}_{\delta_{i}}$ are as in Cor.~\ref{cor:gohberg_PIE} (using $d=\delta_{i}$). Then $\mbs{T}_{i}\mbs{T}_{j}=\mbs{T}_{j}\mbs{T}_{i}$ if and only if $K^{i}_{k,p}K^{j}_{\ell,q}=K^{j}_{\ell,q}K^{i}_{k,p}$ for all $k,p\in\{1:\delta_{i}\}$ and $\ell,q\in\{1:\delta_{j}\}$.
\end{lem}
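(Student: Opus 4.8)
### Proof Proposal

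The plan is to reduce the statement about products of matrix-valued polynomials $\mbs{T}_i$ to an equivalent statement about commutation of the coefficient matrices. First I would write out explicitly what $\mbs{T}_i \mbs{T}_j = \mbs{T}_j \mbs{T}_i$ means: since $\mbs{T}_i$ depends on the pair of variables $(s_i,\theta_i)$ and $\mbs{T}_j$ on $(s_j,\theta_j)$, with $i \neq j$ these are polynomials in disjoint sets of variables, so the product $\mbs{T}_i(s_i,\theta_i)\,\mbs{T}_j(s_j,\theta_j)$ is a matrix-valued polynomial in all four variables $s_i,\theta_i,s_j,\theta_j$. Hence $\mbs{T}_i\mbs{T}_j = \mbs{T}_j\mbs{T}_i$ (as a polynomial identity in $\R^{n\times n}[s_i,\theta_i,s_j,\theta_j]$) holds if and only if the corresponding matrix coefficients agree for every multidegree.

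Next I would expand $\mbf{e}_1(z)^T K \mbf{e}_{\delta_i}(w)$ in terms of the block decomposition of $K$ exactly as in the proof of Lem.~\ref{lem:compatibility_Du(a,c)_2D_appx}. Writing $K^i$ in the block form of~\eqref{eq:M_decomp_2D}, a direct computation using the definitions of $\mbf{e}_1$ and $\mbf{e}_{\delta_i}$ from Cor.~\ref{cor:gohberg_PIE} gives
\begin{equation*}
	\mbs{T}_i(s_i,\theta_i) = -\sum_{k=1}^{\delta_i}\sum_{p=1}^{\delta_i}\frac{(s_i-a_i)^{k-1}}{(k-1)!}\,\frac{(b_i-\theta_i)^{\delta_i-p}}{(\delta_i-p)!}\,K^i_{k,p},
\end{equation*}
and similarly for $\mbs{T}_j$. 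Multiplying these two expressions, the coefficient of the monomial $(s_i-a_i)^{k-1}(b_i-\theta_i)^{\delta_i-p}(s_j-a_j)^{\ell-1}(b_j-\theta_j)^{\delta_j-q}$ in $\mbs{T}_i\mbs{T}_j$ is (up to the nonzero scalar $\tfrac{1}{(k-1)!(\delta_i-p)!(\ell-1)!(\delta_j-q)!}$) the product $K^i_{k,p}K^j_{\ell,q}$, while in $\mbs{T}_j\mbs{T}_i$ the same monomial has coefficient $K^j_{\ell,q}K^i_{k,p}$. Since the monomials $\{(s_i-a_i)^{k-1}(b_i-\theta_i)^{\delta_i-p}(s_j-a_j)^{\ell-1}(b_j-\theta_j)^{\delta_j-q}\}$ are linearly independent over $\R^{n\times n}$, equality of the two polynomial products is equivalent to $K^i_{k,p}K^j_{\ell,q} = K^j_{\ell,q}K^i_{k,p}$ for all $k,p\in\{1:\delta_i\}$ and $\ell,q\in\{1:\delta_j\}$, which is exactly the claimed condition.

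I do not anticipate a genuine obstacle here; the lemma is essentially a restatement of the coefficient-comparison argument already used inside the proof of Lem.~\ref{lem:compatibility_Du(a,c)_2D_appx}, now isolated as a standalone fact about the parameter-level objects $\mbs{T}_i$. The only point requiring a little care is the case $i=j$: then $\mbs{T}_i$ and $\mbs{T}_j$ involve the \emph{same} variables and the product must be interpreted as composition of integral operators rather than pointwise multiplication, so one should either restrict the lemma to $i\neq j$ (which is all that is needed for Lem.~\ref{lem:M_commute_T}, since $\mcl T_i$ trivially commutes with itself) or note that for $i=j$ the stated commutation of all blocks $K^i_{k,p}$ with each other is what the composition formula of Lem.~\ref{lem:PI_composition_1D} would require. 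In the write-up I would simply treat $i\neq j$, which is the setting in which the lemma is invoked.
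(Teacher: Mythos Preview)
Your proposal is correct and follows essentially the same route as the paper: expand $\mbs{T}_i$ via the block decomposition of $K^i$ to obtain a double sum in the shifted monomials $(s_i-a_i)^{k-1}(b_i-\theta_i)^{\delta_i-p}$, multiply the two expansions, and compare coefficients of the resulting four-variable monomials to get the block commutation condition. The paper carries out exactly this computation (after the change of variables $x\mapsto x+a_i$, $\theta\mapsto b_i-\theta$), and your remark on the $i=j$ case is an accurate extra observation that the paper leaves implicit.
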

\begin{proof}
	By definition, we have
	\begin{align*}
		\mbs{T}_{i}(x+a_{i},b_{i}-\theta)&:=-\mbf{e}_{1}(x)K^{i}\mbf{e}_{\delta_{i}}(\theta)		\\
		&=\bmat{I_{n}\\ xI_{n}\\\vdots\\\frac{x^{\delta_{i}-1}}{(\delta_{i}-1)!}I_{n}}^T\bmat{K^{i}_{1,1}&K^{i}_{1,2}&\hdots&K^{i}_{1,\delta_{i}}\\ K^{i}_{2,1}&K^{i}_{2,2}&\hdots&K^{i}_{2,\delta_{i}}\\\vdots&\vdots&\ddots&\vdots\\K^{i}_{\delta_{i},1}&K^{i}_{\delta_{i},2}&\hdots&K^{i}_{\delta_{i},\delta_{i}}}\bmat{\frac{\theta^{\delta_{i}-1}}{(\delta_{i}-1)!}I_{n}\\ \frac{\theta^{\delta_{i}-2}}{(\delta_{i}-2)!}I_{n}\\\vdots\\I_{n}}	\\
		&=\sum_{k,p=1}^{\delta_{i}}\frac{1}{(k-1)!(\delta_{i}-p)!}\,K^{i}_{k,p}\, x^{k-1}\theta^{\delta_{i}-p},
	\end{align*}
	for all $x,\theta\in[0,b_{i}-a_{i}]$. It follows that
	\begin{align*}
		&\mbs{T}_{i}(x+a_{i},b_{i}-\theta)\mbs{T}_{j}(y+a_{j},b_{j}-\eta)
		-\mbs{T}_{j}(y+a_{j},b_{j}-\eta)\mbs{T}_{i}(x+a_{i},b_{i}-\theta)	\\
		&\hspace*{1.0cm}=\sum_{k,p=1}^{\delta_{i}}\sum_{\ell,q=1}^{\delta_{j}}\frac{1}{(k-1)!(\delta_{i}-p)!}\frac{1}{(\ell-1)!(\delta_{j}-q)!}\,
		\bbl[K^{i}_{k,p}K^{j}_{\ell,q}-K^{j}_{\ell,q}K^{i}_{k,p}\bbr]\, x^{k-1}\theta^{\delta_{i}-p} y^{\ell-1}\eta^{\delta_{j}-q},	
	\end{align*}
	for all $x,\theta\in[0,b_{i}-a_{i}]$ and $y,\eta\in[0,b_{j}-a_{j}]$. It follows that $\mbs{T}_{i}\mbs{T}_{j}=\mbs{T}_{j}\mbs{T}_{i}$ if and only if $K^{i}_{k,p}K^{j}_{\ell,q}=K^{j}_{\ell,q}K^{i}_{k,p}$ for all $k,p,\ell,q$. 
\end{proof}

\begin{lem}\label{lem:M_commute_T_appx}
	For $\delta\in\N_{0}^{N}$ and $\Omega=\prod_{i=1}^{N}[a_{i},b_{i}]$, let $\mcl{D}_{i}\subseteq S_{2}^{\delta_{i}\cdot\tnf{e}_{i},n}[\Omega]$ for each $i\in\{1:N\}$ be admissible, and define associated operators $\mcl{T}_{i}$ as in Cor.~\ref{cor:gohberg_PIE} and Cor.~\ref{cor:Tmap_1D} and matrices $K^{i}$ as in Defn.~\ref{defn:consistent_BCs}. 
	Then, for any $i,j\in\{1:N\}$ such that $i\neq j$, we have
	\begin{equation*}
		\mcl{T}_{i}\circ\mcl{T}_{j}=\mcl{T}_{j}\circ\mcl{T}_{i}\qquad
		\Leftrightarrow\qquad
		K^{i}_{k,p}K^{j}_{\ell,q}=K^{j}_{\ell,q}K^{i}_{k,p},\quad\forall k,p\in \{1:\delta_{i}\}, \ell,q\in\{1:\delta_{j}\},
	\end{equation*}
	where
	$K^{i}_{k,p}:=(\tnf{e}_{k}\otimes I_{n})^T K^{i}(\tnf{e}_{p}\otimes I_{n})$ for $\tnf{e}_{k}\in\R^{\delta_{i}}$ the $k$th standard Euclidean basis vector.
\end{lem}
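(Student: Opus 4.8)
The plan is to reduce the operator identity $\mcl{T}_{i}\circ\mcl{T}_{j}=\mcl{T}_{j}\circ\mcl{T}_{i}$ to the purely algebraic commutation statement already established in Lem.~\ref{lem:M_commute_Tparam_appx}, working from the explicit integral representation of $\mcl{T}_{i}$ recalled in the note following Lem.~\ref{lem:M_commute_T}. I would first dispose of the degenerate case $\delta_{i}=0$: there $\mcl{T}_{i}=I_{n}$ by Cor.~\ref{cor:gohberg_PIE}, so the two compositions agree trivially, while the index set $\{1:\delta_{i}\}$ on the right-hand side is empty and the block-commutation condition holds vacuously; hence we may assume $\delta_{i}\ge 1$, and by symmetry $\delta_{j}\ge 1$.

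Under this assumption I would split $\mcl{T}_{i}=-\tnf{K}_{i}+\tnf{V}_{i}$, where $\tnf{K}_{i}$ is the integral operator on $L_{2}^{n}[\Omega]$ with the separable polynomial kernel $\mbs{K}_{i}(s_{i},\theta_{i}):=\mbf{e}_{1}(s_{i}-a_{i})^{T}K^{i}\mbf{e}_{\delta_{i}}(b_{i}-\theta_{i})$, integrated over the full interval $[a_{i},b_{i}]$, and $\tnf{V}_{i}$ is the Volterra operator with scalar kernel $\tfrac{(s_{i}-\theta_{i})^{\delta_{i}-1}}{(\delta_{i}-1)!}I_{n}$, both acting on the variable $s_{i}$. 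Expanding,
\begin{equation*}
	\mcl{T}_{i}\circ\mcl{T}_{j}-\mcl{T}_{j}\circ\mcl{T}_{i}
	=\bl(\tnf{K}_{i}\tnf{K}_{j}-\tnf{K}_{j}\tnf{K}_{i}\br)
	-\bl(\tnf{K}_{i}\tnf{V}_{j}-\tnf{V}_{j}\tnf{K}_{i}\br)
	-\bl(\tnf{V}_{i}\tnf{K}_{j}-\tnf{K}_{j}\tnf{V}_{i}\br)
	+\bl(\tnf{V}_{i}\tnf{V}_{j}-\tnf{V}_{j}\tnf{V}_{i}\br).
\end{equation*}
Since $i\neq j$, in each of the last three groups at least one factor carries a \emph{scalar} kernel; writing out the corresponding iterated integrals and applying Fubini (the scalar factor commutes pointwise with any matrix, and the $s_{j}$-dependent limit of $\tnf{V}_{j}$ is harmless because $s_{j}$ is a free parameter, not an integration variable of $\tnf{K}_{i}$) shows each of those three groups vanishes. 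Hence $\mcl{T}_{i}\circ\mcl{T}_{j}=\mcl{T}_{j}\circ\mcl{T}_{i}$ if and only if $\tnf{K}_{i}\tnf{K}_{j}=\tnf{K}_{j}\tnf{K}_{i}$.

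Next I would compute the two remaining compositions directly: $\tnf{K}_{i}\tnf{K}_{j}$ and $\tnf{K}_{j}\tnf{K}_{i}$ are both integral operators over $[a_{i},b_{i}]\times[a_{j},b_{j}]$ with kernels $\mbs{K}_{i}(s_{i},\theta_{i})\mbs{K}_{j}(s_{j},\theta_{j})$ and $\mbs{K}_{j}(s_{j},\theta_{j})\mbs{K}_{i}(s_{i},\theta_{i})$ respectively. Their difference annihilates every $\mbf{v}\in L_{2}^{n}[\Omega]$ iff this kernel difference vanishes almost everywhere, and since it is a polynomial in $(s_{i},\theta_{i},s_{j},\theta_{j})$ this is equivalent to it vanishing identically. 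As $\mbs{K}_{i}$ is, up to sign, exactly the polynomial $\mbs{T}_{i}$ of Lem.~\ref{lem:M_commute_Tparam_appx} with $K=K^{i}$, the identity $\mbs{K}_{i}\mbs{K}_{j}\equiv\mbs{K}_{j}\mbs{K}_{i}$ is precisely the hypothesis $\mbs{T}_{i}\mbs{T}_{j}=\mbs{T}_{j}\mbs{T}_{i}$ of that lemma; invoking it converts the condition into $K^{i}_{k,p}K^{j}_{\ell,q}=K^{j}_{\ell,q}K^{i}_{k,p}$ for all $k,p\in\{1:\delta_{i}\}$ and $\ell,q\in\{1:\delta_{j}\}$, which is the claimed equivalence.

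The genuinely algebraic content -- passing from equality of the matrix-polynomial products $\mbs{T}_{i}\mbs{T}_{j}$ and $\mbs{T}_{j}\mbs{T}_{i}$ to commutation of the coefficient blocks $K^{i}_{k,p},K^{j}_{\ell,q}$ -- is already packaged in Lem.~\ref{lem:M_commute_Tparam_appx}, so here the only place demanding care is the vanishing of the mixed $\tnf{K}$--$\tnf{V}$ and $\tnf{V}$--$\tnf{V}$ commutators: one must make explicit that integral operators acting on distinct variables, at least one of which has a scalar kernel, commute on $L_{2}^{n}[\Omega]$, and that this fails once both kernels are genuinely matrix-valued -- which is exactly why the $\tnf{K}_{i}\tnf{K}_{j}$ term is the only obstruction.
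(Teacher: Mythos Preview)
Your proposal is correct and follows essentially the same approach as the paper: decompose $\mcl{T}_{i}$ into its full-integral (matrix-kernel) and Volterra (scalar-kernel) parts, observe that the mixed commutators vanish because operators acting on distinct variables with at least one scalar kernel commute, reduce to commutation of the matrix-kernel integral operators, and then invoke Lem.~\ref{lem:M_commute_Tparam_appx}. The paper's decomposition $\mcl{T}_{i}=\mcl{Q}_{i}+\mcl{R}_{i}$ is your $-\tnf{K}_{i}+\tnf{V}_{i}$, and where the paper dismisses the mixed-term commutation ``by inspection'' you give a slightly more explicit justification, but the structure of the argument is identical.
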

\begin{proof}
	Fix arbitrary $i,j\in\{1:N\}$ such that $i\neq j$. If either $\delta_{i}=0$ or $\delta_{j}=0$, then $\mcl{T}_{i}=I_{n}$ or $\mcl{T}_{j}=I_{n}$, and the result holds trivially. Otherwise, let parameters $K^{i}$ be as in Defn.~\ref{defn:consistent_BCs}, and let $\mbs{T}_{i}(x,\theta):=-\mbf{e}_{1}(x-a_{i})^T K^{i}\mbf{e}_{\delta_{i}}(b_{i}-\theta)$ for $\mbf{e}_{1},\mbf{e}_{d}$ as in Cor.~\ref{cor:gohberg_PIE}. Then, by definition of $\mcl{T}_{i}$ in Cor.~\ref{cor:Tmap_1D}, we can express $\mcl{T}_{i}=\mcl{Q}_{i}+\mcl{R}_{i}$, where for any $\mbf{v}\in L_{2}^{n}[\Omega]$,
	\begin{equation*}
		(\mcl{Q}_{i}\mbf{v})(s)
		:=\int_{a_{i}}^{b_{i}}\mbs{T}_{i}(s_{i},\theta_{i})\mbf{v}(s)|_{s_{i}=\theta_{i}}d\theta_{i},\qquad\text{and}\qquad (\mcl{R}_{i}\mbf{v})(s):=\int_{a_{i}}^{s_{i}}\frac{(s_{i}-\theta_{i})^{\delta_{i}-1}}{(\delta_{i}-1)!}\mbf{v}(s)|_{s_{i}=\theta_{i}}d\theta_{i},
	\end{equation*}
	for all $s=(s_{1},\hdots,s_{n})\in\Omega$. Here, we note by inspection that for any $\mbs{T}_{i}$, we have $\mcl{Q}_{i}\mcl{R}_{j}=\mcl{R}_{j}\mcl{Q}_{i}$ and $\mcl{R}_{i}\mcl{R}_{j}=\mcl{R}_{j}\mcl{R}_{i}$. Hence the $\mcl{T}_{i}$ will commute if and only if the $\mcl{Q}_{i}$ commute. By definition of $\mcl{Q}_{i}$, we have for every $\mbf{v}\in L_{2}^{n}[\Omega]$,
	\begin{align*}
		\bbl(\bl(\mcl{Q}_{i}\mcl{Q}_{j}-\mcl{Q}_{j}\mcl{Q}_{i}\br)\mbf{v}\bbr)(s)&=\int_{a_{i}}^{b_{i}}\mbs{T}_{i}(s_{i},\theta_{i})\left[\int_{a_{j}}^{b_{j}}\mbs{T}_{j}(s_{j},\theta_{j})\mbf{v}(s)|_{s_{i}=\theta_{i},s_{j}=\theta_{j}}\,d\theta_{j}\right]d\theta_{i}	\\
		&\quad -\int_{a_{j}}^{b_{j}}\mbs{T}_{j}(s_{j},\theta_{j})\left[\int_{a_{i}}^{b_{i}}\mbs{T}_{i}(s_{i},\theta_{i})\mbf{v}(s)|_{s_{i}=\theta_{i},s_{j}=\theta_{j}}\,d\theta_{i}\right]d\theta_{j}	\\
		&=\int_{a_{i}}^{b_{i}}\int_{a_{j}}^{b_{j}}\bbl[\mbs{T}_{i}(s_{i},\theta_{i})\mbs{T}_{j}(s_{j},\theta_{j})-\mbs{T}_{j}(s_{j},\theta_{j})\mbs{T}_{i}(s_{i},\theta_{i})\bbr]\mbf{v}(s)|_{s_{i}=\theta_{i},s_{j}=\theta_{j}}\,d\theta_{j} d\theta_{i}.
	\end{align*}
	Therefore, $\mcl{T}_{i}\circ\mcl{T}_{j}=\mcl{T}_{j}\circ\mcl{T}_{i}$ if and only if $\mbs{T}_{i}\mbs{T}_{j}=\mbs{T}_{j}\mbs{T}_{i}$ except on a set of measure zero. However, since the $\mbs{T}_{i}$ are polynomial, by Lemma~\ref{lem:M_commute_Tparam_appx},  $\mbs{T}_{i}\mbs{T}_{j}=\mbs{T}_{j}\mbs{T}_{i}$ if and only if $K^{i}_{k,p}K^{j}_{\ell,q}=K^{j}_{\ell,q}K^{i}_{k,p}$ for all $k,p,\ell,q$, concluding the proof.
\end{proof}

Lem.~\ref{lem:M_commute_T_appx} proves that for any distinct $i,j\in\{1:N\}$, the operators $\mcl{T}_{i}$ and $\mcl{T}_{j}$ commute if and only if the $n\times n$ sub-blocks of the matrices $K^{i}$ and $K^{j}$ commute---i.e. if the domains $\mcl{D}_{i}$ are consistent as per Defn.~\ref{defn:consistent_BCs}. Assuming admissible and consistent domains, then, not only can we define an inverse $\mcl{T}_{i}:L_{2}^{n}[\Omega]\to \mcl{D}_{i}$ to each $\partial_{s_{i}}^{\delta_{i}}:\mcl{D}_{i}\to L_{2}^{n}[\Omega]$, but these operators $\mcl{T}_{i}$ and $\mcl{T}_{j}$ also commute for $i\neq j$. Using this fact, Thm.~\ref{thm:Tmap} proves that for admissible and consistent $\mcl{D}_{i}$, the inverse of the differential operator $D^{\delta}:\bigcap_{i=1}^{N}\mcl{D}_{i}\to L_{2}^{n}[\Omega]$ takes the form $\mcl{T}=\prod_{i=1}^{N}\mcl{T}_{i}:L_{2}^{n}[\Omega]\to \mcl{D}$. In proving this result, we also relied on the fact that for $i\neq j$, the integral operator $\mcl{T}_{i}$ commutes with the differential operator $\partial_{s_{j}}^{\delta_{j}}$, as proven in the following lemma.


\begin{lem}\label{lem:Top_orth_Dop}
	For $N\in \N$ and $\delta \in \N_{0}^N$, let $\mcl{D}:=\bigcap_{i=1}^{N}\mcl{D}_{i} \subset S_{2}^{\delta,n}$ with $\mcl{D}_{i}\subset S_2^{\delta_i,n}$ admissible and consistent.
	
	Let $\mcl{T}_{i},\mcl{A}_{i,j}$ be defined as in Cor.~\ref{cor:gohberg_PIE} and extended to multivariate space using Cor.~\ref{cor:Tmap_1D} (letting $\mcl{T}\mapsto\mcl{T}_{i}$ and $\mcl{A}\mapsto\mcl{A}_{i,j}$), for $d=\delta_{i}$, $\mcl{D}=\mcl{D}_{i}$ and $\{\mbs{H}_{k}\}$ where $\mbs{H}_{k}:=\bbbl\{\mat{I_{n},\hspace*{0.25cm} k=j,\\ 0,\quad k\neq j,}$ for each $k\in\{0:d\}$. 
	Then, for any $i,j\in\{1:N\}$ such that $i\neq j$ and all $d\in\{0:\delta_{i}\}$ and $k\in\{0:\delta_{j}\}$, we have
	\begin{equation*}
		\partial_{s_{i}}^{d}\mcl{T}_{j}\mbf{u}=\mcl{T}_{j}\partial_{s_{i}}^{d}\mbf{u},
		\qquad \text{and}\qquad
		\partial_{s_{i}}^{d}\mcl{A}_{j,k}\mbf{u}=\mcl{A}_{j,k}\partial_{s_{i}}^{d}\mbf{u},
	\end{equation*}
	for all $\mbf{u}\in S_{2,i}^{d,n}[\Omega]:=\bl\{\mbf{u}\in L_{2}^{n}[\Omega]\mid \partial_{s_{i}}^{\ell}\mbf{u}\in L_{2}^{n},~\forall 0\leq \ell\leq d\br\}$.
\end{lem}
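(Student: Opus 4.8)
The plan is to exploit the fact that $\mcl{T}_{j}$ and $\mcl{A}_{j,k}$ are assembled entirely from a multiplier operator and integral operators acting only on the variable $s_{j}$, whose kernels and limits of integration do not depend on $s_{i}$ when $i\neq j$. Consequently, the differential operator $\partial_{s_{i}}^{d}$ may simply be passed through these operations — the only real work is justifying differentiation under the integral sign in the low-regularity setting $S_{2,i}^{d,n}[\Omega]$, where we assume $\partial_{s_{i}}^{\ell}\mbf{u}\in L_{2}^{n}$ for $\ell\leq d$ rather than classical differentiability.

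First I would recall, from the construction of $\mcl{T}_{i}$ in Cor.~\ref{cor:Tmap_1D} and the representation used in Lem.~\ref{lem:M_commute_T}, that
\[
	(\mcl{T}_{j}\mbf{v})(s) = -\int_{a_{j}}^{b_{j}} \mbf{e}_{1}(s_{j}-a_{j})^{T}K^{j}\mbf{e}_{\delta_{j}}(b_{j}-\theta_{j})\,\mbf{v}(s)|_{s_{j}=\theta_{j}}\,d\theta_{j} + \int_{a_{j}}^{s_{j}}\tfrac{(s_{j}-\theta_{j})^{\delta_{j}-1}}{(\delta_{j}-1)!}\,\mbf{v}(s)|_{s_{j}=\theta_{j}}\,d\theta_{j},
\]
and that $\mcl{A}_{j,k}$ has the same integral structure plus an additional multiplier term $\tnf{M}[\mbs{H}_{k}(s_{j}-a_{j})]$, a pointwise product with a polynomial in $s_{j}$ only. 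In every term the kernel is polynomial in $(s_{j},\theta_{j})$ and the integration domain ($[a_{j},b_{j}]$ or $[a_{j},s_{j}]$) depends only on $s_{j}$; none depends on $s_{i}$, so when the argument $\mbf{v}(s)|_{s_{j}=\theta_{j}}$ is replaced by $\mbf{u}$, the variable $s_{i}$ appears solely inside $\mbf{u}$.

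Next I would reduce to the case $d=1$ by induction on $d$: the base case $d=0$ is trivial, and the inductive step applies the $d=1$ identity to $\partial_{s_{i}}^{d-1}\mbf{u}\in S_{2,i}^{1,n}[\Omega]$. For $d=1$, I would use a density/closedness argument, which avoids any delicate fibrewise absolute-continuity statement: choose smooth $\mbf{u}_{m}$ with $\mbf{u}_{m}\to\mbf{u}$ and $\partial_{s_{i}}\mbf{u}_{m}\to\partial_{s_{i}}\mbf{u}$ in $L_{2}^{n}[\Omega]$; for each $\mbf{u}_{m}$ the identities $\partial_{s_{i}}(\mcl{T}_{j}\mbf{u}_{m})=\mcl{T}_{j}(\partial_{s_{i}}\mbf{u}_{m})$ and $\partial_{s_{i}}(\mcl{A}_{j,k}\mbf{u}_{m})=\mcl{A}_{j,k}(\partial_{s_{i}}\mbf{u}_{m})$ hold by the classical Leibniz rule (the multiplier term commutes with $\partial_{s_{i}}$ since its multiplier is a function of $s_{j}$ alone); then pass to the limit using boundedness of $\mcl{T}_{j},\mcl{A}_{j,k}$ on $L_{2}^{n}[\Omega]$ and closedness of the weak operator $\partial_{s_{i}}$. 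Alternatively one may argue fibrewise, fixing a.e. the coordinates $s_{k}$, $k\notin\{i,j\}$, and invoking Fubini together with the fundamental theorem of calculus for the absolutely continuous $s_{i}$-representative.

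The main obstacle is exactly this interchange of $\partial_{s_{i}}^{d}$ with the integral operator under only the Sobolev-type regularity $S_{2,i}^{d,n}$; once it is established, the commutation identities are immediate from the $s_{i}$-independence of the kernels and integration limits, and no use of the consistency hypothesis is needed for this particular lemma.
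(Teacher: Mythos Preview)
Your proposal is correct and follows essentially the same route as the paper: both arguments rest on the observation that the kernel and integration limits defining $\mcl{A}_{j,k}$ (and $\mcl{T}_{j}=\mcl{A}_{j,0}$) depend only on $(s_{j},\theta_{j})$, so that $\partial_{s_{i}}^{d}$ may be passed through the integral. The paper simply writes out this interchange as a direct computation without further justification, whereas you supply the density/closedness argument to make the step rigorous in $S_{2,i}^{d,n}$ and add an induction on $d$; this extra care is a welcome refinement but not a different idea.
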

\begin{proof}
	We prove the result only for the operators $\mcl{A}_{j,k}$ (not for $\mcl{T}_{j}$), noting that $\mcl{T}_{j}=\mcl{A}_{j,0}$.
	
	Fix arbitrary $i,j\in\{1:N\}$ such that $i\neq j$, and let $d\in\{0:\delta_{i}\}$ and $k\in\{0:\delta_{j}\}$. Fix further arbitrary $\mbf{u}\in S_{2,i}^{d,n}[\Omega]$. If $d=0$ or $k=d$, we have $\partial_{s}^{d}=I_{n}$ or $\mcl{A}_{j,k}=I_{n}$ (by the definition in Cor.~\ref{cor:gohberg_PIE}), and therefore $\partial_{s_{i}}^{d}\mcl{A}_{j,k}\mbf{u}=\mcl{A}_{j,k}\partial_{s_{i}}^{d}\mbf{u}$ holds trivially. Otherwise, if $k\neq d$ and $d\neq 0$, then by definition of the operator $\mcl{A}_{j,k}$ in Cor.~\ref{cor:gohberg_PIE},
	\begin{align*}
		\bl(\mcl{A}_{j,k}\mbf{v}\br)(s)
		&=\int_{a_{j}}^{b_{j}}\mbs{R}_{j,k}(s_{j},\theta_{j})\,\mbf{v}(s_{1},\hdots,s_{j-1},\theta_{j},s_{j+1},\hdots,s_{N})\,d\theta_{j}	\\
		&\qquad +\int_{a_{j}}^{s_{j}}\frac{(s_{j}-\theta_{j})^{k-1}}{(k-1)!}\,\mbf{v}(s_{1},\hdots,s_{j-1},\theta_{j},s_{j+1},\hdots,s_{N})\,d\theta_{j},
	\end{align*}
	where $\mbs{R}_{j,k}(x,y):=-\mbf{c}_{k}(x-a_{j})K\mbf{e}_{d}(b_{j}-y)$ for $K$ as defined in Cor.~\ref{cor:gohberg_PIE}, and where
	\begin{equation*}
		\mbf{e}_d(z)=\bmat{\frac{z^{d-1}}{(d-1)!}&\cdots& \half z^2& z &1}^T\otimes I_n,
		\qquad
		\mbf{c}_{k}(z)
		=\bmat{0_{k}&1&z&\cdots&\frac{z^{d-k-1}}{(d-1)!}}^T\otimes I_{n}.
	\end{equation*}
	It follows that
	\begin{align*}
		\bl(\mcl{A}_{j,k}\partial_{s_{i}}^{d}\mbf{u}\br)(s)
		&=\int_{a_{j}}^{b_{j}}\mbs{R}_{j,k}(s_{j},\theta_{j})\,\partial_{s_{i}}^{d}\mbf{u}(s_{1},\hdots,s_{j-1},\theta_{j},s_{j+1},\hdots,s_{N})\,d\theta_{j} 		\\
		&\qquad +\int_{a_{j}}^{s_{j}}\frac{(s_{j}-\theta_{j})^{k-1}}{(k-1)!}\,\partial_{s_{i}}^{d}\mbf{u}(s_{1},\hdots,s_{j-1},\theta_{j},s_{j+1},\hdots,s_{N})\,d\theta_{j}	\\
		&=\partial_{s_{i}}^{d}\left(\int_{a_{j}}^{b_{j}}\mbs{R}_{j,k}(s_{j},\theta_{j})\mbf{u}(s_{1},\hdots,s_{j-1},\theta_{j},s_{j+1},\hdots,s_{N})\,d\theta_{j} \right)		\\
		&\qquad +\partial_{s_{i}}^{d}\left(\int_{a_{j}}^{s_{j}}\frac{(s_{j}-\theta_{j})^{k-1}}{(k-1)!}\mbf{u}(s_{1},\hdots,s_{j-1},\theta_{j},s_{j+1},\hdots,s_{N})\,d\theta_{j}	\right)
		=\bl(\partial_{s_{i}}^{d}\mcl{A}_{j,k}\mbf{u}\br)(s).
	\end{align*}
\end{proof}


\section{The $*$-Algebra of $N$D 3-PI Operators}\label{appx:PIs}

%

In Section~\ref{sec:PIs}, a class of multivariate 3-PI operators was defined, denoted by $\PIset_{N}[\Omega]$, generalizing the definition of univariate 3-PI operators to the $N$D setting. In this appendix, we show that the class of multivariate 3-PI operators includes a particular class of integral operators, namely that of integral operators with polynomial semi-separable kernels. In addition, we provide some of the proofs omitted from Subsection~\ref{subsec:PIs:algebra}, proving that the class of multivaraite 3-PI operators is indeed closed under linear combinations, as well as composition and adjoint operations---thus defining a $*$-algebra.

To start, we introduce the following convenient notation to denote the univariate and multivariate 3-PI operators defined by given parameters.
\begin{defn}[Parameterization of ND 3-PI operators, $\PI_{N}{[\mbf{P}]}$]\label{defn:PI_ND_appx}
	For $[a,b]\subset\R$, define $\PI_{1}:\PIparam_{1}^{m\times n}[a,b]\to\PIset_{1}^{m\times n}[a,b]$ such that $\PI_{1}[\mbf{P}]$ is the univariate 3-PI operator defined by $\mbf{P}=\{\mbs{P}_{0},\mbs{P}_{1},\mbs{P}_{2}\}\in \PIparam_{1}^{m\times n}[a,b]:=\R^{m\times n}[s]\times\R^{m\times n}[s,\theta]\times \R^{m\times n}[s,\theta]$, i.e.
	\begin{equation*}
			(\PI_{1}[\mbf{P}]\mbf{v})(s)
			:=\mbs{P}_{0}(s)\mbf{v}(s) +\int_{a}^{s}\mbs{P}_{1}(s,\theta)\mbf{v}(\theta)\, d\theta +\int_{s}^{b}\mbs{P}_{2}(s,\theta)\mbf{v}(\theta)\, d\theta,\qquad s\in[a,b],
	\end{equation*}
	for $\mbf{v}\in L_{2}^{n}[a,b]$.
	For $\Omega=\prod_{i=1}^{N}[a_{i},b_{i}]$, define $\PIparam_{N}[\Omega]:=\prod_{i=1}^{N}\PIparam_{1}[a_{i},b_{i}]$, and define the map $\PI_{N}:\PIparam_{N}[\Omega]\to\PIset[\Omega]$ from parameters $\mbf{P} =\{\mbf{P}_i\}_{i=1}^N \in\PIparam_{N}[\Omega]$ to an associated $N$D 3-PI operator as $\PI_{N}[\mbf{P}]:=\prod_{i=1}^{N} \PI_{1}[\mbf{P}_{i}]$, where now the $i$th univariate operator acts only along the $i$th variable,
	\begin{equation*}
		(\PI_{1}[\mbf{P}_{i}]\mbf{v})(s):=\bl(\PI_{1}[\mbf{P}_{i}]\mbf v(s_1,\cdots,s_{i-1},\bullet,s_{i+1},\cdots s_N)\br)(s_i),\qquad s\in\Omega.
	\end{equation*}
\end{defn}

We remark that, by Defn.~\ref{defn:PI_ND}, any $N$D 3-PI operator $\mcl{P} \in \PIset_{N}[\Omega]$ can be expressed as a sum of of products of 3-PI operators along each dimensions $i\in\{1,\hdots,N\}$---i.e. $\mcl{P} = \sum_j \PI_{N}[\mbf{P}_j]=\sum_{j}\prod_{i}\PI_{N}[\mbf{P}_{j,i}]$ for some $\mbf{P}_j:=\{\mbf{P}_{j,i}\}_{i=1}^{N} \in \PIparam_{N}[\Omega]$. In turn, any univariate 3-PI operator is given by the sum of a multiplier operator defined by a polynomial, and an integral operator with polynomial semi-separable kernel, so that $\mcl{R}\in\PIset_{1}[a,b]$ if $\mcl{R}\mbf{v}(s)=\mbs{R}(s)\mbf{v}(s)+\int_{a}^{b}\mbs{K}(s,\theta)\mbf{v}(\theta)d\theta$ for some $\mbs{M}\in\R[s]$ and some $\mbs{K}$ of the form
\begin{equation*}
	\mbs{K}(s,\theta)=\begin{cases}
		\mbs{K}_{1}(s,\theta),	&	\theta\leq s,	\\
		\mbs{K}_{2}(s,\theta),	&	\theta>s,
	\end{cases}
\end{equation*}
for $\mbs{K}_{1},\mbs{K}_{2}$ polynomial. Taking sums of compositions of such 3-PI operators along different intervals $[a_{i},b_{i}]$, then, the proposed class of $N$D 3-PI operators will include (but is not limited to) any operator of the form 
\begin{align*}
	(\mcl{R}\mbf{v})(s)=\sum_{j=1}^{M}\left(\prod_{i=1}^{N}\mbs{R}_{i,j}(s_{i})\right) \mbf{v}(s),\qquad
	\text{and}\qquad
	(\mcl{K}\mbf{v})(s)=\sum_{j=1}^{M}\left(\prod_{i=1}^{N}\int_{a_{i}}^{b_{i}}\mbs{K}_{i,j}(s_{i},\theta_{i}) \right)\mbf{v}(\theta)d\theta,
\end{align*}
where $\mbs{M}_{i,j}$ are polynomial, and $\mbs{K}_{i,j}$ are polynomial semi-separable. Note here that, any multivariate polynomial can be expressed as $\mbs{R}(s)=\sum_{j=1}^{M}\prod_{i=1}^{N}\mbs{R}_{i,j}(s_{i})$ for some univariate polynomials $\mbs{R}_{i,j}$. As such, the class $\PIset_{N}[\Omega]$ will include any multiplier operator defined by a polynomial. In addition, defining multivariate polynomial semi-separable functions as in Defn.~\ref{defn:semisep}, any such polynomial semi-separable function can be expressed as $\sum_{j=1}^{M}\prod_{i=1}^{N}\mbs{K}_{i,j}$ for some univariate polynomial semi-separable functions. Consequently, any integral operator with polynomial semi-separable kernel will be a multivariate 3-PI operator, yielding the following result.

\begin{lem}\label{lem:PI_semisep_appx}
	Suppose that 
	\[
	(\mcl{K}\mbf{v})(s):=\int_{\Omega}\mbs{K}(s,\theta)\mbf{v}(\theta)d\theta
	\]
	where $\mbs{K}(s,\theta)\in L_{2}[\Omega]$ is multivariate polynomial semi-separable. Then $\mcl{K}\in\PIset_{N}[\Omega]$.
\end{lem}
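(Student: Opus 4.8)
The plan is to reduce the statement to the definition of multivariate 3-PI operators (Defn.~\ref{defn:PI_ND}) by decomposing the polynomial semi-separable kernel $\mbs{K}$ into a finite sum of products of univariate polynomial semi-separable kernels, one per spatial direction. The key observation is that Defn.~\ref{defn:semisep} supplies, for each sign pattern $\alpha\in\{-1,1\}^{N}$, a single multivariate polynomial $\mbs{K}_{\alpha}\in\R[s,\theta]$ agreeing with $\mbs{K}$ on the corresponding orthant $\{\alpha_{i}\cdot(s_{i}-\theta_{i})\leq 0\}$. Since each $\mbs{K}_{\alpha}$ is a genuine polynomial in the $2N$ variables $(s_{1},\theta_{1},\dots,s_{N},\theta_{N})$, it can be written as a finite sum of products $\sum_{m}\prod_{i=1}^{N}p^{\alpha}_{i,m}(s_{i},\theta_{i})$ of univariate polynomials, simply by collecting monomials.

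First I would fix notation: for each $i$ and each univariate interval $[a_{i},b_{i}]$, and for a univariate polynomial pair $(\mbs{K}_{i}^{1},\mbs{K}_{i}^{2})$, the operator $\mcl{R}_{i}$ with $(\mcl{R}_{i}\mbf v)(s)=\int_{a_i}^{s_i}\mbs{K}_{i}^{1}(s_i,\theta_i)\mbf v(s)|_{s_i=\theta_i}\,d\theta_i+\int_{s_i}^{b_i}\mbs{K}_{i}^{2}(s_i,\theta_i)\mbf v(s)|_{s_i=\theta_i}\,d\theta_i$ lies in $\PIset_{1}[a_{i},b_{i}]$ by Defn.~\ref{defn:PI_1D} (with zero multiplier part). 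The product $\prod_{i=1}^{N}\mcl{R}_{i}$, applied via Fubini, produces $(\prod_i\mcl R_i\mbf v)(s)=\int_{\Omega}\big(\prod_{i=1}^{N}\tilde{\mbs{K}}_{i}(s_{i},\theta_{i})\big)\mbf v(\theta)\,d\theta$, where $\tilde{\mbs{K}}_{i}(s_i,\theta_i)$ equals $\mbs{K}_{i}^{1}$ when $\theta_i\leq s_i$ and $\mbs{K}_{i}^{2}$ when $\theta_i>s_i$. Thus the kernel of any single product is a product of univariate semi-separable factors, whose restriction to the orthant indexed by $\alpha$ (say $\alpha_i=1$ meaning $s_i\leq\theta_i$) equals $\prod_i$ of the appropriate branch polynomial.

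The core step, then, is: given the finitely many $\mbs{K}_{\alpha}$, build univariate polynomial semi-separable factors whose product orthant-by-orthant reconstructs $\mbs{K}$. A clean way is to proceed one variable at a time. Write $\mbs{K}_{\alpha}(s,\theta)=\sum_{m}c_{m}\,\mu^{\alpha}_{1,m}(s_{1},\theta_{1})\cdots\mu^{\alpha}_{N,m}(s_{N},\theta_{N})$ as a sum of monomials. For a fixed sign choice in the first coordinate, the restriction of $\mbs K$ to $\{\alpha_1(s_1-\theta_1)\le 0\}$ still depends on the remaining sign pattern, so one obtains, for each of the two first-coordinate branches, a \emph{lower-dimensional} polynomial semi-separable kernel in $(s_2,\theta_2,\dots)$. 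Iterating, after $N$ steps one has expressed $\mbs{K}$ exactly as a finite sum $\sum_{j}\prod_{i=1}^{N}\mbs{K}_{i,j}(s_{i},\theta_{i})$ where each $\mbs{K}_{i,j}$ is univariate polynomial semi-separable on $[a_{i},b_{i}]$ — i.e. a two-branch function with polynomial branches. Each inner factor then defines an element of $\PIset_{1}[a_{i},b_{i}]$ (taking the multiplier part to be zero), each summand $\prod_{i}\PI_{1}[\mbf{K}_{i,j}]$ is in $\PIset_{N}[\Omega]$ by Defn.~\ref{defn:PI_ND}, and by Lem.~\ref{lem:PI_sum_ND} the finite sum $\mcl{K}=\sum_{j}\prod_{i}\PI_{1}[\mbf{K}_{i,j}]\in\PIset_{N}[\Omega]$, which is the claim.

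The main obstacle I anticipate is the bookkeeping in the inductive decomposition: one must check that the branch polynomials produced at each coordinate genuinely depend polynomially on the remaining variables and that the two branches of each univariate factor are themselves polynomials (not just restrictions of a polynomial to a half-domain, which is automatic here). There is also a mild subtlety about the boundary sets $\{s_i=\theta_i\}$ (measure zero, so they do not affect the $L_2$ operator, matching the ``almost everywhere'' phrasing in Defn.~\ref{defn:semisep}), and about ensuring the factorization of each $\mbs{K}_{\alpha}$ into univariate products is compatible across orthants so that the final sum has a \emph{single} index set $j$ — this is handled by taking, if necessary, a common refinement of the monomial expansions over all $2^{N}$ orthants. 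None of these steps requires new machinery, only careful tracking of indices, so the proof should be short; the body of the paper indeed relegates it to an appendix lemma (Lem.~\ref{lem:PI_semisep_appx}) with the comment that it follows from the inductive definition.
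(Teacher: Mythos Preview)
Your proposal is correct and follows essentially the same approach as the paper: decompose $\mbs{K}$ by sign pattern $\alpha\in\{-1,1\}^{N}$, expand each polynomial $\mbs{K}_{\alpha}$ as a finite sum $\sum_{j}\prod_{i}\mbs{R}_{\alpha,i,j}(s_{i},\theta_{i})$ (e.g.\ by monomials), and recognize each summand as a product of univariate 3-PI operators. The one simplification the paper makes that you do not is to avoid merging across orthants: for fixed $\alpha$ and each $i$, it uses a \emph{single-branch} 1D parameter, namely $\{0,\mbs{R}_{\alpha,i,j},0\}$ if $\alpha_{i}=-1$ and $\{0,0,\mbs{R}_{\alpha,i,j}\}$ if $\alpha_{i}=1$, and then simply sums the resulting $\PIset_{N}$ elements over both $\alpha$ and $j$. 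This sidesteps your ``common refinement'' bookkeeping entirely, since there is no need for a single index set $j$ shared across all $2^{N}$ orthants or for any compatibility of the two branches of a univariate factor.
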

\begin{proof} 
	Suppose that $\mbs{K}$ is multivaraite polynomia semi-separable. Then there exist polynomials $\mbs{K}_{\alpha}\in\R[s,\theta]$ for $\alpha\in\{-1,1\}^{N}$ such that
	\begin{align*}
		\mbs{K}(s,\theta)=\mbs{K}_{\alpha}(s,\theta)	\qquad
		\forall (s,\theta)\in \{(s,\theta)\in\Omega\mid \alpha_{i}(s_{i}-\theta_{i})\leq 0,~\forall i\in\{1:N\}\}.
	\end{align*}
	Given these polynomials, we can then express
	\begin{equation*}
		(\mcl{K}\mbf{v})(s)
		=\int_{\Omega}\mbs{K}(s,\theta)\mbf{v}(\theta)d\theta
		=\sum_{\alpha\in\{-1,1\}^{N}} \int_{\Omega}\mbf{I}_{\alpha}(s,\theta)\mbs{K}_{\alpha}(s,\theta)\mbf{v}(\theta)d\theta
	\end{equation*}
	where we define the indicator function
	\begin{equation*}
		\mbf{I}_{\alpha}(s,\theta):=\prod_{i=1}^{N}\mbf{I}_{\alpha_{i}}(s_{i},\theta_{i})
		\qquad\text{where}\qquad
		\mbf{I}_{\alpha}(s,\theta)
		:=\begin{cases}
			1,	& \alpha_{i}\cdot (s_{i}-\theta_{i})\leq 0,~\forall i\in\{1:N\},	\\
			0,	&	\tnf{else}.
		\end{cases}
	\end{equation*}
	Now, for each $\alpha\in\{-1,1\}^{N}$, since $\mbs{K}_{\alpha}$ is polynomial, we can define $\mbs{R}_{\alpha,i,j}\in\R[s_{i},\theta_{i}]$ such that $\mbs{K}_{\alpha}=\sum_{j=1}^{M}\prod_{i=1}^{N}\mbs{R}_{\alpha,i,j}$ for some $M\in\N$ sufficiently large (e.g. expanding $\mbs{K}_{\alpha}$ as a sum of monomials). Given these univariate polynomials, $\mbs{R}_{\alpha,i,j}$, define the 3-PI parameters $\mbf{R}_{\alpha,i,j}^{\beta,\gamma}\in\PIparam_{1}[a_{i},b_{i}]$ by
	\begin{equation*}
		\mbf{R}_{\alpha,i,j}:=
		\begin{cases}
			\{0,\mbs{R}_{\alpha,i,j},0\},	&	\alpha_{i}=-1,	\\
			\{0,0,\mbs{R}_{\alpha,i,j}\},	&	\alpha_{i}=1,
		\end{cases}
	\end{equation*}
	Then, by definition
	\begin{align*}
		\int_{\Omega} \mbf{I}_{\alpha}(s,\theta)\mbs{K}_{\alpha}(s,\theta)\mbf{v}(\theta)d\theta
		&=\int_{\Omega}\bbbbl(\sum_{j=1}^{M}\prod_{i=1}^{N}\mbf{I}_{\alpha_{i}}(s_{i},\theta_{i})\mbs{R}_{\alpha,i,j}(s_{i},\theta_{i})\bbbbr)\mbf{v}(\theta)d\theta	\\
		&=\sum_{j=1}^{M}\left[\bbbl(\prod_{i=1}^{N}\PI_{1}\bl[\mbf{R}_{\alpha,i,j}\br]\bbbr)\mbf{v}\right](s).
	\end{align*}
	Since $\prod_{i=1}^{N}\PI_{1}\bl[\mbf{R}_{\alpha,i,j}\br]\in\PIset_{N}[\Omega]$ for all $\alpha$ and $j$, it follows that also $\mcl{K}\in\PIset_{N}[\Omega]$.
\end{proof}

Lem~\ref{lem:PI_semisep_appx} shows that the class of $N$D PI operators includes a particular class of integral operators, of which the kernel is polynomial semi-separable. By construction, the operator $\mcl{T}$ in Thm.~\ref{thm:Tmap}, defining the inverse to $D^{\delta}:\mcl{D}\to L_{2}^{n}[\Omega]$, will be of this form. However, the class of $N$D 3-PI operators also allows for mixed multiplier and integral operators along different spatial directions, e.g. $\int_{a}^{x}\mbs{R}(x,y,\theta)\mbf{v}(\eta,y)d\eta$ and $\int_{y}^{d}\mbs{R}(x,y,\eta)\mbf{v}(x,\zeta)d\zeta$. This ensures that the operator $\mcl{A}$ defining the PIE representation in Thm.~\ref{thm:PDE2PIE} is also an $N$D PI operator, as proven in the following lemma.

\begin{lem}\label{lem:PI_op_T_appx}
	For $N\in\N$ and $\delta\in\N_{0}^{N}$, let $\mcl{T}$ and $\mcl{A}$ be as defined in Thm.~\ref{thm:Tmap}. Then $\mcl{T},\mcl{A}\in\PIset_{N}^{n\times n}[\Omega]$.
\end{lem}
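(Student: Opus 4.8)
The plan is to unwind the definitions of $\mcl{T}$ and $\mcl{A}$ given in Thm.~\ref{thm:Tmap} and verify directly that both are built from univariate 3-PI operators via the operations (sum, product, multiplier composition) under which $\PIset_{N}[\Omega]$ is closed. The key structural fact is that $\mcl{T} = \prod_{i=1}^{N}\mcl{T}_i$ and $\mcl{A} = \sum_{\vec0\le\alpha\le\delta}\tnf{M}[\mbs{A}_\alpha]\prod_{i=1}^{N}\mcl{A}_{i,\alpha_i}$, where each $\mcl{T}_i$ and each $\mcl{A}_{i,\alpha_i}$ is the multivariate lifting of a univariate operator obtained from Cor.~\ref{cor:gohberg_PIE}.

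First I would observe that each univariate operator $\mcl{T}_i$ and $\mcl{A}_{i,\alpha_i}$, as given explicitly in Cor.~\ref{cor:gohberg_PIE} (before lifting), is a univariate 3-PI operator: it has the form $(\mcl{R}\mbf{v})(s)=\mbs{R}_0(s)\mbf{v}(s)+\int_a^s\mbs{R}_1(s,\theta)\mbf{v}(\theta)d\theta+\int_s^b\mbs{R}_2(s,\theta)\mbf{v}(\theta)d\theta$ with polynomial $\mbs{R}_0,\mbs{R}_1,\mbs{R}_2$ (for $\mcl{T}_i$, $\mbs{R}_0=0$; for $\mcl{A}_{i,j}$ with $j=\delta_i$ it reduces to the identity; in all cases the kernels $\mbs{G}_T,\mbs{G}_A$ from Cor.~\ref{cor:gohberg_PIE} are polynomial semi-separable and the multiplier $\mbs{H}_d(s-a)$ is polynomial). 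Hence $\mcl{T}_i,\mcl{A}_{i,\alpha_i}\in\PIset_1^{n\times n}[a_i,b_i]$ in the sense of Defn.~\ref{defn:PI_1D}. Their multivariate liftings (acting on variable $s_i$ only) are then, by Defn.~\ref{defn:PI_ND} applied with $M=1$, exactly single-factor elements of $\PIset_N^{n\times n}[\Omega]$.

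Next, $\mcl{T}=\prod_{i=1}^{N}\mcl{T}_i$ is a product of $N$ univariate 3-PI operators, one along each spatial direction, which is precisely the form $\prod_{i=1}^N\mcl{R}_{1,i}$ with $M=1$ in Defn.~\ref{defn:PI_ND}; thus $\mcl{T}=\PI_N[\mbf{T}]\in\PIset_N^{n\times n}[\Omega]$ for the parameter tuple $\mbf{T}=\{\mbf{T}_i\}_{i=1}^N$ with $\mbf{T}_i$ the 3-PI parameters of $\mcl{T}_i$. For $\mcl{A}$, each term $\prod_{i=1}^N\mcl{A}_{i,\alpha_i}$ is likewise a single product of univariate 3-PI operators along distinct directions, hence in $\PIset_N^{n\times n}[\Omega]$. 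The multiplier $\tnf{M}[\mbs{A}_\alpha]$ with polynomial $\mbs{A}_\alpha$ is itself an $N$D 3-PI operator (a polynomial multiplier is a degenerate 3-PI operator with zero kernels, and a multivariate polynomial can be written as a finite sum of products of univariate monomials, so $\tnf{M}[\mbs{A}_\alpha]\in\PIset_N^{n\times n}[\Omega]$). Since $\PIset_N^{n\times n}[\Omega]$ is closed under composition by Prop.~\ref{prop:PI_composition_ND} (applied entrywise), $\tnf{M}[\mbs{A}_\alpha]\prod_{i=1}^N\mcl{A}_{i,\alpha_i}\in\PIset_N^{n\times n}[\Omega]$; and since it is closed under finite linear combinations by Lem.~\ref{lem:PI_sum_ND} (again entrywise), the full sum $\mcl{A}=\sum_{\vec0\le\alpha\le\delta}\tnf{M}[\mbs{A}_\alpha]\prod_{i=1}^N\mcl{A}_{i,\alpha_i}$ lies in $\PIset_N^{n\times n}[\Omega]$ as well.

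The only genuinely delicate point — more a matter of care than of difficulty — is handling the matrix-valued case correctly: Defn.~\ref{defn:PI_ND} declares $\mcl{R}\in\PIset_N^{m\times n}[\Omega]$ precisely when every scalar block $[\mcl{R}]_{k,\ell}$ lies in $\PIset_N[\Omega]$, and the multiplier $\tnf{M}[\mbs{A}_\alpha]$ mixes blocks, so one must check that composition with a block-dense polynomial multiplier preserves the blockwise 3-PI property. This follows because Prop.~\ref{prop:PI_composition_ND} and Lem.~\ref{lem:PI_sum_ND} extend to the matrix case via ordinary matrix multiplication and addition (as noted in the paragraph preceding Prop.~\ref{prop:*algebra}): a block of $\tnf{M}[\mbs{A}_\alpha]\mcl{B}$ is a finite sum of products $[\mbs{A}_\alpha]_{k,j}[\mcl{B}]_{j,\ell}$, each a composition of a scalar polynomial multiplier (an element of $\PIset_N[\Omega]$) with a scalar $N$D 3-PI operator, hence in $\PIset_N[\Omega]$. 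I would therefore write the proof as: (i) identify $\mcl{T}_i,\mcl{A}_{i,\alpha_i}$ as univariate 3-PI operators and their liftings as elements of $\PIset_N^{n\times n}[\Omega]$; (ii) conclude $\mcl{T}\in\PIset_N^{n\times n}[\Omega]$ by the product structure of Defn.~\ref{defn:PI_ND}; (iii) conclude $\mcl{A}\in\PIset_N^{n\times n}[\Omega]$ by invoking closure under composition (Prop.~\ref{prop:PI_composition_ND}) and linear combination (Lem.~\ref{lem:PI_sum_ND}), in their matrix-valued forms.
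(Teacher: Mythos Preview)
Your proposal is correct and takes essentially the same approach as the paper: identify each $\mcl{T}_i$ and $\mcl{A}_{i,\alpha_i}$ from Cor.~\ref{cor:gohberg_PIE} as a univariate 3-PI operator, conclude $\mcl{T}=\prod_i\mcl{T}_i\in\PIset_N$ directly from Defn.~\ref{defn:PI_ND}, and obtain $\mcl{A}\in\PIset_N$ via closure under composition (Prop.~\ref{prop:PI_composition_ND}) and sums (Lem.~\ref{lem:PI_sum_ND}) together with $\tnf{M}[\mbs{A}_\alpha]\in\PIset_N$. The only cosmetic difference is in the matrix-valued case: the paper first does $n=1$ and then explicitly expands $[\mcl{T}]_{k,\ell}=\sum_{j_1,\ldots,j_{N-1}}[\mcl{T}_1]_{k,j_1}\cdots[\mcl{T}_N]_{j_{N-1},\ell}$ to reduce to the scalar case, whereas you invoke the matrix extensions of the closure lemmas directly; both arguments are equivalent.
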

\begin{proof}
	For the case $n=1$, the result follows by inspection. In particular, by definition, $\mcl{T}=\prod_{i=1}^{N}\mcl{T}_{i}$ and $\mcl{A}=\sum_{\vec{0}\leq\alpha\leq\delta}\tnf{M}[\mbs{A}_{\alpha}]\prod_{i=1}^{N}\mcl{A}_{i,\alpha_{i}}$, where the operators $\mcl{T}_{i}$ and $\mcl{A}_{i,\alpha_{i}}$ are of the form of the operators $\mcl{T}$ and $\mcl{A}$ in Cor.~\ref{cor:gohberg_PIE}, respectively. By definition, then, we find that $\mcl{T}_{i}$ and $\mcl{A}_{i,\alpha_{i}}$ are univariate 3-PI operators, wherefore $\mcl{T}$ is a multivariate 3-PI operator. Furthermore, since $\tnf{M}[\mbs{A}_{\alpha}]\in\PIset_{N}$ for all $\mbs{A}_{\alpha}\in\R[s]$ and $\PIset_{N}$ is closed under compositions (by Prop.~\ref{prop:PI_composition_ND}), it follows that also $\mcl{A}$ is a multivariate 3-PI operator. 
	
	For the case $n>1$, we note that for each $k,\ell\in\{1,n\}$, we can express 
	\begin{align*}
		[\mcl{T}]_{k,\ell}
		=\left[\prod_{i=1}^{N}\mcl{T}_{i}\right]_{k,\ell}
		=\sum_{j_{1}=1}^{n}\sum_{j_{2}=1}^{n}\cdots \sum_{j_{N-1}=1}^{n}[\mcl{T}_{1}]_{k,j_{1}}[\mcl{T}_{2}]_{j_{1},j_{2}}\cdots[\mcl{T}_{N}]_{j_{N-1},\ell}.
	\end{align*}
	Since, as established for the case $n=1$, each $[\mcl{T}_{i}]_{j_{i-1},j_{i}}$ is a univariate 3-PI operator, it follows that each $[\mcl{T}]_{k,\ell}$ is a multivariate 3-PI operator, and therefore $\mcl{T}\in\PIset_{N}^{n\times n}[\Omega]$. By similar reasoning, it follows that also $\mcl{A}\in\PIset_{N}^{n\times n}[\Omega]$.
\end{proof}

By Lem.~\ref{lem:PI_op_T}, given a multivariate PDE as in~\eqref{eq:PDE_standard}, the operators $\mcl{T}$ and $\mcl{A}$ defining the associated PIE representation as per Thm.~\ref{thm:Tmap} are in fact $N$D 3-PI operators. Having thus shown that the class $\PIset_{N}[\Omega]$ is suitable for parameterization of $N$D PIEs, we now prove that this class in fact defines a $*$-algebra. To start, we show that $\PIset_{N}[\Omega]$ is a vector space, for which we have the following lemma.
\begin{lem}\label{lem:PI_sum_ND_appx}
	For $N\in\N$ and $\Omega:=\prod_{i=1}^{N}$, if $\mcl{Q},\mcl{R}\in\PIset_{N}[\Omega]$, then $\lambda\mcl{Q}+\mu\mcl{R}\in\PIset_{N}[\Omega]$ for all $\lambda,\mu\in\R$.
\end{lem}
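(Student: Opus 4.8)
The plan is to prove that $\PIset_{N}[\Omega]$ is closed under linear combinations by reducing the statement to the elementary observation that a finite sum of finite sums of products of univariate $3$-PI operators is again a finite sum of products of univariate $3$-PI operators. By Defn.~\ref{defn:PI_ND}, since $\mcl{Q},\mcl{R}\in\PIset_{N}[\Omega]$, there exist $M_{Q},M_{R}\in\N$ and univariate $3$-PI operators $\mcl{Q}_{j,i}\in\PIset_{1}[a_{i},b_{i}]$, $\mcl{R}_{k,i}\in\PIset_{1}[a_{i},b_{i}]$ such that $\mcl{Q}=\sum_{j=1}^{M_{Q}}\prod_{i=1}^{N}\mcl{Q}_{j,i}$ and $\mcl{R}=\sum_{k=1}^{M_{R}}\prod_{i=1}^{N}\mcl{R}_{k,i}$.

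First I would absorb the scalars $\lambda,\mu$ into the univariate factors: for each term $\prod_{i=1}^{N}\mcl{Q}_{j,i}$ in $\mcl{Q}$, replace the first factor $\mcl{Q}_{j,1}$ by $\lambda\,\mcl{Q}_{j,1}$, which is again an element of $\PIset_{1}[a_{1},b_{1}]$ because $\PIset_{1}[a_{1},b_{1}]$ is closed under scalar multiplication (this follows immediately from the parameterization $\PI_{1}[\lambda\mbf{Q}_{j,1}]$, using linearity of multiplier and integral operators in the polynomial parameters). Doing the same with $\mu$ on the $\mcl{R}$-terms, we obtain univariate $3$-PI operators $\tilde{\mcl{Q}}_{j,i},\tilde{\mcl{R}}_{k,i}$ so that $\lambda\mcl{Q}=\sum_{j=1}^{M_{Q}}\prod_{i=1}^{N}\tilde{\mcl{Q}}_{j,i}$ and $\mu\mcl{R}=\sum_{k=1}^{M_{R}}\prod_{i=1}^{N}\tilde{\mcl{R}}_{k,i}$.

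Then I would simply concatenate the two lists of product terms: setting $M:=M_{Q}+M_{R}$ and relabeling, $\lambda\mcl{Q}+\mu\mcl{R}=\sum_{\ell=1}^{M}\prod_{i=1}^{N}\mcl{S}_{\ell,i}$, where $\mcl{S}_{\ell,i}=\tilde{\mcl{Q}}_{\ell,i}$ for $\ell\le M_{Q}$ and $\mcl{S}_{\ell,i}=\tilde{\mcl{R}}_{\ell-M_{Q},i}$ for $\ell>M_{Q}$. Each $\mcl{S}_{\ell,i}\in\PIset_{1}[a_{i},b_{i}]$, so by Defn.~\ref{defn:PI_ND} this exhibits $\lambda\mcl{Q}+\mu\mcl{R}$ as an element of $\PIset_{N}[\Omega]$. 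The only subtlety worth flagging is that the inductive/univariate factors in a product act on distinct spatial variables, so the product notation $\prod_{i=1}^{N}$ is unambiguous and the concatenation does not disturb which variable each factor acts on; this is exactly the structure already used throughout Section~\ref{sec:PIs}. There is no real obstacle here — the argument is purely bookkeeping on the defining representation — which is why the main text defers it to the appendix; I would keep the appendix proof at the level of detail above, perhaps adding one explicit line recalling that $\PI_{1}[\lambda\mbf{Q}]=\lambda\,\PI_{1}[\mbf{Q}]$ so that the scalar-absorption step is fully justified.
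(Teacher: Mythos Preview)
Your proposal is correct and follows essentially the same approach as the paper: absorb the scalar into the first univariate factor (using $\PI_{1}[\lambda\mbf{Q}_{j,1}]=\lambda\,\PI_{1}[\mbf{Q}_{j,1}]$), then concatenate the two finite lists of product terms to exhibit $\lambda\mcl{Q}+\mu\mcl{R}$ in the defining form of $\PIset_{N}[\Omega]$. The paper's proof differs only in notation, working at the level of the parameter sets $\mbf{Q}_{j}\in\PIparam_{N}[\Omega]$ rather than the operators themselves.
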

\begin{proof}
	Fix arbitrary $\mcl{Q}=\sum_{j=1}^{M}\PI_{N}[\mbf{Q}_{j}]\in\PIset_{N}[\Omega]$, where for each $j\in\{1:M\}$, we have $\mbf{Q}_{j}:=\{\mbf{Q}_{j,i}\}_{i=1}^{N}$ for $\mbf{Q}_{j,i}\in\PIparam_{1}[a_{i},b_{i}]$. Suppose that $\mbf{Q}_{j,1}=\{\mbf{Q}_{j,1}^{0},\mbf{Q}_{j,1}^{1},\mbf{Q}_{j,2}^{2}\}\in\PIparam_{1}[a_{1},b_{1}]$, so that $\mbf{Q}_{j,1}^{0}\in\R[s_{1}]$ and $\mbf{Q}_{j,1}^{1},\mbf{Q}_{j,1}^{2}\in\R[s_{1},\theta_{1}]$. For $\lambda\in\R$, define $\lambda\mbf{Q}_{j,1}:=\{\lambda\mbf{Q}_{j,1}^{0},\lambda\mbf{Q}_{j,1}^{1},\lambda\mbf{Q}_{j,2}^{2}\}\in\PIset_{1}[a_{1},b_{1}]$. Then, by definition of univariate 3-PI operators (Defn.~\ref{defn:PI_1D}) and linearity of multiplier and integral operators, $\lambda\PI_{1}[\mbf{Q}_{j,1}]\mbf{v}=\PI_{1}[\lambda\mbf{Q}_{j,1}]\mbf{v}$ for all $\mbf{v}\in L_{2}$. Define further $\lambda\mbf{Q}_{j}:=\{\lambda\mbf{Q}_{j,1},\mbf{Q}_{j,2},\hdots,\mbf{Q}_{j,N}\}\in\PIset_{N}[\Omega]$. Then, for each $j\in\{1:M\}$
	\begin{equation*}
		\lambda\mcl{Q}=\lambda\sum_{j=1}^{M}\PI_{N}[\mbf{Q}_{j}]
		=\sum_{j=1}^{M}\lambda\prod_{i=1}^{N}\PI_{1}[\mbf{Q}_{j,i}]
		=\sum_{j=1}^{M}\PI_{1}[\lambda\mbf{Q}_{j,1}] \PI_{1}[\mbf{Q}_{j,2}]\cdots \PI_{1}[\mbf{Q}_{j,N}]
		=\sum_{j=1}^{M}\PI_{N}[\lambda\mbf{Q}_{j}]
		\in\PIset_{N}[\Omega].
	\end{equation*}
	Finally, fix arbitrary $\mcl{R}=\sum_{k=1}^{K}\PI_{N}[\mbf{R}_{k}]\in\PIset_{N}[\Omega]$. Then, for all $\lambda,\mu\in\R$,
	\begin{equation*}
		\lambda\mcl{Q}+\mu\mcl{R}
		=\sum_{j=1}^{M}\PI_{N}[\lambda\mbf{Q}_{j}] +\sum_{k=1}^{K}\PI_{N}[\mu\mbf{R}_{k}]
		\in\PIset_{N}[\Omega].
	\end{equation*}
\end{proof}

Lem.~\ref{lem:PI_sum_ND_appx} shows that the class of multivariate 3-PI operators is closed under linear combinations, and is thus a vector space. Next, we show that this vector space is also closed under compositions. Here, the composition of two multivariate 3-PI operators can be defined inductively, using the composition rules for unviariate 3-PI operators. In particular, if $\mcl{Q}=\prod_{i=1}^{N}\PI_{1}[\mbs{Q}_{i}]$ and $\mcl{R}=\prod_{i=1}^{N}\PI_{1}[\mbs{R}_{i}]$ for $\mbs{Q}_{i},\mbs{R}_{i}\in\PIparam_{1}[a_{i},b_{i}]$, then $\mcl{Q}\circ\mcl{R}=\prod_{i=1}^{N}(\PI_{1}[\mbs{Q}_{i}]\circ\PI_{1}[\mbs{R}_{i}])$. In order for this to hold, however, the different operators $\PI_{1}[\mbs{Q}_{i}]$ and $\PI_{1}[\mbs{R}_{j}]$ for $i\neq j$ must commute. The following lemma proves that this is indeed the case.

\begin{lem}\label{lem:PI_prod_commute}
	Let $N\in\N$ and $\Omega=\prod_{i=1}^{N}[a_{i},b_{i}]$. For $\mbf{R}\in\PIparam_{1}[a_{i},b_{i}]$ and $\mbf{Q}\in\PIparam_{1}[a_{j},b_{j}]$ let $\mcl{R}$ and $\mcl{Q}$ be the multivariate extensions of $\PI_{1}[\mbf{R}]$ and $\PI_{1}[\mbf{Q}]$, respectively, so that e.g. $(\mcl{R}\mbf{v})(s):=(\PI_{1}\mbf{v}(s_{1},\hdots,s_{i-1},\bullet,s_{i+1},\hdots,s_{N}))(s_{i})$ for $\mbf{v}\in L_{2}[\Omega]$. If $i\neq j$, then $(\mcl{R}\mcl{Q})\mbf{v}=(\mcl{Q}\mcl{R})\mbf{v}$ for all $\mbf{v}\in L_{2}[\Omega]$.
\end{lem}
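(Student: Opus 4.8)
The plan is to reduce the claim to a statement about iterated integrals that can be swapped by Fubini's theorem. Since $\mcl{R}$ acts only on the $s_i$ variable and $\mcl{Q}$ acts only on the $s_j$ variable with $i\neq j$, I expect the composition in either order to produce the same double integral (plus multiplier terms), and equality will then follow from the fact that the two integrals are over disjoint variables.

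First I would write out $\PI_1[\mbf{R}]$ and $\PI_1[\mbf{Q}]$ explicitly using Definition~\ref{defn:PI_1D}: writing $\mbf{R}=\{\mbs{R}_0,\mbs{R}_1,\mbs{R}_2\}$ and $\mbf{Q}=\{\mbs{Q}_0,\mbs{Q}_1,\mbs{Q}_2\}$, the lifted operators act as
\begin{align*}
(\mcl{R}\mbf{v})(s)&=\mbs{R}_0(s_i)\mbf{v}(s)+\int_{a_i}^{s_i}\mbs{R}_1(s_i,\theta_i)\mbf{v}(s)|_{s_i=\theta_i}\,d\theta_i+\int_{s_i}^{b_i}\mbs{R}_2(s_i,\theta_i)\mbf{v}(s)|_{s_i=\theta_i}\,d\theta_i,
\end{align*}
and similarly for $\mcl{Q}$ in the $s_j$ variable. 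Then I would compute $(\mcl{R}\mcl{Q})\mbf{v}$ by substituting the expression for $\mcl{Q}\mbf{v}$ into that for $\mcl{R}$. This produces a sum of nine terms, each of the form ``(multiplier or integral in $s_i$) composed with (multiplier or integral in $s_j$)''. For instance the term coming from $\mbs{R}_1$ and $\mbs{Q}_1$ is $\int_{a_i}^{s_i}\int_{a_j}^{s_j}\mbs{R}_1(s_i,\theta_i)\mbs{Q}_1(s_j,\eta_j)\mbf{v}(s)|_{s_i=\theta_i,s_j=\eta_j}\,d\eta_j\,d\theta_i$. Doing the same for $(\mcl{Q}\mcl{R})\mbf{v}$ yields the same nine terms but with the order of integration (and the order of the scalar factors, which commute) reversed. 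Since the inner integrand in each term is, for almost every fixed value of the remaining variables, an integrable function of $(\theta_i,\eta_j)$ on the rectangle $[a_i,b_i]\times[a_j,b_j]$ (the kernels being polynomial and $\mbf{v}\in L_2[\Omega]$, so the relevant slices lie in $L_2$ of the respective intervals), Fubini–Tonelli applies and the two orders of integration agree pointwise a.e. The purely multiplier term $\mbs{R}_0(s_i)\mbs{Q}_0(s_j)\mbf{v}(s)$ is obviously order-independent since scalar (or matrix, in the vector-valued case one works entry-wise as elsewhere) polynomial multipliers in distinct variables commute.

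The one point requiring a little care — and what I would flag as the main obstacle, though it is routine — is making the limit-substitution notation precise: when $\mcl{R}$ is applied after $\mcl{Q}$, the variable $s_i$ inside $\mcl{Q}\mbf{v}$ must be replaced by $\theta_i$, and one must check that this replacement commutes with the $s_j$-integral defining $\mcl{Q}\mbf{v}$, i.e. that $\bl(\mcl{Q}\mbf{v}\br)(s)|_{s_i=\theta_i}$ equals $\mcl{Q}$ applied to $\mbf{v}(s)|_{s_i=\theta_i}$ in the $s_j$-slot. This holds because the kernel $\mbs{Q}_k(s_j,\eta_j)$ does not depend on $s_i$, so the substitution passes through the integral. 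Once that bookkeeping is settled, the equality $(\mcl{R}\mcl{Q})\mbf{v}=(\mcl{Q}\mcl{R})\mbf{v}$ for all $\mbf{v}\in L_2[\Omega]$ follows term-by-term, completing the proof; the vector-valued case $n>1$ is identical, applying the scalar argument to each entry of the resulting matrix of operators as in the proof of Lem.~\ref{lem:PI_op_T_appx}.
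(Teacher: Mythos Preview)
Your proposal is correct and follows essentially the same approach as the paper: decompose each operator into its multiplier and two Volterra-integral parts, obtain nine cross-terms, and verify commutativity term-by-term by exploiting that the two operators act on disjoint variables (the paper does this by direct computation of representative pairs, which is just your Fubini argument made explicit). Your remarks on the substitution bookkeeping and the entrywise extension to $n>1$ are slightly more detailed than the paper's treatment but add nothing new methodologically.
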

\begin{proof}
	Let $\mbf{R}=\{\mbs{R}_{0},\mbs{R}_{1},\mbs{R}_{2}\}\in\PIparam[a_{i},b_{i}]$ and $\mbf{Q}=\{\mbs{Q}_{0},\mbs{Q}_{1},\mbs{Q}_{2}\}\in\PIparam[a_{j},b_{j}]$.
	To prove the result, we first note that by Defn.~\ref{defn:PI_1D}, for any $\mbf{v}\in L_{2}[\Omega]$, we can express $\mcl{R}\mbf{v}=\mcl{R}_{0}\mbf{v}+\mcl{R}_{1}\mbf{v}+\mcl{R}_{2}\mbf{v}$ and $\mcl{Q}\mbf{v}=\mcl{Q}_{0}\mbf{v}+\mcl{Q}_{1}\mbf{v}+\mcl{Q}_{2}\mbf{v}$, where we define
	\begin{align*}
		&(\mcl{R}_{0}\mbf{v})(s):=\mbs{R}_{0}(s_{i})\mbf{v}(s),	&
		&(\mcl{Q}_{0}\mbf{v})(s):=\mbs{Q}_{0}(s_{j})\mbf{v}(s),	\\
		&(\mcl{R}_{1}\mbf{v})(s):=\int_{a_{i}}^{s_{i}}\mbs{R}_{1}(s_{i},\theta_{i})\mbf{v}(s)|_{s_{i}=\theta_{i}}d\theta_{i},	&
		&(\mcl{Q}_{1}\mbf{v})(s):=\int_{a_{j}}^{s_{j}}\mbs{Q}_{1}(s_{j},\theta_{j})\mbf{v}(s)|_{s_{j}=\theta_{j}}d\theta_{j},	\\
		&(\mcl{R}_{2}\mbf{v})(s):=\int_{s_{i}}^{b_{i}}\mbs{R}_{2}(s_{i},\theta_{i})\mbf{v}(s)|_{s_{i}=\theta_{i}}d\theta_{i},	&
		&(\mcl{Q}_{2}\mbf{v})(s):=\int_{s_{j}}^{b_{j}}\mbs{Q}_{2}(s_{j},\theta_{j})\mbf{v}(s)|_{s_{j}=\theta_{j}}d\theta_{j},
	\end{align*}
	for all $s=(s_{1},\hdots,s_{N})\in\Omega$. Here, we remark that
	\begin{align*}
		\bl(\mcl{R}_{0}\mcl{Q}_{0}\mbf{v}\br)(s)
		=\mbs{R}_{0}(s_{i})\mbs{Q}_{0}(s_{j})\mbf{v}(s)
		=\mbs{Q}_{0}(s_{j})\mbs{R}_{0}(s_{i})\mbf{v}(s)
		=\bl(\mcl{Q}_{0}\mcl{R}_{0}\mbf{v}\br)(s),
	\end{align*}
	as well as
	\begin{align*}
		\bl(\mcl{R}_{0}\mcl{Q}_{1}\mbf{v}\br)(s)
		&=\mbs{R}_{0}(s_{i})\int_{a_{j}}^{s_{j}}\mbs{Q}_{1}(s_{j},\theta_{j})\mbf{v}(s)|_{s_{j}=\theta_{j}}d\theta_{j}		\\
		&=\int_{a_{j}}^{s_{j}}\mbs{Q}_{1}(s_{j},\theta_{j})\mbs{R}_{0}(s_{i})\mbf{v}(s)|_{s_{j}=\theta_{j}}d\theta_{j}
		=\bl(\mcl{Q}_{1}\mcl{R}_{0}\mbf{v}\br)(s),
	\end{align*}
	and
	\begin{align*}
		\bl(\mcl{R}_{1}\mcl{Q}_{1}\mbf{v}\br)(s)
		&=\int_{a_{i}}^{s_{i}}\mbs{R}_{1}(s_{i},\theta_{i})\left[\int_{a_{j}}^{s_{j}}\mbs{Q}_{1}(s_{j},\theta_{j})\mbf{v}(s)|_{s_{i}=\theta_{i},s_{j}=\theta_{j}}d\theta_{j}\right]d\theta_{i}		\\
		&=\int_{a_{i}}^{s_{i}}\int_{a_{j}}^{s_{j}}\left[\mbs{R}_{1}(s_{i},\theta_{i})\mbs{Q}_{1}(s_{j},\theta_{j})\mbf{v}(s)|_{s_{i}=\theta_{i},s_{j}=\theta_{j}}\right]d\theta_{j} d\theta_{i}		\\
		&=\int_{a_{j}}^{s_{j}}\mbs{Q}_{1}(s_{j},\theta_{j})\left[\int_{a_{i}}^{s_{i}}\mbs{R}_{1}(s_{i},\theta_{i})\mbf{v}(s)|_{s_{i}=\theta_{i},s_{j}=\theta_{j}}d\theta_{i}\right]d\theta_{j}
		=\bl(\mcl{Q}_{1}\mcl{R}_{1}\mbf{v}\br)(s),
	\end{align*}
	By similar reasoning, we find that $\mcl{R}_{\ell}\mcl{Q}_{k}\mbf{v}=\mcl{Q}_{k}\mcl{R}_{\ell}\mbf{v}$, for all $k,\ell\in\{0,1,2\}$.
	It follows that
	\begin{align*}
		\mcl{R}\mcl{Q}\mbf{v}
		&=\bl(\mcl{R}_{0}+\mcl{R}_{1}+\mcl{R}_{2}\br)\bl(\mcl{Q}_{0}+\mcl{Q}_{1}+\mcl{Q}_{2}\br)\mbf{v}	\\
		&=\bl(\mcl{Q}_{0}+\mcl{Q}_{1}+\mcl{Q}_{2}\br)\bl(\mcl{R}_{0}+\mcl{R}_{1}+\mcl{R}_{2}\br)\mbf{v}
		=\mcl{Q}\mcl{R}\mbf{v}.
	\end{align*}
\end{proof}

Using Lem.~\ref{lem:PI_prod_commute}, the following corollary proves that the composition of two $N$D PI operators can indeed be expressed using the composition of the defining 1D PI operators, as used in the proof of Prop.~\ref{prop:PI_composition_ND}.

\begin{cor}\label{cor:PI_prod_commute}
	For given $N\in\N$ and $\Omega=\prod_{i=1}^{N}[a_{i},b_{i}]$, let $\mbf{Q}=\{\mbf{Q}_{i}\}_{i=1}^{N}\in\PIparam_{N}[\Omega]$ and $\mbf{R}=\{\mbf{R}_{i}\}_{i=1}^{N}\in\PIparam_{N}[\Omega]$, where $\mbf{Q}_{i}\in\PIparam_{N}[a_{i},b_{i}]$ and $\mbf{R}_{i}\in\PIparam_{N}[a_{i},b_{i}]$ for each $i\in\{1:N\}$. Then
	\begin{equation*}
		\left(\prod_{i=1}^{N}\PI_{1}[\mbf{Q}_{i}]\right)\circ \left(\prod_{j=1}^{N}\PI_{1}[\mbf{R}_{j}]\right)=\prod_{i=1}^{N}\bl(\PI_{1}[\mbf{Q}_{i}]\circ\PI[\mbf{R}_{i}]\br).
	\end{equation*}
\end{cor}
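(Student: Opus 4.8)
The plan is to prove Corollary~\ref{cor:PI_prod_commute} by induction on $N$, using Lemma~\ref{lem:PI_prod_commute} (commutability of lifted univariate operators acting on distinct variables) as the key ingredient. The statement asserts that the composition of two products of lifted univariate 3-PI operators factors variable-by-variable. This is an entirely formal (algebraic) fact: once we know that $\PI_1[\mbf Q_i]$ and $\PI_1[\mbf R_j]$ commute whenever $i\neq j$, we can reorder the composite product $\PI_1[\mbf Q_1]\circ\cdots\circ\PI_1[\mbf Q_N]\circ\PI_1[\mbf R_1]\circ\cdots\circ\PI_1[\mbf R_N]$ so that each $\PI_1[\mbf R_i]$ migrates leftward past all $\PI_1[\mbf Q_j]$ with $j>i$, landing directly to the right of $\PI_1[\mbf Q_i]$, thereby regrouping the composite as $\prod_{i=1}^N(\PI_1[\mbf Q_i]\circ\PI_1[\mbf R_i])$.

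Concretely, the base case $N=1$ is trivial: both sides equal $\PI_1[\mbf Q_1]\circ\PI_1[\mbf R_1]$. For the inductive step, assume the claim for $N-1$. Write $\mcl Q:=\prod_{i=1}^N\PI_1[\mbf Q_i]$ and $\mcl R:=\prod_{j=1}^N\PI_1[\mbf R_j]$. The composite is
\begin{equation*}
	\mcl Q\circ\mcl R=\PI_1[\mbf Q_1]\circ\bbl(\prod_{i=2}^{N}\PI_1[\mbf Q_i]\bbr)\circ\PI_1[\mbf R_1]\circ\bbl(\prod_{j=2}^{N}\PI_1[\mbf R_j]\bbr).
\end{equation*}
By Lemma~\ref{lem:PI_prod_commute}, $\PI_1[\mbf R_1]$ commutes with each $\PI_1[\mbf Q_i]$ for $i\in\{2:N\}$ (distinct variables), so $\bl(\prod_{i=2}^{N}\PI_1[\mbf Q_i]\br)\circ\PI_1[\mbf R_1]=\PI_1[\mbf R_1]\circ\prod_{i=2}^{N}\PI_1[\mbf Q_i]$. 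Likewise, $\PI_1[\mbf Q_1]$ and $\PI_1[\mbf R_1]$ act on the same variable $s_1$, so their composition is again a univariate 3-PI operator $\PI_1[\mbf Q_1]\circ\PI_1[\mbf R_1]$ acting on $s_1$, and this operator commutes with every $\PI_1[\mbf Q_i]$, $\PI_1[\mbf R_j]$ for $i,j\in\{2:N\}$ — again by Lemma~\ref{lem:PI_prod_commute}. Hence
\begin{equation*}
	\mcl Q\circ\mcl R=\bl(\PI_1[\mbf Q_1]\circ\PI_1[\mbf R_1]\br)\circ\bbl(\prod_{i=2}^{N}\PI_1[\mbf Q_i]\bbr)\circ\bbl(\prod_{j=2}^{N}\PI_1[\mbf R_j]\bbr).
\end{equation*}
Applying the induction hypothesis to the two products over $\{2:N\}$ (which live on the hyper-rectangle $\prod_{i=2}^{N}[a_i,b_i]$, or equivalently as lifted operators on $\Omega$), the trailing factor equals $\prod_{i=2}^{N}(\PI_1[\mbf Q_i]\circ\PI_1[\mbf R_i])$, giving $\mcl Q\circ\mcl R=\prod_{i=1}^{N}(\PI_1[\mbf Q_i]\circ\PI_1[\mbf R_i])$, as desired.

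I do not anticipate a genuine obstacle here — the content is bookkeeping. The only point requiring a little care is being explicit about \emph{which} pairs of operators are invoked as commuting at each stage: one must check that after forming $\PI_1[\mbf Q_1]\circ\PI_1[\mbf R_1]$ this fused operator still acts only on $s_1$ (so that it commutes with everything acting on $s_2,\dots,s_N$), which follows because $\PIset_1[a_1,b_1]$ is closed under composition (Lemma~\ref{lem:PI_composition_1D}) and lifting commutes with composition. A secondary notational subtlety is that the induction hypothesis is stated for an $N$-fold product over $\{1:N\}$, so to apply it to the product over $\{2:N\}$ one should either reindex, or observe that the proof of Lemma~\ref{lem:PI_prod_commute} applies verbatim to any subset of spatial variables; I would simply note this and proceed. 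Thus the corollary follows by induction on $N$.
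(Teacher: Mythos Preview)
Your proof is correct and uses essentially the same idea as the paper: both rely on Lemma~\ref{lem:PI_prod_commute} to reorder the composite product so that each $\PI_1[\mbf{R}_i]$ lands adjacent to $\PI_1[\mbf{Q}_i]$. The paper does this in a single step (writing out the chain and asserting the reordering directly), whereas you formalize it via induction on $N$; your observation that the fused operator $\PI_1[\mbf{Q}_1]\circ\PI_1[\mbf{R}_1]$ commutes with the remaining factors is correct but not actually needed, since once it sits on the left you can apply the induction hypothesis to the trailing product immediately.
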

\begin{proof}	
	By Lem.~\ref{lem:PI_prod_commute} we have that for all $\mbf{u}\in L_{2}[\Omega]$,
	\begin{align*}
		&\left(\prod_{i=1}^{N}\PI_{1}[\mbf{Q}_{i}]\right)\left(\prod_{j=1}^{N}\PI_{1}[\mbf{R}_{j}]\right)\mbf{u}
		=\bbl(\PI_{1}[\mbf{Q}_{1}]\cdots \PI_{1}[\mbf{Q}_{N}]\,\PI_{1}[\mbf{R}_{1}]\cdots \PI_{1}[\mbf{R}_{N}]\bbr)\mbf{u}		\\
		&\hspace*{3.0cm}=\bl(\PI_{1}[\mbf{Q}_{1}]\PI_{1}[\mbf{R}_{1}]\br)\cdots\bl(\PI_{1}[\mbf{Q}_{N}] \PI_{1}[\mbf{R}_{N}]\br)\mbf{u}
		=\left(\prod_{i=1}^{N}\bl(\PI_{1}[\mbf{Q}_{i}]\PI_{1}[\mbf{R}_{i}]\br)\right)\mbf{u}.
	\end{align*}
\end{proof}

Using Cor.~\ref{cor:PI_prod_commute}, Prop.~\ref{prop:PI_composition_ND} proves that the composition of two multivariate 3-PI operators is again a multivariate 3-PI operator, so that $\PIset_{N}$ forms an algebra. Finally, to prove that $\PIset_{N}$ in fact forms a $*$-algebra, we show that this set is also closed under the adjoint operation. Specifically, we have the following result, corresponding to Lem.~\ref{lem:PI_adjoint_ND} in Subsection~\ref{subsec:PIs:algebra}

\begin{lem}\label{lem:PI_adjoint_ND_appx}
	For $N\in\N$ and $\Omega:=\prod_{i=1}^{N}[a_{i},b_{i}]\subseteq\R^{N}$, if $\mcl{R}\in\PIset_{N}[\Omega]$, then $\mcl{R}^*\in\PIset_{N}[\Omega]$. In particular, if $\mcl{R}=\sum_{j=1}^{M}\PI_{N}[\mbf{R}_{j}]$, where $\mbf{R}_{j}:=\{\mbf{R}_{j,i}\}_{i=1}^{N}\in\PIparam_{N}[\Omega]$, then $\mcl{R}^*=\sum_{j=1}^{M}\PI_{N}[\mbf{R}_{j}^{\adj}]$, where
	\begin{equation*}
		\mbf{R}_{j}^{\adj}:=\{\mbf{R}_{j,i}^{\adj}\}_{i=1}^{N},
	\end{equation*}
	where the univariate adjoint parameters $\mbf{R}_{j,i}^{\adj}\in\PIparam_{1}[a_{i},b_{i}]$ are as defined in Lem.~\ref{lem:PI_adjoint_1D}.
\end{lem}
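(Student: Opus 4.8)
The plan is to reduce the multivariate statement to the univariate adjoint formula (Lem.~\ref{lem:PI_adjoint_1D}) using the inductive definition of $\PIset_{N}[\Omega]$ together with the commutativity of univariate operators acting on distinct spatial variables (Lem.~\ref{lem:PI_prod_commute}). Since $\mcl{L}(L_{2}[\Omega])$ is a $*$-algebra, the adjoint $\mcl{R}^{*}$ certainly exists; all that must be shown is that it lies in $\PIset_{N}[\Omega]$ and is given by the stated formula. By linearity of the adjoint and the fact (already established by Lem.~\ref{lem:PI_sum_ND}) that $\PIset_{N}[\Omega]$ is a vector space, it suffices to treat a single product term $\PI_{N}[\mbf{R}_{j}] = \prod_{i=1}^{N}\PI_{1}[\mbf{R}_{j,i}]$, where each $\PI_{1}[\mbf{R}_{j,i}]$ acts on the variable $s_{i}$.

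First I would fix $\mbf{R} = \{\mbf{R}_{i}\}_{i=1}^{N} \in \PIparam_{N}[\Omega]$ and write $\mcl{R} = \prod_{i=1}^{N}\mcl{R}_{i}$, where $\mcl{R}_{i}$ denotes the multivariate extension of $\PI_{1}[\mbf{R}_{i}]$ acting on $s_{i}$. Using the standard identity $(\mcl{Q}\circ\mcl{R})^{*} = \mcl{R}^{*}\circ\mcl{Q}^{*}$ for adjoints of compositions in $\mcl{L}(L_{2}[\Omega])$, I get $\mcl{R}^{*} = \mcl{R}_{N}^{*}\circ\cdots\circ\mcl{R}_{1}^{*}$. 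The key observation is that the adjoint of $\mcl{R}_{i}$, the lifted univariate operator, is itself the lifted univariate operator associated with $\mbf{R}_{i}^{\adj}$: this follows because the $L_{2}[\Omega]$ inner product factors as an iterated integral, so that taking the adjoint of an operator acting only in the $s_{i}$ variable reduces, via Fubini, to taking the univariate adjoint in that variable while leaving the other variables as parameters -- precisely the content of Lem.~\ref{lem:PI_adjoint_1D} applied fiberwise (cf.\ the fiberwise arguments used in the proof of Cor.~\ref{cor:Tmap_1D}). Hence $\mcl{R}_{i}^{*}$ is the multivariate extension of $\PI_{1}[\mbf{R}_{i}^{\adj}] \in \PIset_{1}[a_{i},b_{i}]$.

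Next, since $\mcl{R}_{N}^{*},\ldots,\mcl{R}_{1}^{*}$ are lifted univariate operators acting on \emph{distinct} spatial variables, Lem.~\ref{lem:PI_prod_commute} (and its extension Cor.~\ref{cor:PI_prod_commute}) lets me reorder the composition freely, so that $\mcl{R}^{*} = \prod_{i=1}^{N}\mcl{R}_{i}^{*} = \prod_{i=1}^{N}\PI_{1}[\mbf{R}_{i}^{\adj}] = \PI_{N}[\mbf{R}^{\adj}]$ with $\mbf{R}^{\adj} := \{\mbf{R}_{i}^{\adj}\}_{i=1}^{N} \in \PIparam_{N}[\Omega]$. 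Summing over $j$, for a general $\mcl{R} = \sum_{j=1}^{M}\PI_{N}[\mbf{R}_{j}]$ I obtain $\mcl{R}^{*} = \sum_{j=1}^{M}\PI_{N}[\mbf{R}_{j}^{\adj}] \in \PIset_{N}[\Omega]$, which is the claimed formula. This also finishes the proof that $\PIset_{N}[\Omega]$ is closed under the involution, completing (together with Lem.~\ref{lem:PI_sum_ND} and Prop.~\ref{prop:PI_composition_ND}) the verification that it is a $*$-algebra.

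The main obstacle is the fiberwise adjoint claim in the second step: one must carefully justify that, for $\mbf{u},\mbf{v} \in L_{2}^{n}[\Omega]$, the identity $\ip{\mcl{R}_{i}\mbf{u}}{\mbf{v}}_{L_{2}[\Omega]} = \ip{\mbf{u}}{\PI_{1}[\mbf{R}_{i}^{\adj}]_{\text{lifted}}\,\mbf{v}}_{L_{2}[\Omega]}$ holds, which requires writing the $\Omega$-integral as $\int_{\prod_{k\neq i}[a_{k},b_{k}]}\!\bl(\int_{a_{i}}^{b_{i}}\cdots\, ds_{i}\br)\,\prod_{k\neq i}ds_{k}$ and invoking Fubini's theorem (valid because $\mcl{R}_{i}$ is bounded and the integrand is integrable) together with the univariate adjoint identity from Lem.~\ref{lem:PI_adjoint_1D} applied to the inner integral for almost every fixed value of the remaining variables. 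The polynomial (hence bounded, measurable) structure of the parameters $\mbs{R}_{0},\mbs{R}_{1},\mbs{R}_{2}$ ensures all the hypotheses of Fubini are met, so this is technical rather than deep; once it is in place the rest of the argument is bookkeeping via the commutativity lemmas.
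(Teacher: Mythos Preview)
Your proposal is correct and follows essentially the same approach as the paper: reduce by linearity to a single product term, use Fubini together with the univariate adjoint formula (Lem.~\ref{lem:PI_adjoint_1D}) to identify the adjoint of each lifted factor, and then invoke Lem.~\ref{lem:PI_prod_commute} to reorder the resulting product. The paper presents this as a direct iterated-integral computation moving one operator at a time from $\mbf{v}$ to $\mbf{u}$, whereas you phrase it via the abstract identity $(\mcl{Q}\circ\mcl{R})^{*}=\mcl{R}^{*}\circ\mcl{Q}^{*}$ followed by the fiberwise adjoint claim, but the underlying argument is the same.
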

\begin{proof}
	To prove the result we first remark that, by Lem.~\ref{lem:PI_adjoint_1D}, for each $i\in\{1:N\}$ and all $\mbf{R}\in\PIparam_{1}[a_{i},b_{i}]$ we have
	\begin{equation*}
		\int_{a_{i}}^{b_{i}}\mbf{u}(s)\,\bl(\PI_{1}[\mbf{R}]\mbf{v}\br)(s)\,ds_{i}	\\
		=\int_{a_{i}}^{b_{i}}\bl(\PI_{1}[\mbf{R}^{\adj}]\mbf{u}\br)(s)\, \mbf{v}(s)\, d s_{i},\qquad \forall \mbf{u},\mbf{v}\in L_{2}[\Omega],
	\end{equation*}
	where we define $(\PI_{1}[\mbf{R}]\mbf{v})(s):=\bl(\PI_{1}[\mbf{R}]\mbf{v}(s_{1},\hdots,s_{i-1},\bullet,s_{i+1},\hdots,s_{N})\br)(s_{i})$. Now, let $\mcl{R}=\sum_{j=1}^{M}\PI_{N}[\mbf{R}_{j}]$, where $\mbf{R}_{j}:=\{\mbf{R}_{j,i}\}_{i=1}^{N}\in\PIparam_{N}[\Omega]$.
	Then, for all $j\in\{1:M\}$,
	\begin{align*}
		&\ip{\mbf{u}}{\PI_{N}[\mbf{R}_{j}]\mbf{v}}_{L_{2}[\Omega]}
		=\int_{\Omega}\bbbl[\mbf{u}(s) \bl(\PI_{N}[\mbf{R}_{j}]\mbf{v}\br)(s)\bbbr]\,ds \\
		&=\int_{a_{1}}^{b_{1}}\cdots \int_{a_{N}}^{b_{N}}\bbbl[
		\mbf{u}(s_{1},\hdots,s_{N}) \bl(\PI_{1}[\mbf{R}_{j,1}]\cdots\PI_{1}[\mbf{R}_{j,N}]\mbf{v}\br)(s_{1},\hdots,s_{N})\bbbr] \,ds_{N}\cdots ds_{1} \\
		&=\int_{a_{1}}^{b_{1}}\cdots \int_{a_{N}}^{b_{N}}\bbbl[
		\bl(\PI_{1}[\mbf{R}_{j,1}^{\adj}]\mbf{u}\br)(s_{1},\hdots,s_{N})\, \bl(\PI_{1}[\mbf{R}_{j,2}]\cdots\PI_{1}[\mbf{R}_{j,N}]\mbf{v}\br)(s_{1},\hdots,s_{N})\bbbr]\,ds_{N}\cdots ds_{1} \\
		&=\int_{a_{1}}^{b_{1}}\cdots \int_{a_{N}}^{b_{N}}\bbbl[
		\bl(\PI_{1}[\mbf{R}_{j,2}^{\adj}]\PI_{1}[\mbf{R}_{j,1}^{\adj}]\mbf{u}\br)(s_{1},\hdots,s_{N})\, \bl(\PI_{1}[\mbf{R}_{j,3}]\cdots\PI_{1}[\mbf{R}_{j,N}]\mbf{v}\br)(s_{1},\hdots,s_{N})\bbbr]\,ds_{N}\cdots ds_{1} \\
		&~\vdots\\
		&=\int_{a_{1}}^{b_{1}}\cdots \int_{a_{N}}^{b_{N}}\bbbl[
		\br(\PI_{1}[\mbf{R}_{j,N}^{\adj}]\cdots\PI_{1}[\mbf{R}_{j,1}^{\adj}]\mbf{u}\br)(s_{1},\hdots,s_{N})\,  \mbf{v}(s_{1},\hdots,s_{N})\bbbr]\,ds_{N}\cdots ds_{1}	\\
		&=\int_{a_{1}}^{b_{1}}\cdots \int_{a_{N}}^{b_{N}}\bbbl[
		\br(\PI_{1}[\mbf{R}_{j,1}^{\adj}]\cdots\PI_{1}[\mbf{R}_{j,N}^{\adj}]\mbf{u}\br)(s_{1},\hdots,s_{N})\,  \mbf{v}(s_{1},\hdots,s_{N})\bbbr]\,ds_{N}\cdots ds_{1}	\\
		&=\int_{\Omega}\bbbl(\bl(\PI_{N}[\mbf{R}_{j}^{\adj}]\mbf{u}\br)(s)\mbf{v}(s)\bbbr)\,ds	\\
		&=\ip{\PI_{N}[\mbf{R}_{j}^{\adj}]\mbf{u}}{\mbf{v}}_{L_{2}[\Omega]},
	\end{align*}
	where we remark that $\PI_{1}[\mbf{R}_{j,N}^{\adj}]\cdots\PI_{1}[\mbf{R}_{j,1}^{\adj}]=\PI_{1}[\mbf{R}_{j,1}^{\adj}]\cdots\PI_{1}[\mbf{R}_{j,N}^{\adj}]$ by Lem.~\ref{lem:PI_prod_commute}.
	By linearity of the inner product, it follows that if $\mcl{R}:=\sum_{i=1}^{M}\PI_{N}[\mbf{R}_{j}]$, then
	\begin{equation*}
		\ip{\mbf{u}}{\mcl{R}\mbf{v}}_{L_{2}}
		=\sum_{i=1}^{M}\ip{\mbf{u}}{\PI_{N}[\mbf{R}_{j}]\mbf{v}}_{L_{2}}
		=\sum_{i=1}^{M}\ip{\PI_{N}[\mbf{R}_{j}^{\adj}]\mbf{u}}{\mbf{v}}_{L_{2}}
		=\ip{\mcl{R}^*\mbf{u}}{\mbf{v}}_{L_{2}}.
	\end{equation*}
	%
\end{proof}


\section{Exponential PIE to PDE Stability of Classical PDE Examples}\label{appx:Examples}

In this appendix, we verify exponential stability of the reaction-diffusion equation from Subsec.~\ref{subsec:Examples:examples:heat} and the wave equation from Subsec.~\ref{subsec:Examples:examples:wave}. For each example, an explicit solution is constructed using separation of variables, and it is proven that the norm of this solution is bounded by an exponentially decaying function, providing an explicit value of the decay rate. Since the reaction-diffusion and wave equations are classical examples of PDEs, for which existence and uniqueness of solutions have been well-studied in the literature, several details in the derivation of the solutions will be omitted, and it will not be shown that the obtained solutions are indeed unique. We refer to standard textbooks on PDEs such as~\cite{evans2022PDEs} for more details and proofs. 

\subsection{Exponential PIE to PDE Stability of the Reaction-Diffusion Equation}\label{appx:Examples:heat}

Consider the following reaction-diffusion equation, with Dirichlet-Neumann boundary conditions
\begin{align}\label{eq:reaction_diffusion_appx}
	\mbf{u}_{t}(t,x,y)&=\nu[\mbf{u}_{xx}(t,x,y)+\mbf{u}_{yy}(t,x,y)]+r\mbf{u}(t,x,y),	&	(x,y)&\in[0,L_{x}]\times[0,L_{y}],	\\
	\mbf{u}(t,0,y)&=\mbf{u}(t,L_{x},y)=0,\qquad \mbf{u}(t,x,0)=\mbf{u}_{y}(t,x,L_{y})=0.		\notag
\end{align}
We solve this PDE using separation of variables. In particular, suppose $\mbf{u}(t,x,y)=T(t)U(x,y)$ for some function $T$ that depends only on time, and a function $U$ that varies only in space. Substituting the relation $\mbf{u}(t,x,y)=T(t)U(x,y)$ into the PDE, we find that it must satisfy
\begin{equation*}
	T'(t)U(x,y)=\nu T(t)[\partial_{x}^2 U(x,y)+\partial_{y}^2 U(x,y)] +rT(t)U(x,y).
\end{equation*}
Dividing both sides by $T(t)U(x,y)$, and rearranging the terms, this yields
\begin{equation*}
	\frac{T'(t)}{T(t)}-r=\nu\frac{\partial_{x}^2 U(x,y)}{U(x,y)} +\nu\frac{\partial_{y}^2 U(x,y)}{U(x,y)},
\end{equation*}
where now the left-hand side depends only on $t$, and the right-hand side depends only on $x,y$. It follows that both sides must be constant in time and space, whence solutions must satisfy
\begin{equation*}
	\frac{T'(t)}{T(t)}-r=-\lambda\qquad \text{and}\qquad
	\nu\frac{\partial_{x}^2 U(x,y)}{U(x,y)} +\nu\frac{\partial_{y}^2 U(x,y)}{U(x,y)}=-\lambda,
\end{equation*}
for some constant $\lambda\in\R$. Solving the first of these equations, we obtain
\begin{equation*}
	T(t)=T(0)e^{[r-\lambda]t}.
\end{equation*}
For the second equation, suppose further that $U(x,y)=X(x)Y(y)$ for some functions $X:[0,L_{x}]\to\R$ and $Y:[0,L_{y}]\to\R$. Substituting this identity into the equation, it follows that these functions must satisfy
\begin{equation*}
	\nu\frac{X''(x)}{X(x)} =-\nu\frac{Y''(y)}{Y(y)}-\lambda.
\end{equation*}
Here, the left-hand side depends only on $x$, and the right-hand side depends only on $y$, implying that both sides must be constant in $x$ and $y$. Thus, the functions $X$ and $Y$ must satisfy
\begin{equation*}
	\nu\frac{X''(x)}{X(x)}=-\mu,\qquad \text{and}\qquad
	-\nu\frac{Y''(y)}{Y(y)}-\lambda=-\mu,
\end{equation*}
for some constant $\mu\in\R$. To solve these problems, recall that the function $u(t,x,y)=T(t)X(x)Y(y)$ must further satisfy the boundary conditions
\begin{align*}
	T(t)X(0)Y(y)&=u(t,0,y)=0,	&	T(t)X(L_{x})Y(y)&=u(t,L_{x},y)=0,	\\
	T(t)X(x)Y(0)&=u(t,x,0)=0,	&	T(t)X(x)Y'(L_{y})&=u_{y}(t,x,L_{y})=0.
\end{align*}
Excluding trivial solutions, these boundary conditions can only be satisfied if
\begin{equation*}
	X(0)=X(L_{x})=0,\qquad\text{and}\qquad Y(0)=Y'(L_{y})=0.
\end{equation*}
Thus, we obtain two boundary-value problems
\begin{align*}
	X''(x)&=-\frac{\mu}{\nu} X(x),	&	&\text{and}	&
	Y''(y)&=-\frac{\lambda-\mu}{\nu}Y(y),		\\
	X(0)&=X(L_{x})=0,		&	&	&
	Y(0)&=Y'(L_{y})=0.
\end{align*}
These are both standard 1D Sturm-Liouville problems, of which solutions are well-known.
In particular, we note that non-trivial solutions exist only for $\mu=\mu_{m}:=\nu\frac{m^2\pi^2}{L_{x}^2}$ and $\lambda-\mu=\lambda_{m,n}-\mu_{n}:=\nu\frac{(n-1/2)^2\pi^2}{L_{y}^2}$ for $m,n\in\N$, taking the form
\begin{equation*}
	X_{m}(x)=\sin\left(\frac{m\pi}{L_{x}}x\right),\qquad 
	Y_{n}(y)=\sin\left(\frac{(n-1/2)\pi}{L_{y}}y\right),\qquad \forall m,n\in\N.
\end{equation*}
Here, the functions $\mbf{u}_{m,n}(x,y):=X_{m}(x)Y_{n}(y)$ form a basis for functions on $L_{2}[[0,1]^2]$ satisfying the imposed Dirichlet and Neumann boundary conditions, and any solution to the original PDE takes the form
\begin{equation*}
	\mbf{u}(t,x,y)=\sum_{m=1}^{\infty}\sum_{n=1}^{\infty}a_{m,n}e^{[r-\lambda_{m,n}]t}\mbf{u}_{m,n}(x,y)
	=\sum_{m=1}^{\infty}\sum_{n=1}^{\infty}a_{m,n}e^{[r-\lambda_{m,n}]t} \sin\left(\frac{m\pi}{L_{x}}x\right) \sin\left(\frac{(n-1/2)\pi}{L_{y}}y\right),
\end{equation*}
for some coefficients $a_{m,n}$ defined by the initial conditions, and where
\begin{equation*}
	\lambda_{m,n} =\mu_{m}+\nu\frac{(n-1/2)^2\pi^2}{L_{y}^2} =\nu\pi^2\left[\frac{m^2}{L_{x}^2}+\frac{(n-1/2)^2}{L_{y}^2}\right],\qquad \forall m,n\in\N.
\end{equation*}
It follows that
\begin{equation*}
	\mbf{u}_{xxyy}(t,x,y)
	=\sum_{m=1}^{\infty}\sum_{n=1}^{\infty}\frac{m^2\pi^2}{L_{x}^2}\frac{(n-1/2)^2\pi^2}{L_{y}^2}a_{m,n}e^{[r-\lambda_{m,n}]t} \sin\left(\frac{m\pi}{L_{x}}x\right) \sin\left(\frac{(n-1/2)\pi}{L_{y}}y\right).
\end{equation*}
By orthogonality of the basis functions $\mbf{u}_{m,n}$ with respect to the $L_{2}$ inner product, we find
\begin{align*}
	\norm{\mbf{u}(t)}_{L_{2}}^2&=\sum_{m=1}^{\infty}\sum_{n=1}^{\infty}\frac{L_{x}L_{y}}{4}a_{m,n}^2e^{2[r-\lambda_{m,n}]t} 	\\
	&\leq \sum_{m=1}^{\infty}\sum_{n=1}^{\infty}\left(m^2\pi^2(n-1/2)^2\pi^2\right)^2 \frac{L_{x}L_{y}}{4}a_{m,n}^2e^{2[r-\lambda_{\min}]t} 
	=L_{x}^2 L_{y}^2 e^{2[r-\lambda_{\min}]t}\norm{\mbf{u}_{xxyy}(0)}_{L_{2}}^2,
\end{align*}
where $\lambda_{\min}:= \min_{m,n\in\N}\lambda_{m,n}=\nu\pi^2\left[\frac{1}{L_{x}^2}+\frac{1}{4L_{y}^2}\right]$.
Thus, the PDE is exponentially PIE to PDE stable whenever $r\leq \lambda_{\min}$, with rate of decay $k=r-\lambda_{\min}$.

\subsection{Exponential PIE to PDE Stability of the Wave Equation}\label{appx:Examples:wave}

Consider the following wave equation with feedback, with Dirichlet-Neumann boundary conditions
\begin{align}\label{eq:wave_eq_appx}
	\mbf{u}_{tt}(t,x,y)&=\mbf{u}_{xx}(t,x,y)+\mbf{u}_{yy}(t,x,y)-2\kappa\mbf{u}_{t}(t,x,y)-\kappa^2\mbf{u}(t,x,y),	&	(x,y)&\in[0,L_{x}]\times[0,L_{y}],	\\
	\mbf{u}(t,0,y)&=\mbf{u}_{x}(t,L_{x},y)=0,\qquad \mbf{u}(t,x,0)=\mbf{u}_{y}(t,x,L_{y})=0,		\notag
\end{align}
where $\kappa\geq 0$.
Suppose solutions take the form $\mbf{u}(t,x,y)=T(t)U(x,y)$ for some function $T$ that depends only on time, and a function $U$ that varies only in space. Substituting the relation $\mbf{u}(t,x,y)=T(t)U(x,y)$ into the PDE, we find that $T$ and $U$ must satisfy
\begin{equation*}
	T''(t)U(x,y)= T(t)[\partial_{x}^2 U(x,y)+\partial_{y}^2 U(x,y)] +2\kappa T'(t)U(x,y) -\kappa^2T(t)U(x,y).
\end{equation*}
Dividing both sides by $T(t)U(x,y)$, and re-arranging the terms, this yields
\begin{equation*}
	\frac{T''(t)+2\kappa T'(t)}{T(t)}+\kappa^2=\frac{\partial_{x}^2 U(x,y)}{U(x,y)} +\frac{\partial_{y}^2 U(x,y)}{U(x,y)},
\end{equation*}
where now the left-hand side depends only on $t$, and the right-hand side depends only on $x,y$. It follows that both sides must be constant in time and space, whence solutions must satisfy
\begin{equation*}
	\frac{T''(t)+2\kappa T'(t)}{T(t)}+\kappa^2=-\lambda,\qquad \text{and}\qquad
	\frac{\partial_{x}^2 U(x,y)}{U(x,y)} +\frac{\partial_{y}^2 U(x,y)}{U(x,y)}=-\lambda,
\end{equation*}
for some constant $\lambda\in\R$. Solving the first of these equations, we find that if $\lambda>0$, solutions take the form
\begin{equation*}
	T(t)=\gamma e^{-\kappa t}\sin\left(t\sqrt{\lambda}\right)+\delta e^{-\kappa t}\cos\left(t\sqrt{\lambda}\right),
\end{equation*}
for some constants $\gamma,\delta$ depending on $T(0)$ and $T'(0)$. 
For the second equation, suppose further that $U(x,y)=X(x)Y(y)$ for some functions $X:[0,L_{x}]\to\R$ and $Y:[0,L_{y}]\to\R$. Substituting this identity into the equation, it follows that these functions must satisfy
\begin{equation*}
	\frac{X''(x)}{X(x)} =-\frac{Y''(y)}{Y(y)}-\lambda.
\end{equation*}
Here, the left-hand side depends only on $x$, and the right-hand side depends only on $y$, implying that both sides must be constant in $x$ and $y$. Thus, the functions $X$ and $Y$ must satisfy
\begin{equation*}
	\frac{X''(x)}{X(x)}=-\mu,\qquad \text{and}\qquad
	-\frac{Y''(y)}{Y(y)}-\lambda=-\mu,
\end{equation*}
for some constant $\mu\in\R$. To solve these problems, recall that the function $u(t,x,y)=T(t)X(x)Y(y)$ must further satisfy the boundary conditions
\begin{align*}
	T(t)X(0)Y(y)&=u(t,0,y)=0,	&	T(t)X'(L_{x})Y(y)&=u_{x}(t,L_{x},y)=0,	\\
	T(t)X(x)Y(0)&=u(t,x,0)=0,	&	T(t)X(x)Y'(L_{y})&=u_{y}(t,x,L_{y})=0.
\end{align*}
Excluding trivial solutions, these boundary conditions can only be satisfied if
\begin{equation*}
	X(0)=X'(L_{x})=0,\qquad\text{and}\qquad Y(0)=Y'(L_{y})=0.
\end{equation*}
Thus, we obtain the boundary-value problems
\begin{align*}
	X''(x)&=-\mu X(x),	&	&\text{and}	&
	Y''(y)&=-(\lambda-\mu)Y(y),		\\
	X(0)&=X'(L_{x})=0,		&	&	&
	Y(0)&=Y'(L_{y})=0.
\end{align*}
These are again standard Sturm-Liouville problems, which we can solve to find that a solution exists only if $\mu=\mu_{m}=(m-\frac{1}{2})^2\frac{\pi^2}{L_{x}^2}$ and $\lambda_{m,n}-\mu_{m}=(n-\frac{1}{2})^2\frac{\pi^2}{L_{y}^2}$ for $m,n\in\N$, with solutions taking the form
\begin{equation*}
	X_{m}(x)=\alpha\sin(\sqrt{\mu_{m}} x),\qquad
	Y_{n}(y)=\beta\sin(\sqrt{\lambda_{n}-\mu_{n}} y),\qquad \forall m,n\in\N,
\end{equation*}
for arbitrary $\alpha,\beta\in\R$. Thus, non-trivial solutions to the PDE exist only for
\begin{equation*}
	\lambda_{m,n}:=\mu_{m}+\left(n-\frac{1}{2}\right)^2\frac{\pi^2}{L_{y}^2}
	=\pi^2\left[\frac{(m-1/2)^2}{L_{x}^2}+\frac{(n-1/2)^2}{L_{y}^2}\right],\qquad \forall m,n\in\N.
\end{equation*}
Note that the functions $\mbf{u}_{m,n}(x,y):=X_{m}(x)Y_{n}(y)$ for $m,n\in\N$ define a basis for the space of functions satisfying the imposed Dirichlet and Neumann boundary conditions. Reintroducing the dependence on time, $T(t)$, we find that solutions to the wave equation take the form
%
\begin{align*}
	\mbf{u}(t,x,y)&=\sum_{m,n=1}^{\infty}a_{m,n}e^{-\kappa t}\sin\left(\sqrt{\lambda_{m,n}}t\right)\sin\bl(\sqrt{\mu_{m}} x\br)\sin\left(\sqrt{\lambda_{m,n}-\mu_{m}} y\right)	\\
	&\qquad  +\sum_{m,n=1}^{\infty}b_{m,n}e^{-\kappa t}\cos\left(\sqrt{\lambda_{m,n}}t\right)\sin\bl(\sqrt{\mu_{m}} x\br)\sin\left(\sqrt{\lambda_{m,n}-\mu_{m}} y\right),
\end{align*}
for coefficients $a_{m,n},b_{m,n}$ determined by $\mbf{u}(0,x,y)$ and $\mbf{u}_{t}(0,x,y)$. Given these coefficients, the temporal derivative of the solution is then given by 
\begin{align*}
	\mbf{u}_{t}(t,x,y)&=-\kappa\mbf{u}(t) +\sum_{m,n=1}^{\infty}a_{m,n}\sqrt{\lambda_{m,n}} e^{-\kappa t}\cos\left(\sqrt{\lambda_{m,n}}t\right)\sin\bl(\sqrt{\mu_{m}} x\br)\sin\left(\sqrt{\lambda_{m,n}-\mu_{m}} y\right)	\\
	&\qquad  -\sum_{m,n=1}^{\infty}b_{m,n}\sqrt{\lambda_{m,n}}e^{-\kappa t}\sin\left(\sqrt{\lambda_{m,n}}t\right)\sin\bl(\sqrt{\mu_{m}} x\br)\sin\left(\sqrt{\lambda_{m,n}-\mu_{m}} y\right)	\\
	&=-\sum_{m,n=1}^{\infty}\bbbl[\kappa a_{m,n}+b_{m,n}\sqrt{\lambda_{m,n}}\bbbr]e^{-\kappa t}\sin\left(\sqrt{\lambda_{m,n}}t\right)\sin\bl(\sqrt{\mu_{m}} x\br)\sin\left(\sqrt{\lambda_{m,n}-\mu_{m}} y\right)	\\
	&\qquad  -\sum_{m,n=1}^{\infty}\bbbl[\kappa b_{m,n}-a_{m,n}\sqrt{\lambda_{m,n}}\bbbr]e^{-\kappa t}\cos\left(\sqrt{\lambda_{m,n}}t\right)\sin\bl(\sqrt{\mu_{m}} x\br)\sin\left(\sqrt{\lambda_{m,n}-\mu_{m}} y\right).
\end{align*}
By orthogonality of the spatial basis functions with respect to the $L_{2}$-inner product, it follows that
\begin{align*}
	\norm{\mbf{u}(t)}_{L_{2}}^2
	&=\frac{L_{x}L_{y}}{4}e^{-2\kappa t}\sum_{m,n=1}^{\infty}\left[a_{m,n}\sin\left(\sqrt{\lambda_{m,n}}t\right) +b_{m,n}\cos\left(\sqrt{\lambda_{m,n}}t\right)\right]^2,	\\
	\norm{\mbf{u}_{t}(t)}_{L_{2}}^2
	&=  \frac{L_{x}L_{y}}{4}e^{-2\kappa t}\sum_{m,n=1}^{\infty}\left[\bbbl(\kappa a_{m,n}+b_{m,n}\sqrt{\lambda_{m,n}}\bbbr)\sin\left(\sqrt{\lambda_{m,n}}t\right) +\bbbl(\kappa b_{m,n}-a_{m,n}\sqrt{\lambda_{m,n}}\bbbr)\cos\left(\sqrt{\lambda_{m,n}}t\right)\right]^2.	
\end{align*}
Given these expressions, we note that solutions do not satisfy $\norm{\bmat{\mbf{u}(t)\\\mbf{u}_{t}(t)}}_{L_{2}}\leq M e^{-k t}\norm{\bmat{\mbf{u}(0)\\\mbf{u}_{t}(0)}}_{L_{2}}$ for any $M\geq 1$ and $k\geq 0$. Indeed, to illustrate, consider $L_{x}=L_{y}=1$, with the initial condition $\mbf{u}(0,x,y)=\sin((j-\frac{1}{2})\pi x)\sin((j-\frac{1}{2})\pi y)$ for some $j\in\N$. Then $a_{m,n}=0$ for all $m,n\in\N$, $b_{j,j}=1$, and $b_{m,n}=0$ for all other $m,n\in\N$. It follows that, the norm of the solution at each time $t$ is given by
\begin{align*}
	\norm{\mbf{u}(t)}_{L_{2}}^{2}
	&=\frac{L_{x}L_{y}}{4}e^{-2\kappa t}\cos\bl(\sqrt{2}(j-1/2)\pi t\br),	\\
	\norm{\mbf{u}_{t}(t)}_{L_{2}}^{2}
	&=\frac{L_{x}L_{y}}{4}e^{-2\kappa t}\left[ \sqrt{2}(j-1/2)\pi\sin\bl(\sqrt{2}(j-1/2)\pi t\br) +\kappa \cos\bl(\sqrt{2}(j-1/2)\pi t\br)\right]^2.
\end{align*}
Therefore, at $t=0$, we have
\begin{align*}
	\norm{\bmat{\mbf{u}(0)\\\mbf{u}_{t}(0)}}_{L_{2}}^{2}
	=\norm{\mbf{u}(0)}_{L_{2}}^{2} +\norm{\mbf{u}_{t}(0)}_{L_{2}}^{2}
	=\frac{L_{x}L_{y}}{4} +\frac{L_{x}L_{y}}{4}\kappa^2
	=\frac{L_{x}L_{y}}{4}(1+\kappa^2),
\end{align*}
whereas at $t=T_{j}:=\frac{1}{2\sqrt{2}(j-1/2)}$, we have
\begin{align*}
	\norm{\bmat{\mbf{u}(T_{j})\\\mbf{u}_{t}(T_{j})}}_{L_{2}}^{2}
	=\norm{\mbf{u}(T_{j})}_{L_{2}}^{2} +\norm{\mbf{u}_{t}(T_{j})}_{L_{2}}^{2}
	=\frac{L_{x}L_{y}}{4}\sqrt{2}(j-1/2)\pi.
\end{align*}
It follows that, for any $M\geq 1$, there exists $j\in\N$ such that $\norm{\bmat{\mbf{u}(T_{j})\\\mbf{u}_{t}(T_{j})}}_{L_{2}}^{2}> M\norm{\bmat{\mbf{u}(0)\\\mbf{u}_{t}(0)}}_{L_{2}}^{2}$, and thus the wave equation cannot be exponentially stable in the classical sense. However, we can prove that the PDE is exponentially PIE to PDE stable, in the sense of Defn.~\ref{defn:exponential_stability}. In particular, note that
\begin{align*}
	\mbf{u}_{xxyy}(0,x,y)&=\sum_{m,n=1}^{\infty}b_{m,n}\mu_{m}(\lambda_{m,n}-\mu_{m})\sin\bl(\sqrt{\mu_{m}} x\br)\sin\left(\sqrt{\lambda_{m,n}-\mu_{m}} y\right),		\\
	\mbf{u}_{txxyy}(0,x,y)&=-\sum_{m,n=1}^{\infty}\bbbl[\kappa b_{m,n}-a_{m,n}\sqrt{\lambda_{m,n}}\bbbr]\mu_{m}(\lambda_{m,n}-\mu_{m})\sin\bl(\sqrt{\mu_{m}} x\br)\sin\left(\sqrt{\lambda_{m,n}-\mu_{m}} y\right).
\end{align*}
By orthogonality of the spatial basis functions, it follows then that
\begin{align*}
	\norm{\bmat{\mbf{u}_{xxyy}(0)\\\mbf{u}_{txxyy}(0)}}_{L_{2}}^{2}
	=\frac{L_{x}L_{y}}{4}\sum_{m,n=1}^{\infty}\left[b_{m,n}^2 +\bbbl(\kappa b_{m,n}-a_{m,n}\sqrt{\lambda_{m,n}}\bbbr)^2\right]\mu_{m}^2(\lambda_{m,n}-\mu_{m})^2.
\end{align*}
Thus, we can bound the norm of the solution to the wave equation at each time as
\begin{align*}
	\norm{\bmat{\mbf{u}(t)\\\mbf{u}_{t}(t)}}_{L_{2}}^{2}
	&=\frac{L_{x}L_{y}}{4}e^{-2\kappa t}\sum_{m,n=1}^{\infty}\left[a_{m,n}\sin\left(\sqrt{\lambda_{m,n}}t\right) +b_{m,n}\cos\left(\sqrt{\lambda_{m,n}}t\right)\right]^2	\\
	&\hspace*{-1.0cm} +\frac{L_{x}L_{y}}{4}e^{-2\kappa t}\sum_{m,n=1}^{\infty}\left[\bbbl(\kappa a_{m,n}+b_{m,n}\sqrt{\lambda_{m,n}}\bbbr)\sin\left(\sqrt{\lambda_{m,n}}t\right) +\bbbl(\kappa b_{m,n}-a_{m,n}\sqrt{\lambda_{m,n}}\bbbr)\cos\left(\sqrt{\lambda_{m,n}}t\right)\right]^2		\\
	&\leq \frac{L_{x}L_{y}}{4}e^{-2\kappa t}\sum_{m,n=1}^{\infty}\left[a_{m,n}^2 +b_{m,n}^2 +\bbl(\kappa a_{m,n}+b_{m,n}\sqrt{\lambda_{m,n}}\bbr)^2 +\bbl(\kappa b_{m,n}-a_{m,n}\sqrt{\lambda_{m,n}}\bbr)^2\right]			\\
	&\leq e^{-2\kappa t}\norm{\bmat{\mbf{u}_{xxyy}(0)\\\mbf{u}_{txxyy}(0)}}_{L_{2}}^{2} +\frac{L_{x}L_{y}}{4}e^{2\kappa t}\sum_{m,n=1}^{\infty}\left[a_{m,n}^2 +\bbl(\kappa a_{m,n}+b_{m,n}\sqrt{\lambda_{m,n}}\bbr)^2\right].
\end{align*}
Here, we note that for any $L_{x},L_{y}$, we can find a constant $C$ such that $\lambda_{m,n}\leq C\mu_{m}^2(\lambda_{m,n}-\mu_{m})^2$ for all $m,n\in\N$. It follows that
\begin{align*}
	\sum_{m,n=1}^{\infty}\left[a_{m,n}^2 +\bbl(\kappa a_{m,n}+b_{m,n}\sqrt{\lambda_{m,n}}\bbr)^2\right]
	&\leq \sum_{m,n=1}^{\infty}\left[a_{m,n}^2 +\bl(\kappa^2+\lambda_{m,n}\br)\bl(a_{m,n}^2+b_{m,n}^2\br)\right]	\\
	&\leq \sum_{m,n=1}^{\infty}\left[a_{m,n}^2 +\lambda_{m,n}\bl(\kappa^2+1\br)\bl(a_{m,n}^2+b_{m,n}^2\br)\right]	\\
	&\leq \sum_{m,n=1}^{\infty}2\lambda_{m,n}(1+\kappa^2)\bl(a_{m,n}^2+b_{m,n}^2\br)	\\
	&\leq 2C(1+\kappa^2)^2\sum_{m,n=1}^{\infty}\left[b_{m,n}^2 +\bbl(\kappa b_{m,n}-a_{m,n}\sqrt{\lambda_{m,n}}\bbr)^2\right]\mu_{m}^2(\lambda_{m,n}-\mu_{m})^2 \\
	&=\frac{8C(1+\kappa^2)^2}{L_{x}L_{y}}\norm{\bmat{\mbf{u}_{xxyy}(0)\\\mbf{u}_{txxyy}(0)}}_{L_{2}}^{2}.
\end{align*}
Defining, then, $M:=\sqrt{1+8C(1+\kappa^2)^2}$, it follows that
\begin{align*}
	\norm{\bmat{\mbf{u}(t)\\\mbf{u}_{t}(t)}}_{L_{2}}
	\leq M e^{-2\kappa t} \norm{\bmat{\mbf{u}_{xxyy}(0)\\\mbf{u}_{txxyy}(0)}}_{L_{2}}.
\end{align*}

\end{document}